\newtheorem{theorem}{Theorem}[section]
\newtheorem{corollary}{Corollary}[theorem]
\newtheorem{proposition}[theorem]{Proposition}
\newtheorem{lemma}[theorem]{Lemma}
\theoremstyle{definition}
\newtheorem{definition}[theorem]{Definition}
\newtheorem{remark}[theorem]{Remark}
\numberwithin{equation}{section}
\newcommand{\bX}{\mathbb{X}}
\newcommand{\bx}{{\bf x}}
\newcommand{\bu}{{\bf u}}
\newcommand{\bk}{{\bf k}}
\newcommand{\bV}{\mathbb{V}}
\newcommand{\bY}{\mathbb{Y}}
\newcommand{\bG}{\mathbb{G}}
\newcommand{\bP}{\mathbb{P}}
\newcommand{\by}{\mathbf{y}}
\newcommand{\bA}{\mathbb{A}}
\newcommand{\bL}{\mathbb{L}}
\newcommand{\R}{\mathbb{R}}
\newcommand{\Q}{\mathbb{Q}}
\newcommand{\Z}{\mathbb{Z}}
\newcommand{\bM}{\mathbb{M}}
\newcommand{\C}{\mathbb{C}}
\renewcommand{\H}{\mathbb{H}}
\newcommand{\F}{\mathbb{F}}
\newcommand{\Oo}{\mathcal{O}}
\newcommand{\cZ}{\mathcal{Z}}
\newcommand{\cH}{\mathcal{H}}
\newcommand{\cY}{\mathcal{Y}}
\newcommand{\cG}{\mathcal{G}}
\newcommand{\cN}{\mathcal{N}}
\newcommand{\cW}{\mathcal{W}}
\newcommand{\cM}{\mathcal{M}}
\newcommand{\cF}{\mathcal{F}}
\newcommand{\Exc}{\mathrm{Exc}}
\newcommand{\Kra}{\mathrm{Kra}}
\newcommand{\Int}{\mathrm{Int}}
\newcommand{\Pap}{\mathrm{Pap}}
\newcommand{\GL}{\mathrm{GL}}
\newcommand{\ord}{\mathrm{ord}}
\newcommand{\loc}{\mathrm{loc}}
\newcommand{\Blow}{\mathrm{Blow}}
\newcommand{\Lie}{\mathrm{Lie}\, }
\newcommand{\Nilp}{\mathrm{Nilp}\, }
\newcommand{\Proj}{\mathrm{Proj}\, }
\newcommand{\Spec}{\mathrm{Spec}\, }
\newcommand{\Spf}{\mathrm{Spf}\, }
\newcommand{\SpfOF}{{\mathrm{Spf}\,\mathcal{O}_{\breve{F}} }}
\newcommand{\Herm}{\mathrm{Herm}}
\newcommand{\Hom}{\mathrm{Hom}}
\newcommand{\End}{\mathrm{End}}
\newcommand{\rU}{\mathrm{U}}
\newcommand{\barchi}{\bar{\chi}}
\newcommand{\barpsi}{\bar{\psi}}
\newcommand{\q}{q}
\begin{document}

\title{Special cycles on unitary Shimura curves at ramified primes}

\author{Yousheng Shi}
\address{Department of Mathematics, University of Wisconsin Madison, Van Vleck Hall, Madison, WI 53706, USA}
\email{shi58@wisc.edu, shiyousheng0318@gmail.com}



\date{\today}


\begin{abstract}
In this paper, we study special cycles on the Kr\"amer model of $\mathrm{U}(1,1)(F/F_0)$-Rapoport-Zink spaces where $F/F_0$ is a ramified quadratic extension of $p$-adic number fields with the assumption that the $2$-dimensional hermitian space of special quasi-homomorphisms is
anisotropic. We write down the decomposition of these special cycles and prove a version of Kudla-Rapoport conjecture in this case. We then apply the local results to compute the intersection numbers of special cycles on unitary Shimura curves and relate these intersection numbers to Fourier coefficients of central derivatives of certain Eisenstein series.
\end{abstract}

\maketitle

\maketitle

\setcounter{tocdepth}{1}
\tableofcontents

\section{Introduction}
The classical Siegel-Weil formula expresses certain Siegel Eisenstein series as generating series of representation numbers of quadratic forms. Starting from  \cite{Kudla97}, Kudla initiated an influential program that seeks to establish the arithmetic Siegel-Weil formula, which expresses central derivatives of these series as arithmetic intersection numbers of $n$ special divisors in the Shimura varieties associated to $\mathrm{GSpin}(n-1,2)$ or $\mathrm{GU}(n-1,1)$. 

In \cite{KR1} and \cite{KR2}, Kudla and Rapoport defined integral models of unitary Shimura varieties associated to $\mathrm{GU}(n-1,1)$ over an imaginary quadratic field $\bk$ and special cycles associated to a certain class of special endomorphisms on them. They showed that the intersection of $n$ independent special divisors is supported on the supersingular locus of the Shimura variety over a finite set of primes of $\Q$ inert or ramified in $\bk$. Over an inert prime $p$, they formulated a local version of the arithmetic Siegel-Weil formula (the local Kudla-Rapoport conjecture), which expresses intersection number of $n$ special divisors on the corresponding Rapoport-Zink spaces as derivative of local densities of hermitian forms. They then explained how the nonarchimedean component of the arithmetic Siegel-Weil formula (also known as the global Kudla-Rapoport conjecture) follows from the local Kudla-Rapoport conjecture.  

In \cite{KR1} and \cite{KR2}, Kudla and Rapoport proved their conjectures when the support of the intersection has zero dimension, which essentially reduces to the case when $n=1,2$. Recently, Li and Zhang \cite{LZ} proved the conjecture for general $n$.

Over a prime $p$ ramified in $\bk$, it has been speculated that there should be analogues of Kudla-Rapoport conjectures. In this case, one can consider the integral model of Shimura variety and its corresponding Rapoport-Zink spaces proposed by Kr\"amer \cite{Kr}. The Kr\"amer model is regular and has semi-stable reduction over $p$. 

In this paper and \cite{HSY}, we prove the analogue of
Kudla–Rapoport conjecture for the Kr\"amer models when $n = 2$ at odd ramified primes.
These results shed light on how  Kudla-Rapoport type of conjectures should be formulated for the Kr\"amer model at ramified primes for general $n$. 

\subsection{Special cycles on Rapoport-Zink spaces}
Let $p$ be an odd prime integer. Let $F_0$ be a finite extension of $\Q_p$ and $F$ be a ramified quadratic extension of $F_0$. Denote by $\Oo_F$ and $\Oo_{F_0}$ their rings of integers respectively. Assume the residue field of $\Oo_{F_0}$ is $\F_\q$ and let $k$ be its algebraic closure. We denote by $\bar{a}$ the Galois conjugate of $a\in F$ over $F_0$. We fix a uniformizer $\pi\in F$ such that $\pi_0=\pi^2\in \Oo_{F_0}$. Let $\breve{F}_0$  be the completion of a maximal unramified extension of $F_0$ and $\breve{F}:=F\otimes_{F_0} \breve{F}_0$. Let $\Oo_{\breve F}$ and $\Oo_{\breve{F}_0}$ be the rings of integers of $\breve{F}$ and $\breve{F}_0$ respectively. Let $\Herm_n$ be the group scheme over $\Oo_{F_0}$ defined by 
\begin{equation}\label{eq:Herm_n local}
    \Herm_n(R)=\{X\in M_{n\times n} (R\otimes_{\Oo_{F_0}} \Oo_F)\mid X=X^*\}\
\end{equation}
for an $\Oo_{F_0}$-algebra $R$ where $X^*=\bar{X}^t$.

We consider the formal moduli space $\cN^\Pap$  which is the  functor over $\Spf \Oo_{\breve{F}}$ whose $S$-point is the groupoid of isomorphism classes of quadruples $ (X,\iota,\lambda,\rho)$ over $S$ for an $\Oo_{\breve{F}_0}$-scheme $S$. Here $X$ is a strict formal $\Oo_{F_0}$-module (which is a particular kind of formal $p$-divisible group, see Section \ref{sec:modulispaces}) over $S$ of dimension $n$ and $F_0$-height $2n$. The action $\iota:\Oo_{F}\rightarrow \End(X)$ is a ring homomorphism such that the induced action on $\Lie X$ satisfies certain signature conditions. 
$\lambda:X\rightarrow X^\vee$ is a principal polarization compatible with $\iota$. Finally, $\rho$ is an $\Oo_F$-linear quasi-isogeny of height $0$ from $X\times_S \bar{S} $ to $\bX\times_{\Spec k} \bar{S}$ where $\bar{S}=S\times \Spec k$ and $(\bX,\iota_\bX,\lambda_\bX)$ is a fixed framing object. $\cN^\Pap$ is representable by a formal scheme over $\Spf \Oo_{\breve{F}}$ which is normal, flat of relative dimension $n-1$ over $\Spf \Oo_{\breve{F}}$ and has isolated singularities (see \cite[Theorem 4.5]{P}). We refer to Section \ref{sec:modulispaces} for a detailed definition of $\cN^\Pap$ and $\cN^\Kra$ below.

In order to resolve the singularities, \cite{Kr} proposes a new moduli problem $\cN^\Kra$ which is flat over $\Spf \Oo_{\breve F}$, has semi-stable reduction and has a natural morphism $\Phi:\cN^\Kra \rightarrow \cN^\Pap$. It is known to the experts that $\cN^\Kra$ is in fact the blow-up of $\cN^\Pap$ along its singular locus. But due to the lack of a precise reference, we provide a proof of this fact in Appendix \ref{sec:Kramer model is blow up}.
The exceptional divisor of this blow-up is denoted by $\Exc$ and is isomorphic to $\bP^{n-1}/k$. 

In the rest of this subsection we specialize to the case $n=2$.
Let $\bY$ be the unique strict formal $\Oo_{F_0}$-module of dimension $1$ and $F_0$-height $2$. It admits an $\Oo_F$-action $\iota_\bY:\Oo_F\rightarrow \End(\bY)$ and a compatible polarization $\lambda_\bY:\bY\rightarrow \bY^\vee$. The space of special quasi-homomorphisms $\bV=\Hom_{\Oo_F}(\bY,\bX)\otimes_{\Z} \Q $ is a $2$ dimensional $F$-vector space with the natural hermitian form
\[h(x,y)=\iota_\bY^{-1} (\lambda_\bY^{-1}\circ y^\vee \circ \lambda_\bX \circ x)\]
where $y^\vee$ is the dual homomorphism of $y$. Let us briefly recall that $n$ dimensional hermitian spaces associated to the quadratic extension $F/F_0$ are classified by the Hasse invariant which is the class 
\begin{equation}\label{eq:Hasse invariant}
    (-1)^{\frac{n(n-1)}{2}} d \in F_0^\times/\mathrm{Nm}(F^\times)\cong \{\pm 1\}
\end{equation}
where $d$ is the determinant of a given hermitian form.
When $n=2$, we say a hermitian space is anisotropic if it does not contain a line of length zero elements, or equivalently its Hasse invariant is $-1$. Otherwise we say it is isotropic.
When $(\bV,h(,))$ is anisotropic, by \cite[Section 8]{RSZ} there is an isomorphism
\[\cN^\Pap\cong \cM_{\Gamma_0(\pi)}=\Spf \Oo_{\breve{F}}[[X,Y]]/(XY-\pi_0),\]
where $\cM_{\Gamma_0(\pi)} $ is the deformation space of degree $\q$ isogenies between Lubin-Tate formal groups.
When $(\bV,h(,))$ is isotropic, then by \cite{KR3} (see also \cite{RSZ}), $\cN^\Pap$ is isomorphic to the formal scheme whose generic fiber is the Drinfeld $p$-adic half plane. Our key assumption in this paper is 
\begin{equation}\label{eq:anisotropicassumption}
    (\bV,h(,))\text{ is anisotropic},
\end{equation}
which we make in the rest of the introduction.
The isotropic case is treated in \cite{HSY}.

For any $\bx\in\bV^m$ ($1\leq m \leq 2$), we define $\cZ^\Kra(\bx)$  (resp. $\cZ(\bx)$) to be the closed subscheme of $\cN^\Kra$ (resp. $\cN^\Pap$) where the quasi-homomorphism $\rho^{-1}\circ \bx:\bY^m \rightarrow \bX$ deforms to a homomorphism (see Definition \ref{def:localspecialcycle}).
By \cite[Proposition 4.3]{Ho2}, $\cZ^\Kra(x)$ for a single $x\in \bV$ is a Cartier divisor. 
In order to decompose $\cZ^\Kra(x)$, we define Weil divisors $\cY^\kappa_{s,\pm}$ (see Section \ref{subsec:quasi canonical lifting on NPap}) on $\cN^\Pap$ and Cartier divisors $\cZ^\kappa_{s,\pm}$ (see Definition \ref{def:quasicanonicaldivisoronKramer}) on $\cN^\Kra$ for an integer $s\geq 0$. Here $\kappa$ is an embedding of $\Oo_F$ into the quaternion algebra with center $F_0$ determined by $\bx$, see \eqref{eq:kappa x}.
The pullback of the universal formal $\Oo_{F_0}$-module over $\cN^\Pap$ (resp. $\cN^\Kra$) to these divisors are quasi-canonical lifts of level $s$ in the sense of \cite{G}. Moreover $\cZ^\kappa_{s,\pm}$ is the strict transform of $\cY^\kappa_{s,\pm}$ under the blow-up $\Phi:\cN^{\Kra}\rightarrow \cN^\Pap$.
When $s=0$, $\cZ^\kappa_{0,+}$ and $\cZ^\kappa_{0,-}$ are the same and is denoted by $\cZ^\kappa_0$.
Our first major result is then the following decomposition theorem (Theorem \ref{decompositionofspecialcycles}).
\begin{theorem}
For any $\bx \in \bV\setminus\{0\}$,
we have the following equality of Cartier divisors on $\cN^\Kra$.
\[\cZ^\Kra(\bx)=\cZ^\kappa_0+\sum_{s=1}^a \cZ^\kappa_{s,-}+\sum_{s=1}^{a} \cZ_{s,+}^\kappa+(a+1) \Exc,\]
where $a$ is the $\pi_0$-adic valuation of $h(\bx,\bx)$.
\end{theorem}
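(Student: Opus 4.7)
My plan is to reduce the identity on $\cN^{\Kra}$ to an analogous Weil divisor decomposition on $\cN^{\Pap}$, then transfer via the blow-up morphism $\Phi$. The first step is to prove on $\cN^{\Pap} \cong \Spf \Oo_{\breve F}[[X,Y]]/(XY-\pi_0)$ the identity
\[
\cZ(\bx) = \cY^\kappa_0 + \sum_{s=1}^a \cY^\kappa_{s,-} + \sum_{s=1}^a \cY^\kappa_{s,+}.
\]
Using Gross's classification of quasi-canonical lifts \cite{G}, any one-dimensional irreducible closed formal subscheme of $\cN^{\Pap}$ over which a nonzero $\bx\in\bV$ deforms to a homomorphism must be one of the quasi-canonical divisors $\cY^\kappa_{s,\pm}$. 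The assumption $v_{\pi_0}(h(\bx,\bx)) = a$ controls which levels occur: for $1 \leq s \leq a$ both signs contribute with multiplicity one (by a direct valuation check using the formula for $h(\cdot,\cdot)$ evaluated on a quasi-canonical lift of level $s$), the level $s=0$ contributes once, and for $s > a$ the quasi-homomorphism $\rho^{-1}\circ\bx$ fails to extend.

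Next I would transfer this identity to $\cN^{\Kra}$. By the appendix, $\cN^{\Kra}$ is the blow-up of $\cN^{\Pap}$ at its unique reduced singular point with exceptional divisor $\Exc \cong \bP^1_k$, and $\cZ^\kappa_{s,\pm}$ (resp.\ $\cZ^\kappa_0$) is by construction the strict transform of $\cY^\kappa_{s,\pm}$ (resp.\ $\cY^\kappa_0$). Away from $\Exc$ the morphism $\Phi$ is an isomorphism and the moduli-theoretic conditions defining $\cZ^{\Kra}(\bx)$ and $\cZ(\bx)$ agree, so
\[
\cZ^{\Kra}(\bx) = \cZ^\kappa_0 + \sum_{s=1}^a \cZ^\kappa_{s,-} + \sum_{s=1}^a \cZ^\kappa_{s,+} + N \cdot \Exc
\]
as Cartier divisors on $\cN^{\Kra}$, for some nonnegative integer $N$. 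I would then pin down $N$ by localizing at a generic point $\eta$ of $\Exc$: there $\Oo_{\cN^{\Kra},\eta}$ is a discrete valuation ring, and in a local chart of the blow-up the universal formal $\Oo_{F_0}$-module admits an explicit description coming from the Kr\"amer signature condition, allowing a direct computation of the length of $\Oo_{\cZ^{\Kra}(\bx),\eta}$ as a function of $a$, which should yield $N = a+1$.

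The main obstacle is the computation of $N$. This requires a careful deformation-theoretic analysis at the generic point of the exceptional fiber, keeping track of how the extra line datum of the Kr\"amer moduli problem interacts with the $\Oo_F$-action and the principal polarization when $\rho^{-1}\circ\bx$ is extended to the universal object. A useful cross-check is to intersect both sides with a test divisor such as $\Exc$ itself, since the self-intersection $\Exc\cdot\Exc$ on the regular semistable surface $\cN^{\Kra}$ and the intersections of each $\cZ^\kappa_{s,\pm}$ with $\Exc$ can be read off from the explicit Kr\"amer model, giving an independent determination of $N$ and thereby confirming the coefficient $a+1$.
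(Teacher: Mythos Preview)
Your overall strategy---horizontal part plus a multiple of $\Exc$, then pin down the multiplicity---matches the paper, but two of the key technical steps are not the ones you propose, and the ones you propose have gaps.

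For the horizontal part, you appeal vaguely to ``Gross's classification'' and a ``direct valuation check'' for multiplicities. The paper does something sharper: via the exceptional isomorphism $\cN^\Pap \cong \cM_{\Gamma_0(\pi)}$ and the two projections $p_i:\cM_{\Gamma_0(\pi)}\to\cM$, it observes that $\cZ^\Kra(\bx)$ is contained in both pullbacks $(\Phi\circ p_i)^*(\cZ(\bx;\mathrm{pr}_i))$, where $\cZ(\bx;\mathrm{pr}_i)\subset\cM$ is the locus where $\mathrm{pr}_i\circ\bx$ lifts. By \cite[Proposition~7.1]{RSZexotic} this locus equals $\sum_{j=0}^a\cW_j^{\kappa_\bx}$, and pulling back via Lemma~\ref{lem:piW and cY} and Corollary~\ref{cor:strict transform respect summation} gives explicit Cartier divisors whose \emph{common} horizontal part is exactly $\cZ_0^{\kappa_\bx}+\sum_{s=1}^a(\cZ_{s,+}^{\kappa_\bx}+\cZ_{s,-}^{\kappa_\bx})$ (the extra pieces $\cZ_{a+1,+}^{\kappa_\bx}$ and $\cZ_{a+1,-}^{\kappa_\bx}$ appear in only one of the two pullbacks). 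This sandwich argument simultaneously bounds the components and their multiplicities, which your sketch does not.

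For the coefficient $N$ of $\Exc$, your primary suggestion (localize at the generic point of $\Exc$ and compute a length) is in principle feasible but you acknowledge it as the main obstacle. Your cross-check---intersect both sides with $\Exc$---is circular: it requires knowing $\cZ^\Kra(\bx)\cdot\Exc$ independently, which you do not have. The paper instead intersects with the \emph{conjugate} canonical divisor $\cZ_0^{\bar\kappa_\bx}$. By Propositions~\ref{prop:cZintersectwithspecialfiber} and~\ref{prop:cZ0intersectspecialfiber}, this test curve meets $\Exc$ transversally in a single point and misses every $\cZ_{s,\pm}^{\kappa_\bx}$, so $N=\cZ_0^{\bar\kappa_\bx}\cdot\cZ^\Kra(\bx)$ directly. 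That intersection is then a pure lifting problem on $\cZ_0^{\bar\kappa_\bx}\cong\Spf\Oo_{\breve F}$: one computes how far the map $((\rho_0\circ\iota_\bY(\delta))\times(\rho_0'\circ\iota_\bY(-\delta)))^{-1}\circ\bx$ deforms, and Wewers' theorem \cite[Theorem~1.4]{W} gives the answer $a+1$. This choice of test curve is the idea your proposal is missing.
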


The reduced locus  of $\cN^\Kra$ is its exceptional divisor. In particular it is proper over $\Spec k$.
For a full rank lattice $\bL\subset \bV$ and a basis $\{x_1,x_2\}$ of $\bL$, we define the intersection number 
\begin{equation}\label{IntL}
    \Int(\bL)=\chi(\cN^\Kra, \Oo_{\cZ^\Kra(x_1)}\otimes^{\mathbb{L}} \Oo_{\cZ^\Kra(x_2)}),
\end{equation}
where $\Oo_{\cZ^\Kra(x)}$ is the structure sheaf of $\cZ^\Kra(x)$, $\otimes^{\mathbb{L}}$ is the derived tensor product of coherent sheaves on $\cN^\Kra$ and $\chi$ denotes the Euler-Poincar\'e characteristic. By \cite{Ho2}, this quantity is independent of the choice of $\{x_1, x_2\}$, hence is an invariant of $\bL$. We say two hermitian matrices $T_1$ and $T_2$ in $\Herm_n(F_0)$ are equivalent if there is a $g\in \GL_n(\Oo_F)$ such that $g^* T_1 g=T_2$.
We then prove the following theorem (Theorem \ref{thm:maintheorem1}).
\begin{theorem}\label{thm:IntrothmA}
Suppose $(\bV,h(,))$ is anisotropic. 
Let $\bL$ be a hermitian $\Oo_F$-lattice in $\bV$ whose hermitian form is represented by a Gram matrix equivalent to $T=\left(\begin{array}{cc}
    u_1 (-\pi_0)^a & 0 \\
    0 & u_2 (-\pi_0)^b
\end{array}\right)$,where $u_1,u_2\in \Oo_{F_0}^\times$ and $a,b\geq 0$. Then 
\[\mathrm{Int}(\bL)=2 \sum_{s=0}^{min\{a,b\}}\q^s(a+b+1-2s)-a-b-2.\]
\end{theorem}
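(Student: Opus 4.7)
The plan is to expand $\mathrm{Int}(\bL)$ bilinearly via the decomposition theorem and evaluate each resulting piece. Since the Gram matrix of $\bL$ is equivalent to $T$, choose an orthogonal $\Oo_F$-basis $\{x_1,x_2\}$ of $\bL$ with $h(x_1,x_1)=u_1(-\pi_0)^a$ and $h(x_2,x_2)=u_2(-\pi_0)^b$; by \cite{Ho2} the intersection number is independent of the basis. Applying the decomposition theorem to each $\cZ^\Kra(x_i)$ gives
\begin{align*}
\cZ^\Kra(x_1)&=\cZ^{\kappa_1}_0+\sum_{s=1}^{a}\bigl(\cZ^{\kappa_1}_{s,+}+\cZ^{\kappa_1}_{s,-}\bigr)+(a+1)\Exc,\\
\cZ^\Kra(x_2)&=\cZ^{\kappa_2}_0+\sum_{t=1}^{b}\bigl(\cZ^{\kappa_2}_{t,+}+\cZ^{\kappa_2}_{t,-}\bigr)+(b+1)\Exc,
\end{align*}
with $\kappa_1,\kappa_2$ the embeddings determined by $x_1,x_2$ respectively. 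Expanding the derived tensor product bilinearly reduces the problem to three classes of intersection numbers on $\cN^\Kra$: (I) the quasi-canonical pairings $\cZ^{\kappa_1}_{s,\epsilon}\cdot\cZ^{\kappa_2}_{t,\eta}$; (II) the cross terms $\cZ^{\kappa_i}_{s,\epsilon}\cdot\Exc$; and (III) the self-intersection $\Exc\cdot\Exc$.

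Step (III) is immediate from the local model $\Spf\Oo_{\breve F}[[X,Y]]/(XY-\pi_0)$ of $\cN^\Pap$ in the anisotropic case: the blow-up of this ordinary double point produces an exceptional $\Exc\cong\bP^1_k$ with $\Exc\cdot\Exc=-2$. For step (II), one inspects the explicit local equations for the strict transforms $\cZ^\kappa_{s,\pm}$ of the Weil divisors $\cY^\kappa_{s,\pm}$ constructed earlier in the paper, identifying the tangent direction of each quasi-canonical divisor at the singular point of $\cN^\Pap$ and hence the point of $\Exc$ where it attaches; this determines each $\cZ^{\kappa_i}_{s,\epsilon}\cdot\Exc$ combinatorially.

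The heart of the proof is step (I). Because $h(x_1,x_2)=0$, the embeddings $\kappa_1,\kappa_2$ generate independent $\Oo_F$-subalgebras of the quaternion algebra $\mathrm{End}^0_{\Oo_F}(\bY)$, and on the Pappas side their quasi-canonical liftings interact according to Gross's theory \cite{G}: the pairings $\cY^{\kappa_1}_{s,\epsilon}\cdot\cY^{\kappa_2}_{t,\eta}$ on $\cN^\Pap$ are multiples of $\q^{\min(s,t)}$, the exponent encoding the distance between the two optimal embeddings in the Bruhat--Tits tree of $\mathrm{PGL}_2(F_0)$. Transferring these pairings to $\cN^\Kra$ through the strict-transform identity $\cZ^\kappa_{s,\epsilon}=\widetilde{\cY^\kappa_{s,\epsilon}}$, correcting by the self-intersection of $\Exc$, and summing over the four sign choices gives a double sum that, upon re-indexing by $s=\min(s_1,s_2)$ (there are $a+b+1-2s$ pairs $(s_1,s_2)\in[0,a]\times[0,b]$ with this minimum), produces the main term $2\sum_{s=0}^{\min(a,b)}\q^s(a+b+1-2s)$. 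Combining with the contributions from (II) and (III) yields the correction $-a-b-2$ and hence the advertised formula.

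The main obstacle is the careful bookkeeping in step (I): tracking the four $(\epsilon,\eta)\in\{\pm\}^2$ combinations through the blow-up $\Phi:\cN^\Kra\to\cN^\Pap$, and verifying that the classical Gross-type intersection formulas for quasi-canonical lifts transfer faithfully to $\cN^\Kra$ with the correct dependence on $\min(s,t)$ and on the relative positions of $\kappa_1(\pi)$ and $\kappa_2(\pi)$ in $\mathrm{End}^0_{\Oo_F}(\bY)$. Once these pairwise Krämer-side intersection numbers are in hand, the combinatorial identity reducing the double sum to the stated closed form is elementary.
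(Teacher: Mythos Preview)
Your overall strategy---decompose via the decomposition theorem and expand bilinearly into pieces of type (I), (II), (III)---is exactly what the paper does, and your handling of (III) ($\Exc\cdot\Exc=-2$) and the combinatorial re-indexing by $\min(s,t)$ are both fine.

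The gap is in step (I). You propose to compute the pairings $\cY^{\kappa_1}_{s,\epsilon}\cdot\cY^{\kappa_2}_{t,\eta}$ on $\cN^\Pap$ via ``Gross's theory'' and the Bruhat--Tits tree, and then transfer to $\cN^\Kra$ by strict transform. This does not work as stated: $\cN^\Pap$ is \emph{not regular} at the unique point where all these divisors meet, so the $\cY^\kappa_{s,\pm}$ are only Weil divisors and their pairwise intersection numbers are not defined in the usual sense. There is no clean ``transfer formula'' from Pappas to Kr\"amer without already knowing the multiplicities of $\Exc$ in the various total transforms---which is precisely the information you are trying to compute. The paper avoids this entirely by working directly on the regular scheme $\cN^\Kra$: each $\cZ^\kappa_{s,\pm}$ is isomorphic to $\Spf W_s$, and the intersection $\cZ^\kappa_{s,\pm}\cdot\cZ^\Kra(\by)$ is computed as the length of the locus in $\Spf W_s$ where a specific quasi-homomorphism deforms, using the Vollaard--Wewers formula for $n_{0,s}(\psi)$ (Proposition~\ref{lengthofliftingsinquasicanonicallifts}). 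The individual pairings $\cZ^\kappa_{s,\pm}\cdot\cZ^{\bar\kappa}_{t,\pm}$ are then extracted by an inductive subtraction (Corollary~\ref{intersectionnumberofquasicanonicalliftingdivisors}), giving $\q^{\min(s,t)}-1$, not $\q^{\min(s,t)}$; the $-1$ is essential for the final count.

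You are also missing two vanishing results that make the bookkeeping tractable: the orthogonality $h(x_1,x_2)=0$ forces $\kappa_2=\bar\kappa_1$, and then (a) $\cZ^\kappa_{s,+}\cdot\cZ^{\bar\kappa}_{t,-}=0$ because the $+$ and $-$ divisors attach to the two \emph{distinct} singular points $P_1,P_2$ of the special fiber (Proposition~\ref{prop:cZintersectwithspecialfiber}), and (b) $\cZ^\kappa_0\cdot\cZ^{\bar\kappa}_0=0$ because these meet $\Exc$ at different smooth points (Proposition~\ref{prop:cZ0intersectspecialfiber}). Without these, the ``four sign choices'' you mention do not collapse correctly. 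For step~(II), the paper's argument that $\cZ^\kappa_{s,\pm}\cdot\Exc=1$ is not via local equations but via the observation that the supersingular locus of $\cZ^\kappa_{s,\pm}\cong\Spf W_s$ is its reduced point.
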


\subsection{Local density}
Fix an additive character $\psi$ of $F_0$ with conductor $\Oo_{F_0}$.
For $S\in\Herm_m(F_0)$ and $T\in\Herm_n(F_0)$, define
\begin{equation}
    \alpha(S, T) = \int_{\Herm_n(F_0)} \int_{M_{m,n}(\Oo_F)} \psi(\mathrm{Tr}( Y(S[X]-T))) dX dY
\end{equation}
where $dX$ and $dY$ are Haar measures on $M_{m,n}(\Oo_F)$ and $\Herm_n(\Oo_{F_0})$ respectively such that $\mathrm{vol}(M_{m,n}(\Oo_F), dX) =\mathrm{vol}(\Herm_n(\Oo_{F_0}), dY)=1$ and $\mathrm{Tr}$ stands for the matrix trace. The quantity $\alpha(S,T)$ is independent of the choice of $\psi$ and is called the representation density of $T$ by $S$, or simply (hermitian) local density. We refer to Lemma \ref{lem:definitionoflocaldensity} to justify the name. Define 
\[S_r=S\oplus \frac{1}{\pi}\left(\begin{array}{cc}
    0 & I_r \\
    -I_r & 0
\end{array}\right).\]
It can be shown that there is a polynomial $\alpha(S,T,X)\in \Q[X]$ such that $\alpha(S_r,T)=\alpha(S,T,\q^{-2r})$, see \cite[Corollary 5.4]{Hi}. We define the derivative of the local density $\alpha(S,T)$ to be 
\begin{equation}\label{eq:alphaprime}
  \alpha'(S,T)=-\frac{\partial}{\partial X} \alpha(S,T,X)|_{X=1}.  
\end{equation}

Now go back to the situation of Theorem \ref{thm:IntrothmA}.
We take $S$ to be a Gram matrix of the unique self-dual hermitian lattice (up to hermitian isometries) of rank $2$ which has Hasse invariant $1$.  We can compute
$\alpha(S,T,X)$ and $\alpha(S,S)$ following the idea of \cite{Hi}. Then we obtain the following theorem (see Theorem \ref{thm:maintheorem2}).
\begin{theorem}\label{thm:IntrothmB}
Let $\bL,T$ be as in Theorem \ref{thm:IntrothmA} and 
$S$ be as above. Then 
\[   \mathrm{Int}(\bL)=2\cdot \frac{\alpha'(S,T)}{\alpha(S,S)}.\]
\end{theorem}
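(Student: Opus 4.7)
The plan is to reduce the theorem to two explicit local density computations and then compare with the closed form already established in Theorem \ref{thm:IntrothmA}. That theorem gives
\[\mathrm{Int}(\bL)=2 \sum_{s=0}^{\min\{a,b\}}\q^s(a+b+1-2s)-a-b-2,\]
so it suffices to show that $2\alpha'(S,T)/\alpha(S,S)$ equals the same expression. The whole argument is thus a purely analytic computation on the local density side.

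The first step is to compute the constant $\alpha(S,S)$. Since $S$ is the Gram matrix of a self-dual rank-two hermitian lattice of Hasse invariant $+1$, Hironaka's explicit evaluation for self-dual hermitian forms in \cite{Hi} applies and delivers $\alpha(S,S)$ as a simple rational function in $\q^{-1}$.

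The second step is to determine the polynomial $\alpha(S,T,X)\in \Q[X]$, which is controlled by the values $\alpha(S_r,T)=\alpha(S,T,\q^{-2r})$ for $r\geq 0$. Following Hironaka's framework, the plan is to stratify the integration domain $M_{2+2r,2}(\Oo_F)$ according to the $\Oo_F$-elementary divisors of $X$, and to perform the inner $Y$-integral over $\Herm_2(\Oo_{F_0})$, which collapses to a characteristic function of the congruence $S_r[X]\equiv T$ modulo appropriate powers of $\pi$. Because $T=\mathrm{diag}(u_1(-\pi_0)^a,u_2(-\pi_0)^b)$ is diagonal, the resulting finite sum is organized by the pair $(a,b)$ together with the Smith invariants of $X$, and reading off the dependence on $\q^{-2r}$ then exhibits $\alpha(S,T,X)$ as an explicit polynomial.

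The final step is to compute $\alpha'(S,T)=-\partial_X \alpha(S,T,X)|_{X=1}$, divide by $\alpha(S,S)$, multiply by $2$, and match the outcome with the expression of Theorem \ref{thm:IntrothmA} — likely by a direct term-by-term comparison, or by an induction on $\min\{a,b\}$ if the closed form is cleaner to express recursively. The main obstacle I expect is step two: the stratification and the case analysis on the Smith invariants of $X$ versus $(a,b)$ are intricate, and packaging all the contributions into a clean polynomial $\alpha(S,T,X)$ is the technical heart of the argument. Once that polynomial is in hand, verifying the derivative identity should reduce to an elementary algebraic manipulation.
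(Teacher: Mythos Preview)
Your overall strategy is exactly the one the paper follows: compute $\alpha(S,S)$ and the polynomial $\alpha(S,T,X)$ explicitly, take the derivative at $X=1$, and compare with the closed form $\mu_\q(T)$ from Theorem~\ref{thm:IntrothmA}. The paper carries this out in Theorem~\ref{thm:maintheorem2}, obtaining $\alpha(S,S)=2(\q-1)$ and an explicit expression for $\alpha(S,T,X)$, after which the verification $\alpha'(S,T)=(\q-1)\mu_\q(T)$ is a short algebraic manipulation.

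Where your outline diverges is in the computation of $\alpha(S,T,X)$ itself. You propose stratifying $M_{2+2r,2}(\Oo_F)$ by the elementary divisors of $X$; the paper instead uses Hironaka's formula (Theorem~\ref{thm:Hironakaformula}), which writes $\alpha(S,T)$ as a sum over Iwahori double cosets $\Gamma\backslash X_2$ of products of Gauss-type integrals $\cG_\Gamma(Y,T)\cG(Y,S)/\alpha(Y;\Gamma)$. This is not a stratification by Smith invariants of the representing matrix, and the Gauss-sum machinery (Lemmas~\ref{lem:Gaussintegral}--\ref{lem:G(Y,S)}) is what actually produces the closed form \eqref{eq:local density first formula}. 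Your Smith-invariant approach is in principle viable---the paper uses a related primitive-decomposition argument (Proposition~\ref{prop:induction on fundamental invariants}) for the formulas with $S=\cH$---but calling it ``Hironaka's framework'' is a misattribution, and the case analysis you anticipate would be organized quite differently from what appears in Section~\ref{sec:local density}. Either route delivers the polynomial; the Iwahori/Gauss-sum route is what the paper implements.
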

\begin{remark}
The factor $2$ in the theorem is essentially the ramification index of $F/F_0$ which comes naturally from global considerations, see \eqref{eq:degcZ} below. One can compare with the inert case, see \cite[Theorem 3.4.1]{LZ}, where this number is $1$. 
\end{remark}
\begin{remark}
In general when the Rapoport-Zink space has bad reduction, one needs to modify the analytic (right hand) side of Theorem \ref{thm:IntrothmB}. This is first discovered in \cite{KRshimuracurve}. See also \cite{San3}, \cite[Section 10]{LZ} and \cite{HSY}.
\end{remark}
We also obtain formulas of $\alpha(S,T,X)$ for some other choices of $S$ and $T$ which might be of independent interest, see Theorem \ref{thm:localdensity calculation}.

\subsection{Global moduli problem and Eisenstein series}
In the second part of the paper, we apply the local results above to the global intersection problem proposed by \cite{KR2}. 
Let $\bk$ be an imaginary quadratic field.
As in the local case, let $\Herm_n$ be the group scheme over $\Z$ defined by
\begin{equation}\label{eq:Herm_n global}
   \Herm_n(R)=\{X\in M_{n\times n} (R\otimes_{\Z} \Oo_\bk)\mid X=X^*\} 
\end{equation}
for any algebra $R$.
Let $\cM^\Kra_{(n-1,1)}$ be the moduli functor over $\Spec \Oo_\bk$ which parametrizes principally polarized abelian varieties $A$ with an action $\iota:\Oo_\bk \rightarrow \End(A)$, a polarization $\lambda: A\rightarrow A^\vee$ compatible with $\iota$ and a filtration $\cF_A\subset \Lie A$ which satisfies the signature $(n-1,1)$ condition (see Section \ref{sec:globalintersecionnumber}).
In particular $\cM_{(1,0)}:=\cM^\Kra_{(1,0)}$ is the moduli space of CM elliptic curves with $\Oo_{k}$-action.
Let $V$ be a  hermitian vector space over $\bk$ of signature $(1,1)$ containing a self-dual lattice $L$. The lattice $L$ determines an open and closed substack
\[\cM\subset\cM_{(1,0)}\times_{\Spec \Oo_\bk} \cM^\Kra_{(1,1)}\]
which is an integral model of a unitary Shimura curve.
For a point in $\cM(S)$ ($S$ an $\Oo_\bk$-scheme), i.e., 
a pair of data $(E,\iota_0,\lambda_0)\in \cM_{(1,0)}(S)$ and $(A,\iota,\lambda,\cF_A)\in \cM^\Kra_{(1,1)}(S)$, define the projective $\Oo_\bk$-module of finite rank
\[V'(E,A)=\Hom_{\Oo_\bk}(E,A).\]
It is equipped with the hermitian form $h'(x,y)=\iota_0^{-1}(\lambda_0^{-1}\circ y^\vee \circ \lambda \circ x)$. For a $m \times m$ positive-definite hermitian matrix $T$ with values in $\Oo_\bk$, let $\cZ^\Kra(T)$ be the stack of collections $(E,\iota_0,\lambda_0, A,\iota, \lambda,\cF_A,\bx)$ where $ (E,\iota_0,\lambda_0, A,\iota, \lambda,\cF_A)\in \cM(S)$ and $\bx \in V'(E,A)^m$ with $h'(\bx,\bx)=T$. The moduli space $\cZ^\Kra(T)$ is a representable by a Deligne-Mumford stack which is finite and unramfied over $\cM$. When $t\in \Z_{>0}$, $\cZ^\Kra(t)$ is a divisor by \cite[Proposition 3.2.3]{Ho1}. 

Let $T\in \Herm_2(\Z)$ and assume it is positive definite. Let $V_T$ be the hermitian space with Gram matrix $T$. Let $V_p=V\otimes_\Q \Q_p$. Define
\begin{equation}\label{eq:Diff}
    \mathrm{Diff}(V_T,V):=\{\ell \text{ a finite prime of } \Q \mid V_\ell \text{ is not isomorphic to } (V_T)_\ell\}
\end{equation}
where isomorphic means isomorphic as hermitian spaces. 
Then $\cZ^\Kra(T)$ is empty if $|\mathrm{Diff}(V_T,V)|>1$. If $\mathrm{Diff}(V_T,V)=\{p\}$ for a prime $p$ inert or ramified in $\bk$,
it is proved in \cite[Proposition 2.22]{KR2} that 
the support of $\cZ^\Kra(T)$ is on the supersingular locus of $\cM$ over the residue field of $\bk_p$. Let $e$ be the ramification index of $\bk_p/\Q_p$.
Define
\begin{equation}\label{eq:degcZ}
    \widehat{\mathrm{deg}}(\cZ^\Kra(T))=\chi(\cZ^\Kra(T),\Oo_{\cZ^\Kra(x_1)}\otimes^{\mathbb{L}} \Oo_{\cZ^\Kra(x_2)})\cdot \log p^{2/e},
\end{equation}
where $t_1, t_2$ are the diagonal entries of $T$.

Let $E(z,s,L)$ (see section \ref{sec:Eisensteinseries}) be the incoherent Eisenstein series considered by \cite{KR2}. It can be analytically continued to $s\in \C$ and satisfies a functional equation centered at $s=0$. Let $E'_T(z,0,L)$ be the $T$-th Fourier coefficient of its central derivative. Then we have the following theorem (Theorem \ref{globalmainthm}).

\begin{theorem}
Let $T\in \Herm_2(\Z)$ be positive definite and assume $\mathrm{Diff}(V_T,V)=\{p\}$ where $p$ is an odd prime ramified in $\bk$ such that $V_{p}$ is isotropic. Then
\[E'_T(z,0,L)=C_1 \cdot \widehat{\mathrm{deg}}(\cZ^\Kra(T)) \cdot q^T, \]
where $C_1$ is a constant that only depends on $\bk$ and $L$ and $q=\exp(2\pi i \mathrm{Tr}(Tz))$.
\end{theorem}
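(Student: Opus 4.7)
The plan is to prove the identity by matching the two sides prime-by-prime and reducing the global equality to the local Kudla-Rapoport formula of Theorem \ref{thm:IntrothmB}, combined with standard Whittaker-density computations at the remaining places. On the geometric side, since $\mathrm{Diff}(V_T,V)=\{p\}$, the cycle $\cZ^\Kra(T)$ is supported on the supersingular locus of $\cM$ at the place of $\bk$ above $p$. I would invoke the Rapoport-Zink $p$-adic uniformization to identify a formal neighborhood of this locus with a double coset quotient involving $\cN^\Kra$, and pull back $\cZ^\Kra(T)$. After passing to orbits of a global $p$-arithmetic group acting on pairs $\bx\in\bV^2$ with $h(\bx,\bx)=T$, the pullback decomposes as a sum of the local cycles $\cZ^\Kra(\bx)$ from the first part of the paper. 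Here $\bV$ is the local hermitian space of special quasi-homomorphisms at $p$; by the dichotomy at ramified primes it is anisotropic precisely because $V_p$ is isotropic, so the hypothesis \eqref{eq:anisotropicassumption} holds and Theorems \ref{thm:IntrothmA} and \ref{thm:IntrothmB} apply. Grouping the sum by isomorphism classes of $\Oo_F$-lattices $\bL\subset\bV$ with Gram matrix equivalent to $T$ yields
\[\widehat{\mathrm{deg}}(\cZ^\Kra(T))=\log p\cdot\sum_\bL \mathrm{Int}(\bL)\cdot m(\bL),\]
where $m(\bL)$ is a product of local representation multiplicities at finite primes $\ell\neq p$ together with a global volume factor, and $\log p=\log p^{2/e}$ since $e=2$.

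On the analytic side, the $T$-th Fourier coefficient $E_T(z,s,L)$ factors as a product of local Whittaker integrals over all places of $\Q$. Incoherence at $\infty$ together with $\mathrm{Diff}(V_T,V)=\{p\}$ forces the Whittaker integral at $p$ to vanish at $s=0$; its $s$-derivative is, up to an explicit constant determined by the normalization of $L_p$, a nonzero multiple of the derived local density $\alpha'(S,T)$, where $S$ is the Gram matrix of the unique self-dual rank $2$ hermitian $\Oo_F$-lattice of Hasse invariant $+1$ (which is $L_p$, since $V_p$ is isotropic). At each other finite prime the Whittaker integral at $s=0$ is a hermitian representation density that assembles into the corresponding factor of $m(\bL)$, and at $\infty$ one recovers the Gaussian $q^T$. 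Combining these factors gives an expression of the form
\[E'_T(z,0,L)=C_1\cdot\sum_\bL\frac{\alpha'(S,T)}{\alpha(S,S)}\cdot m(\bL)\cdot q^T,\]
for a constant $C_1$ depending only on $\bk$ and $L$.

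To conclude, I would apply Theorem \ref{thm:IntrothmB} term by term to substitute $\mathrm{Int}(\bL)=2\,\alpha'(S,T)/\alpha(S,S)$ and match the two expressions; the factor $2$ from the local formula combines with the normalization $\log p^{2/e}=\log p$ on the geometric side to give a net factor $2\log p$, consistent with the inert-case analogue mentioned in the remark after Theorem \ref{thm:IntrothmB}. The principal technical obstacle is the $p$-adic uniformization in the Kr\"amer setting: one must translate the classical uniformization for $\cN^\Pap$ to $\cN^\Kra$ by using the blow-up identification proved in Appendix \ref{sec:Kramer model is blow up}, and verify that the decomposition of $\cZ^\Kra(x)$ into quasi-canonical components and the exceptional divisor given by Theorem \ref{thm:IntrothmA} is compatible with this uniformization. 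A secondary but delicate point is fixing Haar measure normalizations globally so that the orbital multiplicities $m(\bL)$ on the geometric side and the non-$p$ Whittaker integrals on the analytic side match term-by-term, so that the constant $C_1$ is genuinely independent of $T$.
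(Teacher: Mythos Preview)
Your proposal is correct and follows essentially the same approach as the paper: the paper packages the argument into two intermediate results---Theorem \ref{globaintersectionnumber} for the geometric side via $p$-adic uniformization and Proposition \ref{prop:E'_T} for the analytic side via Whittaker factorization---both expressed through the explicit quantity $\mu_p(T)$ rather than the density ratio $\alpha'(S,T)/\alpha(S,S)$, and then simply equates them. One small clarification worth making explicit in your write-up: since $\mathrm{Int}(\bL)$ depends only on the $\GL_2(\Oo_F)$-equivalence class of $T$, it is constant across your sum over $\bL$ and factors out, which is why the paper can write the geometric side as $\mu_p(T)$ times the single representation number $r_{\mathrm{gen}}(T,[[L']])$.
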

\begin{remark}
The hermitian space $V_p$ in the theorem has the opposite Hasse invariant as the hermitian space $\bV$ with hermitian form $h(,)$.
\end{remark}

\noindent
{\bf Acknowledgements.}
First we would like to thank Michael Rapoport for suggesting studying special cycles at ramified primes. We would also like to thank Qiao He, Brian Smithling, Tonghai Yang and Wei Zhang for helpful discussions. Finally, we would like thank the referee for valuable suggestions.

\noindent
{\bf Data availability statement:}
The paper has no associated data.

\part{Local theory}
Denote by $\sigma$ the Frobenius of $k/\F_\q$ and $\breve{F}_0/F_0$. Let $\mathbb{H}$ be the unique quaternion algebra with center $F_0$ and $\Oo_{\mathbb{H}}$ be its ring of integers. Let $\mathrm{val}_{\pi_0}$ ($\mathrm{val}_{\pi}$ resp.) be the valuation on $\breve F$ such that $\mathrm{val}_{\pi_0}(\pi_0)=1$ ($\mathrm{val}_{\pi}(\pi)=1$ resp.).
There exists a $\delta\in \Oo_{\mathbb{H}}^\times$ such that $\delta^2\in F_0$ and $\delta \pi=-\pi \delta$.
For a local $p$-adic ring $\Oo$, let $\Nilp \Oo$ be the category of $\Oo$-schemes $S$ such that $p\cdot \Oo_S$ is a locally nilpotent ideal sheaf. For $S\in \Nilp \Oo$, let $\bar{S}$ be its special fiber. Unless otherwise specified, all tensor products are defined over $\SpfOF$.

\section{Moduli spaces}\label{sec:modulispaces}
\subsection{Rapoport-Zink spaces}\label{subsec:RZ spaces}
For a scheme $S$ over $\Spec \Oo_{F_0}$, a formal $\Oo_{F_0}$-module over $S$ is a formal $p$-divisible group $X$ over $S$ with an action $i:\Oo_{F_0}\rightarrow \End(X)$. We say $X$ is a strict formal $\Oo_{F_0}$-module if $\Oo_{F_0}$ acts on $\Lie X$ via the structure morphism $\Oo_{F_0}\rightarrow \Oo_S$.
Let $(\bX,\iota_\bX)$ be a strict formal $\Oo_{F_0}$-module of dimension $n$ and $F_0$-height $2n$ over $k$ with an action $\iota_\bX:\Oo_{F}\rightarrow \End(X)$ that extends the action of $\Oo_{F_0}$. Let $\lambda_{\bX}$ be a principal polarization whose associated Rosati involution induces on $\Oo_F$ the nontrivial Galois automorphism over $F_0$. 

We consider the Rapoport-Zink space $\cN^\Pap$, a functor over $\Spf \Oo_{\breve{F}}$ which represents the moduli problem such that $\cN^\Pap(S)$ is the groupoid of isomorphism classes of quadruples $ (X,\iota,\lambda,\rho)$ over $S$ for $S\in \Nilp \Oo_{\breve F}$. Here $X$ is a strict formal $\Oo_{F_0}$-module over $S$ of dimension $n$ and $F_0$-height $2n$ with an $\Oo_F$-action $\iota:\Oo_{F}\rightarrow \End(X)$ that extends the action of $\Oo_{F_0}$ satisfying the Kottwitz signature condition
\begin{equation}\label{eq:Kottwitzcondition}
    \mathrm{char}(\iota(\pi)|\Lie X)=(T-\pi)^{n-1}(T+\pi),
\end{equation}
and the Pappas wedge condition proposed in \cite{P}
\begin{equation}\label{eq:wedgecondition}
    \wedge^n(\iota(\pi)+\pi|\Lie X)=0, \ \wedge^2(\iota(\pi)-\pi|\Lie X)=0.
\end{equation}
$\lambda:X\rightarrow X^\vee$ is a principal polarization whose associated Rosati involution induces on $\Oo_F$ the nontrivial Galois automorphism over $\Oo_{F_0}$ where $X^\vee$ is the dual of $X$ in the category of strict formal $\Oo_{F_0}$-modules.
Finally $\rho:X\times_S \bar{S} \rightarrow \bX \times_{\Spec k} \bar{S}$ is an $\Oo_F$-linear quasi-isogeny of height $0$ such that $\lambda$ and $\rho^*(\lambda_\bX)$ differ locally on $\bar{S}$ by a factor in $\Oo_{F_0}^\times$. An isomorphism between two quadruples $(X,\iota,\lambda,\rho)$ and $(X',\iota',\lambda',\rho')$ is given by an $\Oo_F$-linear isomorphism $\alpha:X\rightarrow X'$ such that $\rho'\circ (\alpha \times_S \bar{S})=\rho$ and $\alpha^*(\lambda')$ is a $\Oo_{F_0}^\times$-multiple of $\lambda$.

By \cite[Proposition 2.1]{RTW}, $\cN^\Pap$ is representable by a separated formal scheme, locally formally of finite type over $\SpfOF$ of relative dimension $n-1$. Moreover it is normal and has isolated singularities. 
Let $(\bY,\iota_\bY,\lambda_\bY)$ be the unique strict formal $\Oo_{F_0}$-module of dimension $1$ and $F_0$-height $2$ over $\Spec k$, together with an action $\iota_\bY:\Oo_F\rightarrow \End(\bY)$ that lifts the $\Oo_{F_0}$-action and a principal polarization $\lambda_\bY: \bY\rightarrow \bY^\vee$. Let $\cN_{(1,0)}$ be the Rapoport-Zink space defined similarly as above using the framing object $(\bY,\iota_\bY,\lambda_\bY)$.
Then $\cN_{(1,0)}\cong \SpfOF$ and the universal strict $\Oo_{F_0}$-module over $\cN_{(1,0)}$ is the canonical lift $\cG$ of $\bY$ with respect to the $\Oo_F$ action (see \cite{G} or \cite[Section 8]{ARGOS}).

Let $\cN^\Kra$ be the functor over $\Spf \Oo_{\breve{F}}$ whose $S$-point is the groupoid of isomorphism classes of quintuples $ (X,\iota,\lambda,\rho,\mathcal{F}_X)$ over $S\in \Nilp \Oo_{\breve F}$. Here the quadruple $(X,\iota,\lambda,\rho)$ has exactly the same meaning as in the definition of $\cN^\Pap$ and the additional data $\cF_X$ is a $\Oo_F\otimes_{\Oo_{F_0}}\Oo_S$-stable local direct summand of rank $n-1$ of $\Lie X$. We require that $\Oo_F$ acts on $\cF_X$ via the structure map $\Oo_F\rightarrow\Oo_S$ and acts on $\Lie X/\cF_X$ via the Galois conjugate of the structure map. An isomorphism between two quintuples $(X,\iota,\lambda,\rho,\cF_X)$ and $(X',\iota',\lambda',\rho',\cF_{X'})$ is given by an $\Oo_F$-linear isomorphism $\alpha:X\rightarrow X'$ such that $\rho'\circ (\alpha \times_S \bar{S})=\rho$, $\alpha^*(\lambda')$ is a $\Oo_{F_0}^\times$-multiple of $\lambda$ and $\alpha^*(\cF_{X'})=\cF_X$.
The following is essentially due to \cite{Kr}, see Theorem 2.3.2 and 2.3.3 of \cite{BHKRY1}.
\begin{proposition}\label{prop:Kramermodel}
$\cN^\Kra$ is representable by a formal scheme flat over $\SpfOF$ which is regular and has semi-stable reduction. The natural forgetful map
\[\Phi:\cN^\Kra\rightarrow \cN^\Pap,(X,\iota,\lambda,\rho,\cF_X)\mapsto (X,\iota,\lambda,\rho) \]
is an isomorphism outside the singular locus $\mathrm{Sing}$ of $\cN^\Pap$ where $\iota(\pi)$ acts on $\Lie X$ by $0$. Moreover $\Phi^{-1}(\mathrm{Sing})$ is a union of exceptional divisors each of which is isomorphic to the projective space $\bP^{n-1}/k$.
\end{proposition}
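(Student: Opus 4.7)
The plan is to handle representability and the fiberwise structure of $\Phi$ by direct moduli-theoretic arguments, and to deduce flatness, regularity, and semi-stable reduction from an \'etale-local blow-up computation, following Kr\"amer \cite{Kr} and \cite[Theorems 2.3.2, 2.3.3]{BHKRY1}.

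For representability, I would first invoke the known fact that $\cN^\Pap$ is representable by a separated formal scheme locally formally of finite type over $\SpfOF$ (\cite[Proposition 2.1]{RTW}), and that $\Lie X$ is a locally free sheaf of rank $n$ on the universal object. Since the extra datum $\cF_X$ is a rank $n-1$ local direct summand of $\Lie X$ subject to closed $\Oo_F \otimes_{\Oo_{F_0}}\Oo_S$-equivariance conditions, $\cN^\Kra$ sits inside the relative Grassmannian $\mathrm{Gr}(n-1,\Lie X)$ over $\cN^\Pap$ as a closed formal subscheme. In particular $\Phi$ is projective.

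For the fibers of $\Phi$: on the complement of $\mathrm{Sing}$, $\iota(\pi)$ acts nontrivially on $\Lie X$, and the Kottwitz signature condition \eqref{eq:Kottwitzcondition} combined with the wedge condition $\wedge^2(\iota(\pi)-\pi|\Lie X)=0$ should force $\ker(\iota(\pi)-\pi|\Lie X)$ to be a local direct summand of rank exactly $n-1$ on which $\iota(\pi)$ acts via $+\pi$. The $\Oo_F$-equivariance requirements on $\cF_X$ and $\Lie X/\cF_X$ then pin down $\cF_X$ as this kernel, so $\cF_X$ is functorially determined from $(X,\iota,\lambda,\rho)$ and $\Phi$ is an isomorphism over the complement of $\mathrm{Sing}$. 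At a singular point, by definition $\iota(\pi)$ acts as $0$ on $\Lie X$, which is then a free $\Oo_S$-module of rank $n$ with $\Oo_F$-action factoring through $\Oo_F/\pi\Oo_F\cong k$; the $\Oo_F$-equivariance conditions on $\cF_X$ become automatic, so the fiber of $\Phi$ over such a point is the Grassmannian $\mathrm{Gr}(n-1,n)\cong \bP^{n-1}/k$, giving the claimed exceptional divisor.

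The main obstacle is establishing flatness, regularity, and semi-stable reduction. Here I would invoke Pappas's \'etale-local model \cite[Theorem 4.5]{P}: near a singular point, $\cN^\Pap$ is \'etale-locally isomorphic to the spectrum of an explicit, non-regular $\Oo_{\breve F}$-algebra of the expected form. Combined with the identification of $\cN^\Kra$ as the blow-up of $\cN^\Pap$ along $\mathrm{Sing}$ (which is proved in Appendix \ref{sec:Kramer model is blow up}), the verification reduces to the concrete check that this explicit blow-up is regular and has special fiber a union of smooth divisors, the strict transform and the exceptional $\bP^{n-1}$, meeting transversally. This is precisely the computation originally carried out by Kr\"amer \cite{Kr} and recorded in \cite[Section 2.3]{BHKRY1}.
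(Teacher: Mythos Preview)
The paper does not supply its own proof of this proposition: it is stated as a known result, attributed to Kr\"amer \cite{Kr} with the formulation taken from \cite[Theorems 2.3.2, 2.3.3]{BHKRY1}. Your outline is correct and matches the structure of those references: representability via the relative Grassmannian over $\cN^\Pap$, the fiberwise dichotomy for $\cF_X$ according to whether $\iota(\pi)$ vanishes on $\Lie X$, and the reduction of flatness, regularity, and semi-stable reduction to Kr\"amer's explicit local-model computation.

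One organizational remark: you invoke the blow-up identification from Appendix~\ref{sec:Kramer model is blow up} (Proposition~\ref{prop: Kra is blow up of Pappas}) to deduce regularity, but in the paper's logical order that result comes after and is independent of Proposition~\ref{prop:Kramermodel}. Kr\"amer's original argument bypasses the blow-up entirely: it establishes regularity and semi-stable reduction directly on the local model $N^\Kra$ via explicit affine charts (recorded in this paper as \eqref{eq:app Kramer model equation}), and then transports these properties to $\cN^\Kra$ through the local model diagram of \cite{RZ}. Your route is not circular, since the appendix does not rely on the regularity of $\cN^\Kra$; it is just slightly less direct than the original.
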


The following fact should be well-known to experts. However due to the lack of a precise reference, we prove it in Appendix \ref{sec:Kramer model is blow up}.
\begin{proposition}\label{prop: Kra is blow up of Pappas}
$\cN^\Kra$ is the blow-up of $\cN^\Pap$ along its singular locus Sing.
\end{proposition}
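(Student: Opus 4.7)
My plan is to prove the proposition using the universal property of the formal blow-up. Write $\mathcal{I}\subset\Oo_{\cN^\Pap}$ for the ideal sheaf of $\mathrm{Sing}$. The goal is to produce a canonical $\cN^\Pap$-morphism $f\colon\cN^\Kra\to\mathrm{Bl}_{\mathrm{Sing}}\cN^\Pap$ and then show that $f$ is an isomorphism.

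The first step is to construct $f$. By Proposition \ref{prop:Kramermodel}, $\cN^\Kra$ is regular and $\Phi^{-1}(\mathrm{Sing})$ is a disjoint union of copies of $\bP^{n-1}/k$, hence an effective Cartier divisor on $\cN^\Kra$. I would verify directly that the pullback ideal $\Phi^{-1}(\mathcal{I})\cdot\Oo_{\cN^\Kra}$ is locally principal---and thus invertible---by expressing $\mathcal{I}$ in terms of the matrix entries of $\iota(\pi)$ acting on $\Lie X$ in a formal neighborhood of a singular point. The universal property of the formal blow-up then provides the morphism $f$ uniquely.

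The second step is to show $f$ is an isomorphism. I would reduce this to the analogous local-model statement: both $\cN^\Pap$ and $\cN^\Kra$ sit in local model diagrams with formally smooth roofs compatible with $\Phi$, and formal blow-up commutes with formally smooth base change. Since $\mathrm{Sing}$ is a finite set of isolated points, the claim reduces to a formal-local assertion at a worst point of the Pappas local model $M^\Pap$, cut out of an appropriate Grassmannian by the Kottwitz and Pappas wedge conditions \eqref{eq:Kottwitzcondition}--\eqref{eq:wedgecondition}. There, the Kr\"amer datum $\cF_X\subset\Lie X$ must satisfy $\mathrm{image}(\iota(\pi)-\pi)\subset\cF_X\subset\ker(\iota(\pi)+\pi)$: away from the worst point this forces $\cF_X$ to be the $+\pi$-eigenspace of $\iota(\pi)$ uniquely, while at the worst point (where $\iota(\pi)=0$ on $\Lie X$) any $(n-1)$-dimensional subspace is admissible, producing exactly a $\bP^{n-1}$-family---matching the exceptional divisor of the blow-up. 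On each standard affine chart, trivializing $\cF_X$ should recover the defining equations of $M^\Kra$, which by inspection ought to coincide with the Proj charts of $\mathrm{Bl}_{\mathrm{Sing}}M^\Pap$.

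The main obstacle is the scheme-theoretic identification in the last step: one must show that the charts of the Proj construction and of the Kr\"amer model carry the same ring structure, not merely have homeomorphic underlying spaces or matching exceptional fibers. Concretely, this requires identifying the projective coordinates on the exceptional $\bP^{n-1}$ with the flag coordinates parametrizing $\cF_X$, and then carefully tracking the relations imposed by the polarization and the wedge conditions, to conclude that $f$ induces isomorphisms on all completed local rings and hence is an isomorphism of formal schemes.
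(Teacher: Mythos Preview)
Your proposal follows essentially the same route as the paper's proof in Appendix~\ref{sec:Kramer model is blow up}: construct the map to the blow-up via the universal property, reduce to the local models through the local model diagram (using that blow-up commutes with flat/formally smooth base change), and then verify the scheme-theoretic isomorphism $N^\Kra\cong N^\Blow$ on charts. One small slip: with the paper's conventions $\Oo_F$ acts on $\cF_X$ via the structure map, so the correct inclusions are $\mathrm{image}(\iota(\pi)+\pi)\subset\cF_X\subset\ker(\iota(\pi)-\pi)$, the opposite of what you wrote (and on the local model side the roles dualize again, cf.\ conditions (5)--(6) in Section~\ref{subsec:local models}).

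Where you leave the ``main obstacle'' as something to be checked, the paper simply computes: it writes $N^\Pap=\Spec R^\Pap$ with $R^\Pap$ cut out by the relations \eqref{eq:I Pap}, describes $N^\Blow$ explicitly as a closed subscheme of a Proj over $R^\Pap$ (equations \eqref{eq: I blow'} after a convenient change of homogeneous variables $s_{ij}=\delta_{ij}t+t_{ij}$), and then exhibits explicit ring homomorphisms $\Psi_i^\sharp$ and $\Omega_i^\sharp$ between the Kr\"amer charts \eqref{eq:app Kramer model equation} and the blow-up charts, checking by hand that they are mutually inverse (Lemma~\ref{lem:local model isomorphism}). So your outline is correct, but the actual content of the argument is this direct coordinate verification rather than an abstract matching of projective and flag coordinates.
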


\subsection{An exceptional isomorphism of moduli spaces}\label{subsec:exceptional iso}
From now on we assume $n=2$. In this case the right hand side of \eqref{eq:Kottwitzcondition} is $T^2-\pi_0$, hence is defined over $\Oo_{F_0}$. Furthermore the wedge condition \ref{eq:wedgecondition} is implied by \eqref{eq:Kottwitzcondition} so can be omitted. 

Let $\mathbb{M}:=M(\bX)$ be the relative covariant Dieudonn\'e module of $\bX$ over $\Oo_{F_0}$ and $N:=\bM \otimes_{\Z} \Q$. Recall that $N$ is a $4$-dimensional $\breve{F}_0$-vector space with a $\sigma$-linear operator $F_\bX$ and a $\sigma^{-1}$-linear operator $V_\bX$ (see \cite[Appendix B.8]{fargues2006isomorphisme}), both commuting with the $F$-action which we still denote as $\iota_\bX$. The polarization $\lambda_\bX$ induces a skew symmetric $\breve{F}_0$ valued form $\langle,\rangle_\bX$ on $N$ satisfying 
\begin{align*}
    \langle F_\bX (x),y\rangle_\bX=&\langle x,V_\bX (y)\rangle_\bX^\sigma, \\
    \langle\iota_{\bX}(a)x,y\rangle_\bX=&\langle x,\iota_{\bX}(\bar{a}y)\rangle_\bX, a\in F.
\end{align*}
We denote by $\Pi_\bX$ the endomorphism $\iota_{\bX}(\pi)$ of $N$ and we often write $\Pi$ if there is no ambiguity. Following \cite{KR3}, we can define a  form $(,)_\bX$ on $N$ by
\begin{equation}\label{eq:definitionof(,)}
(x,y)_\bX=\delta[\langle \Pi x, y\rangle_\bX+\pi \langle x, y\rangle_\bX].   
\end{equation}
Let $\tau=\Pi V_\bX^{-1}$. Then $C:=N^\tau$ is a hermitian $F$-vector space such that $C \otimes_F \breve F= N$. The key assumption of this paper is that 
\begin{equation}\label{eq:keyassumption}
    \text{the hermitian form } (,)_\bX \text{ is anisotropic.}
\end{equation}
Equivalently, $-d \notin \mathrm{Nm}_{F/F_0}(F^\times)$ where $d$ is the determinant of $(,)_\bX$. This is equivalent to the condition \eqref{eq:anisotropicassumption} in the introduction, see \eqref{eq:relationsonhermitianforms} below. Such a framing object $(\bX,\iota_\bX,\lambda_\bX)$ is unique up to isogenies which preserve the polarization up to a scalar in $\Oo_{F_0}^\times$.

In fact, $\cN^\Pap$ can be descended to $\Spf \Oo_{\breve{F}_0}$ (see \cite[Section 8]{RSZ}). We denote by $\cN$ the corresponding moduli functor defined over $\Spf \Oo_{\breve{F}_0}$. In other words we have
\begin{equation}
   \cN^\Pap= \cN \times_{\Spf \Oo_{\breve{F}_0}} \SpfOF. 
\end{equation}
where $\times$ is the complete tensor product for formal schemes.
By assumption \eqref{eq:keyassumption} we have the following.
\begin{proposition}(\cite[Section 8]{RSZ})
$\cN$ has a unique reduced point on its special fiber.
As formal schemes,
\[\cN \cong \Spf \Oo_{\breve{F}_0} [[x,y]]/(x y-\pi_0).\]
In particular $\cN$ has semi-stable reduction and is regular.
\end{proposition}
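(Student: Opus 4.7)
The plan is to identify the unique reduced $k$-point of $\cN$ and then compute the complete local ring at that point via deformation theory.

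\emph{Reduced points.} By Dieudonn\'e theory applied to the moduli problem, reduced $k$-points of $\cN$ correspond to $\Oo_F$-stable, self-dual Dieudonn\'e lattices $M \subset N$ satisfying the Kottwitz signature condition \eqref{eq:Kottwitzcondition}. Passing to $\tau$-invariants, these lattices are in bijection with vertex lattices in the hermitian space $(C,(,)_\bX)$. Under the anisotropic assumption \eqref{eq:keyassumption}, the unitary group $\mathrm{U}(C)$ is compact of rank $2$ and its Bruhat--Tits building is a single point; therefore there is a unique vertex lattice up to the natural equivalence, yielding a unique reduced point $P_0 \in \cN(k)$, which is necessarily the singular point of $\cN^\Pap$.

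\emph{Local ring.} Let $M_0$ denote the Dieudonn\'e module of $\bX$ at $P_0$; since $P_0$ is singular, $\iota_\bX(\pi)$ acts by $0$ on $\Lie \bX = M_0/V_\bX M_0$. By Grothendieck--Messing theory for strict formal $\Oo_{F_0}$-modules, deformations of $P_0$ over an Artinian local $\Oo_{\breve F_0}$-algebra $R$ are classified by $\Oo_F \otimes R$-stable, rank-$2$, isotropic direct summands $\mathcal{F}_R \subset M_0 \otimes_{\Oo_{\breve F_0}} R$ lifting the Hodge filtration, such that $\iota(\pi)$ on $\Lie X_R := (M_0 \otimes R)/\mathcal{F}_R$ satisfies the Kottwitz characteristic polynomial $T^2 - \pi_0$. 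Choosing a basis of $\Lie X_R$ in which the polarization and the $\Oo_F \otimes R$-structure take standard forms, the action of $\iota(\pi)$ becomes a $2 \times 2$ matrix over $\mathfrak{m}_R$ which is forced to have trace zero (by polarization compatibility and the Rosati involution acting as $\pi \mapsto -\pi$) and determinant $-\pi_0$. A further normalization from the $\Oo_F \otimes R$-module structure on $\mathcal{F}_R$ reduces this matrix to an off-diagonal form with entries $x$ and $y$ satisfying $xy = \pi_0$. This produces a natural morphism $\Spf \Oo_{\breve F_0}[[x,y]]/(xy - \pi_0) \to \cN$.

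\emph{Main obstacle.} The main difficulty is to verify rigorously that this morphism is an isomorphism, i.e., that no further relations arise and every deformation is captured in this way. It suffices to check that the tangent space to $\cN$ at $P_0$ is $2$-dimensional over $k$ and that $\cN$ is flat over $\Spf \Oo_{\breve F_0}$; both follow from the representability and relative dimension statements of \cite[Proposition 2.1]{RTW}, combined with the identification $\cN^\Pap = \cN \times_{\Spf \Oo_{\breve F_0}} \Spf \Oo_{\breve F}$. Since $\Oo_{\breve F_0}[[x,y]]/(xy - \pi_0)$ already has these properties, any surjection from it onto another flat $\Oo_{\breve F_0}$-algebra of the same tangent space and relative dimension must be an isomorphism. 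Once the identification is established, semi-stable reduction and regularity of $\cN$ are immediate because $xy - \pi_0$ is a regular element cutting out a strict normal crossings divisor on the special fiber.
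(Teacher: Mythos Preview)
The paper does not give its own proof of this proposition; it is cited from \cite[Section 8]{RSZ}. The argument there (and sketched in the paper's Subsection~\ref{subsec:exceptional iso} and Proposition~\ref{prop:exceptionaliso}) proceeds via an \emph{exceptional isomorphism}: one constructs a map $\cM_{\Gamma_0(\pi)} \to \cN^\Pap$ sending a degree-$\q$ isogeny $\phi: Y \to Y'$ of one-dimensional formal $\Oo_{F_0}$-modules to $X = Y \times Y'$ with $\iota_X(\pi) = \begin{pmatrix} & \phi' \\ \phi & \end{pmatrix}$, and shows this is an isomorphism. The structure $\cM_{\Gamma_0(\pi)} \cong \Spf \Oo_{\breve F_0}[[x,y]]/(xy - \pi_0)$ is then classical Lubin--Tate theory (cf.\ Remark~\ref{rmk:modularequation}). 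The uniqueness of the reduced point is immediate: there is a unique supersingular isogeny up to isomorphism.

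Your approach is genuinely different: you attempt a direct local-model / Grothendieck--Messing computation. This can be made to work, but your sketch has a gap. The deformation-theoretic description you give parametrizes lifts of the Hodge filtration inside the crystal $M_0 \otimes R$, and the constraint is the Kottwitz condition on the quotient. Carrying this out (as in \cite[Section 4]{P}, used in the paper's Lemma~\ref{lem:cNKra is completion of local model}) yields the equation $x_1^2 + x_2^2 = \pi_0$, not $xy = \pi_0$ directly; your assertion that ``a further normalization from the $\Oo_F \otimes R$-module structure reduces this matrix to an off-diagonal form'' is precisely the step that requires justification. The two equations are indeed isomorphic over $\Oo_{\breve F_0}$ since $\sqrt{-1}$ lies there (residue field algebraically closed, $p$ odd), but this change of variables should be made explicit rather than absorbed into an unexplained ``normalization.'' Also, Grothendieck--Messing classifies deformations by Hodge filtrations, not directly by the matrix of $\iota(\pi)$ on $\Lie X_R$; your phrasing conflates these.

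The RSZ route has the practical advantage that the identification $\cN^\Pap \cong \cM_{\Gamma_0(\pi)}$ is used essentially throughout the rest of the paper (quasi-canonical lifting divisors, the maps $p_1, p_2$, etc.), so it is not merely a device for this proposition. Your direct approach would be more self-contained for this single statement but would leave those later constructions without a foundation.
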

\begin{remark}
In contrast, $\cN^\Pap$ is not regular. But it is normal and Cohen-Macaulay, see \cite[Theorem 4.5]{P}.
\end{remark}

Moreover by \cite[Section 8]{RSZ}, $\cN$ is isomorphic to a Lubin-Tate deformation space with Iwahori level structure. We recall the construction now.
In the following, we define every moduli functor over $\SpfOF$ as this is all we need.  
Let $\cM_{\Gamma_0(\pi)}$ represent the functor over $\SpfOF$ that associate each scheme $S\in \Nilp \Oo_{\breve{F}}$ the set of isomorphism classes of quadruples 
\[(Y,Y',\phi:Y \rightarrow Y',\rho_Y),\]
where $Y$ and $Y'$ are strict formal $\Oo_{F_0}$-modules over $S$ of $F_0$-height $2$ and dimension one, $\phi$ is an isogeny of degree $\q$ and $\rho_Y: Y\times_{S} \bar{S}\rightarrow \bY \times_{\Spec k} \bar{S}$ is a quasi-isogeny of formal $\Oo_{F_0}$-modules of height $0$. We can define a map $ \cM_{\Gamma_0(\pi)}\rightarrow \cN^\Pap$ as follows. Let $(Y,Y',\phi, \rho_Y)$ be in $ \cM_{\Gamma_0(\pi)}(S)$, set
\[X:=Y\times Y'.\]
We can define an $\Oo_F$-action on $X$ by 
\begin{equation}\label{eq:iota X in cN}
   \iota_X(\pi)=\left(\begin{array}{cc}
     & \phi' \\
    \phi & 
\end{array} \right), 
\end{equation}
where $\phi'$ is the dual isogeny of $\phi$ ($ \phi'\circ \phi=\pi_0$). Define the framing map
\begin{equation}\label{eq:rhoY'}
  \rho_{Y'}: Y'\times_{S} \bar{S}\overset{\phi^{-1}}{\longrightarrow} Y\times_{S} \bar{S} \overset{\rho_Y}{\longrightarrow} \bY \times_{\Spec k} \bar{S} \overset{\iota_{\bY}(\pi)}{\longrightarrow} \bY \times_{\Spec k} \bar{S}.  
\end{equation}
Then $\rho_Y^*(\lambda_{\bY}) $ and $\rho_{Y'}^*(\lambda_{\bY}) $ lift to principal polarization $\lambda_Y$ of $Y$ and $\lambda_{Y'}$ of $Y'$. Let 
\[\lambda_X=\left(\begin{array}{cc}
    \lambda_Y &  \\
     & \lambda_{Y'}
\end{array} \right).\]
Then we get the object $(X,\iota_X, \lambda_X,\rho_X)\in \cN^\Pap(S)$. This defines a map $ \cM_{\Gamma_0(\pi)}\rightarrow \cN^\Pap$ which is obviously functorial in $S$.
\begin{proposition}(\cite[Proposition 8.2]{RSZ})\label{prop:exceptionaliso}
 the map $ \cM_{\Gamma_0(\pi)}\rightarrow \cN^\Pap$ is an isomorphism.
\end{proposition}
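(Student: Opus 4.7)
The plan is to construct an inverse functor to the map $\Psi:\cM_{\Gamma_0(\pi)}\to\cN^\Pap$ displayed in the statement. Concretely, given a quadruple $(X,\iota,\lambda,\rho)\in\cN^\Pap(S)$, I want to canonically recover a pair of strict formal $\Oo_{F_0}$-modules $Y,Y'$ of dimension $1$ and $F_0$-height $2$, a degree-$\q$ isogeny $\phi:Y\to Y'$, and a framing $\rho_Y$, in such a way that $X\cong Y\times Y'$ with $\iota(\pi)$ corresponding to the off-diagonal matrix $\bigl(\begin{smallmatrix} 0 & \phi' \\ \phi & 0 \end{smallmatrix}\bigr)$ and $\lambda$ to the product of principal polarizations.

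First I would analyze the framing object $(\bX,\iota_\bX,\lambda_\bX)$ itself. The anisotropic hypothesis \eqref{eq:keyassumption} pins down the isocrystal $(N,F_\bX,V_\bX,\iota_\bX,(,)_\bX)$ up to isomorphism, and a direct inspection of the structure of $C=N^\tau$ from Section \ref{subsec:exceptional iso} shows that $\bX$ splits as $\bY\times\bY$, with $\iota_\bX(\pi)$ equal to the off-diagonal matrix built from $\iota_\bY(\pi)$ and with $\lambda_\bX=\lambda_\bY\oplus\lambda_\bY$. This already exhibits the desired preimage on the closed point of $\cN^\Pap$.

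To extend the decomposition to an arbitrary $S\in\Nilp\,\Oo_{\breve F}$, I would transport the two projections $\bX\to\bY$ along the quasi-isogeny $\rho:X\times_S\bar S\to\bX\times_k\bar S$ to get two quasi-isogenies from $X\times_S\bar S$ to $\bY\times_k\bar S$ of formal $\Oo_{F_0}$-modules. By rigidity of quasi-isogenies for $p$-divisible groups, these lift uniquely to quasi-isogenies over $S$ and assemble into a product decomposition $X\cong Y\times Y'$ as strict formal $\Oo_{F_0}$-modules. The isogeny $\phi:Y\to Y'$ is extracted from the restriction of $\iota(\pi)$, and its degree is forced to be $\q$ by the identity $\iota(\pi)^2=\pi_0$ combined with the Kottwitz condition \eqref{eq:Kottwitzcondition}; the framing $\rho_Y$ is the $\bY$-component of $\rho$ on the first factor. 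Functoriality in $S$ is built into the construction, and one verifies directly that $\Psi$ and this inverse construction compose to the identity in both directions.

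The main obstacle will be the polarization bookkeeping: the $\Oo_{F_0}^\times$-ambiguity in $\lambda$ allowed by the definition of $\cN^\Pap$ has to be tracked through the splitting so that both factor polarizations on $Y$ and $Y'$ come out principal and compatible with $\iota$, and in the opposite direction the product polarization $\lambda_Y\oplus\lambda_{Y'}$ must be shown to define an element of $\cN^\Pap(S)$ with the prescribed $\Oo_{F_0}^\times$-class. Once these compatibilities and the uniqueness of the lift are in hand, the inverse is well-defined and the proposition follows.
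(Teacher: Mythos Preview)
The paper does not give its own proof of this proposition; it simply cites [RSZ, Proposition 8.2]. So I evaluate your proposal on its merits.

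There is a genuine gap at the key step. You write that the projections $\mathrm{pr}_i\circ\rho:X_{\bar S}\to\bY_{\bar S}$ ``lift uniquely to quasi-isogenies over $S$'' by rigidity, and that this yields $X\cong Y\times Y'$. But rigidity of quasi-isogenies requires both source and target to already be $p$-divisible groups over $S$: it says that $\Hom^0(G,H)\to\Hom^0(G_{\bar S},H_{\bar S})$ is a bijection once $G,H$ are given over $S$. Here $\bY$ lives only over $k$, and the whole point of the inverse functor is to \emph{construct} $Y,Y'$ over $S$. You cannot invoke rigidity to produce them. Put differently, the splitting idempotents $e_1,e_2\in\End(\bX)$ transport via $\rho$ only to quasi-endomorphisms of $X_{\bar S}$; a height-$0$ quasi-isogeny need not be an isomorphism, so $\rho^{-1}e_i\rho$ need not be integral, and even if integral, endomorphisms do not automatically deform along nilpotent thickenings.

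What actually forces the splitting to persist is the extra structure $(\iota,\lambda)$ together with the Kottwitz condition, and one clean way to exploit it (which the reference to the Dieudonn\'e calculation in Section~\ref{subsec:Dieudonnemodulecalculation}, ``as in the proof of \cite[Proposition 8.2]{RSZ}'', indicates is close to what [RSZ] does) is to compare deformation functors. Both $\cM_{\Gamma_0(\pi)}$ and $\cN^\Pap$ are formal spectra of complete local $\Oo_{\breve F}$-algebras with a unique $k$-point, so it suffices to show $\Psi$ induces an isomorphism on deformation rings. Grothendieck--Messing identifies deformations of $(\bX,\iota_\bX,\lambda_\bX)$ with lifts of the Hodge filtration in $D(\bX)_S$ that are $\Oo_F$-stable, isotropic, and satisfy the signature condition; an explicit computation in the Dieudonn\'e module (using the basis of Section~\ref{subsec:Dieudonnemodulecalculation}) matches these with the data $(Y,Y',\phi)$ on the $\cM_{\Gamma_0(\pi)}$ side. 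Your concern about polarization bookkeeping is legitimate but secondary; the missing ingredient is a mechanism that produces the product decomposition over $S$, and rigidity alone does not supply it.
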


Let $\cM$ be the Lubin-Tate moduli functor over $\SpfOF$ that associates to each scheme $S$ the set of isomorphism classes of pairs
\[(Y,\rho_Y)\]
where $Y$ is a strict formal $\Oo_{F_0}$-module of $F_0$-height $2$ and dimension $1$ over $S$ and $\rho_Y:Y\times_S \bar{S}\rightarrow \bY \times_{\Spec k} \bar{S}$ a quasi-isogeny of formal $\Oo_{F_0}$-modules of height $0$. Then it is well-known that (see for example \cite[Theorem 3.8]{viehmannKonstantin})
\begin{equation}
    \cM\cong \SpfOF[[T]].
\end{equation}
For later use we recall the following lemma from classical literature. 
\begin{lemma}\label{lem:modularequation}
$\cM_{\Gamma_0(\pi)}$ embeds into $\cM\times\cM$ as a divisor. 
In other words, there is a $f\in \Oo_{\breve{F}_0}[[X,Y]]$ such that
\[\cM_{\Gamma_0(\pi)}\cong \Spf \Oo_{\breve{F}}[[X,Y]]/(f(X,Y)).\]
For $i=1,2$, let $p_i$ be the morphism which is the composition of the embedding  $\cM_{\Gamma_0(\pi)}\rightarrow \cM\times \cM$ with the projection of $\cM\times \cM$ to its $i$-th factor.
Then $p_i$ is finite of degree $\q+1$.
\end{lemma}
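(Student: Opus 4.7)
The plan is to define the morphism
\[\iota:\cM_{\Gamma_0(\pi)}\longrightarrow\cM\times\cM,\qquad (Y,Y',\phi,\rho_Y)\longmapsto\bigl((Y,\rho_Y),(Y',\rho_{Y'})\bigr),\]
with $\rho_{Y'}$ defined as in \eqref{eq:rhoY'}, and to extract the structure of $\cM_{\Gamma_0(\pi)}$ from the two projections $p_i=\mathrm{pr}_i\circ\iota$.

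The first step is to show $\iota$ is a closed immersion. If two objects $(Y_j,Y_j',\phi_j,\rho_{Y_j})$ ($j=1,2$) over a test scheme $S$ have the same image under $\iota$, the universal property of $\cM$ provides unique $\Oo_{F_0}$-linear isomorphisms $\alpha:Y_1\xrightarrow{\sim}Y_2$ and $\alpha':Y_1'\xrightarrow{\sim}Y_2'$ matching the framings. Applying \eqref{eq:rhoY'} to both quadruples shows that $\alpha'\circ\phi_1$ and $\phi_2\circ\alpha$ reduce to the same quasi-isogeny on $\bar S$; by the rigidity of quasi-isogenies of $p$-divisible groups they coincide, giving an isomorphism of quadruples. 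A parallel analysis on tangent spaces (again via $\alpha$, $\alpha'$ being uniquely determined) shows that $\iota$ is also a closed immersion scheme-theoretically.

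The second step is to show that $p_1$ is finite flat of degree $\q+1$. The fiber of $p_1$ over $(Y,\rho_Y)$ classifies degree-$\q$ $\Oo_{F_0}$-linear isogenies $\phi:Y\to Y'$, equivalently, $\Oo_{F_0}/\pi_0$-stable order-$\q$ finite flat subgroup schemes $K\subset Y[\pi_0]$ (then $Y'=Y/K$, and $\rho_{Y'}$ is determined by $\rho_Y$ via \eqref{eq:rhoY'}). Since $Y[\pi_0]$ is a finite flat group scheme of order $\q^2$ killed by $\pi_0$, this is the classical $\Gamma_0(\pi)$-moduli problem: by Drinfeld's theorem on level structures for formal $\Oo_{F_0}$-modules of $F_0$-height $2$, it is representable and finite flat of degree $\q+1$ over $\cM$. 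The identical argument applied to the dual isogeny $\phi':Y'\to Y$ gives the same assertion for $p_2$.

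Finally, since $\cM\cong\Spf\Oo_{\breve F}[[T]]$ (which in fact descends to $\Spf\Oo_{\breve F_0}[[T]]$), the product $\cM\times\cM$ is isomorphic to $\Spf\Oo_{\breve F}[[X,Y]]$, a regular formal scheme of dimension $3$. By Proposition~\ref{prop:exceptionaliso} combined with the semi-stable description $\cN\cong\Spf\Oo_{\breve F_0}[[x,y]]/(xy-\pi_0)$ recorded above, $\cM_{\Gamma_0(\pi)}$ is $2$-dimensional, so the image of $\iota$ is a closed subscheme of pure codimension one. Because $\Oo_{\breve F}[[X,Y]]$ is a regular local ring, hence factorial by Auslander--Buchsbaum, every height-one ideal is principal, and by descent through $\cN$ the generator can be chosen in $\Oo_{\breve F_0}[[X,Y]]$. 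I expect the main obstacle to be establishing that $\iota$ is a closed immersion: the argument that $\phi$ is determined by the pair of framed formal modules relies crucially on rigidity of quasi-isogenies, while the degree count for $p_1$ and the extraction of the single defining equation are comparatively routine once $\iota$ is in hand.
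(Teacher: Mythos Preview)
Your argument is broadly sound and reaches the same conclusions, but it diverges from the paper in how the divisor statement is obtained. The paper identifies $\cM_{\Gamma_0(\pi)}$ as the locus in $\cM\times\cM$ where the fixed isogeny $\iota_\bY(\pi)$ lifts, and then invokes Grothendieck--Messing deformation theory: since both formal modules are one-dimensional, the obstruction to lifting lies in a rank-one module, so its vanishing cuts out a single equation. This gives the Cartier divisor immediately, with no global input about $\cM_{\Gamma_0(\pi)}$. Your route through factoriality is more indirect, and your appeal to Proposition~\ref{prop:exceptionaliso} and the \cite{RSZ} description of $\cN$ to extract the dimension is both unnecessary and logically awkward here: once you have shown $p_1$ is finite flat, $\cM_{\Gamma_0(\pi)}$ already has the same dimension as $\cM$.

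There is also a genuine slip in your last step. In a UFD it is not true that every height-one ideal is principal, only every height-one \emph{prime} ideal. To close this you must either argue that the defining ideal is prime (which your \cite{RSZ} citation would indeed give, since $\cM_{\Gamma_0(\pi)}$ is then an integral domain), or---more cleanly, and without the external input---observe that finite flat over the regular $\cM$ makes $\cM_{\Gamma_0(\pi)}$ Cohen--Macaulay, hence a codimension-one Cohen--Macaulay closed subscheme of the regular $\cM\times\cM$, which is automatically a Cartier divisor by Auslander--Buchsbaum. Your treatment of the closed immersion (via rigidity of quasi-isogenies) and of finiteness and degree (via Drinfeld level structures) is in line with the paper's, which likewise factors through Drinfeld level and computes the degree on the generic fibre as $|\bP^1(\F_\q)|$.
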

\begin{proof}
This is well-known so we sketch its proof very quickly.
With the choice of framing maps $\rho_Y$ and $\rho_{Y'}$ as in \eqref{eq:rhoY'}, $\cM_{\Gamma_0(\pi)}$ is exactly the locus of $\cM\times\cM$ where the isogeny $\iota_\bY(\pi)$ lifts. The fact that $\cM_{\Gamma_0(\pi)}$ is a divisor is now an immediate consequence of Grothendieck-Messing deformation theory.

To prove the finiteness of the morphism $\cM_{\Gamma_0(\pi)}\rightarrow \cM$, notice that the morphism $\cM'\rightarrow \cM$ factor through $\cM_{\Gamma_0(\pi)}$ where $\cM'$ is the Lubin-Tate space with Drinfeld level structure:
\[\cM'(S)=\{(X,\rho,z)\mid (X,\rho)\in \cM(S), z\in X \text{ is a divisor of exact order } \q\}.\]
$\cM'$ is in turn a subfunctor of $\cM'$ whose $S$-point is 
\[\cM''(S)=\{(X,\rho,z)\mid (X,\rho)\in \cM(S), z\in X, [\pi_0](z)=0\}\]
where $[\pi_0]$ is the action of $\pi_0$ on $X$.
So it suffices to prove the finiteness of $\cM''\rightarrow \cM$. 
But this is a consequence of the Weierstrass preparation theorem for complete local rings and the fact that $[\pi_0]\times k$ can be represented by a polynomial.

Finally, to calculate the degree of the morphism $\cM_{\Gamma_0(\pi)}\rightarrow \cM$, it is enough to compute it on the generic fiber by studying the Tate module. The degree is then in fact $|\bP^1(\F_\q)|=\q+1$. 
\end{proof}
\begin{remark}\label{rmk:modularequation}
When $F_0=\Q_p$, the equation $f(X,Y)$ is exactly the classical modular polynomial such that
\[f(X,Y)\equiv (X-Y^p)(Y-X^p)\pmod{p}.\]
\end{remark}

\subsection{Dieudonn\'e module of the framing objects}\label{subsec:Dieudonnemodulecalculation}
We know that $\cM_{\Gamma_0(\pi)}$ has a unique $k$-point which is $(\bY,\bY, \iota_{\bY}(\pi),\mathrm{id}_\bY)$. Under the isomorphism  $ \cM_{\Gamma_0(\pi)}\rightarrow \cN^\Pap$, this point goes to $(\bX,\iota_\bX,\lambda_\bX,\mathrm{id}_\bX)\in \cN^\Pap$ where 
\begin{equation}\label{eq:definitionofbX}
    \bX=\bY\times \bY, \ \iota_\bX(\pi)=\left(\begin{array}{cc}
         &  \iota_\bY(\pi)\\
      \iota_\bY(\pi)   & 
    \end{array}\right),\ \lambda_\bX=\left(\begin{array}{cc}
      \lambda_\bY   &  \\
         & \lambda_\bY
    \end{array}\right).
\end{equation}
$(\bX,\iota_\bX,\lambda_\bX,\rho_\bX)$ is the unique reduced $k$-point of $\cN^\Pap$, we denote it by Sing. Here $(\bX,\iota_\bX,\lambda_\bX)$ is actually the framing object for $\cN^\Pap$. We describe its relative Dieudonn\'e module $\mathbb{M}$ explicitly.

First let us describe the relative Dieudonn\'e module $M(\bY)$ of $\bY$ (see \cite[Remark 2.15]{Shi1}). As an $\Oo_{\breve{F}_0}$ lattice, it is of rank 2. We can choose a basis $\{e,f\}$ such that 
$F_\bY e=f,F_\bY f=\pi_0 e,V_\bY e=f,V_\bY f=\pi_0 e$ and $\langle e,f\rangle_\bY=\delta$. With respect to this basis,
$\mathrm{End}(\bY)$ is of the form
\begin{equation}\label{eq:End bY is Oo_H}
    \left\{\left(\begin{array}{cc}
    a & b \pi_0  \\
    b^\sigma & a^\sigma
\end{array}\right) \mid a,b \in F_0[\delta]\cap \Oo_{\mathbb{H}}\right\}\cong \Oo_{\mathbb{H}}.
\end{equation}
One can check that under the pairing $\langle,\rangle_\bY$, the Rosati involution of an element in $\mathrm{End}(\bY)$ is given by the main involution of $\mathbb{H}$
\begin{equation}\label{rosatiinvolutionofbG}
    \left(\begin{array}{cc}
    a & b\pi_0  \\
    b^\sigma & a^\sigma
\end{array}\right)^*=
\left(\begin{array}{cc}
    a^\sigma & -b\pi_0  \\
    -b^\sigma & a
\end{array}\right).
\end{equation}
As in the proof of \cite[Proposition 8.2]{RSZ},
by changing basis using elements in $\mathbb{H}\cap \mathrm{SL}_2(F_0)$ we can assume $F$ and $V$ are of the same matrix form as before and 
\begin{equation}\label{eq:standardpiaction}
  \Pi=\left(\begin{array}{cc}
    0 & \pi_0  \\
    1 & 0
\end{array}\right).  
\end{equation}
Thus $\tau$ is the diagonal matrix $\mathrm{Diag}\{1,1\}$. It fixes the $F_0$-vector space $\mathrm{span}_{F_0}\{ e,f\}$. Then 
\begin{equation}\label{eq:(e,e)}
    (e,e)_\bY=-\delta^2 
\end{equation}
As $\Oo_F$ is a DVR and $M(\bY)\otimes_\Z \Q$ is a one dimensional $F$-space, there is a unique self-dual $\Oo_F$-lattice w.r.t. $(,)_\bY$, i.e., $\mathrm{span}\{e\}$. Let $\rho_{\bY}$ be the identity of $\bY$, then $(\bY,\iota_\bY, \lambda_\bY,\rho_\bY)$ is the unique closed point of $\cN_{(1,0)}(k)$.

Now let $\bX$ be defined in \eqref{eq:definitionofbX}. We denote the first copy of $\bY$ as $\bY_1$ and the second copy as $\bY_2$.
In terms of the Dieudonn\'e module $\bM=M(\bX)$ we assume that $\{e_1, f_1\}$ is a basis of $M(\bY_1)$  and $\{e_2, f_2\}$ is a basis of $M(\bY_2)$ such that 
\[F_\bX e_i=f_i,F_\bX f_i=\pi_0 e_i, V_\bX e_i=f_i,V_\bX f_i=\pi_0 e_i,\langle e_i,f_j\rangle_\bX=\delta \delta_{ij},\  i,j=1,2.\]
Using \eqref{eq:iota X in cN} and \eqref{eq:standardpiaction}, we can see that
\begin{equation}\label{eq:Pimatrixform}
    \Pi(e_1)=f_2, \ \Pi(f_1)=\pi_0 e_2,\ 
    \Pi(e_2)= f_1, \ \Pi(f_2)=\pi_0 e_1.
\end{equation}
It is then easy to see that $\Pi^2=\pi_0$ and $\langle\Pi x,y\rangle_{\bX}=\langle x,-\Pi y\rangle_{\bX} $ for $x,y\in M(X)$.
Define
\begin{equation}\label{eq:v_1 v_2}
   v_1:= e_1+e_2, \ v_2:=\delta (e_1-e_2). 
\end{equation}
Then we have
\[\tau(v_1)=v_1, \tau(v_2)=v_2,\]
where $\tau=\Pi V_\bX^{-1}$ and
\begin{align}\label{eq:Gram matrix of v_1 v_2}
    (v_1,v_1)_\bX=&\delta \langle\Pi v_1,v_1\rangle_\bX+\pi \delta\langle v_1,v_1\rangle_\bX
    = -2\delta^2 , \\ \nonumber
    (v_1,v_2)_\bX=& \delta \langle\Pi v_1,v_2\rangle_\bX+\pi \delta\langle v_1,v_2\rangle_\bX
    = 0, \\ \nonumber
    (v_2,v_2)_\bX=& \delta \langle\Pi v_2,v_2\rangle_\bX+\pi \delta\langle v_2,v_2\rangle_\bX
    = 2\delta^4 .
\end{align}

\subsection{Kr\"amer model}\label{subsec:Kramermodel}
We need some information on  the local model $N^\Kra$ of $\cN^\Kra$ in the sense of \cite[Definition 3.27]{RZ}. Again we assume $n=2$ and  \eqref{eq:anisotropicassumption}.
For the complete definition, see Appendix \ref{sec:Kramer model is blow up}.
For now we just need to know that $N^\Kra$ is representable by a regular scheme which is flat over $\SpfOF$ and can be covered by affine charts $U_i$ charts $U_i=\Spec R_i$ ($i=1,2$) where
\begin{equation}\label{eq:Kramer model equation}
    R_i=\Oo_{\breve F}[x_i^1, x_i^2, y_i]/(y_i((x_i^1)^2+(x_i^2)^2)-2\pi, x^i_i=1).
\end{equation}
The following transition relations between coordinate functions hold in $U_i\cap U_j$ where $x_i^j$ and $x_j^i$ are units.
\begin{equation}\label{eq:coordinate change of Kramer model}
    x_i^\ell=x_i^j x_j^\ell,\ y_i=(x_j^i)^2 y_j.
\end{equation}
The special fiber of $N^\Kra$ consists of two divisors $Z_1$ and $Z_2$ where $Z_1 \cong \bP^{1}/k$ is the exceptional divisor of $N^\Kra$ defined by the equation $y_i=0$ in $U_i$:
\begin{equation}\label{eq:Z1}
    Z_1\cap U_i=\Spec (k[x_i^1, x_i^2]/(x_i^i-1)).
\end{equation}
And $Z_2$ is defined by the equation $\sum_{j=1}^2 (x_i^j)^2=0$ in $U_i$:
\begin{equation}\label{eq:Z2}
    Z_2 \cap U_i=\Spec (k[x_i^1,x_i^2,y_i]/(x_i^i-1,(x_i^1)^2+(x_i^2)^2).
\end{equation}

\begin{lemma}\label{lem:Z1 intersect Z2}
$Z_1$ and $Z_2$ intersect transversely at the two singular points of the special fiber $N^\Kra_s$ of $N^\Kra$ and the intersection number is $2$.
\end{lemma}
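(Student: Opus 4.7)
The plan is to carry out the analysis in the explicit local chart $U_1=\Spec R_1$ (the chart $U_2$ is symmetric), where
\[R_1=\Oo_{\breve F}[x_1^2, y_1]/(y_1(1+(x_1^2)^2)-2\pi).\]
First I would reduce modulo $\pi$: since $p$ (hence $2$) is invertible in $k$, the special fiber of $U_1$ is cut out by $y_1(1+(x_1^2)^2)=0$ in $\A^2_k$. Because $k$ is algebraically closed of odd characteristic, $1+t^2$ has two distinct roots $\pm i$ in $k$, so the special fiber of $U_1$ consists of three smooth affine lines, namely the component $Z_1\cap U_1=V(y_1)$ and the two components of $Z_2\cap U_1=V(x_1^2-i)\sqcup V(x_1^2+i)$, meeting exactly at the two points $P_\pm=(\pm i, 0)$; these are precisely the singular points of $N^\Kra_s\cap U_1$.

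Next I would check transversality at $P_+$ (the case $P_-$ is identical): setting $u=x_1^2-i$, one has $(x_1^2)^2+1=u(u+2i)$ with $u+2i$ a unit in the local ring at $P_+$. Thus $Z_2$ is locally cut out by $u$ and $Z_1$ by $y_1$, and the chart equation $y_1\cdot u(u+2i)=2\pi$ forces $\pi\in(u,y_1)$. Hence $(u,y_1)$ is a regular system of parameters of the 2-dimensional regular local ring $\Oo_{N^\Kra,P_+}$, so $Z_1$ and $Z_2$ meet transversely at $P_+$ with local intersection length $1$.

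Finally, I would use the transition rule \eqref{eq:coordinate change of Kramer model}, which gives $x_2^1=1/x_1^2$ on $U_1\cap U_2$. Since $x_1^2=\pm i\ne 0$ at $P_\pm$, both points lie in $U_1\cap U_2$, so they correspond bijectively (not doubly) to the analogous two singular points of $U_2$. Therefore $Z_1\cap Z_2$ consists of exactly two points of $N^\Kra_s$, each contributing $1$ to the intersection number, giving total $2$.

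The only mild obstacle is verifying that $(u,y_1)$ indeed generates the maximal ideal of $\Oo_{N^\Kra,P_+}$, i.e., that $\pi\in(u,y_1)$; this is immediate from the chart equation together with the invertibility of $2$ in $\Oo_{\breve F}$. The remaining steps are direct local computation and a simple gluing check via the transition maps.
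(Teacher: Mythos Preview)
Your proof is correct and follows essentially the same approach as the paper: both compute the scheme-theoretic intersection $Z_1\cap Z_2$ in the affine charts and observe it consists of the two reduced points $[(1,\pm\sqrt{-1})]\in\bP^1_k$, hence intersection number $2$. Your version is somewhat more thorough in explicitly verifying transversality via a regular system of parameters and in checking the gluing to rule out extra points, whereas the paper simply writes down the intersection ring $k[x_i^1,x_i^2]/(x_i^i-1,(x_i^1)^2+(x_i^2)^2)$ and reads off the result.
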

\begin{proof}
From \eqref{eq:Z1} and \eqref{eq:Z2}, we see that the intersection of $Z_1$ and $Z_2$ in $U_i$ ($1\leq i \leq 2$) is 
\[\Spec (k[x_i^1, x_i^2]/(x_i^i-1,(x_i^1)^2+(x_i^2)^2).\]
Hence in the homogeneous coordinate of $\bP^1/k$, the intersection locus is the union of two points $[(1,\sqrt{-1})]$ and $[(-1,\sqrt{-1})]$, each with multiplicity one. Notice that the two points are exactly the singular locus of $N^\Kra_s$.
\end{proof}

\begin{lemma}\label{lem:cNKra is completion of local model}
There is an isomorphism of formal schemes
\[s:\cN^\Kra \rightarrow (N^\Kra)_{Z_1},\]
where $(N^\Kra)_{Z_1}$ is the formal completion of $N^\Kra$ along $Z_1$.
\end{lemma}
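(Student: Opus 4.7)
The plan is to combine the blow-up description of $\cN^\Kra$ (Proposition~\ref{prop: Kra is blow up of Pappas}) with the Rapoport--Zink local model theorem, which provides a diagram
\[\cN^\Kra \xleftarrow{\varphi} \widetilde{\cN}^\Kra \xrightarrow{\psi} N^\Kra\]
in which $\varphi$ is a torsor under a smooth affine group scheme and $\psi$ is formally smooth, so that the completed local rings of $\cN^\Kra$ and $N^\Kra$ agree at corresponding closed points.

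First I would identify the underlying topological spaces. Since $\cN^\Pap \cong \Spf \Oo_{\breve F}[[X,Y]]/(XY-\pi_0)$ is topologically the single point Sing, Proposition~\ref{prop: Kra is blow up of Pappas} implies that $\cN^\Kra$ has underlying topological space equal to the exceptional divisor $\Exc \cong \bP^1/k$, which matches that of $(N^\Kra)_{Z_1}$. Moreover, at every closed point of $\cN^\Kra$, which lies on $\Exc$, the action of $\iota(\pi)$ on $\Lie X$ is zero --- and this is exactly the condition cutting out $Z_1 \subset N^\Kra$ under the local model identification --- so the map arising from the local model diagram automatically lands in $(N^\Kra)_{Z_1}$.

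Next I would construct the morphism $s : \cN^\Kra \to (N^\Kra)_{Z_1}$ and verify it is an isomorphism. To construct $s$ globally, one needs to trivialize the torsor $\varphi$ after formal completion; by the blow-up description, $\cN^\Kra$ is covered by two affine formal charts of the form $\Spf \Oo_{\breve F}[u,v]/(uv-\pi)$ completed along $u=0$, on each of which the torsor can be trivialized using the explicit Dieudonn\'e module data of Section~\ref{subsec:Dieudonnemodulecalculation}, and one patches on the overlap. The resulting $s$ is a bijection on underlying topological spaces inducing isomorphisms on all completed local rings, hence an isomorphism of Noetherian formal schemes.

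The main obstacle is this global upgrade, since the local model theorem \emph{a priori} yields only an \'etale-local comparison. A concrete alternative is to match the two sides chart-by-chart directly: the substitutions $u \leftrightarrow y_i$, $v \leftrightarrow (1+(x_i^j)^2)/2$, together with a M\"obius identification of the exceptional $\bP^1$ of the blow-up with $Z_1$ matching the two pairs of singular points (those of Lemma~\ref{lem:Z1 intersect Z2} on one side, and the two images of the strict transforms of $X=0$ and $Y=0$ on the other), identify each blow-up chart of $\cN^\Kra$ with the formal completion of the chart $U_i$ from~\eqref{eq:Kramer model equation} along $Z_1$, and consistency on the overlap yields the desired $s$.
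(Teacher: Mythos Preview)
Your approach is workable but harder than necessary, and the paper takes a cleaner route that sidesteps precisely the obstacle you identify.

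The key difference is \emph{where} you invoke the local model comparison. You apply it directly to $\cN^\Kra$, whose reduced locus is $\bP^1$; this forces you to globalize an a priori \'etale-local statement over a non-local base, hence the need to trivialize the torsor chart-by-chart and patch. Your chart-matching alternative could be made to work, but verifying compatibility on overlaps (and that your M\"obius identification of the two $\bP^1$'s is the one induced by the local model diagram, not just \emph{some} isomorphism) requires care you have not supplied.

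The paper instead applies the local model comparison one level down, to $\cN^\Pap$. The point is that $\cN^\Pap \cong \Spf \Oo_{\breve F}[[X,Y]]/(XY-\pi_0)$ is \emph{local and strictly Henselian}, so the \'etale-local section of the local model torsor guaranteed by \cite[Proposition~3.33]{RZ} is automatically a global section: the formally \'etale map $\cN^\Pap \to (N^\Pap)_\pi$ factors through the completion $(N^\Pap)_{pt}$ at the singular point, and a formally \'etale map between local strictly Henselian rings is an isomorphism. This gives $\cN^\Pap \xrightarrow{\sim} (N^\Pap)_{pt}$ with no work. Now blow up both sides along the singular locus: the left becomes $\cN^\Kra$ by Proposition~\ref{prop: Kra is blow up of Pappas}, the right becomes $(N^\Kra)_{Z_1}$ by the local-model analogue (Lemma~\ref{lem:local model isomorphism}), and the isomorphism lifts functorially. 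No patching is needed.

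In short: rather than finding a global section over $\bP^1$, descend to the Pappas model where the base is a single point, get the isomorphism for free, then blow up.
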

\begin{proof}
By \cite[Proposition 3.33]{RZ}, there is a formally \'etale morphism
\[s_0:\mathcal{U}\rightarrow (N^\Pap)_\pi\]
where $\mathcal{U}$ is an \'etale neighbourhood of $\cN^\Pap$, $N^\Pap$ is the local model of $\cN^\Pap$ (whose definition will be recalled in Appendix \ref{sec:Kramer model is blow up}) and $(N^\Pap)_\pi$ is its $\pi$-adic completion. Since $\cN^\Pap$ is strictly Henselian and local (this for example follows from Proposition \ref{prop:exceptionaliso}), $\mathcal{U}=\cN^\Pap$.
By the proof of \cite[Corollary 4.13]{P}, we have
\[N^\Pap=\Oo_{\breve F}[x_1,x_2]/(x_1^2+x_2^2-\pi_0)\]
and the image of Sing under $s_0$ is the point $pt$ in the special fiber of $N^\Pap$ defined by $x_1=x_2=\pi=0$. Again since $\cN^\Pap$ is strictly Henselian and local, $s_0$ factor through 
\[s_0':\cN^\Pap\rightarrow (N^\Pap)_{pt}\]
which is again formally \'etale. Here $(N^\Pap)_{pt}$ is the completion of $(N^\Pap)_\pi$ along $pt$. Since both structure rings are local and strictly Henselian, the morphism $s'_0$ is in fact an isomorphism. The lemma now follows from Proposition \ref{prop: Kra is blow up of Pappas} and its analogue for local models, namely Lemma \ref{lem:local model isomorphism}.
\end{proof}

We denote the unique exceptional divisor of $\cN^\Kra$ by $\Exc$. By Proposition \ref{prop:Kramermodel}, $\iota_\bX(\pi)$ acts trivially on $\Lie \bX$. Hence we can choose $\cF_\bX$ to be any line in $\Lie \bX$ and $\Exc=\bP^1(\Lie\bX)$.
Under this isomorphism of Lemma \ref{lem:cNKra is completion of local model}, we have, 
\begin{equation}
    s^{-1}(Z_1)=\Exc=\bP^1(\Lie\bX)\cong \bP^1(V\bM/\pi_0\bM),
\end{equation}
where the last isomorphism  sends a line $L$ in $\Lie \bX$ to its perpendicular line $L^\vee\subset V\bM/\pi_0\bM$ under the pairing $\langle,\rangle_\bX$ described in Section \ref{subsec:Dieudonnemodulecalculation}.

We now describe $s^{-1} (Z_1\cap Z_2)$, namely the two singular points of the special fiber of $\cN^\Kra$. Recall that $V_\bX\bM=\Pi \bM=\mathrm{span}\{f_1,f_2\}$ in the coordinates introduced in  Section \ref{subsec:Dieudonnemodulecalculation}. The quadratic form $(x_i^1)^2+(x_i^2)^2$ that shows up in equation \eqref{eq:Kramer model equation} corresponds to the quadratic form (see \cite[page 9]{Kr})
\[\{\Pi v,\Pi w\}:=\langle\Pi v,  w\rangle_{\bX},\]
for $\Pi v,\Pi w \in V \bM$. Now by equation \eqref{eq:Pimatrixform} we see that
\[\{f_1,f_1\}=\langle e_2,f_1\rangle_\bX=0.\]
Similarly $\{f_2,f_2\}=0$. Hence $s^{-1} (Z_1\cap Z_2)$ correspond to the two lines spanned by $f_1$ and $f_2$ respectively in $V\bM/\pi_0\bM$. Taking their perpendicular lines in $\Lie\bX= \bM/V\bM$, we have shown that
\begin{lemma}\label{lem:intersectionofZ1Z2}
$\Exc\cap s^{-1}(Z_2)$ are the two points $P_1=\mathrm{span}_k\{e_1\}$ and $P_2=\mathrm{span}_k\{e_2\}$ in $\bP^1(\Lie\bX)$.
\end{lemma}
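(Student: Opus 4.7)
The plan is to combine the structural identification from the preceding paragraph with a direct Dieudonné-module computation; most of the work is already done in the text leading up to the statement.

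First, I would invoke Lemma \ref{lem:cNKra is completion of local model} to transport the question to the local model: under $s$, the exceptional divisor $\Exc$ is identified with $Z_1$, so $\Exc\cap s^{-1}(Z_2)$ corresponds to $Z_1\cap Z_2$ inside $N^\Kra$. By Lemma \ref{lem:Z1 intersect Z2}, this intersection consists of exactly two reduced points, namely the two singular points of the special fiber. Under the identification $\Exc\cong \bP^1(V\bM/\pi_0\bM)$ recorded just above the statement, these two points correspond to the two isotropic lines of the quadratic form $\{\Pi v,\Pi w\}=\langle \Pi v,w\rangle_\bX$ on $V\bM/\pi_0\bM$, as already observed in the discussion preceding the lemma (using \cite[page 9]{Kr}).

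Next, I would use the explicit Dieudonné module description from Section \ref{subsec:Dieudonnemodulecalculation}. Since $V\bM=\Pi\bM=\mathrm{span}\{f_1,f_2\}$ and $\Pi(e_1)=f_2$, $\Pi(e_2)=f_1$ by \eqref{eq:Pimatrixform}, one computes
\[
\{f_1,f_1\}=\langle e_2,f_1\rangle_\bX=0,\qquad \{f_2,f_2\}=\langle e_1,f_2\rangle_\bX=0,
\]
while $\{f_1,f_2\}=\langle e_2,f_2\rangle_\bX=\delta\ne 0$. Hence $\mathrm{span}_k\{f_1\}$ and $\mathrm{span}_k\{f_2\}$ are precisely the two isotropic lines of this rank-$2$ quadratic form over $k$.

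Finally, I translate back through the perpendicular isomorphism $\bP^1(V\bM/\pi_0\bM)\xrightarrow{\sim}\bP^1(\Lie\bX)$ induced by $\langle,\rangle_\bX$. Using $\langle e_i,f_j\rangle_\bX=\delta\cdot\delta_{ij}$ from Section \ref{subsec:Dieudonnemodulecalculation}, the perpendicular of $\mathrm{span}\{f_1\}$ inside $\bM/V\bM=\mathrm{span}\{\bar e_1,\bar e_2\}$ is $\mathrm{span}\{e_2\}$, and the perpendicular of $\mathrm{span}\{f_2\}$ is $\mathrm{span}\{e_1\}$. This yields the two claimed points $P_1=\mathrm{span}_k\{e_1\}$ and $P_2=\mathrm{span}_k\{e_2\}$. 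There is no real obstacle here; the only thing to be careful about is keeping track of the two successive identifications (the formally étale map of Lemma \ref{lem:cNKra is completion of local model} and the perpendicular duality between $V\bM/\pi_0\bM$ and $\Lie\bX$) so that isotropic lines on one side are correctly matched with their perpendiculars on the other.
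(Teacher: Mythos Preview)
Your proposal is correct and follows essentially the same approach as the paper: the argument in the text immediately preceding the lemma already identifies $Z_1\cap Z_2$ with the isotropic lines of $\{\,,\,\}$ on $V\bM/\pi_0\bM$, computes $\{f_i,f_i\}=0$, and then passes to perpendiculars in $\Lie\bX$. Your write-up simply makes the nondegeneracy check $\{f_1,f_2\}\neq 0$ explicit (note a small slip: by the formula $\{\Pi v,\Pi w\}=\langle \Pi v,w\rangle_\bX$ one gets $\{f_1,f_2\}=\langle f_1,e_1\rangle_\bX=-\delta$, not $\langle e_2,f_2\rangle_\bX$, though the conclusion is the same).
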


The reduced locus of $\cN^\Kra$ is its exceptional divisor. In particular it is proper over $\Spec k$.
For two divisors $\cZ_1$ and $\cZ_2$ of $\cN^\Kra$ such that the support of their intersection is contained in the reduced locus, one can define their intersection number to be 
\begin{equation}
   \cZ_1 \cdot \cZ_2= \chi(\cN^\Kra, \Oo_{\cZ_1}\otimes^{\mathbb{L}} \Oo_{\cZ_2}). 
\end{equation}
where $\otimes^{\mathbb L}$ is the derived tensor product of coherent sheaves on $\cN^\Kra$ and $\chi$ takes the Euler characteristics. It is easy to compute the self intersection number of $\Exc$.
\begin{lemma}\label{lem:Excselfintersection}
\[\Exc\cdot \Exc=-2.\]
\end{lemma}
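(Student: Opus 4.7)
The plan is to exploit the semi-stable structure of $\cN^\Kra$: the uniformizer $\pi$ is a global regular function whose divisor is the entire special fiber, so intersecting the proper curve $\Exc$ with $\mathrm{div}(\pi)$ must vanish. This reduces the self-intersection of $\Exc$ to minus its intersection with the other component of the special fiber, which by the lemmas above is a transparent transverse point count.

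First, I would read off from the local equation \eqref{eq:Kramer model equation}, namely $y_i((x_i^1)^2+(x_i^2)^2) = 2\pi$, together with the fact that $2 \in \Oo_{\breve F}^\times$ (since $p$ is odd), that
\[\mathrm{div}(\pi) = Z_1 + Z_2\]
as Cartier divisors on $N^\Kra$. Pulling back along the isomorphism $s\colon \cN^\Kra \xrightarrow{\sim} (N^\Kra)_{Z_1}$ of Lemma \ref{lem:cNKra is completion of local model} yields the global equality $\mathrm{div}(\pi) = \Exc + \cZ_2$ on $\cN^\Kra$, where $\cZ_2 := s^{-1}(Z_2)$.

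Second, since $\pi$ globally trivializes $\Oo_{\cN^\Kra}(\Exc + \cZ_2)$, its restriction to the proper curve $\Exc \cong \bP^1_k$ is the trivial line bundle, so taking degrees gives
\[0 \;=\; \Exc \cdot (\Exc + \cZ_2) \;=\; \Exc \cdot \Exc \;+\; \Exc \cdot \cZ_2.\]
It then remains to evaluate $\Exc \cdot \cZ_2$: by Lemma \ref{lem:intersectionofZ1Z2} the intersection is set-theoretically the two $k$-points $P_1, P_2$, and by Lemma \ref{lem:Z1 intersect Z2} the intersection is transverse there, so $\Exc \cdot \cZ_2 = 2$. Combining the two displays gives $\Exc \cdot \Exc = -2$.

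There is no real obstacle. The only point meriting a moment's thought is that the intersection numbers transport correctly across the formal completion $s$, but this is automatic because the two intersection points $P_1, P_2$ already lie inside $Z_1 = \Exc$, so the local contributions computed in $N^\Kra$ and in $(N^\Kra)_{Z_1} \cong \cN^\Kra$ agree.
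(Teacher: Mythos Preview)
Your proof is correct and follows essentially the same approach as the paper: both arguments use that the special fiber $\mathrm{div}(\pi)=\Exc+s^{-1}(Z_2)$ is principal, hence has zero intersection with the proper curve $\Exc$, and then invoke Lemma~\ref{lem:Z1 intersect Z2} for $\Exc\cdot s^{-1}(Z_2)=2$. The paper works on the local model side and cites \cite[Chapter~9, Proposition~1.21]{L} for the vanishing, whereas you spell out the principal-divisor argument directly on $\cN^\Kra$; note that your appeal to Lemma~\ref{lem:intersectionofZ1Z2} is a forward reference and in any case redundant, since Lemma~\ref{lem:Z1 intersect Z2} already gives the intersection number.
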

\begin{proof}
By Lemma \ref{lem:cNKra is completion of local model}, it suffices to compute $Z_1\cdot Z_1$. Since $N^\Kra$ is regular and flat over $\Spf \Oo_{\breve F}$,
Proposition 1.21 of \cite[Chapter 9]{L} implies that the intersection number of $Z_1$ with the special fiber $N^\Kra_s$ of $N^\Kra$ is $0$. But 
\[N^\Kra_s=Z_1+Z_2.\]
The lemma now follows from Lemma \ref{lem:Z1 intersect Z2}.
\end{proof}

\section{Quasi-canonical lifting divisors}\label{sec:quasicanonicalliftingdivisor}
In this section we define sub formal schemes of $\cN^\Pap$ and $\cN^\Kra$ based on the theory of quasi-canonical lifts of Gross (\cite{G}). Our main reference is \cite{W}. For simplicity we often use $\otimes k$ to denote the base change $\times_{\SpfOF} \Spec k$. 
\subsection{Quasi-canonical lifts on $\cM$}\label{subsec:quasicanonicallifts}
Let $\kappa:\Oo_F\hookrightarrow \End(\bY)=\Oo_{\mathbb{H}}$ be an embedding. Via this embedding we obtain a map $\Oo_F\rightarrow \End(\Lie \bY)=k$, which extends the canonical residue map $\Oo_{F_0}\rightarrow k$.

Fix an $s\geq 0$. Let 
\begin{equation}\label{eq:Oo_s}
  \Oo_s=\Oo_{F_0}+\Oo_F\cdot \pi_0^s.  
\end{equation}
Let  $M_s$ be the ring class field of $\Oo_s^\times$, i.e., \ the finite abelian extension of $\breve{F}$ corresponding to the subgroup $\Oo_s^\times$ of $\Oo_F^\times$ under the reciprocity isomorphism of Lubin-Tate theory:
\begin{equation}\label{eq:Artinmap}
   \gamma:\mathrm{Gal}(\bar{\breve{F}}/\breve{F})^{ab}\rightarrow \Oo_F^\times.
\end{equation}
Let $W_s$ be the ring of integers of $M_s$.
Notice that in this notation $M_0=\breve{F}$, $W_0=\Oo_{\breve F}$. We have 
\[[W_s: \Oo_{\breve{F}}]=[\Oo_F^\times:\Oo_s^\times]=\q^s.\]
By its definition, $W_s/\Oo_{\breve{F}_0}$ is totally ramified and integrally closed. 

\begin{definition}(\cite[Definition 3.1]{W})\label{def:quasi canonical lifts}
A quasi-canonical lift of level $s$ with respect to the embedding $\kappa:\Oo_F\hookrightarrow \Oo_{\mathbb{H}}$ is a triple $(\cG',\rho,\iota_{\cG'})$. Here $\cG'$ is a lift of $\bY$ defined over some finite extension $A'/ \Oo_{\breve{F}}$. Meanwhile $\rho: \cG'\times \Spec k \overset{\sim}{\longrightarrow} \bY$ is an isomorphism of strict formal $\Oo_{F_0}$-modules and $\iota_{\cG'}:\Oo_s \overset{\sim}{\longrightarrow} \End(\cG')$ is an  $\Oo_{F_0}$-algebra isomorphism. We require the following conditions. 
\begin{enumerate}
    \item The composition of $\iota_{\cG'}$ together with the natural embedding $\End(\cG')\rightarrow \End (\Lie \cG')=A'$ is the canonical inclusion $\Oo_s\rightarrow A'$.\
    \item The composition of $\iota_{\cG'}$ with the inclusion $\End(\cG')\hookrightarrow \End(\bY)=\Oo_{\mathbb{H}}$ induced by $\rho$ is equal to $\kappa|_{\Oo_s}$.
\end{enumerate}
We call a quasi-canonical lift of level $0$ a canonical lift.
\end{definition}
\begin{remark}
To ease notation, we often omit $\iota_{\cG'}$ from the triple $(\cG',\rho,\iota_{\cG'})$. We also say $\cG'$ is a quasi-canonical lift without mentioning the framing map $\rho$ if condition (1) above is satisfied.
\end{remark}
\begin{remark}\label{rmk: cG independent of kappa}
Let $\kappa,\kappa'$ be two embeddings $\Oo_F\rightarrow \Oo_{\mathbb H}$. By Skolem-Noether Theorem, there is an element $a\in \Oo^\times_{\mathbb H}$ such that $\kappa'=a\kappa a^{-1}$. If $(\cG',\rho)$ is a quasi-canonical lift of level $s$ with respect to $\kappa$, then $(\cG',\iota_\bY(a)\circ \rho)$ is a quasi-canonical lift of level $s$ with respect to $\kappa'$.
\end{remark}

Canonical lift can be defined over $\Oo_{\breve F}$ and is unique up to isomorphism, we fix such a choice and denote it by $\cG$. By Remark \ref{rmk: cG independent of kappa} we can assume $\cG\otimes k=\bY$ and $\iota_\cG\otimes k=\iota_\bY$.
More generally quasi-canonical lifts of level $s$ always exists. Their minimal ring of definition is $W_s$, see Theorem 3.2 and Corollary 4.7 of \cite{W}. A quasi-canonical lift $(\cG_s,\rho)$ with respect to $\kappa$ defines a deformation of $(\bY,\rho_\bY)$ to $\Spf W_s$. Hence we obtain a morphism
\[\Spf W_s\rightarrow \cM.\]
\begin{lemma}\label{lem: definition of cW_s}
The morphism $\Spf W_s\rightarrow \cM$ is a closed embedding and defines a Cariter divisor $\cW_s^\kappa$ of $\cM$. Moreover the resulting divisor $\cW_s^\kappa$ is independent of the choice of the quasi-canonical lift $(\cG_s,\rho)$.
\end{lemma}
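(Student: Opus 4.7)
The plan is to decompose the statement into three claims: (a) the classifying morphism $\Spf W_s \to \cM$ is a closed embedding, (b) its scheme-theoretic image is a Cartier divisor of $\cM$, and (c) this Cartier divisor does not depend on the choice of quasi-canonical lift $(\cG_s,\rho)$ of level $s$ with respect to $\kappa$.

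For (a), I would translate to the ring map $\Oo_{\breve F}[[T]] \to W_s$ obtained from the universal property of $\cM$ applied to the deformation $(\cG_s,\rho)$. Both rings are complete local Noetherian $\Oo_{\breve F}$-algebras with common residue field $k$, so by the complete Nakayama lemma, surjectivity reduces to surjectivity on Zariski cotangent spaces. The substantive input is the minimality assertion in Theorem 3.2 and Corollary 4.7 of \cite{W}: $W_s$ is the minimal ring of definition of the quasi-canonical lift. This means the classifying map does not factor through any proper $\Oo_{\breve F}$-subalgebra of $W_s$, which is exactly surjectivity.

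For (b), given that the map is a closed embedding, the Cartier divisor assertion is formal: $\cM \cong \Spf \Oo_{\breve F}[[T]]$ is a regular $2$-dimensional formal scheme and $\Spf W_s$ is $1$-dimensional (as $W_s$ is a DVR with residue field $k$), so the kernel of the surjection is a height-$1$ prime in a regular local ring. Since regular local rings are UFDs, this prime is principal, and the closed subscheme carries a locally principal ideal sheaf. I would define $\cW_s^\kappa$ to be this Cartier divisor.

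For (c), by Gross's theory (Theorem 3.2 of \cite{W}) the set of isomorphism classes of quasi-canonical lifts of level $s$ with respect to $\kappa$ is a principal homogeneous space under $\mathrm{Gal}(M_s/\breve F)$, acting via the reciprocity map \eqref{eq:Artinmap} as automorphisms of $\Spf W_s$ over $\Spf \Oo_{\breve F}$. Replacing $(\cG_s,\rho)$ by another quasi-canonical lift of level $s$ therefore alters the classifying morphism $\Spf W_s \to \cM$ only by precomposition with an automorphism of the source, leaving the scheme-theoretic image, and hence the Cartier divisor $\cW_s^\kappa$, unchanged. The main obstacle is the surjectivity in step (a); it is not formal but relies on the nontrivial minimal-ring-of-definition input from \cite{W}. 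The other two steps are then routine consequences of regularity of $\cM$ and the Galois symmetry of quasi-canonical lifts.
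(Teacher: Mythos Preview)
Your proof is correct and follows essentially the same approach as the paper: the paper also reduces the closed embedding to the input from \cite{W} (it phrases this as ``$X$ pulls back to a uniformizer of $W_s$'' via \cite[Corollary~4.7]{W}, which is exactly the cotangent-space surjectivity you set up), and for independence it likewise invokes the simply transitive Galois action from \cite[Theorem~3.2]{W}, identifying the different lifts with the roots in $W_s$ of a single monic polynomial $g_s(X)$ cutting out $\cW_s^\kappa$. One minor remark: once you invoke the minimal-ring-of-definition property you get surjectivity of $\Oo_{\breve F}[[T]]\to W_s$ directly, so the Nakayama reduction to cotangent spaces is redundant rather than a separate step.
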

\begin{proof}
Since $\cM\cong \Spf\Oo_{\breve{F}}[[X]]$, the first claim follows from the fact that $X$ pullbacks to a uniformizer of $W_s$, which in turn follows from \cite[Corollary 4.7]{W}. 

Assume that $\cW_s^\kappa$ is defined by the degree $\q^s$ monic polynomial $g_s(X)$.
By \cite[Theorem 3.2]{W}, the set of quasi-canonical lifts of level $s$ is acted simply transitively on by the Galois group $\mathrm{Gal}(W_s/\Oo_{\breve F})$ and can be identified with 
\begin{equation}\label{eq:g_s(X)}
    \Hom_{\Oo_{\breve{F}}}(\Oo_{\breve{F}}[[X]]/(g_s(X)),W_s),
\end{equation}
or in other words the set of roots of $g_s(X)$ in $W_s$. This proves the second claim of the lemma.
\end{proof}
\begin{remark}\label{rmk: root of g_s generates W_s}
by \cite[Corollary 4.7]{W}, we know that each root $x_s$ of $g_s(X)$ is a uniformizer of $W_s$. In particular, $W_s=\Oo_{\breve F}[x_s]$.  
\end{remark}

\subsection{Quasi-canonical lifting divisors on $\cN^\Pap$}\label{subsec:quasi canonical lifting on NPap} 
Throughout the rest of the paper we use the notation
\begin{equation}\label{eq:m(s)}
    m(s)=\left\{\begin{array}{cc}
    s-1 & \text{ if } s\geq 1,\\
    0 & \text{ if } s=0.
\end{array}\right.
\end{equation}
\begin{proposition}\label{prop:quasi canonical divisor on NPap}
For all $s\geq 0$ there exist quasi-canonical lifts $(\cG_s,\rho_s)$ of level $s$ with respect to $\kappa$ together with a morphism $\beta_s:\cG_{m(s)}\rightarrow \cG_s$ such that the following diagrams commute
\begin{equation}\label{eq:cXs definition diagram}
\begin{tikzcd}
\cG_{0}\otimes k\arrow{r}{\beta_0\otimes k} \arrow{d}{\rho_0} & \cG_{0}\otimes k \arrow{d}{\rho'_0}\\
\bY \arrow{r}{\iota_\bY(\pi)} & \bY
\end{tikzcd},
\begin{tikzcd}
\cG_{m(s)}\otimes k\arrow{r}{\beta_s\otimes k} \arrow{d}{\rho_{m(s)}} & \cG_{s}\otimes k \arrow{d}{\rho_s}\\
\bY \arrow{r}{\iota_\bY(\pi)} & \bY
\end{tikzcd}
\end{equation}
where $s\geq 1$ in the second diagram.
These define Weil divisors $\cY_{s,+}^\kappa$ of $\cN^\Pap$ isomorphic to $\Spf W_s$. The morphism 
\[\cY_{s,+}^\kappa\rightarrow \cN^\Pap\overset{p_2}{\rightarrow} \cM\]
induces an isomorphism from $\cY_{s,+}^\kappa$ to its image $\cW_s^\kappa$. 
\end{proposition}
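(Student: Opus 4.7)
The plan is to produce the triples $(\cG_s,\rho_s,\beta_s)$ of the statement, transport them via the isomorphism $\cN^\Pap\cong\cM_{\Gamma_0(\pi)}$ of Proposition \ref{prop:exceptionaliso} into a morphism $\Spf W_s\to\cN^\Pap$, and then identify the image as a Weil divisor which maps isomorphically onto $\cW_s^\kappa$ under $p_2$.

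For $s=0$ I would take $\cG_0=\cG$ the canonical lift, which is defined over $\Oo_{\breve F}=W_0$ and carries the full $\Oo_F$-action; the endomorphism $\beta_0:=\iota_\cG(\pi)$ then lifts $\iota_\bY(\pi)$ tautologically, with $\rho_0=\rho_0'=\mathrm{id}_\bY$ under the identification $\cG\otimes k=\bY$. For $s\geq 1$ I would fix a quasi-canonical lift $\cG_s$ of level $s$ defined over $W_s$, invoking \cite[Theorem 3.2, Corollary 4.7]{W} for existence and for minimality of the ring of definition. The core technical step is to produce an isogeny $\beta_s:\cG_{m(s)}\otimes_{W_{m(s)}}W_s\to\cG_s$ lifting $\iota_\bY(\pi)$; I would do this via Grothendieck--Messing deformation theory applied to the Dieudonn\'e crystals of $\cG_{m(s)}$ and $\cG_s$, so that lifting the reduction $\Pi_\bY:M(\bY)\to M(\bY)$ to $W_s$ reduces to checking that the Hodge filtration of the source is carried into that of the target. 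This compatibility is forced by condition (1) of Definition \ref{def:quasi canonical lifts}, which pins down the $\Oo_{F_0}$-action on the Lie algebras. Adjusting $\rho_s$ by an element of $\Oo_{\mathbb{H}}^\times$, as permitted by Remark \ref{rmk: cG independent of kappa} (and, if necessary, replacing $\cG_s$ by a Galois conjugate, as allowed by \cite[Theorem 3.2]{W}), arranges for the reduction of $\beta_s$ to be exactly $\iota_\bY(\pi)$, so that the displayed diagrams commute.

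With these data in hand, the quadruple $(\cG_{m(s)}\otimes_{W_{m(s)}}W_s,\,\cG_s,\,\beta_s,\,\rho_{m(s)})$ defines a $W_s$-point of $\cM_{\Gamma_0(\pi)}$, hence via Proposition \ref{prop:exceptionaliso} a morphism $\iota_s:\Spf W_s\to\cN^\Pap$; I would set $\cY_{s,+}^\kappa$ to be its scheme-theoretic image. To verify that $\iota_s$ is a closed immersion with the claimed properties, I would compose with $p_2$: by construction this records the second factor $\cG_s$ together with its framing $\rho_s$, so $p_2\circ\iota_s$ coincides with the closed embedding $\Spf W_s\hookrightarrow\cM$ of Lemma \ref{lem: definition of cW_s} onto $\cW_s^\kappa$. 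Since $p_2$ is finite and separated by Lemma \ref{lem:modularequation} and the composition $p_2\circ\iota_s$ is a closed immersion, $\iota_s$ itself must be a closed immersion and the induced map $\cY_{s,+}^\kappa\to\cW_s^\kappa$ is an isomorphism. Finally, since $\cN^\Pap$ has absolute dimension $2$ while $\Spf W_s$ has dimension $1$, $\cY_{s,+}^\kappa$ is of codimension one in $\cN^\Pap$, hence a Weil divisor.

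The main obstacle is the construction of $\beta_s$ for $s\geq 1$: one must ensure both that the Grothendieck--Messing lifting criterion is met at the level of crystals, and that the framings can be chosen so that the reduction of $\beta_s$ is exactly $\iota_\bY(\pi)$ rather than merely a unit multiple thereof. Once $\beta_s$ is in place, the remaining verifications---closed immersion via the finiteness of $p_2$, identification of the image in $\cM$ with $\cW_s^\kappa$, and codimension count---are formal consequences of Proposition \ref{prop:exceptionaliso}, Lemma \ref{lem:modularequation}, and Lemma \ref{lem: definition of cW_s}.
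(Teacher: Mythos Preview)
Your overall architecture matches the paper's: build the data $(\cG_s,\rho_s,\beta_s)$, feed them into the moduli interpretation of $\cM_{\Gamma_0(\pi)}\cong\cN^\Pap$, and then verify the closed-immersion and codimension claims. Your closed-immersion argument via cancellation (the composite $p_2\circ\iota_s$ is a closed immersion and $p_2$ is finite, hence $\iota_s$ is a proper monomorphism, hence closed) is correct and perhaps slicker than the paper's direct check, which simply writes down the surjective ring map $\varphi_s^\sharp:\Oo_{\breve F}[[X,Y]]/(f)\to W_s$, $X\mapsto x_{m(s)}$, $Y\mapsto x_s$, using that $x_s$ generates $W_s$.

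The gap is in your construction of $\beta_s$. You fix $\cG_{m(s)}$ and $\cG_s$ \emph{independently} and then attempt to lift $\iota_\bY(\pi)$ by Grothendieck--Messing. But the GM criterion is precisely that the crystal map carry the Hodge line of the chosen source lift into the Hodge line of the chosen target lift, and for independently selected quasi-canonical lifts this generally fails: from a fixed $\cG_{m(s)}$ only $q$ of the $q^s$ level-$s$ lifts are reached by a degree-$q$ isogeny. Your appeal to condition (1) of Definition~\ref{def:quasi canonical lifts} does not resolve this, since that condition governs how $\Oo_s\subset\End(\cG_s)$ acts on $\Lie\cG_s$, not how the non-endomorphism $\iota_\bY(\pi)$ relates the Hodge filtrations of two different lifts. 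Your proposed remedy of ``replacing $\cG_s$ by a Galois conjugate'' presupposes knowing which conjugates admit an isogeny from $\cG_{m(s)}$, which is exactly what you are trying to establish.

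The paper sidesteps this by building the lifts \emph{inductively} so that $\beta_s$ comes for free. Starting from $\cG_0=\cG$, $\beta_0=\iota_\cG(\pi)$, and having $(\cG_s,\rho_s)$ with corresponding root $x_s$ of $g_s$, one solves the modular equation $f(x_s,Y)=0$. By the moduli meaning of $\cM_{\Gamma_0(\pi)}$, each root $y$ already packages a degree-$q$ isogeny $\beta':\cG_s\to\cG'$ with $\cG'$ defined over $W_s[y]$; by \cite[Proposition~4.6]{W} one may choose $y$ so that $\cG'$ has level $s+1$. Setting $(\cG_{s+1},\beta_{s+1}):=(\cG',\beta')$ and defining $\rho_{s+1}$ by $\rho_{s+1}=\iota_\bY(\pi)\circ\rho_s\circ(\beta_{s+1}\otimes k)^{-1}$ makes the required diagram commute by construction. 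No deformation-theoretic lifting is needed: the isogeny is produced by the modular curve itself.
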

\begin{proof}
We proceed by induction on $s$. 
First let $(\cG_0,\rho_0)=(\cG,\rho_\cG)$ be a canonical lift with respect to $\kappa$ and $\beta_0=\iota_{\cG}(\pi)$. Then we can define $\rho'_0$ by \eqref{eq:rhoY'} such that the first diagram in \eqref{eq:cXs definition diagram} commutes.
Recall that by  Lemma \ref{lem:modularequation}, we have $\cM_{\Gamma_0(\pi)}=\Spf R$, where
\begin{equation}\label{eq:coordinateringofcN}
    R:=\Oo_{\breve{F}}[[X,Y]]/(f(X,Y)).
\end{equation}
Let $x_0\in W_0$ be the unique root of the polynomial $g_0(X)$ where $g_0(X)$ is as in \eqref{eq:g_s(X)}. Now assume inductively that we have defined $x_s\in W_s$ for $s\geq 0$ and that $x_s$ is a root of $g_s(X)$. In particular $W_s=\Oo_{\breve{F}}[x_s]$ and the isomorphism
\[\Oo_{\breve{F}}[[X]]/(g_s(X))\rightarrow W_s, \ X\mapsto x_s\]
corresponds to a quasi-canonical lift $(\cG_s,\rho_s)$ of level $s$ (see the proof of Lemma \ref{lem: definition of cW_s}). Consider the equation 
\begin{equation}\label{eq:modularequation}
    f(x_s,Y)=0.
\end{equation}
By the moduli interpretation of $\cM_{\Gamma_0(\pi)}$, a solution $y$ to the above equation corresponds to a strict formal $\Oo_{F_0}$-module $\cG'$ over $W_s[y]$ together with a degree $\q$ isogeny 
\[\beta':\cG_s\rightarrow \cG'.\]
By the proof of \cite[Proposition 4.6]{W}, $\cG'$ is a quasi-canonical lift of level either $s+1$ or $m(s)$ and we can choose $y$ such that $\cG'$ is of level $s+1$. We define $x_{s+1}$ to be such a $y$ and define $\cG_{s+1}:=\cG'$ and $\beta_{s+1}:=\beta'$. Equation \eqref{eq:rhoY'} then gives us a framing map $\rho_{s+1}:\cG_{s+1}\otimes k\rightarrow \bY$ defined by 
\begin{equation}\label{eq:normalizationofbetas}
  \rho_{s+1}=(\beta_s\otimes k)^{-1} \circ \rho_{s} \circ \iota_\bY(\pi).
\end{equation}
This finishes the definition of quasi-canonical lifts $(\cG_s,\rho_s)$ inductively. The commutativity of the second diagram in \eqref{eq:cXs definition diagram} follows from \eqref{eq:normalizationofbetas}.

Define an $\Oo_{\breve{F}}$-algebra homomorphism 
\begin{equation}\label{eq:varphi_s sharp}
    \varphi_s^\sharp: \Oo_{\breve{F}}[[X,Y]]/(f(X,Y))\rightarrow W_s, X\mapsto x_{m(s)}, Y \mapsto x_{s}.
\end{equation}
$\varphi_s^\sharp$ is surjective as $x_s$ is a generator of $W_s$ for $s\geq 0$. Hence it defines a closed sub formal scheme $\cY_{s,+}^\kappa\cong \Spf W_s$ of $\cN^\Pap$. Since $\cY_{s,+}^\kappa$ has codimension $1$ and $W_s$ is an integral domain, it is a Weil divisor.
\end{proof}

Using Tate modules, we can see that there is a unique morphism 
\begin{equation}\label{gamma_s}
    \gamma_s:\cG_s\rightarrow \cG_{m(s)}
\end{equation}
such that 
\[\beta_s\circ \gamma_s=\iota_{\cG_s}(\pi_0).\]
Then it is automatically true that 
\[\gamma_s \circ \beta_s=\iota_{\cG_{m(s)}}(\pi_0),\]
and 
\begin{equation}\label{eq:gamma_s commutative diagram}
  \rho_{m(s)}\circ (\gamma_s\otimes k) \circ \rho_{s}^{-1}= \iota_\bY (\pi).  
\end{equation}
Let $\lambda_s:\cG_s\rightarrow \cG^\vee_s$ be the unique principal polarization that lifts $\bY \rightarrow \bY^\vee$ corresponding to the bilinear form $\langle,\rangle_\bY$ on $M(\bY)$.
According to the dictionary translating between objects in $\cM_{\Gamma_0(\pi)}$ and $\cN^\Pap$ discussed before Proposition \ref{prop:exceptionaliso},
the pullback of the universal object of $\cN^\Pap$ to $\cY_{s,+}^\kappa$ is 
$(X_s^+,\iota_s^+,\lambda_s^+,\rho_s^+)$ where
\begin{equation}\label{eq:Xs+}
    X_s^+=\cG_{m(s)}\times \cG_{s},\ \iota_s^+(\pi)=\left(\begin{array}{cc}
     & \gamma_s \\
    \beta_s & 
\end{array}\right), \lambda_s^+=\lambda_{m(s)}\times \lambda_{s},
\end{equation}
and
\begin{equation}\label{eq:rho_s^+}
    \rho_s^+=\begin{cases}
    \rho_{m(s)}\times \rho_{s} & \text{ if } s>0,\\
    \rho_0\times \rho_0' & \text{ if } s=0.
    \end{cases}
\end{equation}
Similarly for $s\geq 1$ we can define a closed sub formal scheme $\cY_{s,-}^\kappa\cong \Spf W_s $ of $\cN^\Pap$ such that the pullback of the universal object of $\cN^\Pap$ to $\cY_{s,-}^\kappa$ is 
$(X_s^-,\iota_s^-,\lambda_s^-,\rho_s^-)$ where
\begin{equation}\label{eq:Xs-}
    X_s^-=\cG_s\times \cG_{m(s)},\ \iota_s^-(\pi)=\left(\begin{array}{cc}
     & \beta_s \\
    \gamma_s & 
\end{array}\right), \lambda_s^-=\lambda_s\times \lambda_{m(s)},\ \rho_s^-=\rho_{s}\times \rho_{m(s)}.
\end{equation}

\begin{remark}
When $s=0$ we denote $\cY_{0,+}^\kappa$ by $\cY_{0}^\kappa$ as well. We also define $\cY_{0,-}^\kappa$ to be $\cY_{0}^\kappa$ for uniformality of statements.
\end{remark}

Let $\cW_s^\kappa$ be as in Lemma \ref{lem: definition of cW_s}. Then $p_i^*(\cW_s^\kappa)$ ($i=1,2$) is a Cartier (hence also Weil) divisor on $\cN^\Pap$.
\begin{lemma}\label{lem:piW and cY}
For all $s\geq 0$ we have the following equality of Weil divisors on $\cN^\Pap$.
\[p_1^*(\cW_s^\kappa)=\cY_{s+1,+}^\kappa +\cY_{s,-}^\kappa,\ p_2^*(\cW_s^\kappa)=\cY_{s+1,-}^\kappa +\cY_{s,+}^\kappa.\]
\end{lemma}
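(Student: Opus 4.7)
I will prove the first equality; the second is identical after swapping the two projection factors of $\cM\times\cM$. Write $D_1 := p_1^*(\cW_s^\kappa)$ and $D_2 := \cY_{s+1,+}^\kappa + \cY_{s,-}^\kappa$; the plan is to show that $D_1$ and $D_2$ have the same prime Weil decomposition on $\cN^\Pap$.

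First, I would observe that $D_1$ has no vertical prime components. Indeed, by Lemma \ref{lem:modularequation} the morphism $p_1$ is finite flat of degree $q+1$, and $\cW_s^\kappa \cong \Spf W_s$ is finite flat over $\Spf \Oo_{\breve F}$ of degree $q^s$, so the closed subscheme underlying $D_1 = \cN^\Pap \times_\cM \cW_s^\kappa$ is finite flat over $\Spf \Oo_{\breve F}$ of rank $(q+1)q^s$. Its special fiber is therefore $0$-dimensional and cannot contain any prime Weil divisor of $\cN^\Pap$; so every prime component of $D_1$ is horizontal.

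Next, I analyze the horizontal components via the moduli interpretation on the generic fiber. A geometric point of $p_1^{-1}(\cW_s^\kappa)$ in $\cM_{\Gamma_0(\pi)}$ is a triple $(Y,Y',\phi)$ where $Y$ is a quasi-canonical lift of level $s$ and $\phi : Y \to Y'$ is a degree-$q$ isogeny. By \cite[Proposition 4.6]{W}, the same input used in the proof of Proposition \ref{prop:quasi canonical divisor on NPap}, exactly one of the $q+1$ such isogenies sends $Y$ to a quasi-canonical lift of level $m(s)$, namely the map $\gamma_s$ of \eqref{gamma_s}, yielding a point of $\cY_{s,-}^\kappa$; the remaining $q$ send $Y$ to a quasi-canonical lift of level $s+1$, each yielding a point of $\cY_{s+1,+}^\kappa$ via the $\beta_{s+1}$ constructed in Proposition \ref{prop:quasi canonical divisor on NPap}. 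Summing over the $q^s$ geometric lifts of level $s$ produces $(q+1)q^s$ distinct geometric preimages, which is the length of $D_1$ on the generic fiber; hence $p_1$ is \'etale over the generic fiber of $\cW_s^\kappa$, and the generic fiber of $D_1$ is reduced and equal to the disjoint union of the generic fibers of $\cY_{s+1,+}^\kappa$ and $\cY_{s,-}^\kappa$.

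Combining the two steps, $D_1$ has precisely the horizontal prime components $\cY_{s+1,+}^\kappa$ and $\cY_{s,-}^\kappa$, each with multiplicity one, so $D_1 = D_2$. The main technical input is \cite[Proposition 4.6]{W} classifying degree-$q$ isogenies out of a quasi-canonical lift; the rest is the degree bookkeeping $(q+1)q^s = q^{s+1}+q^s$ together with flatness. The main obstacle I expect is justifying the transfer from the generic-fiber identification to an equality of Weil divisors on the integral model; this is exactly what the flatness of $D_1$ over $\Oo_{\breve F}$ delivers by ruling out spurious vertical components.
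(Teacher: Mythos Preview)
Your proof is correct and follows essentially the same approach as the paper: both use the moduli interpretation (via \cite[Proposition 4.6]{W}) to identify $\cY_{s+1,+}^\kappa$ and $\cY_{s,-}^\kappa$ as subdivisors of $p_1^*(\cW_s^\kappa)$, and then the degree count $(q+1)q^s = q^{s+1}+q^s$ to conclude equality. The paper's version is slightly more direct in that it argues the inclusion of divisors on the integral model (from the construction in Proposition~\ref{prop:quasi canonical divisor on NPap}) rather than passing through reducedness of the generic fiber, but the content is the same.
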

\begin{proof}
We prove the first equation.
By the moduli interpretation in Proposition \ref{prop:quasi canonical divisor on NPap}, both $\cY_{s+1,+}^\kappa$ and $\cY_{s,-}^\kappa$ are sub divisors of $p_1^*(\cW_s^\kappa)$. Since $p_1$ is finite of degree $\q+1$ and $\cW_s^\kappa$ has degree $\q^s$ over $\Oo_{\breve{F}}$, we know that $p_1^*(\cW_s^\kappa)$ has degree $\q^s(\q+1)$ over $\Oo_{\breve{F}}$. On the other hand $\cY_{s+1,+}^\kappa$ and $\cY_{s,-}^\kappa$ are isomorphic to $\Spf W_{s+1}$ and $\Spf W_s$ respectively. By counting the degrees, we see that the inclusion of $\cY_{s+1,+}^\kappa +\cY_{s,-}^\kappa$ into $p_1^*(\cW_s^\kappa)$ is in fact an equality.
\end{proof}

\begin{corollary}\label{cor: Gal acts transitively}
The Galois group $\mathrm{Gal}(W_s/\Oo_{\breve{F}})$ acts simply transitively on diagrams which satisfy the condition stated in Proposition \ref{prop:quasi canonical divisor on NPap}. In particular, the divisor $\cY_{s,+}^\kappa$ is independent of the choice of such a diagram. Similar statements hold for $\cY_{s,-}^\kappa$.
\end{corollary}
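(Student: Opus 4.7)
The plan is to combine Wewers' simple-transitivity result for level-$s$ quasi-canonical lifts with a cardinality count. Denote a diagram by $\mathfrak{D} = (\cG_j, \rho_j, \beta_j)_{j=0}^{s}$ satisfying the compatibilities of Proposition~\ref{prop:quasi canonical divisor on NPap}.

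First I would carry out the cardinality count. Starting from the essentially unique canonical lift $(\cG_0, \rho_0)$, at each step $j \geq 1$ one picks a root $x_j$ of $f(x_{j-1}, Y) = 0$ such that the resulting lift $\cG_j$ has level $j$. By Lemma~\ref{lem:modularequation} together with the proof of \cite[Proposition~4.6]{W}, the polynomial $f(x_{j-1}, Y)$ has $\q + 1$ roots, exactly one of which produces a going-down lift of level $m(j-1)$ and the remaining $\q$ producing going-up lifts of level $j$. Hence the total number of diagrams is $\q^s$, which matches $|\mathrm{Gal}(W_s/\Oo_{\breve F})| = [\Oo_F^\times : \Oo_s^\times]$.

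Next I would define the Galois action on the set of diagrams by transport of structure: for $\sigma \in \mathrm{Gal}(W_s/\Oo_{\breve F})$, set $\sigma(\mathfrak{D}) := (\sigma^{*}\cG_j, \sigma^{*}\rho_j, \sigma^{*}\beta_j)_{j=0}^{s}$, using the quotient $\mathrm{Gal}(W_s/\Oo_{\breve F}) \twoheadrightarrow \mathrm{Gal}(W_j/\Oo_{\breve F})$ at level $j$. Since $\sigma$ acts trivially on the residue field $k$, the framings $\rho_j$ and the commutativity relations in \eqref{eq:cXs definition diagram} (which live over $\Spec k$) are preserved under base change, so the action is well-defined. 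For freeness, note that if $\sigma$ fixes $\mathfrak{D}$ then in particular $\sigma$ fixes the top lift $(\cG_s, \rho_s)$, and by \cite[Theorem~3.2]{W} the Galois action on such lifts is free, forcing $\sigma = 1$. Combined with the matching cardinalities, this establishes simple transitivity.

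Finally, for the divisor independence: if $\sigma(\mathfrak{D}) = \mathfrak{D}'$, then the corresponding maps $\varphi_s^{\sharp}$ and ${\varphi_s^{\sharp}}'$ defined as in \eqref{eq:varphi_s sharp} satisfy ${\varphi_s^{\sharp}}' = \sigma \circ \varphi_s^{\sharp}$, which has the same kernel since $\sigma$ is an $\Oo_{\breve F}$-algebra automorphism of $W_s$. Hence both maps cut out the same closed subscheme of $\cN^\Pap$, and by simple transitivity $\cY_{s,+}^\kappa$ depends only on $\kappa$ and $s$. The argument for $\cY_{s,-}^\kappa$ is identical after swapping the roles of the two factors in the moduli interpretation (equivalently, replacing $p_1$ with $p_2$ and $\beta_s$ with $\gamma_s$). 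The main obstacle will be checking that the inductive construction of a whole tower really commutes with base change by $\sigma$, so that $\sigma^{*}\beta_j$ remains a going-up isogeny into $\sigma^{*}\cG_j$; this should reduce to functoriality of $\cM_{\Gamma_0(\pi)}$ under base change together with the rigidity of lifts of isogenies between formal $\Oo_{F_0}$-modules.
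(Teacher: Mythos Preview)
Your argument is correct, but the paper's proof is shorter because it exploits a ``going-down'' uniqueness that lets it avoid your full tower. The paper observes that a diagram (i.e.\ a commuting square as in \eqref{eq:cXs definition diagram}) is completely encoded by the pair $(x_{m(s)},x_s)$, and that $x_{m(s)}$ is the \emph{unique} root of $f(X,x_s)$ corresponding to a lift of level $m(s)$ (this is the content of Lemma~\ref{lem:piW and cY}: among the $\q+1$ roots, exactly $\q$ go up and exactly one goes down). Hence the diagram is determined by $x_s$ alone, and Wewers' simple transitivity of $\mathrm{Gal}(W_s/\Oo_{\breve F})$ on the roots of $g_s$ immediately gives the result. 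By contrast, you parametrize diagrams by the entire tower $(\cG_j,\rho_j,\beta_j)_{j=0}^s$, count bottom-up to get $\q^s$, and then run a freeness-plus-cardinality argument. This works (towers are in bijection with pairs precisely because of the same going-down uniqueness), but it is more machinery than needed.

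Your closing worry---whether $\sigma^*\beta_j$ remains a going-up isogeny---is a non-issue: base change by $\sigma$ preserves the endomorphism ring, hence the level, so $\sigma^*\cG_j$ is still level $j$ and $\sigma^*\cG_{j-1}$ is still level $j-1$. You could have dispensed with the whole tower by noting that the kernel of $\varphi_s^\sharp$ depends only on $(x_{m(s)},x_s)$, and that this pair is already a Galois orbit of size $\q^s$ for the reason above.
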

\begin{proof}
Let $x_s$ be the root of $g_s(X)$ corresponding to $(\cG_s,\rho_s)$.
Consider the roots of the following degree $\q+1$ equation
\[f(X,x_s).\]
By  Lemma \ref{lem:piW and cY}, there are exactly $\q$ roots which correspond to quasi-canonical lifts of level $s+1$ and there is a unique root $x_{m(s)}$ which  corresponds to a quasi-canonical lift of level $m(s)$. Since $\mathrm{Gal}(W_s/\Oo_{\breve{F}})$ acts simply transitively on the roots of $g_s(X)$, it also acts simply transitively on such pairs $(x_{m(s)},x_s)$. But such a pair exactly corresponds to a diagram in Proposition \ref{prop:quasi canonical divisor on NPap}.
\end{proof}

\subsection{Quasi-canonical lifting divisors on $\cN^\Kra$}
For a blow-up $\tilde{X}\rightarrow X$ and a morphism $Y\rightarrow X$, the strict transform of $Y$ under $\tilde{X}\rightarrow X$ is defined in the definition below \cite[Corollary 7.15]{hartshorne2013algebraic}.
\begin{definition}\label{def:quasicanonicaldivisoronKramer}
Define $\cZ_{s,+}^\kappa$ (resp. $\cZ_{s,-}^\kappa$) to be the strict transform of $\cY_{s,+}^\kappa$ (resp. $\cY_{s,-}^\kappa$) under the blow-up $\Phi:\cN^\Kra\rightarrow \cN^\Pap$. Notice that $\cZ_{0,+}^\kappa=\cZ_{0,-}^\kappa$, so we also denote it by $\cZ_0^\kappa$.
\end{definition}

\begin{proposition}\label{prop:blowupquasicanonicaldivisor}
$\cZ_{s,+}^\kappa$ is an irreducible Cartier divisor of $\cN^\Kra$.
The blow-up $\Phi$ induces an isomorphism 
\[\cZ_{s,+}^\kappa \overset{\sim}{\rightarrow} \cY_{s,+}^\kappa.\]
In particular $\cZ_{s,+}^\kappa\cong \Spf W_s$.
Similar statements hold for $\cZ_{s,-}^\kappa$.
\end{proposition}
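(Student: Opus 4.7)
The plan is to combine Proposition \ref{prop: Kra is blow up of Pappas}, which identifies $\Phi$ with the blow-up along $\mathrm{Sing}$, with the standard description of strict transforms through blow-ups and with the fact that $\cY_{s,+}^\kappa\cong\Spf W_s$ is one-dimensional and regular.

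First, since $\mathrm{Sing}$ is a single reduced closed point of $\cN^\Pap$ while $\cY_{s,+}^\kappa\cong\Spf W_s$ has dimension one, the divisor $\cY_{s,+}^\kappa$ is not set-theoretically contained in $\mathrm{Sing}$. By the universal property of blow-ups (Hartshorne II, Cor.~7.15 and the surrounding discussion, applied in the formal-scheme setting), the strict transform of a closed subscheme not contained in the center is canonically isomorphic to the blow-up of that subscheme along the scheme-theoretic pullback of the ideal sheaf of the center. Thus $\cZ_{s,+}^\kappa$ is identified, via $\Phi$, with the blow-up of $\cY_{s,+}^\kappa$ along $\cY_{s,+}^\kappa\cap\mathrm{Sing}$.

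Next, because $W_s$ is a DVR, every nonzero ideal is a power of the maximal ideal, hence principal and invertible. The ideal cutting out $\cY_{s,+}^\kappa\cap\mathrm{Sing}$ inside $W_s$ is nonzero, since the unique closed point of $\Spf W_s$ lies in the fiber over $\mathrm{Sing}\in\cN^\Pap$. As blowing up along an invertible ideal sheaf is an isomorphism, the composition $\cZ_{s,+}^\kappa\to\cY_{s,+}^\kappa\cong\Spf W_s$ induced by $\Phi$ is an isomorphism. Irreducibility of $\cZ_{s,+}^\kappa$ now follows from that of $\Spf W_s$, and for the Cartier property we invoke Proposition \ref{prop:Kramermodel}: $\cN^\Kra$ is regular and two-dimensional, and $\cZ_{s,+}^\kappa$ has dimension one, so it is an integral closed subscheme of codimension one in a regular locally Noetherian formal scheme, hence a Cartier divisor. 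The argument for $\cZ_{s,-}^\kappa$ is identical, replacing $\cY_{s,+}^\kappa$ by $\cY_{s,-}^\kappa$ throughout.

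The only delicate point in this strategy is the transposition of the Hartshorne identification (strict transform $=$ blow-up of the subscheme along its intersection with the center) from the algebraic to the formal-scheme setting; this is standard but should be checked. As a back-up, one can argue directly using the explicit local equations for $\cN^\Kra$ from Section \ref{subsec:Kramermodel} and the surjection $\varphi_s^\sharp$ in \eqref{eq:varphi_s sharp}: pulling back $\varphi_s^\sharp$ through the two affine charts of the local model exhibits $\cZ_{s,+}^\kappa$ as the zero locus of a single non–zero-divisor in each chart, simultaneously verifying the Cartier property and the isomorphism with $\Spf W_s$. This alternative is computational but avoids any appeal to general blow-up theory.
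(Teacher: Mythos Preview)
Your proposal is correct and follows essentially the same approach as the paper: both identify the strict transform with the blow-up of $\cY_{s,+}^\kappa\cong\Spf W_s$ along the pullback of the center ideal, then observe that this blow-up is an isomorphism because $W_s$ is a PID (equivalently, a DVR), and finally deduce the Cartier property from the regularity of $\cN^\Kra$. The paper is slightly more explicit in naming the center ideal as $I=(\pi,X,Y)$ in the coordinates \eqref{eq:coordinateringofcN}, but the argument is otherwise the same.
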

\begin{proof}
As in the proof of Proposition \ref{prop:quasi canonical divisor on NPap}, we identify $\Oo_{\breve F}[Y]/(g_s(Y))$ with $W_s$ by the map 
\[\Oo_{\breve F}[Y]/(g_s(Y))\rightarrow W_s, Y \mapsto x_s\]
where $x_s$ is a root of $g_s(Y)$.
Then the closed embedding $\cY_{s,+}^\kappa\rightarrow\cN^\Pap$ corresponds to the $\Oo_{\breve F}$-algebra homomorphism 
\[\varphi_{s}^\sharp:\Oo_{\breve F}[[X,Y]]/(f(X,Y))\rightarrow W_s, X\mapsto x_{m(s)},Y \mapsto x_s\]
where $x_{m(s)}$ is as in the proof of Corollary \ref{cor: Gal acts transitively}.
$\cN^\Kra$ is the blow-up of $\cN^\Pap$ along the ideal $I=(\pi,X,Y)$. Hence $\cZ_{s,+}^\kappa$ is isomorphic to the blow-up of $\Spf W_s$ along the ideal generated by $\varphi_{s}^\sharp(I)$ in $W_s$.
Recall that any blow-up along a principal ideal is an isomorphism and $W_s$ is a P.I.D.. This proves the first claim of the Proposition. In particular, $\cZ_{s,+}^\kappa$ is an irreducible Weil divisor of $\cN^\Kra$. As $\cN^\Kra$ is regular, $\cZ_{s,+}^\kappa$ is also an irreducible Cartier divisor.
\end{proof}

\begin{proposition}\label{intersectionnumber1withExc}
For all $s\geq 0$ we have
\[\cZ_{s,-}^\kappa \cdot \Exc=\cZ_{s,+}^\kappa \cdot \Exc=1.\]
\end{proposition}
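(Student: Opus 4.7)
The plan is to pull back the local defining equation of $\Exc$ to $\cZ_{s,\pm}^\kappa \cong \Spf W_s$ (noting that, by Proposition~\ref{prop:blowupquasicanonicaldivisor}, its unique closed point maps via $\Phi$ to $\mathrm{Sing}$, hence lies on $\Exc = \Phi^{-1}(\mathrm{Sing})$) and to read off the intersection number as a valuation in the DVR $W_s$.

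First I would invoke the isomorphism $\cN^\Pap \cong \Spf \Oo_{\breve{F}}[[\xi,\eta]]/(\xi\eta - \pi_0)$ coming from $\cN^\Pap = \cN \times_{\Spf\Oo_{\breve{F}_0}}\Spf\Oo_{\breve{F}}$, under which $\mathrm{Sing}$ corresponds to the maximal ideal $\m = (\xi,\eta,\pi)$. By Proposition~\ref{prop: Kra is blow up of Pappas} and Lemma~\ref{lem:cNKra is completion of local model}, $\cN^\Kra$ is the blowup of $\cN^\Pap$ along $\m$; this blowup is covered by two affine charts, the first being $\Spf \Oo_{\breve{F}}[[\xi,t]]/(\xi t - \pi)$ with $\eta = \xi t^2$ and $\Exc$ cut out by $\xi = 0$, and the second being symmetric in $\xi$ and $\eta$.

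Next I would analyze the closed embedding $\cY_{s,\pm}^\kappa \hookrightarrow \cN^\Pap$, which by Proposition~\ref{prop:quasi canonical divisor on NPap} corresponds to a surjection $\varphi_s^\pm : \Oo_{\cN^\Pap,\mathrm{Sing}} \twoheadrightarrow W_s$ of local rings with common residue field $k$. Setting $\xi_s^\pm := \varphi_s^\pm(\xi)$ and $\eta_s^\pm := \varphi_s^\pm(\eta)$, the relation $\xi\eta = \pi_0 = \pi^2$ forces $v(\xi_s^\pm)+v(\eta_s^\pm) = 2\q^s$, where $v$ is the normalized valuation on $W_s$ (using that $W_s/\Oo_{\breve{F}}$ is totally ramified of degree $\q^s$, so $v(\pi) = \q^s$). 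Surjectivity on maximal ideals gives $(\xi_s^\pm,\eta_s^\pm,\pi) = \m_{W_s} = (x_s)$, and since this principal ideal in the DVR $W_s$ is generated by any element of minimal valuation,
\[
\min\bigl(v(\xi_s^\pm),v(\eta_s^\pm),\q^s\bigr) = 1.
\]
For $s \geq 1$ the condition $\q^s > 1$ forces $\min(v(\xi_s^\pm),v(\eta_s^\pm)) = 1$, while for $s = 0$ both valuations are at least $1$ and sum to $2$, so both equal $1$.

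Finally, interchanging the roles of $\xi$ and $\eta$ if needed, I may assume $v(\xi_s^\pm) = 1 \leq \q^s$. In the first chart the strict transform is then the ring map $\xi \mapsto \xi_s^\pm,\ t \mapsto \pi/\xi_s^\pm$, and the pullback of $\Exc = V(\xi)$ to $\Spf W_s$ is the Cartier divisor $V(\xi_s^\pm)$ of degree $v(\xi_s^\pm) = 1$, yielding $\cZ_{s,\pm}^\kappa \cdot \Exc = 1$. The step requiring the most care is aligning the abstract blowup charts of $\cN^\Pap$ at $\m$ with the explicit Kr\"amer local model coordinates from Section~\ref{subsec:Kramermodel}, so that the chart-level valuation genuinely computes the intersection on $\cN^\Kra$; Lemma~\ref{lem:cNKra is completion of local model} together with Proposition~\ref{prop: Kra is blow up of Pappas} handles this, after which the remaining content is pure valuation arithmetic in $W_s$.
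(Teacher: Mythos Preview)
Your proof is correct but takes a genuinely different route from the paper's. The paper argues moduli-theoretically: it identifies $\Exc$ with the supersingular locus of $\cN^\Kra$ (the locus where both factors $X_1,X_2$ of the universal object have endomorphism ring $\Oo_{\mathbb H}$), and then invokes \cite[Theorem 2.1]{V} applied to an element of $\Oo_{\mathbb H}^\times\setminus\Oo_F$ to conclude that the supersingular locus of $\cZ_{s,\pm}^\kappa$ is exactly its reduced point $\Spec k$, giving intersection number $1$.

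Your approach is purely algebraic: you use that $\Exc$ is the exceptional divisor of the blow-up $\Phi$ along $\mathfrak m$, so its ideal sheaf is $\mathfrak m\cdot\Oo_{\cN^\Kra}$, and then compute its pullback to the strict transform $\cZ_{s,\pm}^\kappa\cong\Spf W_s$ as $\varphi_s^\pm(\mathfrak m)W_s=\mathfrak m_{W_s}$, using only the surjectivity of $\varphi_s^\pm$. This avoids the external reference to \cite{V} entirely. In fact your argument can be streamlined: the explicit chart description and the valuation case analysis on $v(\xi_s^\pm),v(\eta_s^\pm)$ are not strictly needed, since the identity $I_{\Exc}\cdot\Oo_{\cZ_{s,\pm}^\kappa}=\mathfrak m\cdot W_s=\mathfrak m_{W_s}$ already gives $\mathrm{length}(W_s/\mathfrak m_{W_s})=1$ directly. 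One minor imprecision: writing the $\xi$-chart as $\Spf\Oo_{\breve F}[[\xi,t]]/(\xi t-\pi)$ makes $t$ look like a formal variable, whereas on the blow-up $t$ is an affine coordinate along $\Exc\cong\bP^1$; but since you only use this locally at the intersection point, and you explicitly flag the alignment with the Kr\"amer coordinates via Lemma~\ref{lem:cNKra is completion of local model} and Proposition~\ref{prop: Kra is blow up of Pappas}, this does no harm. The paper's approach has the advantage of also pinpointing \emph{which} point of $\Exc$ is hit (used in Propositions~\ref{prop:cZintersectwithspecialfiber} and~\ref{prop:cZ0intersectspecialfiber}), whereas yours gives the multiplicity more transparently.
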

\begin{proof}
$\Exc$ is exactly the supersingular locus of $\cN^\Kra$. To be more precise,
if $X_1\times X_2$ is the universal $\Oo_{F_0}$-module of $\cN^\Kra$ under the identification $\cM_{\Gamma_0(\pi)}\cong\cN^\Pap$, then the supersingular locus is where $X_1=X_2=\bY$, or equivalently the locuas where endomorphism ring of both $X_1$ and $X_2$ is $\Oo_\mathbb{H}$. By applying \cite[Theorem 2.1]{V} to any element in $\Oo^\times_\mathbb{H}\setminus \Oo_{F}$,
the supersingular locus of $\cZ_{s,\pm}^\kappa$ is exactly their reduced locus which is $\Spec k$. This implies the proposition.
\end{proof}

\begin{proposition}\label{prop:cZintersectwithspecialfiber}
For $s\geq 1$, the divisor $\cZ_{s,-}^\kappa$ intersects the special fiber $\cN^\Kra_s$ of $\cN^\Kra$ only at $P_1$ while $\cZ_{s,+}^\kappa$ intersects $\cN^\Kra_s$ only at $P_2$. Here $P_1$ and $P_2$ are the only singular points of $\cN^\Kra_s$ described in Lemma \ref{lem:intersectionofZ1Z2}.  
\end{proposition}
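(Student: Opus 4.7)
The plan is to compute the reduction modulo the maximal ideal of $W_s$ of the universal filtration $\cF_X$ pulled back to $\cZ_{s,\pm}^\kappa$, and then identify the resulting line in $\Lie\bX$ with one of $P_1,P_2$ via the framings.

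By Proposition~\ref{prop:blowupquasicanonicaldivisor} we have $\cZ_{s,\pm}^\kappa\cong\Spf W_s$, so there is a unique closed point above $\Spec k$; by Proposition~\ref{intersectionnumber1withExc} this point lies in $\Exc$. The pulled-back quadruple is $(X_s^\pm,\iota_s^\pm,\lambda_s^\pm,\rho_s^\pm)$ from \eqref{eq:Xs+}--\eqref{eq:Xs-}, and the Kr\"amer signature condition forces $\cF_{X_s^\pm}$ to be the unique $W_s$-direct summand of $\Lie X_s^\pm$ on which $\iota_s^\pm(\pi)$ acts as $+\pi$. For $\cZ_{s,+}^\kappa$, the differential of $\iota_s^+(\pi)$ acts on $\Lie\cG_{m(s)}\oplus\Lie\cG_s$ as the antidiagonal matrix with entries $d\gamma_s,d\beta_s$, so its generic $+\pi$-eigenspace is spanned by $(d\gamma_s,\pi)$, equivalently by $(\pi,d\beta_s)$, where the equivalence uses $d\beta_s\cdot d\gamma_s=\pi_0=\pi^2$ (obtained by differentiating $\beta_s\circ\gamma_s=\iota_{\cG_s}(\pi_0)$). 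The analogous generic eigenvector for $\cZ_{s,-}^\kappa$ is $(d\beta_s,\pi)\in\Lie\cG_s\oplus\Lie\cG_{m(s)}$.

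The technical heart of the proof is the valuation estimate
\[
\mathrm{val}_{\varpi_s}(d\beta_s)<q^s=\mathrm{val}_{\varpi_s}(\pi),\qquad\text{equivalently}\qquad\mathrm{val}_{\varpi_s}(d\gamma_s)>q^s.
\]
I plan to extract this from Wewers's explicit analysis of the quasi-canonical tower in \cite[Section 4]{W}: in the inductive normalization of Proposition~\ref{prop:quasi canonical divisor on NPap}, $\beta_s$ is realized as the quotient of $\cG_{m(s)}$ by an $\Oo_s$-stable order-$q$ subgroup of $\cG_{m(s)}[\pi_0]$ which is distinct from the $\Oo_F$-stable canonical subgroup $\cG_{m(s)}[\pi]$, and this distinction forces the strict inequality for the differential.

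Granted the estimate, for $\cZ_{s,+}^\kappa$ the generator $(d\gamma_s,\pi)$ has second component of strictly smaller $\varpi_s$-adic valuation, so after normalizing to a $W_s$-primitive generator and reducing modulo $\varpi_s$ one obtains a vector of the form $(0,u)$ with $u\in k^\times$. Hence $\cF_{X_s^+}\otimes k$ is the summand $\Lie\cG_s\otimes k$, which under $\rho_s^+=\rho_{m(s)}\times\rho_s$ is identified with $\Lie\bY_2=\langle e_2\rangle=P_2$. For $\cZ_{s,-}^\kappa$ the symmetric argument applied to $(d\beta_s,\pi)$ produces a reduction $(u,0)$ with $u\in k^\times$, so $\cF_{X_s^-}\otimes k$ is the summand $\Lie\cG_s\otimes k$ of $\Lie\cG_s\oplus\Lie\cG_{m(s)}$, which under $\rho_s^-=\rho_s\times\rho_{m(s)}$ is identified with $\Lie\bY_1=\langle e_1\rangle=P_1$. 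This gives the claimed intersection points, and since $\cZ_{s,\pm}^\kappa$ has a single closed point, no other intersection with $\cN^\Kra_s$ occurs.

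The main obstacle is the valuation estimate: everything else is a direct eigenspace computation combined with the bookkeeping of the framings $\rho_s^\pm$. An alternative route would be to verify the estimate directly in the coordinates of the local model $N^\Kra$ of \eqref{eq:Kramer model equation}, by tracking the image of the closed embedding $\cZ_{s,\pm}^\kappa\hookrightarrow\cN^\Kra$ in the explicit charts $U_i$; this would also allow one to bypass appealing to \cite{W}.
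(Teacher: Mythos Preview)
Your proposal is correct and follows essentially the same approach as the paper: both arguments compute the $\pi$-eigenspace of $\iota_s^\pm(\pi)$ on $\Lie X_s^\pm$, invoke Wewers's valuation formula $v(\Lie(\gamma_s))=1-v(\cG_s)=1-\tfrac{1}{2q^s}$ (this is \cite[Corollary~4.8]{W}, which is the precise reference you are looking for) to see which coordinate of the eigenvector survives reduction, and then identify the resulting line with $P_1$ or $P_2$ via the framing $\rho_s^\pm$. The paper carries out the $\cZ_{s,-}^\kappa$ case in detail using the $\pi_0$-adic normalization and leaves $\cZ_{s,+}^\kappa$ to symmetry, which matches your outline exactly.
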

\begin{proof}
We need to use the concepts of formal modulus  of quasi-canonical lifts, see \cite[Section 4]{W}. First we deal with $\cZ_{s,-}^\kappa$.
We know by \cite[Proposition 4.6]{W} that the formal modulus of $\cG_s$ is 
\[v(\cG_s)=\frac{1}{2\q^s}.\]
Recall that we have a degree $\q$ isogeny $\gamma_s: \cG_{s}\rightarrow \cG_{s-1}$. In terms of power series, it can be represented by (see \cite[equation (4.1)]{W}):
\[\gamma_s(X)=c_s X+\text{ higher order terms}.\]
Define $v(\Lie(\gamma_s)):=\mathrm{val}_{\pi_0} (c_s)$. This is independent of the choice of the parameter $X$. Recall that the dual isogeny $\beta_s:\cG_{s-1}\rightarrow \cG_s $ satisfies $\beta_s \circ \gamma_s=\iota_{\cG_s}(\pi_0)$ and is represented by 
\[\beta_s(X)=b_s X+\text{ higher order terms}.\]
Assume that $\{E_1\}$ is a basis of $\Lie\cG_s$ and $\{E_2\}$ is a basis of $\Lie \cG_{s-1}$. We assume that the basis $\{E_1,E_2\}$ of $\Lie X_s^-$ lifts the basis $\{e_1,e_2\}$ of $\Lie \bX$ in Section \ref{subsec:Dieudonnemodulecalculation}. Then with respect to the basis $\{E_1,E_2\}$ of $\Lie X_s^-$, we have 
\[\iota_s^-(\pi)|_{\Lie X_s^-}=\left(\begin{array}{cc}
    0 & b_s \\
    c_s & 0
\end{array}\right).\]
Its $\pi$-eigenspace is then the span of 
\begin{equation}
    e_\pi=E_1+\frac{c_s}{\pi} E_2.
\end{equation}
By \cite[Corollary 4.8]{W}, we have
\[v(\Lie(\gamma_s))=1-v(\cG_s)=1-\frac{1}{2\q^s}.\]

The vector $e_\pi$ reduces to $e_1$ at the special fiber as $\mathrm{val}_{\pi_0}(\frac{c_s}{\pi})=\frac{1}{2}-\frac{1}{2\q^s}>0$. 
Then the filtration $\mathrm{span}\{e_\pi\}\subset\Lie X$ over $\cZ_{s,-}^\kappa$ is $\cF_X$ in the definition of Kr\"amer model. As $\cF_X$ reduces to $\mathrm{span}\{e_1\}$ at the special fiber, $\cZ_{s,-}^\kappa$ intersect $\cN^\Kra_s$ at $P_1=\mathrm{span}\{e_1\}$. 

The case for $\cZ_{s,+}^\kappa$ is the same with above except that one has to switch the roles  of  $e_1$ and $e_2$ in the above calculation. This finishes the proof of the proposition.
\end{proof}

Let $\bar\kappa:\Oo_F\rightarrow \Oo_{\mathbb H}$ be the embedding defined by 
\[\bar\kappa(a)=\kappa(\bar a).\]
\begin{proposition}\label{prop:cZ0intersectspecialfiber}
The divisor $\cZ_0^\kappa$ intersects with $\cN^\Kra_s$ at exactly one point of $\Exc$ which is not $P_1$ or $P_2$. Moreover $\cZ_0^\kappa$ and $\cZ_0^{\bar\kappa}$ do not intersect.
\end{proposition}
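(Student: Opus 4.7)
The plan is to compute the reduction of the Kr\"amer filtration $\cF_{X_0^+}$ on $\cZ_0^\kappa$ at its unique closed point and identify it as a point of $\Exc=\bP^1(\Lie\bX)$. From \eqref{eq:Xs+} with $s=0$, the universal object on $\cY_0^\kappa$ is $X_0^+=\cG\times\cG$ with $\iota_0^+(\pi)=\bigl(\begin{smallmatrix}0 & \iota_\cG(\pi)\\ \iota_\cG(\pi) & 0\end{smallmatrix}\bigr)$, and by Proposition \ref{prop:blowupquasicanonicaldivisor} the universal object on $\cZ_0^\kappa$ is pulled back from this. Condition (1) of Definition \ref{def:quasi canonical lifts} forces $\iota_\cG(\pi)$ to act on $\Lie\cG$ by the structure scalar $\pi$, so on $\Lie X_0^+=\Lie\cG\oplus\Lie\cG$ the matrix $\iota_0^+(\pi)$ has $+\pi$-eigenspace $\mathrm{span}\{E_1+E_2\}$, where $\{E_i\}$ is a basis of the $i$-th copy of $\Lie\cG$. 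The Kr\"amer condition pins down $\cF_{X_0^+}=\mathrm{span}\{E_1+E_2\}$ on the generic fiber, and by freeness this extends uniquely to all of $\cZ_0^\kappa\cong\Spf\Oo_{\breve F}$.

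Next, choose $\{E_1,E_2\}$ to lift via the framing $\rho_0^+=\rho_\cG\times\rho_\cG'$ of \eqref{eq:rho_s^+} the basis $\{e_1,e_2\}$ of $\Lie\bX$ introduced in Section \ref{subsec:Dieudonnemodulecalculation}. Then the reduction of $\cF_{X_0^+}$ at the special fiber is $\mathrm{span}_k\{e_1+e_2\}\in\bP^1(\Lie\bX)=\Exc$. Since $p$ is odd, this point is distinct from both $P_1=\mathrm{span}_k\{e_1\}$ and $P_2=\mathrm{span}_k\{e_2\}$. Combined with Proposition \ref{intersectionnumber1withExc} (which tells us $\cZ_0^\kappa\cdot\Exc=1$), this establishes the first claim.

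For the second claim I invoke Remark \ref{rmk: cG independent of kappa}: the relation $\delta\pi=-\pi\delta$ gives $\delta\kappa(\pi)\delta^{-1}=\bar\kappa(\pi)$, so the $\bar\kappa$-canonical lift may be taken as $(\cG,\iota_\cG,\iota_\bY(\delta)\circ\rho_\cG)$. Propagating this modification through the recipe \eqref{eq:rhoY'} for $\rho_{Y'}$ and using the identity $\iota_\bY(\pi)\circ\iota_\bY(\delta)=-\iota_\bY(\delta)\circ\iota_\bY(\pi)$, the framing $\rho_0^+$ in the $\bar\kappa$-case becomes $(\iota_\bY(\delta),-\iota_\bY(\delta))\circ\rho_0^+$. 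Since $\iota_\bY(\delta)$ is an automorphism of $\bY$ it acts on the rank-one $k$-module $\Lie\bY$ by some nonzero scalar $d\in k^\times$. Therefore the reduction of $\cF_{X_0^+}$ in the $\bar\kappa$-case becomes $\mathrm{span}_k\{de_1-de_2\}=\mathrm{span}_k\{e_1-e_2\}$, which is distinct from $\mathrm{span}_k\{e_1+e_2\}$ precisely because $p$ is odd (and is also distinct from $P_1,P_2$).

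Finally, since the unique closed points of $\cZ_0^\kappa$ and $\cZ_0^{\bar\kappa}$ are distinct points of $\cN^\Kra$, and each divisor is set-theoretically supported on its single closed point, the formal scheme intersection $\cZ_0^\kappa\cap\cZ_0^{\bar\kappa}$ has empty support and hence is empty. The main subtlety is tracking the sign on the second factor of the modified framing for $\bar\kappa$, which arises from the anti-commutation relation $\pi\delta=-\delta\pi$ propagated through \eqref{eq:rhoY'}; without this sign one would erroneously obtain the same reduction $\mathrm{span}_k\{e_1+e_2\}$ in both cases.
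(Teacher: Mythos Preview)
Your proof is essentially correct, but your approach to the first claim differs from the paper's. The paper argues indirectly: since $\cZ_0^\kappa \cong \Spf\Oo_{\breve F}$, its intersection number with the special fiber $\cN_s^\Kra$ is $1$; but $P_1, P_2$ lie on both $\Exc$ and $s^{-1}(Z_2)$ (Lemma~\ref{lem:intersectionofZ1Z2}), so if $\cZ_0^\kappa$ met one of them it would intersect both components of $\cN_s^\Kra$, forcing the intersection number to be at least $2$, a contradiction. Your direct computation of the filtration is more explicit and has the advantage of feeding straight into the second claim. For the second claim the two arguments are essentially the same: both identify the intersection point as a $\pi$-eigenspace and observe that conjugating $\kappa$ to $\bar\kappa$ flips to the $-\pi$-eigenspace.

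There is, however, an imprecision in your basis bookkeeping. The identity ``$+\pi$-eigenspace $=\mathrm{span}\{E_1+E_2\}$'' holds only when $E_1=(E,0)$ and $E_2=(0,E)$ come from the \emph{same} generator $E\in\Lie\cG$. With that constraint you cannot also freely arrange both $E_i\mapsto e_i$ under $\rho_0^+=\rho_0\times\rho_0'$, because $\rho_0$ and $\rho_0'=\iota_\bY(\pi)\circ\kappa(\pi)^{-1}\circ\rho_0$ act on $\Lie$ by scalars that differ by $d(\iota_\bY(\pi)\kappa(\pi)^{-1})\in k^\times$, which need not equal $1$ for general $\kappa$. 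The honest statement is that the reduction of $\cF_{X_0^+}$ is $\mathrm{span}_k\{e_1+ce_2\}$ for some $c\in k^\times$; then your sign-tracking for $\bar\kappa$ yields $\mathrm{span}_k\{e_1-ce_2\}$. The conclusions are unaffected: the first point avoids $P_1,P_2$ since $c\neq0$, and the two points differ since $p$ is odd. This is a one-line fix, not a structural gap.
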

\begin{proof}
Since $\cZ_0^\kappa\cong \Spf \Oo_{\breve F}$, its intersection number with the special fiber $\cN^\Kra_s$ is the length of the module $\Oo_{\breve F}/(\pi)$ which is $1$. By Proposition \ref{intersectionnumber1withExc}, $\cZ_0^\kappa$ and $\Exc$ do intersect.
By Lemma \ref{lem:intersectionofZ1Z2}, $P_1$ and $P_2$ are where the divisors $\Exc$ and $s^{-1}(Z_2)$ in $\cN^\Kra_s$ intersect. If $\cZ_0^\kappa$ intersects with $\Exc$ at $P_1$ or $P_2$, the intersection number $\cZ_0^\kappa\cdot \cN^\Kra_s$ would be at least $2$ as $\cZ_0^\kappa$ would intersect both $\Exc$ and $s^{-1}(Z_2)$. This proves the first claim of the proposition.

As in the proof of Proposition \ref{prop:cZintersectwithspecialfiber}, $\cZ_0^\kappa$ intersects with $\Exc$ at the point which corresponds to the $\pi$-eigenspace of $\kappa(\pi)$. Similarly  $\cZ_0^{\bar\kappa}$ intersects with $\Exc$ at the point which corresponds to the $\pi$-eigenspace of $\bar\kappa(\pi)=\kappa(-\pi)$. Since these two eigenspaces are different, $\cZ_0^\kappa$ and $\cZ_0^{\bar\kappa}$ do not intersect.
\end{proof}

\begin{corollary}
\label{cor:strict transform respect summation}
The strict transform of $p_1^*(\cW_s^\kappa)$ under the blow-up $\Phi$ is the disjoint union of $\cZ_{s+1,+}^\kappa$ and $\cZ_{s,-}^\kappa$. The strict transform of $p_2^*(\cW_s^\kappa)$ under the blow-up $\Phi$ is the disjoint union of $\cZ_{s+1,-}^\kappa$ and $\cZ_{s,+}^\kappa$.
\end{corollary}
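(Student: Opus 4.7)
The plan is straightforward: combine Lemma \ref{lem:piW and cY} with the definition of strict transform (Definition \ref{def:quasicanonicaldivisoronKramer}) to identify the underlying divisor, and then use Propositions \ref{prop:cZintersectwithspecialfiber} and \ref{prop:cZ0intersectspecialfiber} to verify that the two components do not meet.

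First, I would invoke Lemma \ref{lem:piW and cY}, which gives the equality of Weil divisors $p_1^*(\cW_s^\kappa) = \cY_{s+1,+}^\kappa + \cY_{s,-}^\kappa$ on $\cN^\Pap$. Since the center of the blow-up $\Phi$ is the (reduced) singular point $\mathrm{Sing}$, which has strictly lower dimension than either $\cY_{s+1,+}^\kappa$ or $\cY_{s,-}^\kappa$, neither irreducible component is contained in the center, so the strict transform of the sum is the sum of the strict transforms. By Definition \ref{def:quasicanonicaldivisoronKramer}, these are $\cZ_{s+1,+}^\kappa$ and $\cZ_{s,-}^\kappa$ respectively.

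It remains to establish that $\cZ_{s+1,+}^\kappa$ and $\cZ_{s,-}^\kappa$ are disjoint. By Proposition \ref{prop:blowupquasicanonicaldivisor}, each is isomorphic to $\Spf W_{s+1}$ (resp.\ $\Spf W_s$), which is a local formal scheme with a unique closed point. Consequently, the intersection is nonempty if and only if the two closed points coincide as $k$-valued points of $\cN^\Kra$. For $s \geq 1$, Proposition \ref{prop:cZintersectwithspecialfiber} shows that the closed point of $\cZ_{s+1,+}^\kappa$ is $P_2$ while the closed point of $\cZ_{s,-}^\kappa$ is $P_1$, and these are distinct by Lemma \ref{lem:intersectionofZ1Z2}. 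For $s=0$ we have $\cY_{0,-}^\kappa = \cY_0^\kappa$, so the summands are $\cZ_{1,+}^\kappa$ and $\cZ_0^\kappa$; by Proposition \ref{prop:cZintersectwithspecialfiber}, $\cZ_{1,+}^\kappa$ meets $\cN^\Kra_s$ at $P_2$, while Proposition \ref{prop:cZ0intersectspecialfiber} places $\cZ_0^\kappa \cap \cN^\Kra_s$ at a point of $\Exc$ different from both $P_1$ and $P_2$. In either case the closed points are distinct, hence the sum is disjoint. The argument for $p_2^*(\cW_s^\kappa)$ is identical after swapping the roles of $+$ and $-$ (and thus of $P_2$ and $P_1$).

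No step should pose a genuine obstacle — everything has been set up in Propositions \ref{prop:cZintersectwithspecialfiber} and \ref{prop:cZ0intersectspecialfiber}. The only point requiring a moment of care is checking that strict transform distributes over the sum $\cY_{s+1,+}^\kappa + \cY_{s,-}^\kappa$; this follows since the two components are distinct irreducible divisors and neither is contained in the (zero-dimensional) blow-up center.
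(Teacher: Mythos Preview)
Your proposal is correct and follows essentially the same approach as the paper's proof, which simply cites Lemma \ref{lem:piW and cY} and Proposition \ref{prop:blowupquasicanonicaldivisor} for the identification of the strict transform, and Propositions \ref{prop:cZintersectwithspecialfiber} and \ref{prop:cZ0intersectspecialfiber} for disjointness. You have supplied more detail than the paper's two-sentence proof (in particular, the case split $s\geq 1$ versus $s=0$ and the argument via unique closed points), but the logical structure is identical.
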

\begin{proof}
The corollary is a consequence of Lemma \ref{lem:piW and cY} and Proposition \ref{prop:blowupquasicanonicaldivisor}. The fact that  $\cZ_{s+1,+}^\kappa$ (resp. $\cZ_{s+1,-}^\kappa$) and $\cZ_{s,-}^\kappa$ (resp. $\cZ_{s,+}^\kappa$) are disjoint follows from Proposition \ref{prop:cZintersectwithspecialfiber} and \ref{prop:cZ0intersectspecialfiber}.
\end{proof}

\section{Special cycles }\label{specialcyclessection}
In this section, we define special cycles on $\cN^\Kra$ and decompose them into irreducible cycles that are introduced in Section \ref{sec:quasicanonicalliftingdivisor}.
\subsection{Definition of special cycles}
We can define special cycles in the same way as \cite{KR1} and \cite{Shi1}. The definition makes sense for both $\cN^\Pap$ and $\cN^\Kra$ and gives us closed sub formal schemes in both cases. 

First define the space of special quasi-homomorphisms to be the $F$-vector space
\begin{equation}\label{eq:bV}
    \bV=\mathrm{Hom}_{\Oo_F}(\bY,\bX)\otimes_\Z \Q.
\end{equation}
For $x,y \in \bV$, let
\begin{equation}\label{h(,)}
    h(x,y)=\lambda_\bY \circ y^\vee \circ\lambda_\bX \circ x\in \mathrm{End}_{\Oo_F}(\bY)\otimes \Q \xrightarrow[\sim]{\iota_\bY^{-1}} F.
\end{equation}

For $x,y\in \bV$ we abuse notation and denote the induced map between the corresponding relative Dieudonn\'e modules still by $x,y$. Then by \cite[Lemma 3.6]{Shi1} we have 
\begin{equation}\label{eq:relationsonhermitianforms}
    h(x,y)(e,e)_\bY=(x(e),y(e))_\bX,
\end{equation}
where $(,)_\bX$ and $(,)_\bY$ are the hermitian forms defined in \eqref{eq:definitionof(,)} for the rational relative Dieudonn\'e module of $\bX$ and $\bY$ respectively and $\{e\}$ is a basis of $M(\bY)$. In particular, $h(,)$ similar to $(,)_\bX$, so one is anisotropic if and only if the other is.

\begin{definition}\label{def:localspecialcycle}
For a fixed $m$-tuple ($m\leq 2$) $\bx=[x_1,\ldots,x_m]$ of special homomorphisms $x_i\in \bV$, the associated special cycle $\cZ^\Kra(\bx)$ ($\cZ(\bx)$ resp.) is the subfunctor of collections $\xi=(Y,\iota,\lambda_{Y}, \rho_{Y}; {X},\iota,\lambda_{ X},\rho_{ X},\cF)$ in $\cN_{(1,0)} \times \cN^\Kra$ ( $\xi=(Y,\iota,\lambda_{Y}, \rho_{Y}; {X},\iota,\lambda_{ X},\rho_{ X})$ in $\cN_{(1,0)} \times \cN^\Pap$ resp.) such that the quasi-homomorphism 
    \[\rho^{-1}_{X}\circ {\bf x} \circ \rho_{ Y}: {Y}^m\times_S \bar{S}\rightarrow { X}\times_S \bar{S}\]
deforms to a homomorphism from ${Y}^m$ to ${X}$.
\end{definition}

\subsection{Quasi-canonical lifting cycles in special cycles}\label{quasicanonicalliftingcycles}
Fix an $\bx\in \bV\setminus\{0\}$. Let $\mathrm{pr}_i:\bX\rightarrow \bY$ be the natural projection to the $i$-th factor where we identify $\bX$ with $\bY\times \bY$ as in \eqref{eq:definitionofbX}.
\begin{lemma}\label{lem:pr_1 pr_2 x}
For any $\bx\in \bV$, we have
\[\iota_\bY(\pi)\circ \mathrm{pr}_2\circ \bx=\mathrm{pr}_1\circ \bx\circ \iota_\bY(\pi),\quad \iota_\bY(\pi)\circ \mathrm{pr}_1\circ \bx=\mathrm{pr}_2\circ \bx\circ \iota_\bY(\pi).\]
\end{lemma}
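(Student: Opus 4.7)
The plan is to unwind what it means for $\bx \in \bV = \Hom_{\Oo_F}(\bY, \bX) \otimes \Q$ to be $\Oo_F$-linear, using the explicit block form of the $\Oo_F$-action on $\bX$ recorded in \eqref{eq:definitionofbX}.

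Concretely, I would write $\bx : \bY \to \bX = \bY \times \bY$ as a pair $\bx = (x_1, x_2)$ with $x_i = \mathrm{pr}_i \circ \bx : \bY \to \bY$. The condition that $\bx$ commutes with the $\Oo_F$-action, applied to the uniformizer $\pi$, reads
\[
\bx \circ \iota_\bY(\pi) \;=\; \iota_\bX(\pi) \circ \bx.
\]
Now substitute the formula
\[
\iota_\bX(\pi) \;=\; \begin{pmatrix} 0 & \iota_\bY(\pi) \\ \iota_\bY(\pi) & 0 \end{pmatrix}
\]
from \eqref{eq:definitionofbX}. The left-hand side of the identity becomes $(x_1 \circ \iota_\bY(\pi),\, x_2 \circ \iota_\bY(\pi))$, while the right-hand side becomes $(\iota_\bY(\pi) \circ x_2,\, \iota_\bY(\pi) \circ x_1)$. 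Comparing components yields precisely the two desired identities.

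There is no real obstacle here; the lemma is a direct consequence of the off-diagonal form of $\iota_\bX(\pi)$, which is the defining feature of the exceptional isomorphism of moduli spaces in Section~\ref{subsec:exceptional iso}. The only thing to be slightly careful about is that the identity extends from a homomorphism $\bY \to \bX$ (where $\Oo_F$-linearity is an honest statement) to an element of $\bV = \Hom_{\Oo_F}(\bY,\bX) \otimes_\Z \Q$, which is immediate by $\Q$-linearity of both sides.
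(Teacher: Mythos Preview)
Your proposal is correct and follows essentially the same approach as the paper: both use the $\Oo_F$-linearity of $\bx$ together with the off-diagonal block form of $\iota_\bX(\pi)$ from \eqref{eq:definitionofbX}, then compare components. The paper phrases this on $R$-points while you work directly with the morphisms, but the argument is identical.
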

\begin{proof}
Let $z$ be an $R$-point of $\bY$ where $R$ is a $k$-algebra. Then by  \eqref{eq:definitionofbX}, we know that 
\[\iota_\bX(\pi)\circ \bx (z)=(\iota_{\bY}(\pi)\circ \mathrm{pr}_2 \circ \bx(z),\iota_{\bY}(\pi)\circ \mathrm{pr}_1 \circ \bx(z)).\]
On the other hand, since $\bx$ is $\Oo_F$-linear, we know
\[\iota_\bX(\pi)\circ \bx (z)=\bx\circ \iota_\bY (\pi) (z)=(\mathrm{pr}_1\circ \bx\circ \iota_\bY (\pi) (z),\mathrm{pr}_2\circ \bx\circ \iota_\bY (\pi) (z)).\]
Equate the right hand sides of the above two equations, the lemma is proved.
\end{proof}
\begin{lemma}\label{lem:rho_0 is iso}
If $\bx\in \bV$ and $h(\bx,\bx)\in \Oo_{F_0}^\times$, then the morphism $\mathrm{pr}_1\circ \bx:\bY\rightarrow \bY$ is an isomorphism.
\end{lemma}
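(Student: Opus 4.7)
The plan is to compute $h(\bx,\bx)$ explicitly in terms of the two projections $y_i := \mathrm{pr}_i \circ \bx \in \End_{\Oo_{F_0}}(\bY)\otimes\Q \cong \H$ and then invoke the fact that $\H$ is a division algebra with valuation controlled by the reduced norm.

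First I would observe that, although $\mathrm{pr}_i$ is not $\Oo_F$-linear, each $y_i = \mathrm{pr}_i\circ\bx$ is at least $\Oo_{F_0}$-linear, so $y_1,y_2 \in \End_{\Oo_{F_0}}(\bY)\otimes\Q = \H$. By Lemma \ref{lem:pr_1 pr_2 x}, $\iota_\bY(\pi)\,y_2 = y_1\,\iota_\bY(\pi)$, so $y_2 = \iota_\bY(\pi)^{-1} y_1 \iota_\bY(\pi)$; in particular $y_1$ and $y_2$ are conjugate in $\H$ and hence have equal reduced norms, $\mathrm{Nm}(y_1)=\mathrm{Nm}(y_2)\in F_0$.

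Next I would evaluate $h(\bx,\bx)$ via \eqref{eq:relationsonhermitianforms}, writing $\bx(e) = (y_1(e),y_2(e)) \in M(\bY_1)\oplus M(\bY_2) = \bM$ and expanding
\[
(\bx(e),\bx(e))_\bX = \delta\bigl[\langle\Pi\bx(e),\bx(e)\rangle_\bX + \pi\langle\bx(e),\bx(e)\rangle_\bX\bigr].
\]
Using $\Pi(u_1,u_2) = (\iota_\bY(\pi)u_2,\iota_\bY(\pi)u_1)$ from \eqref{eq:definitionofbX} and the fact that $\langle\,,\rangle_\bX$ vanishes on $M(\bY_1)\times M(\bY_2)$, the second term drops out (by skew-symmetry) and the first term becomes a sum of two pieces of the form $\langle y_i(f), y_i(e)\rangle_\bY = -\mathrm{Nm}(y_i)\cdot\delta$, using $\iota_\bY(\pi)(e)=f$, the relation $y_1\iota_\bY(\pi) = \iota_\bY(\pi)y_2$, and that the Rosati involution on $\Oo_\H$ is the main involution. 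Combining with $(e,e)_\bY=-\delta^2$ from \eqref{eq:(e,e)}, a clean cancellation yields
\[
h(\bx,\bx) = \mathrm{Nm}(y_1) + \mathrm{Nm}(y_2) = 2\,\mathrm{Nm}(y_1).
\]

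Finally, assuming $h(\bx,\bx)\in\Oo_{F_0}^\times$ and recalling that $p$ is odd (so $2$ is a unit), I obtain $\mathrm{Nm}(y_1)\in\Oo_{F_0}^\times$. Since $\H$ is the quaternion division algebra with center $F_0$ whose normalized valuation is $\frac{1}{2}v_{F_0}\circ\mathrm{Nm}$, this forces $y_1\in\Oo_\H^\times = \mathrm{Aut}(\bY)$, i.e.\ $\mathrm{pr}_1\circ\bx$ is an isomorphism. The main (only real) obstacle is the bookkeeping in the Dieudonn\'e-module computation of $(\bx(e),\bx(e))_\bX$; once the off-diagonal vanishing and the intertwining from Lemma \ref{lem:pr_1 pr_2 x} are used correctly, the identification $h(\bx,\bx)=2\,\mathrm{Nm}(y_1)$ falls out and the rest is automatic.
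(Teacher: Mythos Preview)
Your proof is correct and takes a genuinely different route from the paper. The paper argues by contradiction: if $\mathrm{pr}_1\circ\bx$ is not an isomorphism, then $\iota_\bY(\pi)^{-1}\circ\mathrm{pr}_1\circ\bx\in\Oo_\H$, and by Lemma~\ref{lem:pr_1 pr_2 x} the same holds for the second projection, so $\iota_\bX(\pi)^{-1}\circ\bx$ is an honest homomorphism and $-\pi_0^{-1}h(\bx,\bx)=h(\iota_\bX(\pi)^{-1}\bx,\iota_\bX(\pi)^{-1}\bx)\in\Oo_{F_0}$, contradicting $h(\bx,\bx)\in\Oo_{F_0}^\times$. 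This is shorter and never touches the Dieudonn\'e module. Your approach instead establishes the explicit identity $h(\bx,\bx)=2\,\mathrm{Nm}(y_1)$ (your computation via \eqref{eq:relationsonhermitianforms}, \eqref{eq:definitionof(,)}, \eqref{eq:definitionofbX}, \eqref{eq:standardpiaction} and the Rosati--main involution identification checks out), which is more laborious but gives strictly more information: it identifies the integral lattice $\Hom_{\Oo_F}(\bY,\bX)\subset\bV$ with the locus where $\mathrm{Nm}(y_1)\in\Oo_{F_0}$, and makes the paper's contradiction argument transparent (the paper is implicitly using that $h(\bx,\bx)\in\Oo_{F_0}$ already forces $y_1\in\Oo_\H$). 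Either argument is fine here; the paper's is the quicker one to write down.
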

\begin{proof}
Assume that $\mathrm{pr}_1\circ \bx:\bY\rightarrow \bY$ is not an isomorphism. Since $\mathbb H$ is a division algebra complete with respect to the $\pi$-adic topology, we know that $\mathrm{pr}_1\circ \bx \circ \iota_\bY(\pi)^{-1}$ and $ \iota_\bY(\pi)^{-1}\circ\mathrm{pr}_1\circ \bx$ are morphisms as well. Then by Lemma \ref{lem:pr_1 pr_2 x}, we know that $\iota_\bY(\pi)^{-1}\circ\mathrm{pr}_2\circ \bx$ is also a morphism. Hence $\iota_\bX(\pi)^{-1}\circ \bx$ defines a morphism from $\bY$ to $\bX$. By the definition of $h(,)$ (see \eqref{h(,)}), we know that 
\[-\pi_0^{-1} h(\bx,\bx)=h(\iota_\bX(\pi)^{-1}\circ \bx,\iota_\bX(\pi)^{-1}\circ \bx)\in \Oo_{F_0}.\]
This contradict the fact that $h(\bx,\bx)\in \Oo_{F_0}^\times$.
\end{proof}

Fix a $\bx\in \bV\setminus\{0\}$, let $a$ be the $\pi_0$-adic valuation of $h(\bx,\bx)$. Define 
\begin{equation}\label{eq:rho_0}
   \bu=\bx\circ\iota_\bY(\pi^{-a}), \quad \rho_0:=\mathrm{pr}_1\circ \bu. 
\end{equation}
Then by Lemma \ref{lem:rho_0 is iso}, $\rho_0:\bY \rightarrow \bY$ is an isomorphism. Moreover $(\cG_0,\rho_0)=(\cG,\rho_0)$ is a canonical lift of $\bY$ with respect to the embedding  $\kappa_\bx:\Oo_F\rightarrow \End(\bY)\cong \Oo_{\mathbb H}$ defined by 
\begin{equation}\label{eq:kappa x}
  \kappa_\bx(a):=\rho_0\circ \iota_\bY(a) \circ \rho_0^{-1}=(\mathrm{pr}_1\circ \bx)\circ \iota_\bY(a) \circ (\mathrm{pr}_1\circ \bx)^{-1}.  
\end{equation}
We can construct a system of quasi-canonical lifts $(\cG_s,\rho_s)$ with respect to $\kappa_\bx$ as in Proposition \ref{prop:quasi canonical divisor on NPap}.
Define $\alpha_s: \cG\rightarrow \cG_{s}$ by 
\begin{equation}\label{eq:alpha_s}
    \alpha_s= \left\{\begin{array}{cc}
      \mathrm{Id}_\cG   & \text{ if } s=0 \\
      \beta_s\circ \beta_{s-1}\circ \ldots \circ \beta_1   & \text{ if } s\geq 1.
    \end{array}\right.
\end{equation}
Define the map $\bx_s^+:\cG\rightarrow X_s^+$ where $X_s^+$ is described in \eqref{eq:Xs+} by 
\begin{equation}\label{OEactiononXs+}
  \bx_s^+=[(\alpha_{m(s)}\circ \iota_\cG(\pi)^{s-m(s)})\times \alpha_s]\circ \Delta,
\end{equation}
where $m(s)$ is defined in \eqref{eq:m(s)} and $\Delta$ is the diagonal embedding $\cG\rightarrow X_0$.
\begin{lemma}\label{lem:OoFlinearality}
We have the following commutative diagram
\begin{equation}\label{eq:xs+ commutative diagram}
\begin{tikzcd}
\cG\otimes k\arrow{r}{\bx_s^+\otimes k} \arrow{d}{\mathrm{Id}_\bY} & X_{s}^+\otimes k \arrow{d}{\rho_s^+}\\
\bY \arrow{r}{\bu\circ \iota_\bY(\pi^s)} & \bX=\bY\times \bY
\end{tikzcd}.
\end{equation}
Moreover $\bx_s^+$ is an $\Oo_F$-module morphism.
\end{lemma}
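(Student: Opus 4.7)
My plan is to prove the two claims in order. First, write $\bx_s^+=(g_1,g_2)\circ\Delta$ where $g_1=\alpha_{m(s)}\circ\iota_\cG(\pi)^{s-m(s)}$ and $g_2=\alpha_s$. Since $\iota_s^+(\pi)$ is antidiagonal with $\gamma_s$ in the upper right and $\beta_s$ in the lower left, the assertion $\bx_s^+\circ\iota_\cG(\pi)=\iota_s^+(\pi)\circ\bx_s^+$ reduces componentwise to the two identities $g_2\circ\iota_\cG(\pi)=\beta_s\circ g_1$ and $g_1\circ\iota_\cG(\pi)=\gamma_s\circ g_2$. The first follows directly from the recursion $\alpha_s=\beta_s\circ\alpha_{m(s)}$ (trivially for $s=0$) together with the $\Oo_{F_0}$-linearity of $\alpha_{m(s)}$. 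The second follows by computing $\gamma_s\circ\alpha_s=\gamma_s\circ\beta_s\circ\alpha_{m(s)}=\iota_{\cG_{m(s)}}(\pi_0)\circ\alpha_{m(s)}=\alpha_{m(s)}\circ\iota_\cG(\pi_0)$, and then matching powers of $\iota_\cG(\pi)$ on both sides, using $\iota_\cG(\pi)^2=\iota_\cG(\pi_0)$.

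For the commutative diagram, I would verify it component by component. Iterating the second diagram of Proposition \ref{prop:quasi canonical divisor on NPap}, namely $\rho_s\circ(\beta_s\otimes k)=\iota_\bY(\pi)\circ\rho_{m(s)}$, yields
\[
\rho_s\circ(\alpha_s\otimes k)=\iota_\bY(\pi)^s\circ\rho_0=\iota_\bY(\pi)^s\circ\mathrm{pr}_1\circ\bu,
\]
and analogously $\rho_{m(s)}\circ(g_1\otimes k)=\iota_\bY(\pi)^{m(s)}\circ\mathrm{pr}_1\circ\bu\circ\iota_\bY(\pi)^{s-m(s)}$. I would then repeatedly invoke Lemma \ref{lem:pr_1 pr_2 x} to commute each factor of $\iota_\bY(\pi)$ past $\mathrm{pr}_i\circ\bu$; each such commutation swaps $\mathrm{pr}_1\leftrightarrow\mathrm{pr}_2$. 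After tracking the parities carefully, both components rewrite in the form $\mathrm{pr}_i\circ\bu\circ\iota_\bY(\pi^s)$, matching the two components of $\bu\circ\iota_\bY(\pi^s)$. The $s=0$ case, which uses the distinct framing $\rho_0^+=\rho_0\times\rho_0'$, is handled separately using the identification $\rho_0'=\iota_\bY(\pi)\circ\rho_0\circ\iota_\bY(\pi)^{-1}=\mathrm{pr}_2\circ\bu$ (itself a consequence of Lemma \ref{lem:pr_1 pr_2 x}), so that $\rho_0^+\circ(\Delta\otimes k)=(\mathrm{pr}_1\circ\bu,\mathrm{pr}_2\circ\bu)=\bu$.

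The main obstacle will be the parity bookkeeping in the second part: because each invocation of Lemma \ref{lem:pr_1 pr_2 x} toggles $\mathrm{pr}_1\leftrightarrow\mathrm{pr}_2$, one must count the total number of commutations and verify that the resulting projections assemble correctly into the two coordinates of $\bX$. A cleaner organisation is induction on $s$: assuming the diagram commutes for $s$, one derives the case $s+1$ by pre-composing one further $\beta_{s+1}$ onto the second component of $\bx_s^+$ and using $\rho_{s+1}\circ(\beta_{s+1}\otimes k)=\iota_\bY(\pi)\circ\rho_s$ to absorb the extra $\iota_\bY(\pi)$ into the bottom morphism of the square, thereby incrementing $s$ simultaneously on both sides.
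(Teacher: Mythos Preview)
Your proposal is correct, and the verification of the commutative diagram is essentially the same computation as in the paper (the paper also iterates $\rho_s\circ(\alpha_s\otimes k)=\iota_\bY(\pi)^s\circ\rho_0$ and invokes Lemma~\ref{lem:pr_1 pr_2 x} to swap $\mathrm{pr}_1\leftrightarrow\mathrm{pr}_2$). The two arguments differ only in presentation.

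Where you genuinely diverge from the paper is in the proof of $\Oo_F$-linearity. You verify $\bx_s^+\circ\iota_\cG(\pi)=\iota_s^+(\pi)\circ\bx_s^+$ directly over $W_s$, reducing it to the two identities $\alpha_s\circ\iota_\cG(\pi)=\beta_s\circ\alpha_{m(s)}\circ\iota_\cG(\pi)^{s-m(s)}$ and $\alpha_{m(s)}\circ\iota_\cG(\pi)^{s-m(s)+1}=\gamma_s\circ\alpha_s$, which follow from the recursion $\alpha_s=\beta_s\circ\alpha_{m(s)}$, the relation $\gamma_s\circ\beta_s=\iota_{\cG_{m(s)}}(\pi_0)$, and $\Oo_{F_0}$-linearity of the $\alpha$'s. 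The paper instead \emph{first} establishes the commutative diagram and \emph{then} deduces $\Oo_F$-linearity on the special fiber, appealing to the injectivity of the reduction map $\Hom(\cG,X_s^+)\hookrightarrow\Hom(\bY,X_s^+\otimes k)$ to lift the conclusion. Your route is more elementary in that it avoids this reduction-to-special-fiber step and keeps the computation purely on the level of formal $\Oo_{F_0}$-modules; the paper's route is economical in that it recycles the diagram computation rather than doing a second verification. Both are clean; yours has the minor advantage of making the two claims logically independent.
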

\begin{proof}
First assume $s\geq 1$. Put the commutative diagrams in \eqref{eq:cXs definition diagram} together, we have
\[
   \rho_s\circ (\alpha_s \otimes k)=\iota_\bY(\pi^s)\circ \rho_0. 
\]
We then have
\begin{align*}
    &\rho_s^+\circ (\bx_s^+\otimes k)\\
    =&(\rho_{s-1}\times \rho_{s})\circ(((\alpha_{s-1}\circ \iota_\cG(\pi))\times \alpha_s)\otimes k)\circ i_1\\
    =&(\rho_{s-1}\circ (\alpha_{s-1} \otimes k)\circ \iota_\bY(\pi))\times (\rho_s\circ (\alpha_s \otimes k))\\
    =&(\iota_\bY(\pi)^{s-1}\circ \rho_0\circ \iota_\bY(\pi))\times (\iota_\bY(\pi)^s\circ \rho_0)\\
    =&\iota_\bX(\pi)^{s-1}\circ((\rho_0\circ \iota_\bY(\pi))\times (\iota_\bY(\pi)\circ \rho_0)).
\end{align*}
By the fact that $\rho_0=\mathrm{pr}_1\circ\bu$ and Lemma \ref{lem:pr_1 pr_2 x}, the above is equal to 
\begin{align*}
    &\iota_\bX(\pi)^{s-1}\circ((\mathrm{pr}_1\circ\bu\circ \iota_\bY(\pi))\times (\iota_\bY(\pi)\circ \mathrm{pr}_1\circ\bu))\\
    =&\iota_\bX(\pi)^{s-1}\circ((\mathrm{pr}_1\circ\bu\circ \iota_\bY(\pi))\times ( \mathrm{pr}_2\circ\bu\circ\iota_\bY(\pi)))\\
    =&\iota_\bX(\pi)^{s-1}\circ \bu \circ \iota_\bY(\pi)\\
    =&\bu \circ \iota_\bY(\pi^s).
\end{align*}
This proves the commutativity of the diagram in the lemma when $s\geq 1$. The proof for $s=0$ is similar and left to the reader.
Now we need to check that $\bx_s^+$ is an $\Oo_F$-module morphism.
Because the reduction map $\Hom(X,Y)\rightarrow \Hom(X\otimes k, Y\otimes k)$ is injective, it suffices to check that
\begin{equation}\label{eq:OoElinearilty}
    \rho_s^+\circ (\bx_s^+\otimes k)\circ \iota_\bY(\pi)=\rho_s^+\circ (\iota_s^+(\pi)\otimes k)\circ (\bx_s^+\otimes k).
\end{equation}
By the commutative of \eqref{eq:xs+ commutative diagram}, the left hand side of the above equation is $\bu\circ \iota_\bY(\pi)^{s+1}$. Recall the definition of $\iota_s^+$ in \eqref{eq:Xs+} and the definition of $\iota_{\bX}$ in \eqref{eq:definitionofbX}.  By the definition of the moduli problem $\cM_{\Gamma_0(\pi)}$, we know that
\[\rho_s^+\circ (\iota_s^+(\pi)\otimes k)=\iota_{\bX}(\pi)\circ \rho_s^+.\]
Combine this with the commutativity of \eqref{eq:xs+ commutative diagram}, we see 
\begin{align*}
    &\rho_s^+\circ (\iota_s^+(\pi)\otimes k)\circ (\bx_s^+\otimes k)\\
    =& \iota_{\bX}(\pi)\circ \rho_s^+ \circ (\bx_s^+\otimes k)\\
    =& \iota_{\bX}(\pi)\circ \bu \circ \iota_\bY(\pi)^s\\
    =& \bu\circ \iota_\bY(\pi)^{s+1}.
\end{align*}
This proves \eqref{eq:OoElinearilty}. The lemma is proved.
\end{proof}

Similarly define the map $\bx_s^-:\cG\rightarrow X_s^-$ by 
\begin{equation}\label{OEactiononXs-}
  \bx_s^-=[ \alpha_s \times (\alpha_{m(s)}\circ \iota_{\cG}(\pi)^{s-m(s)})]\circ \Delta.
\end{equation}
By a similar calculation as in the case of $\bx_s^+$, we know that $\bx_s^-$ is $\Oo_F$-linear and lifts the homomorphism 
$(\rho_{s}^-)^{-1}\circ (\bu \circ \iota_\bY(\pi^s))\circ \rho_\cG:\cG\otimes k\rightarrow X_s^-\otimes k$.

We summarize the above constructions by the following lemma.
\begin{lemma}\label{lem:quasicanonicalliftsinspecialcycle}
For $0\leq s \leq a$, $\cY_{s,+}^{\kappa_\bx}$ and $\cY_{s,-}^{\kappa_\bx}$ are sub formal schemes of $\cZ(\bx)$ while $\cZ_{s,+}^{\kappa_\bx}$ and $\cZ_{s,-}^{\kappa_\bx}$ are sub formal schemes of $\cZ^\Kra(\bx)$.  
\end{lemma}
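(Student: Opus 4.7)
The plan is to construct explicit lifts witnessing membership in the special cycle. Over $\cY_{s,+}^{\kappa_\bx}$ the universal object is $(X_s^+,\iota_s^+,\lambda_s^+,\rho_s^+)$ as given in \eqref{eq:Xs+}--\eqref{eq:rho_s^+}, and over $\cY_{s,-}^{\kappa_\bx}$ it is $(X_s^-,\iota_s^-,\lambda_s^-,\rho_s^-)$. Pairing these with the canonical lift $(\cG,\iota_\cG,\lambda_\cG,\mathrm{id}_\bY)$ for the first factor, the condition defining $\cZ(\bx)$ becomes that the quasi-homomorphism $(\rho_s^{\pm})^{-1}\circ\bx:\bY\to\bX$ lifts to a genuine $\Oo_F$-homomorphism $\cG\to X_s^{\pm}$.

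First I would produce the lift in the $+$ case. Combining Lemma~\ref{lem:OoFlinearality}, the identity $\bu=\bx\circ\iota_\bY(\pi^{-a})$ from \eqref{eq:rho_0}, and the commutativity of \eqref{eq:xs+ commutative diagram}, one computes
\[
  \rho_s^+\circ(\bx_s^+\otimes k)=\bu\circ\iota_\bY(\pi^s)=\bx\circ\iota_\bY(\pi^{s-a}).
\]
Hence, as quasi-homomorphisms,
\[
  (\rho_s^+)^{-1}\circ\bx = (\bx_s^+\otimes k)\circ\iota_\bY(\pi^{a-s})
  = \bigl(\bx_s^+\circ\iota_\cG(\pi^{a-s})\bigr)\otimes k.
\]
When $0\le s\le a$, the exponent $a-s$ is non-negative, so $\bx_s^+\circ\iota_\cG(\pi^{a-s})$ is an honest $\Oo_F$-linear morphism $\cG\to X_s^+$ (it is $\Oo_F$-linear because both factors are, using Lemma~\ref{lem:OoFlinearality}). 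This morphism is precisely the required lift, showing $\cY_{s,+}^{\kappa_\bx}\subset\cZ(\bx)$. The same argument, using the analogue of Lemma~\ref{lem:OoFlinearality} for $\bx_s^-$ noted just after \eqref{OEactiononXs-}, gives $\cY_{s,-}^{\kappa_\bx}\subset\cZ(\bx)$.

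For the Kr\"amer statement, I would observe that $\cZ^\Kra(\bx)=\Phi^{-1}(\cZ(\bx))$ as closed formal subschemes of $\cN^\Kra$: both the moduli condition in Definition~\ref{def:localspecialcycle} and the defining data $(X,\iota,\lambda,\rho)$ do not involve the filtration $\cF_X$, so adding $\cF_X$ does not alter the lifting condition. Since $\cZ_{s,\pm}^{\kappa_\bx}$ is by Definition~\ref{def:quasicanonicaldivisoronKramer} the strict transform of $\cY_{s,\pm}^{\kappa_\bx}\subset\cZ(\bx)$ under $\Phi$, and the strict transform is always contained in the scheme-theoretic preimage, we conclude
\[
  \cZ_{s,\pm}^{\kappa_\bx}\subset \Phi^{-1}(\cY_{s,\pm}^{\kappa_\bx})\subset \Phi^{-1}(\cZ(\bx))=\cZ^\Kra(\bx).
\]

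The only mildly subtle step is verifying that the quasi-homomorphism $(\rho_s^+)^{-1}\circ\bx$ genuinely equals $(\bx_s^+\circ\iota_\cG(\pi^{a-s}))\otimes k$ on the special fibre (rather than just that both sides agree up to the quasi-isogeny framework); this is immediate from the commutative diagram \eqref{eq:xs+ commutative diagram} once one is careful that $\iota_\cG(\pi^{a-s})$ reduces to $\iota_\bY(\pi^{a-s})$ under the identification $\cG\otimes k=\bY$, which holds by the normalization of the canonical lift chosen just before Proposition~\ref{prop:quasi canonical divisor on NPap}. The rest is purely formal.
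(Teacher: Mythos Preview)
Your proof is correct and follows essentially the same approach as the paper: the paper constructs the same lift $\bx_s^+\circ\iota_\cG(\pi^{a-s})$ via Lemma~\ref{lem:OoFlinearality} and concludes in one line. Your treatment of the Kr\"amer case via $\cZ^\Kra(\bx)=\Phi^{-1}(\cZ(\bx))$ and the strict-transform-in-preimage containment is more explicit than the paper's, which simply asserts the inclusion without further comment.
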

\begin{proof}
By Lemma \ref{lem:OoFlinearality}, the composition of homomorphisms 
\[\cG\overset{\pi^{a-s}}{\longrightarrow}\cG\overset{\bx_s^+}{\longrightarrow} X_s^+\]
is $\Oo_F$-linear and
lifts $(\rho_s^+)^{-1}\circ \bx \circ \rho_\cG$. Hence $\cY_{s,+}^{\kappa_\bx}\hookrightarrow \cZ(\bx)$ and $\cZ_{s,+}^{\kappa_\bx}\hookrightarrow \cZ^\Kra(\bx)$. 
\end{proof}

\subsection{Decomposition of special divisors}
It is proved in \cite[Proposition 4.3]{Ho2} that for $\bx\in \bV\setminus\{0\}$, $\cZ^\Kra(\bx)$ is a Cartier divisor. We say a Cartier divisor $\cZ$ of $\cN^\Kra$ is irreducible if it is connected and is irreducible in each affine charts of $\cN^\Kra$. We say an irreducible divisor $S$ over $\SpfOF$ is horizontal if $\pi$ is not locally nilpotent on the structure sheaf of $S$. Otherwise we say $S$ is vertical.
Any irreducible divisor is either horizontal or vertical. Any Cartier divisor $\cZ$ of $\cN^\Kra$ that can be decomposed into irreducible ones can thus be decomposed into horizontal part and vertical part
\[\cZ=\cZ_h+\cZ_v\]
where every irreducible component of $\cZ_h$ (resp. $\cZ_v$) is horizontal 
(resp. vertical).

\begin{theorem}\label{decompositionofspecialcycles}
Let $\bx=\bx_0\cdot \pi^a\in \bV$ such that $h(\bx_0,\bx_0)\in \Oo_{F_0}^\times$. Define $\kappa_\bx$ as in \eqref{eq:kappa x}. Then we have the following equality of Cariter divisors on $\cN^\Kra$.
\[\cZ^\Kra(\bx)=\cZ_0^{\kappa_\bx}+\sum_{s=1}^a \cZ_{s,+}^{\kappa_\bx}+\sum_{s=1}^{a} \cZ_{s,-}^{\kappa_\bx}+ (a+1) \Exc.\]
\end{theorem}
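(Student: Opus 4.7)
The plan is to decompose $\cZ^\Kra(\bx)$ by combining (i) the explicit description of horizontal components coming from Lemma \ref{lem:quasicanonicalliftsinspecialcycle}, (ii) passage through the blow-up $\Phi: \cN^\Kra \to \cN^\Pap$ together with the strict-transform identification of Proposition \ref{prop:blowupquasicanonicaldivisor}, and (iii) an intersection-theoretic computation to pin down the multiplicity of $\Exc$. By Lemma \ref{lem:quasicanonicalliftsinspecialcycle} each $\cZ_{s,\pm}^{\kappa_\bx}$ with $0 \le s \le a$ appears in $\cZ^\Kra(\bx)$ with multiplicity at least one, and $\Exc$ appears because at a point of $\Exc$ the framing $\rho$ is essentially the identity, so the quasi-homomorphism $\bx$ is trivially a homomorphism. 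The task reduces to ruling out extra horizontal components, verifying that the given ones appear with multiplicity exactly one, and computing the multiplicity of $\Exc$.

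First I would analyze $\cZ(\bx)$ on $\cN^\Pap$ via the exceptional isomorphism $\cN^\Pap \cong \cM_{\Gamma_0(\pi)} \hookrightarrow \cM \times \cM$ (Proposition \ref{prop:exceptionaliso}). Writing $\bx = (\mathrm{pr}_1 \circ \bx,\, \mathrm{pr}_2 \circ \bx)$ translates the condition that $\bx$ deforms to a homomorphism into a pair of compatible quasi-canonical lifting conditions on the two Lubin-Tate factors, linked by the $\pi$-isogeny $\phi$ and by the $\Oo_F$-linearity relation of Lemma \ref{lem:pr_1 pr_2 x}. Since $h(\bx,\bx) = \pi_0^a \cdot (\mathrm{unit})$, the quasi-canonical level on either side is constrained to lie in $\{0,1,\ldots,a\}$, and the only horizontal divisors in $\cM \times \cM$ realizing a compatible pair are the $\cY_{s,\pm}^{\kappa_\bx}$ constructed in Section \ref{subsec:quasi canonical lifting on NPap}. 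A degree count, using that $\cY_{s,\pm}^{\kappa_\bx} \cong \Spf W_s$ has degree $q^s$ over $\Oo_{\breve F}$ combined with Lemma \ref{lem:piW and cY}, then gives each horizontal component with multiplicity exactly one. Because $\cZ^\Kra(\bx)$ and $\cZ(\bx)$ are defined by the same lifting condition and $\Phi$ is an isomorphism off $\Exc$, the horizontal part of $\cZ^\Kra(\bx)$ is exactly $\cZ_0^{\kappa_\bx} + \sum_{s=1}^a (\cZ_{s,+}^{\kappa_\bx} + \cZ_{s,-}^{\kappa_\bx})$ by Proposition \ref{prop:blowupquasicanonicaldivisor}.

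It remains to prove that $\Exc$ appears with multiplicity exactly $a+1$. Write $\cZ^\Kra(\bx) = \cZ_0^{\kappa_\bx} + \sum_{s=1}^a (\cZ_{s,+}^{\kappa_\bx} + \cZ_{s,-}^{\kappa_\bx}) + e \cdot \Exc$. On the one hand, combining Proposition \ref{intersectionnumber1withExc} with Lemma \ref{lem:Excselfintersection} yields
\[
\cZ^\Kra(\bx) \cdot \Exc \;=\; (2a+1)\cdot 1 + e \cdot (-2) \;=\; 2a+1-2e.
\]
On the other hand, I would compute $\cZ^\Kra(\bx) \cdot \Exc$ by restricting the local equation cutting out the Cartier divisor $\cZ^\Kra(\bx)$ to $\Exc \cong \bP^1(\Lie\bX)$, using the Kramer local-model charts \eqref{eq:Kramer model equation} and applying Grothendieck-Messing to convert the lifting condition on $\bx$ into an explicit polynomial in the coordinates $x_i^1, x_i^2$ determined by the Dieudonn\'e-module computation in Section \ref{subsec:Dieudonnemodulecalculation}. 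Equivalently, one may first compute $\mathrm{mult}_{\mathrm{Sing}}(\cZ(\bx))$ on $\cN^\Pap \cong \Spf \Oo_{\breve F}[[X,Y]]/(XY-\pi_0)$ and then invoke the blow-up formula $\Phi^*\cY = \widetilde{\cY} + \mathrm{mult}_{\mathrm{Sing}}(\cY)\cdot \Exc$ summed over horizontal components. Either route should give $\cZ^\Kra(\bx) \cdot \Exc = -1$, forcing $e = a+1$.

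\textbf{The main obstacle} is this last intersection/multiplicity calculation: writing down the defining equation of $\cZ^\Kra(\bx)$ on a Kramer chart requires explicitly tracking how the Lie-algebra action of $\bx$ is deformed by the non-smooth relation $y_i((x_i^1)^2 + (x_i^2)^2) = 2\pi$ and by the Hodge filtration coordinates. If working instead on $\cN^\Pap$, the difficulty is to evaluate $\mathrm{mult}_{\mathrm{Sing}}(\cZ(\bx))$ in $\Oo_{\breve F}[[X,Y]]/(XY-\pi_0)$ from the lifting condition for $\bx$, which amounts to reading off the $(X,Y,\pi)$-adic order of vanishing of the modular equation $f(X,Y)$ of Lemma \ref{lem:modularequation} restricted to each quasi-canonical branch and summing. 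All remaining steps are either already done in Sections \ref{sec:quasicanonicalliftingdivisor}–\ref{specialcyclessection} or reduce to standard degree bookkeeping.
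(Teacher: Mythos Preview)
Your horizontal argument is essentially the same as the paper's: both exploit the embedding $\cN^\Pap \cong \cM_{\Gamma_0(\pi)} \hookrightarrow \cM \times \cM$ to bound $\cZ^\Kra(\bx)$ from above by the pullbacks of quasi-canonical divisors on the two Lubin-Tate factors. The paper makes this cleaner by quoting \cite[Proposition 7.1]{RSZexotic} for the identity $\cZ(\bx;\mathrm{pr}_i)=\sum_{j=0}^a \cW_j^{\kappa_\bx}$ on $\cM$, pulling back via $\Phi\circ p_1$ and $\Phi\circ p_2$, and observing that the two pullbacks agree in their horizontal parts \emph{except} for the single extra component $\cZ_{a+1,+}^{\kappa_\bx}$ versus $\cZ_{a+1,-}^{\kappa_\bx}$; since $\cZ^\Kra(\bx)$ sits inside both, its horizontal part is forced.

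Where your plan genuinely diverges is in the computation of the $\Exc$-multiplicity, and this is exactly where you identify the obstacle. The paper sidesteps the local-equation analysis you propose by intersecting with a different test curve: instead of $\Exc$, it intersects $\cZ^\Kra(\bx)$ with $\cZ_0^{\bar\kappa_\bx}$. The point is that by Propositions \ref{intersectionnumber1withExc}, \ref{prop:cZintersectwithspecialfiber}, and \ref{prop:cZ0intersectspecialfiber}, the curve $\cZ_0^{\bar\kappa_\bx}$ meets $\Exc$ transversally at a single point which is \emph{neither} $P_1$ nor $P_2$ nor the point where $\cZ_0^{\kappa_\bx}$ lands; hence $\cZ_0^{\bar\kappa_\bx}$ is disjoint from every horizontal component of $\cZ^\Kra(\bx)$, and $\cZ_0^{\bar\kappa_\bx}\cdot \cZ^\Kra(\bx)=e\cdot(\cZ_0^{\bar\kappa_\bx}\cdot\Exc)=e$. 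This intersection number is then nothing but the length of the deformation locus of $\bx$ along $\cZ_0^{\bar\kappa_\bx}\cong\Spf\Oo_{\breve F}$, which one reads off directly as $a+1$ from \cite[Theorem 1.4]{W} after rewriting $\bx$ in the framing adapted to $\bar\kappa_\bx$. This replaces your hard Grothendieck--Messing/local-chart computation (or the ill-defined multiplicity at the singular point of $\cN^\Pap$) with an elementary deformation length on a smooth curve.
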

\begin{proof}
Step 1.
$\Exc$ is obviously in $\cZ^\Kra(\bx)$ since the point Sing is always in $\cZ(\bx)$. And it follows from Lemma \ref{lem:quasicanonicalliftsinspecialcycle} and that all the other irreducible components on the right hand side of the equation indeed show up in the decomposition of $\cZ^\Kra(\bx)$.

\noindent Step 2.
By Proposition \ref{prop:exceptionaliso}, the universal object over $\cN^\Kra$ is $(X,\iota,\lambda,\rho,\cF_X)$ where $X=G_1\times G_2$ and $\rho=\rho_1\times \rho_2$ where $(G_1,\rho_1)$ and $(G_2,\rho_2)$ are deformations of $(\bY,\rho_\bY)$. Define $\cZ(\bx;\mathrm{pr}_i)$ to be the sub formal scheme of (the $i$-th) $\cM$ where  
\[\rho_i^{-1}\circ \mathrm{pr}_i \circ\bx\]
deforms to homomorphism $\cG\rightarrow G_1$. By \cite[Proposition 7.1]{RSZexotic}, we know that
\[\cZ(\bx;\mathrm{pr}_i)=\sum_{j=0}^a \cW_j^{\kappa_\bx}.\]
By Corollary \ref{cor:strict transform respect summation}, we have the following equality of Cartier divisors on $\cN^\Kra$
\[(\Phi\circ p_1)^*(\cZ(\bx;\mathrm{pr}_1))=\cZ_0^{\kappa_\bx}+\sum_{j=1}^a \cZ_{j,+}^{\kappa_\bx} +\sum_{j=1}^a \cZ_{j,-}^{\kappa_\bx} + \cZ_{a+1,+}^{\kappa_\bx}+a_1 \Exc,\]
\[(\Phi\circ p_2)^*(\cZ(\bx;\mathrm{pr}_2))=\cZ_0^{\kappa_\bx}+\sum_{j=1}^a \cZ_{j,+}^{\kappa_\bx} +\sum_{j=1}^a \cZ_{j,-}^{\kappa_\bx} +\cZ_{a+1,-}^{\kappa_\bx}+a_2 \Exc,\]
where $a_1,a_2\in \cZ_{\geq 0}$.
However by definition $\cZ^\Kra(\bx)$ is a sub formal scheme of both $(\Phi\circ p_i)^*(\cZ(\bx;\mathrm{pr}_i))$. Hence it can be decomposed into irreducible divisors and its horizontal part is also a sub divisor of both $(\Phi\circ p_i)^*(\cZ(\bx;\mathrm{pr}_i))_h$. This shows that the horizontal part of $\cZ^\Kra(\bx)$ is 
\[\cZ_0^{\kappa_\bx}+\sum_{s=1}^a \cZ_{s,-}^{\kappa_\bx}+\sum_{s=1}^{a} \cZ_{s,+}^{\kappa_\bx}.\]

\noindent Step 3.
Let $V$ be a vertical component of $\cZ^\Kra(\bx)$, We claim that $\Phi(V)$ must be supported on the reduced locus of $\cN^\Pap$ which is its unique point Sing. This claim follows from the same proof as that of \cite[Lemma 5.1.1]{LZ}. Alternatively there are isogenies $\rho_i^{-1}\circ \mathrm{pr}_i \circ\bx:\bY\rightarrow G_i$. Hence the pullbacks of $G_1$ and $G_2$ to $\cZ^\Kra(\bx)$ are supersingular. This implies that $V=\Exc$. 
It remains to calculate the multiplicity $m$ of the exceptional divisor $\Exc$ in $\cZ^\Kra(\bx)$. By Proposition \ref{intersectionnumber1withExc} and \ref{prop:cZ0intersectspecialfiber},  we know that 
\[m=\cZ_0^{\bar{\kappa}_\bx} \cdot \cZ^\Kra(\bx).\]
Notice that $\bar{\kappa}_\bx$ can be obtained by 
\[
  \bar\kappa_\bx(a)=\rho_0\circ \iota_\bY(\delta)\circ \iota_\bY(a) \circ \iota_\bY(\delta)^{-1}\circ \rho_0^{-1} 
\]
where $\rho_0$ is as in \eqref{eq:rho_0}.
If $\cZ_0^{\kappa_\bx}$ is determined by the first diagram in \eqref{eq:cXs definition diagram}, then $\cZ_0^{\bar{\kappa}_\bx}$ is determined by the  diagram
\[
\begin{tikzcd}
\cG\otimes k\arrow{r}{\beta_0\otimes k} \arrow{d}{\rho_0\circ \iota_\bY(\delta)} & \cG\otimes k \arrow{d}{\rho'_0\circ \iota_\bY(-\delta)}\\
\bY \arrow{r}{\iota_\bY(\pi)} & \bY
\end{tikzcd}.
\]
Apply \eqref{eq:xs+ commutative diagram} to the case $s=0$, we see that
\[(\rho_0\times \rho_0')^{-1}\circ \bu=\mathrm{Id}_\bY \times \mathrm{Id}_\bY. \]
Hence
\[
    ((\rho_0\circ \iota_\bY(\delta))\times(\rho'_0\circ \iota_\bY(-\delta)))^{-1}\circ \bx=\iota_\bY(\delta^{-1}\cdot \pi^a)\times\iota_\bY(-\delta^{-1}\cdot \pi^a).
\]
By the definition of $\cZ_0^{\bar{\kappa}_\bx}$ and $\cZ^\Kra(\bx)$, their intersection is the locus in $\cZ_0^{\bar{\kappa}_\bx}\cong\SpfOF$ such that the map 
\[((\rho_0\circ \iota_\bY(\delta))\times(\rho'_0\circ \iota_\bY(-\delta)))^{-1}\circ \bx: \bY\rightarrow \bY \times \bY\]
deforms to a homomorphism $\cG\rightarrow \cG\times \cG$. By \cite[Theorem 1.4]{W}, the above map lifts to $\Spf \Oo_{\breve{F}}/(\pi^{a+1})$ but not to $\Spf \Oo_{\breve{F}}/(\pi^{a+2})$. This finishes the proof.
\end{proof}

\section{Intersection of special cycles}
\subsection{Deformations of homomorphisms between quasi-canonical lifts}
Most of this subsection is the same as \cite[Section 7]{KR1}. A crucial difference occurs at \eqref{eq:n_r,sofpsi} (compared to \cite[Lemma 7.4]{KR1}), because we are working on schemes over $\SpfOF$ instead of $\Spf \Oo_{\breve{F}_0}$. As in loc. cit., we can assume $\kappa(a)=\iota_\bY(a)$ and $\rho_s=\mathrm{Id}_\bY$ for all $s\geq 0$. Then by \eqref{eq:normalizationofbetas} we know $\beta_s\otimes k =\iota_{\bY}(\pi)$.

Assume that $A$ is a finite extension of $\Oo_{\breve{F}_0}$ with uniformizer $\lambda$, and let $A_m:=A/ \lambda^{m+1}$. We also let $e$ be the absolute ramification index of $A$ over $\Oo_{\breve{F}_0}$ and denote by $\mathrm{ord}_A$ the discrete valuation on $A$ with $\mathrm{ord}_A(\lambda)=\frac{1}{e}$. Define
\begin{equation}\label{eq:e_s}
    e_s:=[W_s:\Oo_{\breve{F}_0}]=2\q^s
\end{equation}
to be the ramification index of $W_s/\Oo_{\breve{F}_0}$.
Suppose $\cG_r$ and $\cG_s$ are the quasi-canonical lifts defined over $A$. Suppose that a homomorphism 
\[\psi: \cG_r \times_{\Spf W_r} \Spec k \longrightarrow \cG_s \times_{\Spf W_s} \Spec k  \]
is given. Define $n_{r,s} (\psi)$ to be the maximal $m$ such that $\psi$ lifts to a homomorphism $\cG_r \times_{\Spf W_r} \Spf A_m \longrightarrow \cG_s \times_{\Spf W_s} \Spf A_m  $.

Let $H_{r,s}\subset \Oo_{\mathbb H}=\End(\bY)$ be the subset of elements $\phi$ that lift to homomorphisms from $\cG_r$ to $\cG_s$.  By \cite[Proposition 1.1]{W2}, if $s\geq r$, then $H_{r,s}=\pi^{s-r} \Oo_r$.
Passage to dual isogenies shows that $n_{r,s}(\phi)=n_{s,r}(\phi^\vee)$. Hence we may assume $s\geq r$, as we shall do from now on. For $\psi \in \Oo_{\mathbb{H}} \setminus H_{r,s}$, let 
\begin{equation}\label{l_r,s}
   l_{r,s}(\psi)=\mathrm{max} \{v(\psi+\phi)\mid \phi\in H_{r,s}\},
\end{equation}
where $v$ is the valuation on $\mathbb H$ with $v(\pi)=1$. More explicitly, $l=l_{r,s}(\psi)$ is the positive integer such that 
\begin{equation}
    \psi \in (\pi^{s-r} \Oo_r +\pi^l  \Oo_{\mathbb{H}})\setminus(\pi^{s-r} \Oo_r +\pi^{l+1}  \Oo_{\mathbb{H}}).
\end{equation}
Notice that if $v(\psi)<s-r$, then $l_{r,s}(\psi)=v(\psi)$.
What we need is $n_{0,s}(\psi)$ which is determined in the following proposition.
\begin{proposition}\label{lengthofliftingsinquasicanonicallifts}
Let $l=l_{0,s}(\psi)$. Then
\[n_{0,s}(\psi)=e/e_s\cdot\left\{\begin{array}{ccc}
    &\frac{\q^{l+1}-1}{\q-1} & \text{ if } l<s, \\
     &\frac{\q^{s}-1}{\q-1}+ \frac{1}{2}(l+1-s) e_s & \text{ if } l\geq s.
\end{array}\right.\]
\end{proposition}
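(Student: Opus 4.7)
The strategy follows \cite[Section 7]{KR1} closely, but careful attention must be paid to the ramification of $F/F_0$, which is exactly where the factor $\tfrac{1}{2}$ in the second case (and the overall normalization by $e/e_s$) will come from. First I would reduce to a universal case. Write $\psi = \phi + \pi^l \psi_0$ where $\phi \in H_{0,s} = \pi^s \Oo_F$ and $\psi_0 \in \Oo_\mathbb{H}\setminus \Oo_F$ is a representative with $v(\psi_0)=0$. Since every $\phi \in H_{0,s}$ lifts to all of $A$ by definition of $H_{0,s}$, we have $n_{0,s}(\psi) = n_{0,s}(\pi^l \psi_0)$. By Skolem--Noether we may take $\psi_0 = \delta$ to be the fixed element from the introduction, satisfying $\delta \iota_\bY(a) = \iota_\bY(\bar a)\delta$ and $\delta^2 \in F_0^\times$. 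Thus the problem reduces to computing $n_{0,s}(\pi^l \delta)$ for $0\le l$.

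Next I would combine Gross--Wewers deformation theory with the Serre--Tate picture. Under the normalization $\kappa = \iota_\bY$ and $\rho_s = \mathrm{Id}_\bY$, the quasi-canonical lift $\cG_s$ sits in the universal deformation of $\bY$ at a canonical point whose Serre--Tate parameter is a uniformizer of $W_s$ after base change. The condition that $\pi^l\delta \colon \bY\to \bY$ lifts to $\cG_0 \times \Spf A_m \to \cG_s \times \Spf A_m$ translates, via \cite[Theorem 1.4]{W} and the explicit formula for $v(\mathrm{Lie}(\gamma_s))$ used in the proof of Proposition \ref{prop:cZintersectwithspecialfiber}, into an inequality of the shape
\[
\mathrm{ord}_A(\text{obstruction of }\pi^l\delta) \ge m+1.
\]
The obstruction is computed by the product of deformation parameters along the chain $\cG_0 \to \cG_1 \to \cdots \to \cG_s$, and each step contributes a factor whose $A$-valuation is scaled by $e/e_j$ where $e_j = 2 q^j$.

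With this in hand the two cases become a bookkeeping exercise. If $l<s$, then the obstruction is concentrated in the first $l+1$ steps of the chain; the $j$-th step contributes $e/e_j$ multiplied by $e_j/2$, so summing over $j=0,1,\ldots,l$ gives the geometric sum
\[
\frac{e}{e_s} \cdot \sum_{j=0}^{l} q^j = \frac{e}{e_s}\cdot\frac{q^{l+1}-1}{q-1}.
\]
If $l\ge s$, then the obstruction picks up the full contribution from the first $s$ steps, giving $\tfrac{e}{e_s}\cdot\tfrac{q^s-1}{q-1}$, plus an extra linear term coming from the fact that $\pi^l\delta$ lifts $\cG_0\to\cG_s$ at the generic level up to order $\tfrac{1}{2}(l+1-s)$ in $W_s$, i.e., $\tfrac{e}{e_s}\cdot \tfrac{1}{2}(l+1-s) e_s$ in $A$. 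Here the factor $\tfrac{1}{2}$ is precisely $1/\mathrm{val}_{\pi}(\pi_0)$, the ramification index of $F/F_0$.

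The main obstacle is correctly tracking the normalization: in \cite[Lemma 7.4]{KR1} one measures in powers of the uniformizer of $\breve F_0$, whereas here one measures in powers of $\lambda$ over $\SpfOF$, and the intermediate computations naturally take values in the valuation of $W_s$. Keeping these three valuations consistent throughout the inductive step along $\cG_{j-1}\to\cG_j$, and verifying that the "horizontal" contribution in the case $l\ge s$ really carries a factor of $1/2$ (rather than $1$, as it would if $F=F_0$), is the delicate point. Once these normalizations are pinned down, the proposition follows from the explicit description of $\beta_s,\gamma_s$ in Section \ref{sec:quasicanonicalliftingdivisor} together with the crystalline deformation formula of Gross.
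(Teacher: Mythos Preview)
Your overall strategy—reduce to $\psi=\pi^l\psi_0$ with $l_{0,s}(\psi_0)=0$ and then climb the tower of quasi-canonical lifts—is indeed the same shape as the paper's argument, which follows \cite[Section 7]{KR1}. But the execution has two genuine problems.

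First, the bookkeeping in your ``chain'' decomposition is wrong as written. You say the $j$-th step contributes $e/e_j$ multiplied by $e_j/2$; that product is the constant $e/2$, so summing over $j=0,\dots,l$ gives $(l+1)e/2$, not $\tfrac{e}{e_s}\cdot\tfrac{q^{l+1}-1}{q-1}$. The correct accounting comes from the recursion $n_{0,s+1}(\pi\psi)=n_{0,s}(\psi)+e/e_{s+1}$ (this is \cite[Corollary 5.3]{W}), which increases both $s$ and $l$ by one. Unwinding it, for $l<s$ one starts at level $s-l$ with $l=0$ and picks up $\sum_{k=s-l}^{s} e/e_k=\tfrac{e}{e_s}\sum_{j=0}^{l}q^j$; the contributions come from the \emph{top} of the tower, not the bottom, and each term is $e/e_{s-j}$, not $e/e_j\cdot e_j/2$.

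Second, and more seriously, you never establish the base case. The recursion only moves along the diagonal $(l,s)\mapsto(l+1,s+1)$, so to reach a point with $l<s$ you must know $n_{0,s'}(\psi)$ for some $\psi$ with $l_{0,s'}(\psi)=0$ and $s'>0$. The paper isolates exactly this as the key new input: equation \eqref{eq:n_r,sofpsi}, which in the case $r=0$ reads $n_{0,s}(\psi)=e/e_s$ whenever $l_{0,s}(\psi)=0$. This is the ramified analogue of \cite[Lemma 7.4]{KR1} and is where the factor $\tfrac12$ genuinely enters (since $e_0=2$ rather than $1$). Your appeal to ``the product of deformation parameters along the chain'' and to \cite[Theorem 1.4]{W} does not produce this; that theorem concerns deformations of $\cG_0\to\cG_0$, not $\cG_0\to\cG_s$ with $s>0$. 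You also need \cite[Theorem 2.1]{V} for the diagonal case $n_{r,r}$, which anchors the induction for $l\ge s$. Finally, the Skolem--Noether reduction to $\psi_0=\delta$ is not quite right: Skolem--Noether conjugates embeddings of $F$, but here $\kappa$ is fixed, so what you really need is that $n_{0,s}$ depends only on $l$—which is the conclusion, not an input.
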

\begin{proof}
The proof is almost the same with that of \cite[Proposition 7.2]{KR1} with slight changes in numerology. So we only sketch the key steps here with emphasize on where the numerology differs.
First by \cite[Theorem 2.1]{V}, if $\psi\in \Oo_{\mathbb{H}}\setminus H_{r,r}$ with $l_{r,r}(\psi)=l$, then
\[n_{r,r}(\psi)=e/ e_r \left\{\begin{array}{cc}
   \frac{(\q^{l/2}-1)(\q+1)}{\q-1}+1  & \text{if } l\leq 2r\text{ and } l \text{ even}, \\
     \frac{(\q^{(l-1)/2}-1)(\q+1)}{\q-1}+\q^{(l-1)/2}+1  & \text{if } l\leq 2r\text{ and } l \text{ odd},  \\
     \frac{(\q^{r-1}-1)(\q+1)}{\q-1}+\q^{r-1}+(\frac{l+1}{2}-r)e_r +1  & \text{if } l\geq 2r-1.
\end{array}\right.  \]
Secondly suppose that $\cG_r$, $\cG_s$ and $\cG_{s+1}$ are defined over $A$ and that $\psi\in \Oo_{\mathbb H} \setminus H_{r,s}$ for $r\leq s$. Then by \cite[Corollary 5.3]{W}, we have
\[n_{r,s+1}(\pi\psi)=n_{r,s}(\psi)+e/e_{s+1}.\]
We can use induction on $s$ by the above equation. To initiate the induction process, we need the following equation which is a variant of Lemma 7.4 of \cite{KR1}.
\begin{equation}\label{eq:n_r,sofpsi}
  n_{r,s}(\psi)=\frac{1}{2}(e/e_{s})e_r \text{ if } l_{r,s}(\psi)=0 \text{ and } s\geq 2r.
\end{equation}
The whole process of inductive calculation is exactly the same as loc.cit, so we omit it. This finishes the proof of the proposition.
\end{proof}

\subsection{Intersection of special cycles}
We now describe a basis of $\bV$.
Define $i_1:\bY \rightarrow \bX$ and $i_2:\bY \rightarrow \bX$ by the function on $R$-points ($R$ a $k$-algebra)
\begin{equation}\label{i_1 i_2}
   i_1(x)=(x,x),\ i_2(x)=(\delta x,-\delta x).   
\end{equation}
\begin{lemma}
$i_1$ and $i_2$ are $\Oo_F$-morphisms.
\end{lemma}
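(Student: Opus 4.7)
The plan is to verify that $i_1$ and $i_2$ commute with the $\Oo_F$-action. Since both maps are manifestly $\Oo_{F_0}$-linear (they are built out of $\Oo_{F_0}$-module maps: the diagonal and the scalars $\pm\delta$, where $\delta \in \Oo_\H = \End(\bY)$ commutes with the $\Oo_{F_0}$-action on $\bY$), and since $\Oo_F = \Oo_{F_0}[\pi]$, it suffices to check compatibility with the action of $\pi$, i.e.\ to verify
\[
i_j \circ \iota_\bY(\pi) \;=\; \iota_\bX(\pi) \circ i_j, \qquad j = 1,2,
\]
using the formula $\iota_\bX(\pi)(y_1,y_2) = (\iota_\bY(\pi)y_2,\,\iota_\bY(\pi)y_1)$ from \eqref{eq:definitionofbX}.

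For $i_1$ this is immediate: both sides send $x$ to $(\iota_\bY(\pi)x,\iota_\bY(\pi)x)$. For $i_2$, the calculation gives
\[
\iota_\bX(\pi)\bigl(i_2(x)\bigr) \;=\; \iota_\bX(\pi)(\delta x,-\delta x) \;=\; \bigl(-\iota_\bY(\pi)\delta x,\ \iota_\bY(\pi)\delta x\bigr),
\]
whereas $i_2(\iota_\bY(\pi) x) = (\delta\iota_\bY(\pi)x, -\delta\iota_\bY(\pi)x)$. These match precisely because of the anti-commutation relation $\delta\pi = -\pi\delta$ in $\Oo_\H$ (fixed at the beginning of Part I), which translates in $\End(\bY)$ to $\iota_\bY(\pi)\delta = -\delta\iota_\bY(\pi)$.

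There is essentially no obstacle here; the only subtle point is remembering that the $\Oo_F$-structure on $\bX$ is the twisted one from \eqref{eq:definitionofbX} (not a product structure) and that the twist is exactly compensated by the sign in the definition of $i_2$, which is the whole point of the factor $\delta$ with the opposite sign in the second coordinate.
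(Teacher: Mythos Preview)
Your proof is correct and follows essentially the same approach as the paper: both reduce to checking compatibility with $\iota_\bX(\pi)$ and verify it by direct computation, with the $i_2$ case resting on the anti-commutation $\delta\pi=-\pi\delta$. Your version is slightly more explicit about why checking $\pi$ suffices and about invoking the anti-commutation relation, but the argument is the same.
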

\begin{proof}
It is a direct calculation. 
\begin{align*}
    i_1(\iota_\bY(\pi)x)=(\pi x,\pi x),\quad & i_2(\iota_\bY(\pi)x)=(\delta\pi x,-\delta\pi x),\\
    \iota_\bX(\pi)i_1(x)=(\pi x,\pi x), \quad & \iota_\bX(\pi)i_2(x)=(-\pi \delta x,\pi \delta x).
\end{align*}
The lemma follows.
\end{proof}

\begin{lemma}
We know that
\begin{equation}
    h(i_1,i_1)=2, h(i_1,i_2)=0, h(i_2,i_2)=-2\delta^2.
\end{equation}
In particular $i_1,i_2$ span a unimodular lattice in $\bV$.
\end{lemma}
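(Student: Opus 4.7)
The plan is to reduce the computation of $h(i_a,i_b)$ to the already-established formulas for the form $(\,,\,)_\bX$ on the rational Dieudonn\'e module $N$, using the comparison identity \eqref{eq:relationsonhermitianforms}. Concretely, for a chosen basis element $e$ of $M(\bY)$ we have
\[h(x,y)\,(e,e)_\bY=(x(e),y(e))_\bX,\]
so all that is needed is to identify the images $i_1(e),i_2(e)\in\bM=M(\bX)$ in the basis $\{e_1,f_1,e_2,f_2\}$ of Section \ref{subsec:Dieudonnemodulecalculation}, and then appeal to the values of $(e,e)_\bY$ and of the Gram matrix of $\{v_1,v_2\}$ already computed in \eqref{eq:(e,e)} and \eqref{eq:Gram matrix of v_1 v_2}.

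First I would observe that, in the identification $\bX=\bY_1\times\bY_2$ with its Dieudonn\'e module decomposition, the defining formulas \eqref{i_1 i_2} show on tangent/Dieudonn\'e level that
\[i_1(e)=e_1+e_2=v_1,\qquad i_2(e)=\delta e_1-\delta e_2=v_2,\]
where $v_1,v_2$ are exactly the vectors defined in \eqref{eq:v_1 v_2}. Combining with $(e,e)_\bY=-\delta^2$ and the Gram matrix
\[((v_i,v_j)_\bX)=\begin{pmatrix}-2\delta^2 & 0\\ 0 & 2\delta^4\end{pmatrix}\]
from \eqref{eq:Gram matrix of v_1 v_2}, the identity \eqref{eq:relationsonhermitianforms} immediately yields $h(i_1,i_1)=(-2\delta^2)/(-\delta^2)=2$, $h(i_1,i_2)=0$, and $h(i_2,i_2)=(2\delta^4)/(-\delta^2)=-2\delta^2$.

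Finally, for unimodularity of $\Oo_F i_1\oplus \Oo_F i_2$, the determinant of the Gram matrix is $-4\delta^2$. Since $\delta\in\Oo_\H^\times$ with $\delta^2\in F_0$, we have $\delta^2\in\Oo_{F_0}^\times$, and because $p$ is odd, $2\in\Oo_{F_0}^\times$. Hence the discriminant lies in $\Oo_{F_0}^\times$, showing the lattice is unimodular. There is no genuine obstacle here; the only slightly non-trivial point is the bookkeeping that $i_j(e)$ coincides with the vector $v_j$ in the earlier basis, after which the statement follows by direct substitution.
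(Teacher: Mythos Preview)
Your proposal is correct and follows essentially the same approach as the paper: identify $i_1(e)=v_1$ and $i_2(e)=v_2$, then invoke \eqref{eq:relationsonhermitianforms} together with the precomputed values \eqref{eq:(e,e)} and \eqref{eq:Gram matrix of v_1 v_2}. Your version simply spells out the final arithmetic and the unimodularity reasoning more explicitly than the paper does.
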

\begin{proof}
We can check directly that
\[i_1(e)=v_1,\ i_2(e)=v_2\]
where $v_1,v_2$ are as in \eqref{eq:v_1 v_2}. The lemma now follows from equation \eqref{eq:relationsonhermitianforms}, \eqref{eq:(e,e)} and \eqref{eq:Gram matrix of v_1 v_2}.
\end{proof}

Define $\kappa:=\kappa_{i_1}$ as in \eqref{eq:kappa x}. Then $\kappa_{i_2}=\bar\kappa$.  

\begin{proposition}\label{intersectionnumberofquasicanonicalliftsandspecialcycles} Assume that $\bx=i_1\circ \pi^a$ and $\by=i_2\circ \pi^b$. Then we have
\[\cZ_{s,+}^\kappa\cdot \cZ^\Kra(\by)=\cZ_{s,-}^\kappa\cdot \cZ^\Kra(\by)=\left\{\begin{array}{ccc}
    &\frac{\q^{b+1}-1}{\q-1} & \text{ if } b\leq s, \\
     &\frac{\q^{s}-1}{\q-1}+ (b+1-s) \q^s & \text{ if } b\geq s,
\end{array}\right.\]
\[\cZ_{s,+}^{\bar\kappa}\cdot \cZ^\Kra(\bx)=\cZ_{s,-}^{\bar\kappa}\cdot \cZ^\Kra(\bx)=\left\{\begin{array}{ccc}
    &\frac{\q^{a+1}-1}{\q-1}  & \text{ if } a\leq s ,\\
     &\frac{\q^{s}-1}{\q-1}+ (a+1-s) \q^{s} & \text{ if } a\geq s.
\end{array}\right.\]
\end{proposition}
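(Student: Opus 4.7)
The plan is to compute $\cZ^\kappa_{s,+}\cdot\cZ^\Kra(\by)$ by restricting $\cZ^\Kra(\by)$ to the horizontal divisor $\cZ^\kappa_{s,+}\cong\Spf W_s$ and measuring the length of the resulting Cartier divisor over $k$; the three remaining identities will follow by symmetry.  Because $\cZ^\Kra(\by)$ decomposes (Theorem \ref{decompositionofspecialcycles}) into divisors of type $\cZ^{\bar\kappa}_{s',\pm}$ and $\Exc$, and $\kappa\ne\bar\kappa$, the horizontal divisor $\cZ^\kappa_{s,+}$ is not a component of $\cZ^\Kra(\by)$.  The intersection scheme is therefore zero-dimensional, the derived tensor product $\Oo_{\cZ^\kappa_{s,+}}\otimes^{\mathbb{L}}\Oo_{\cZ^\Kra(\by)}$ is concentrated in degree zero, and its $k$-length equals the intersection number.

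Under the normalization $\rho_s=\mathrm{Id}_\bY$ for every $s\geq 0$ from the beginning of Section~\ref{sec:quasicanonicalliftingdivisor}, the pullback of the universal object to $\cZ^\kappa_{s,+}$ is the quadruple $(X^+_s,\iota^+_s,\lambda^+_s,\rho^+_s)$ of \eqref{eq:Xs+} with $\rho^+_s=\mathrm{Id}_\bY\times\mathrm{Id}_\bY$, and under this framing the special-fibre reduction of $\by=i_2\circ\iota_\bY(\pi^b)$ is the pair $(\iota_\bY(\delta\pi^b),-\iota_\bY(\delta\pi^b)):\bY\to\bY\times\bY$.  Since a morphism of strict formal $\Oo_{F_0}$-modules into the product $X^+_s=\cG_{m(s)}\times\cG_s$ is simply a pair of component morphisms, the lifting problem for $\by$ decouples into two independent problems: lift $\delta\pi^b$ to a homomorphism $\cG\to\cG_{m(s)}$, and lift $-\delta\pi^b$ to a homomorphism $\cG\to\cG_s$.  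The intersection locus is cut out by the common obstruction, and its $k$-length is governed by $\min(n_{0,m(s)}(\delta\pi^b),n_{0,s}(\delta\pi^b))$, both computed over the common base $A=W_s$ (so that $e=e_s$).

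A direct calculation in $\Oo_{\mathbb{H}}=\Oo_F\oplus\delta\Oo_F$ gives $l_{0,s}(\delta\pi^b)=b$ for every $s\geq 0$: since $\delta$ is a unit of $\pi$-valuation zero, $\delta\pi^b$ lies in $\pi^b\Oo_{\mathbb{H}}\setminus\pi^{b+1}\Oo_{\mathbb{H}}$, and its component in the $\Oo_F$ summand is zero, so no choice of $\phi\in\pi^s\Oo_F$ can push $\delta\pi^b+\phi$ deeper than $\pi^b\Oo_{\mathbb{H}}$.  Substituting $l=b$ and $e/e_s=1$ into Proposition~\ref{lengthofliftingsinquasicanonicallifts} yields precisely the piecewise expression in the statement: $(\q^{b+1}-1)/(\q-1)$ when $b<s$, and $(\q^s-1)/(\q-1)+(b+1-s)\q^s$ when $b\geq s$.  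To conclude I still need the dominance $n_{0,m(s)}(\delta\pi^b)\geq n_{0,s}(\delta\pi^b)$.  This is trivial for $s=0$, and for $s\geq 1$ follows from a case-by-case comparison in the three regimes $b<m(s)$, $b=m(s)$, $b\geq s$, using the factor $e/e_{m(s)}=\q$ that appears in Proposition~\ref{lengthofliftingsinquasicanonicallifts}; in each regime the excess works out to $\q^{b+1}-1$ or $\q^s-1$.  This dominance check is the only nonroutine step.

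The identity $\cZ^\kappa_{s,-}\cdot\cZ^\Kra(\by)=\cZ^\kappa_{s,+}\cdot\cZ^\Kra(\by)$ follows from the same argument with $X^+_s$ replaced by $X^-_s$ from \eqref{eq:Xs-}, which merely swaps the two factors and leaves each component lifting problem unchanged.  Finally, the two formulas for $\cZ^{\bar\kappa}_{s,\pm}\cdot\cZ^\Kra(\bx)$ follow from the symmetry that interchanges $i_1\leftrightarrow i_2$ and $\kappa\leftrightarrow\bar\kappa$, induced at the level of the framing object by conjugation by $\iota_\bY(\delta)$; this symmetry converts the problem into the one just treated with $a$ playing the role of $b$, giving the same expression with $b$ replaced by $a$.
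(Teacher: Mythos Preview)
Your proposal is correct and follows essentially the same route as the paper's own proof: restrict to $\cZ^\kappa_{s,+}\cong\Spf W_s$, decouple the deformation problem into the two component maps $\delta\pi^b:\bY\to\bY$ toward $\cG_{m(s)}$ and $\cG_s$, compute $l_{0,s}(\delta\pi^b)=b$ (using the decomposition $\Oo_{\mathbb H}=\Oo_F\oplus\delta\Oo_F$), apply Proposition~\ref{lengthofliftingsinquasicanonicallifts} with $e/e_s=1$ and $e/e_{m(s)}=\q^{s-m(s)}$ respectively, and verify that the $\cG_s$-component gives the minimum. The paper then dismisses the remaining three cases with ``similar arguments''; your appeal to the $\delta$-conjugation symmetry is a clean way to make that precise.
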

\begin{proof}
Let $A=W_s$ and $\lambda$ be a uniformizer of $A$. 
By Proposition \ref{prop:quasi canonical divisor on NPap} and \ref{prop:blowupquasicanonicaldivisor}, we know that $\cZ_{s,+}^\kappa\cong \Spf W_s$ and the formal $\Oo_{F_0}$-module over $ \cZ_{s,+}^\kappa$ is $\cG_{m(s)}\times \cG_{s}$.
Then $\cZ_{s,+}^\kappa\cap \cZ^\Kra(\by)$ is the locus of $\cZ_{s,+}^\kappa$ such that the map
\begin{equation}\label{mapfromF0totYs-}
    (\iota_\bY(\delta\pi^b) \times \iota_\bY(-\delta\pi^b))\circ i_1:\bY\rightarrow \bY\times \bY
\end{equation}
lifts to a homomorphism $\cG\rightarrow \cG_{m(s)}\times \cG_{s}$. Let
\[m=\cZ_{s,+}^\kappa\cdot \cZ^\Kra(\by),\]
then $m$ is biggest integer such that the map in equation \eqref{mapfromF0totYs-} can be lifted to $ \cG\times_{\Spf W_0} \Spec A_m \rightarrow (\cG_{m(s)}\times \cG_{s}) \times_{\Spf W_s} \Spec A_m $.

By Proposition \ref{lengthofliftingsinquasicanonicallifts}, the map 
\[\iota_\bY(\delta)\circ \iota_\bY(\pi)^b:\bY\rightarrow \bY \]
can be lifted to $ \cG\times_{\Spf W_0} \Spec A_{m_1} \rightarrow \cG_{s} \times_{\Spf W_s} \Spec A_{m_1} $, but not to $ \cG\times_{\Spf W_0} \Spec A_{m_1+1} \rightarrow \cG_s \times_{\Spf W_s} \Spec A_{m_1+1}$ where
\[m_1=\left\{\begin{array}{ccc}
    &\frac{\q^{b+1}-1}{\q-1} & \text{ if } b<s, \\
     &\frac{\q^{s}-1}{\q-1}+ (b+1-s) \q^s & \text{ if } b\geq s,
\end{array}\right.\]
as $l_{0,s}(\pi^b \circ \delta)=b$.

Similarly,  the map 
\[\iota_\bY(\delta)\circ \iota_\bY(\pi)^b:\bY\rightarrow \bY \]
can be lifted to $ \cG\times_{\Spf W_0} \Spec A_{m_2} \rightarrow \cG_{m(s)} \times_{\Spf W_s} \Spec A_{m_2} $, but not to $ \cG\times_{\Spf W_0} \Spec A_{m_2+1} \rightarrow \cG_{m(s)} \times_{\Spf W_s} \Spec A_{m_2+1}$ where 
\[m_2=\q^{s-m(s)}\cdot\left\{\begin{array}{ccc}
    &\frac{\q^{b+1}-1}{\q-1}  & \text{ if } b<m(s) ,\\
     &\frac{\q^{m(s)}-1}{\q-1}+ (b+1-m(s)) \q^{m(s)} & \text{ if } b\geq m(s),
\end{array}\right.\]
as $l_{0,s-1}(\pi^b \circ \delta)=b$.
Then we know that
\[m=min\{m_1,m_2\}=m_1.\]
Similar arguments produce the formula for the other cases in the proposition.
\end{proof}

\begin{corollary}\label{intersectionnumberofquasicanonicalliftingdivisors}
Suppose $s,t\geq 1$. Then 
\[\cZ_{s,+}^\kappa \cdot \cZ_{t,+}^{\bar\kappa}=\cZ_{s,-}^\kappa \cdot \cZ_{t,-}^{\bar\kappa}=\q^{min\{s,t\}}-1.\]
\end{corollary}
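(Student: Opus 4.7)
The plan is to apply Theorem \ref{decompositionofspecialcycles} to $\by = i_2\pi^t$ and intersect the resulting decomposition with $\cZ_{s,+}^\kappa$, using Proposition \ref{intersectionnumberofquasicanonicalliftsandspecialcycles} together with disjointness arguments to isolate the desired intersection numbers. Since $\kappa_{i_2} = \bar\kappa$ and $h(\by,\by) = -2\delta^2(-\pi_0)^t$ has $\pi_0$-adic valuation $t$, the decomposition reads
\[\cZ^\Kra(i_2\pi^t) = \cZ_0^{\bar\kappa} + \sum_{r=1}^t\bigl(\cZ_{r,+}^{\bar\kappa} + \cZ_{r,-}^{\bar\kappa}\bigr) + (t+1)\Exc.\]
Intersecting with $\cZ_{s,+}^\kappa$, the left hand side equals the quantity $f(s,t) := \cZ_{s,+}^\kappa \cdot \cZ^\Kra(i_2\pi^t)$ given explicitly by Proposition \ref{intersectionnumberofquasicanonicalliftsandspecialcycles} (with $a$ there replaced by $s$ and $b$ by $t$).

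The crucial step is to argue that most of the resulting terms on the right vanish, leaving
\[f(s,t) = \sum_{r=1}^t \cZ_{s,+}^\kappa\cdot \cZ_{r,+}^{\bar\kappa} + (t+1).\]
For this, observe that $\cZ_{s,+}^\kappa \cong \Spf W_s$ is horizontal and topologically supported at the single closed point $P_2 \in \Exc$ by Proposition \ref{prop:cZintersectwithspecialfiber}. Repeating the eigenspace analysis of that proposition verbatim with $\bar\kappa$ in place of $\kappa$---the formal modulus $v(\cG_r)$ and the valuations of $\Lie(\beta_r^{\bar\kappa})$ and $\Lie(\gamma_r^{\bar\kappa})$ depend only on the level $r$---shows that $\cZ_{r,-}^{\bar\kappa}$ is supported at $P_1$. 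The analogous $\bar\kappa$-version of Proposition \ref{prop:cZ0intersectspecialfiber} shows that $\cZ_0^{\bar\kappa}$ is supported at a third point of $\Exc$, distinct from both $P_1$ and $P_2$. In each case the underlying supports are disjoint closed points of $\cN^\Kra$, so the intersection numbers $\cZ_{s,+}^\kappa\cdot \cZ_{r,-}^{\bar\kappa}$ and $\cZ_{s,+}^\kappa\cdot \cZ_0^{\bar\kappa}$ both vanish.

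Taking the difference of consecutive values of $t$ then yields
\[\cZ_{s,+}^\kappa\cdot \cZ_{t,+}^{\bar\kappa} = f(s,t) - f(s,t-1) - 1,\]
with base case $f(s,0) = 1$ confirmed both by Proposition \ref{intersectionnumberofquasicanonicalliftsandspecialcycles} (with $b=0$) and by the degenerate decomposition $\cZ^\Kra(i_2) = \cZ_0^{\bar\kappa} + \Exc$. A direct check in the three ranges $t < s$, $t = s$ or $s+1$, and $t \geq s+2$ of the piecewise formula for $f(s,t)$ shows this difference collapses uniformly to $\q^{\min\{s,t\}} - 1$. The second identity $\cZ_{s,-}^\kappa\cdot \cZ_{t,-}^{\bar\kappa} = \q^{\min\{s,t\}} - 1$ then follows by the entirely parallel argument obtained by intersecting the same decomposition with $\cZ_{s,-}^\kappa$ instead, where the relevant disjointness is now between $\cZ_{s,-}^\kappa$ (supported at $P_1$) and each of $\cZ_{r,+}^{\bar\kappa}$ (at $P_2$) and $\cZ_0^{\bar\kappa}$ (at the third point).

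The main obstacle is the $\bar\kappa$-analogue of Proposition \ref{prop:cZintersectwithspecialfiber}: one must check that twisting the embedding by the nontrivial Galois action does not flip the reduction of the $\pi$-eigenvector on the Lie algebra from $E_1$ to $E_2$. This should go through because $\beta_r^{\bar\kappa}\otimes k = -\beta_r\otimes k$ differs from $\beta_r\otimes k$ only by a unit, so the $\pi_0$-adic valuations of the entries of $\iota_s^{\pm,\bar\kappa}(\pi)|_{\Lie X}$ are unchanged, and the argument identifying the reduction point of the $\pi$-eigenspace with $P_1$ (resp.\ $P_2$) carries over verbatim.
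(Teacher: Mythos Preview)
Your proposal is correct and follows essentially the same approach as the paper's proof: decompose $\cZ^\Kra(i_2\circ\pi^t)$ via Theorem~\ref{decompositionofspecialcycles}, use Propositions~\ref{prop:cZintersectwithspecialfiber} and~\ref{prop:cZ0intersectspecialfiber} to kill the cross-terms, then take successive differences and apply Proposition~\ref{intersectionnumberofquasicanonicalliftsandspecialcycles}. The only cosmetic difference is that the paper treats the $-$ case first rather than the $+$ case, and the paper simply invokes Propositions~\ref{prop:cZintersectwithspecialfiber} and~\ref{prop:cZ0intersectspecialfiber} for $\bar\kappa$ without further comment (since those statements are for an arbitrary embedding and $P_1,P_2$ are absolute points of $\cN^\Kra_s$), whereas you spell out why the $\bar\kappa$-case goes through.
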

\begin{proof}
We prove
\begin{equation}\label{eq:cZ intersect cZ}
  \cZ_{s,-}^\kappa \cdot \cZ_{b,-}^{\bar\kappa}=\q^{min\{s,b\}}-1   
\end{equation}
The other case is similar and left to the reader. First by Proposition and \ref{prop:cZintersectwithspecialfiber} and \ref{prop:cZ0intersectspecialfiber}, we know for all $s,t\geq 0$
\begin{equation}\label{eq: cZ- intersect barcZ+}
    \cZ_{s,+}^\kappa\cdot \cZ_{t,-}^{\bar\kappa}=\cZ_{s,-}^\kappa\cdot  \cZ_{t,+}^{\bar\kappa}=0.
\end{equation}
When $s=0$ or $b=0$, equation \eqref{eq:cZ intersect cZ} follows directly from this. Now assume $s\geq 1$ and $b\geq 1$.
By Theorem \ref{decompositionofspecialcycles}, we have
\[
   \cZ^\Kra(i_2\circ \pi^b)=\cZ_0^{\bar\kappa}+\sum_{t=1}^b \cZ_{b,-}^{\bar\kappa}+ \sum_{t=1}^b \cZ_{b,+}^{\bar\kappa} +(b+1) \Exc. 
\]
This together with \eqref{eq: cZ- intersect barcZ+} and Proposition \ref{intersectionnumber1withExc} implies
\begin{equation}
   \cZ_{s,-}^\kappa \cdot \cZ^{\bar\kappa}_{b,-}
    =\cZ_{s,-}^\kappa \cdot \cZ^\Kra(i_2\circ \pi^{b})-\cZ_{s,-}^\kappa \cdot \cZ^\Kra(i_2\circ \pi^{b-1})-1. 
\end{equation}
Now apply Proposition \ref{intersectionnumberofquasicanonicalliftsandspecialcycles} to the right hand side of the above. 
If $b\leq s$, then 
\[\cZ_{s,-}^\kappa \cdot \cZ^\Kra(i_2\circ \pi^{b})-\cZ_{s,-}^\kappa \cdot \cZ^\Kra(i_2\circ \pi^{b-1})=\frac{\q^{b+1}-1}{\q-1}-\frac{\q^{b}-1}{\q-1}=\q^b.\]
If $b\geq s+1$, then 
\begin{align*}
    &\cZ_{s,-}^\kappa \cdot \cZ^\Kra(i_2\circ \pi^{b})-\cZ_{s,-}^\kappa \cdot \cZ^\Kra(i_2\circ \pi^{b-1})\\
    =&[\frac{\q^s-1}{\q-1}+(b+1-s)\q^s]-[\frac{\q^s-1}{\q-1}+(b-s)\q^s]\\
    =&\q^s.
\end{align*}
The corollary now follows.
\end{proof}

For a rank two integral lattice $\bL$ in $\bV$, define $\mathrm{Int}(\bL)$ as in equation \eqref{IntL}.
\begin{theorem}\label{thm:maintheorem1}
Suppose $\bV$ has dimension $2$ and $(\bV,h(,))$ is anisotropic.
Let $\bL$ be a hermitian lattice in $\bV$ whose hermitian form is represented by a Gram matrix equivalent to $T=\left(\begin{array}{cc}
    u_1 (-\pi_0)^a & 0 \\
    0 & u_2 (-\pi_0)^b
\end{array}\right)$, where $u_1,u_2\in \Oo_{F_0}^\times$ and $a,b\geq 0$. Then 
\[\mathrm{Int}(\bL)=\mu_\q(T),\]
where
\[\mu_\q(T):=2 \sum_{s=0}^{min\{a,b\}}\q^s(a+b+1-2s)-a-b-2.\]
\end{theorem}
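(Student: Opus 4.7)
The plan is to apply the decomposition theorem (Theorem~\ref{decompositionofspecialcycles}) to one generator of $\bL$ and then compute each pairwise intersection using Proposition~\ref{intersectionnumberofquasicanonicalliftsandspecialcycles}. First I would reduce to a standard basis. The intersection number $\mathrm{Int}(\bL)$ is invariant under the $U(\bV)$-action on $\cN^\Kra$ (which permutes special cycles via change of framing) and, by Witt's theorem, every rank-$2$ hermitian $\Oo_F$-lattice in the anisotropic $\bV$ whose Gram matrix is equivalent to $T$ lies in a single $U(\bV)$-orbit---the Hasse invariant of any such $\bL$ is forced to equal $-1$ (that of $\bV$), so the isomorphism class is determined by $(a,b)$ alone. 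Hence we may take $\bL = \Oo_F(i_1\pi^a)\oplus\Oo_F(i_2\pi^b)$; unit scalars can be absorbed since $\cZ^\Kra(\cdot)$ is insensitive to $\Oo_F^\times$-scaling of its argument.

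Next, set $\kappa := \kappa_{i_1}$ and $\bar\kappa := \kappa_{i_2}$. Theorem~\ref{decompositionofspecialcycles} yields
\[
\cZ^\Kra(i_1\pi^a) = \cZ_0^\kappa + \sum_{s=1}^a \cZ_{s,+}^\kappa + \sum_{s=1}^a \cZ_{s,-}^\kappa + (a+1)\Exc,
\]
and an analogous decomposition holds for $\cZ^\Kra(i_2\pi^b)$. Expanding $\mathrm{Int}(\bL) = \cZ^\Kra(i_1\pi^a)\cdot \cZ^\Kra(i_2\pi^b)$ by bilinearity, Proposition~\ref{intersectionnumberofquasicanonicalliftsandspecialcycles} gives the common value $f(s,b) := \cZ_{s,+}^\kappa\cdot \cZ^\Kra(i_2\pi^b) = \cZ_{s,-}^\kappa\cdot \cZ^\Kra(i_2\pi^b)$, while decomposing $\cZ^\Kra(i_2\pi^b)$ and combining Proposition~\ref{intersectionnumber1withExc} with Lemma~\ref{lem:Excselfintersection} computes
\[
\Exc \cdot \cZ^\Kra(i_2\pi^b) = 1 + 2b + (b+1)\cdot (-2) = -1.
\]

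Collecting terms,
\[
\mathrm{Int}(\bL) = f(0,b) + 2\sum_{s=1}^a f(s,b) - (a+1) = 2\sum_{s=0}^a f(s,b) - (a+b+2),
\]
where $f(0,b) = b+1$. By the symmetry of the intersection pairing we may assume $a \leq b$, so $f(s,b) = \tfrac{q^s - 1}{q-1} + (b+1-s)q^s$ for every $0 \leq s \leq a$. The identity
\[
\sum_{s=0}^a \frac{q^s-1}{q-1} = \sum_{s=0}^a (a-s)\, q^s,
\]
obtained by reversing the order of summation in $\sum_{s=1}^a \sum_{j=0}^{s-1} q^j$, then recasts the previous display as $2\sum_{s=0}^a q^s(a+b+1-2s) - (a+b+2)$, which is precisely $\mu_q(T)$.

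The main obstacle is the reduction step, since Proposition~\ref{intersectionnumberofquasicanonicalliftsandspecialcycles} is stated only for the specific pair $(i_1\pi^a, i_2\pi^b)$: establishing that an arbitrary $\bL$ with the prescribed Gram matrix equivalence class can be moved to this standard form---via $U(\bV)$-invariance together with the uniqueness of the orbit---is where the essential content lies. Once this is granted, the remainder is a mechanical bilinear expansion combined with the summation identity above.
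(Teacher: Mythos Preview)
Your proof is correct and follows the same overall strategy as the paper—reduce to the standard basis $\bx=i_1\pi^a$, $\by=i_2\pi^b$ and expand via Theorem~\ref{decompositionofspecialcycles}—but the bookkeeping is organized differently. The paper decomposes \emph{both} $\cZ^\Kra(\bx)$ and $\cZ^\Kra(\by)$ into irreducible components and then computes all pairwise intersections, grouping them into three pieces $\mathrm I$, $\mathrm{II}$, $\mathrm{III}$; the cross term $\mathrm{III}$ requires Corollary~\ref{intersectionnumberofquasicanonicalliftingdivisors} (the formula $\cZ_{s,\pm}^\kappa\cdot\cZ_{t,\pm}^{\bar\kappa}=\q^{\min\{s,t\}}-1$) together with the vanishing of mixed $(+,-)$ products from Propositions~\ref{prop:cZintersectwithspecialfiber} and~\ref{prop:cZ0intersectspecialfiber}. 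You instead decompose only $\cZ^\Kra(\bx)$ and pair each piece directly against the full $\cZ^\Kra(\by)$ using Proposition~\ref{intersectionnumberofquasicanonicalliftsandspecialcycles}, which bypasses Corollary~\ref{intersectionnumberofquasicanonicalliftingdivisors} entirely (that corollary is itself deduced from Proposition~\ref{intersectionnumberofquasicanonicalliftsandspecialcycles} in the paper). Your route is a genuine streamlining: it needs fewer auxiliary vanishing statements and the final summation identity $\sum_{s=0}^a\frac{\q^s-1}{\q-1}=\sum_{s=0}^a(a-s)\q^s$ makes the recombination transparent. The paper's symmetric double decomposition, on the other hand, isolates the contribution of each pair of irreducible components, which is conceptually cleaner if one cares about those individual numbers.
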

\begin{proof}
By \cite{Ho2}, the intersection number $\mathrm{Int}(\bL)$ is well-defined. In other words, we can choose any basis $\{\bx,\by\}$ of  $\bL$, then we have 
\[\mathrm{Int}(\bL)=\cZ^\Kra(\bx) \cdot \cZ^\Kra(\by).\]
So we can choose $ \{\bx,\by\}$ such that 
\[\left(\begin{array}{cc}
    h(\bx,\bx) & h(\bx,\by) \\
    h(\by,\bx) & h(\by,\by)
\end{array}\right)=\left(\begin{array}{cc}
    u_1 (-\pi)^a & 0 \\
    0 & u_2 (-\pi)^b
\end{array}\right).\]
We can further assume that $\bx=i_1\circ \pi^a$ and $\by=i_2\circ \pi^b$ by changing the frame $\rho_\bX$ by an element in $\rU(C)$ if necessary (it is an isomorphism).

Decompose  $\cZ^\Kra(\bx)$ and $ \cZ^\Kra(\by)$ as in Theorem \ref{decompositionofspecialcycles}. Combine the decomposition with Proposition \ref{prop:cZintersectwithspecialfiber} and \ref{prop:cZ0intersectspecialfiber}, we have
\[\cZ^\Kra(\bx)\cdot \cZ^\Kra(\by)=\mathrm{I}+\mathrm{II}+\mathrm{III}\]
where
\[\mathrm{I}=(a+1)(b+1)\Exc\cdot \Exc,\]
\[\mathrm{II}=(\cZ_0^\kappa+\sum_{s=1}^a \cZ_{s,-}^\kappa+\sum_{s=1}^{a} \cZ_{s,+}^\kappa)\cdot (b+1)\Exc+(\cZ_0^{\bar\kappa}+\sum_{s=1}^b \cZ_{s,-}^{\bar\kappa}+\sum_{s=1}^{b} \cZ_{s,+}^{\bar\kappa})\cdot (a+1)\Exc,\]
\[\mathrm{III}=(\sum_{s=1}^b \cZ_{s,-}^{\bar\kappa})\cdot (\sum_{t=1}^a \cZ_{t,-}^\kappa)+(\sum_{s=1}^b \cZ_{s,+}^{\bar\kappa})\cdot (\sum_{t=1}^a \cZ_{t,+}^\kappa).\]
By Lemma \ref{lem:Excselfintersection},
\[\mathrm{I}=-2(a+1)(b+1).\]
By Proposition \ref{intersectionnumber1withExc},
\[\mathrm{II}=(2a+1)(b+1)+(2b+1)(a+1).\]
By Corollary \ref{intersectionnumberofquasicanonicalliftingdivisors}, 
\[(\sum_{s=1}^b \cZ_{s,-}^{\bar\kappa})\cdot (\sum_{t=1}^a \cZ_{t,-}^\kappa)=(\sum_{s=1}^b \cZ_{s,+}^{\bar\kappa})\cdot (\sum_{t=1}^a \cZ_{t,+}^\kappa)=\sum_{s=1}^b\sum_{t=1}^a \q^{min\{s,t\}}-ab.\]
For $0 \leq e \leq min\{a,b\}$, the set of indices $(s,t)$ that contributes to the summand $\q^e$ in the above summation is 
\[\{(s,t)\mid t=e,e< s \leq b\}\sqcup \{(s,t)\mid s=e,e\leq t \leq a\}.\]
Hence
\[\sum_{s=1}^b\sum_{t=1}^a \q^{min\{s,t\}}=\sum_{e=1}^{min\{a,b\}}(a+b-2e+1)\q^e.\]
Putting the terms $\mathrm{I}$, $\mathrm{II}$ and $\mathrm{III}$ together and simplify. This finishes the proof of the theorem.
\end{proof}

\section{A Kudla-Rapoport type theorem}\label{sec:KRconjecture}
In this section we relate the intersection number $\mu_\q(T)$ defined in Theorem \ref{thm:maintheorem1} to the derivative of certain local density polynomial. 
\subsection{Local density polynomials}
Let $|\cdot|$ be the absolute value on $F$ normalized such that $|\pi|=\frac{1}{\q}$.
For $X,Y\in \Herm_n(F_0)$, set $\langle X,Y\rangle=\mathrm{Tr}(XY)\in F_0$ where $\mathrm{Tr}$ is matrix trace.
For $S\in \Herm_m(F_0)$ and $X\in M_{m,n}(F)$, we denote $S[X]=X^* SX$. 
Define $X_n:=\Herm_n(F_0)\cap \GL_n(F)$.
Fix an additive character $\psi$ of $F_0$ with conductor $\Oo_{F_0}$. Define
\[ \Herm^\vee_n(\Oo_{F_0}):=\{ T=(t_{ij}) \in \Herm_n(F_0)\mid \ord_{\pi}(t_{ii}) \geq 0,  \ord_{\pi}(t_{ij}\partial_F) \geq 0 \text{ if } i\neq j\}\]
where $\partial_F$ is a generator of the different ideal of $F/F_0$. In the current paper we have $(\partial_F)=(\pi)$. When $F/F_0$ is unramified and $p\neq 2$, $(\partial_F)=(1)$. The lattice $\Herm^\vee_n(\Oo_{F_0})$ is the dual of $\Herm_n(\Oo_{F_0})$ under the pairing $\langle,\rangle$.
For $S\in \Herm^\vee_m(\Oo_{F_0})\cap X_m$ and $T\in\Herm^\vee_n(\Oo_{F_0})\cap X_n$, define
\begin{equation}\label{eq:definitionoflocaldensity}
    \alpha(S, T) = \int_{\Herm_n(F_0)} \int_{M_{m,n}(\Oo_F)} \psi(\langle Y,S[X]-T\rangle) dX dY
\end{equation}
where $dX$ and $dY$ are Haar measures on $M_{m,n}(\Oo_F)$ and $\Herm_n(\Oo_{F_0})$ respectively such that $\mathrm{vol}(M_{m,n}(\Oo_F), dX) =\mathrm{vol}(\Herm_n(\Oo_{F_0}), dY)=1$. The quantity $\alpha(S,T)$ only depends on the equivalent classes of $S$ and $T$.
\begin{lemma}\label{lem:definitionoflocaldensity}
\begin{align*}
		\alpha(S, T) 	&=\q^{\ell n (n-2m)} |\{ X \in M_{m,n}(\Oo_{F}/\pi_0^\ell):\,  S[X]-T \in  \pi_0^\ell \cdot \Herm^\vee_n(\Oo_{F_0})\}|\\
		&= |\partial_F|^{-\frac{n(n-1)}2}  \q^{\ell n (n-2m)} |\{ X \in M_{m,n}(\Oo_{F}/\pi_0^\ell):\,  S[X]-T \in  \pi_0^\ell \cdot \Herm_n(\Oo_{F_0}) \}| 
\end{align*}
for sufficiently large $\ell$. In particular the classical local density
\[\alpha^{\mathrm{cl}}(S, T)= \lim_{\ell \rightarrow \infty} \q^{\ell n (n-2m)} |\{ X \in M_{m,n}(\Oo_{F}/\pi_0^\ell):\,  S[X]-T \in  \pi_0^\ell \cdot \Herm_n(\Oo_{F_0}) \}|\]
differs from $\alpha(S, T)$ by the factor $|\partial_F|^{{\frac{n(n-1)}2}}$.
\end{lemma}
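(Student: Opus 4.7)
The plan is a standard Fourier-theoretic truncation argument. First, I extend $dY$ to the Haar measure on $\Herm_n(F_0)$ normalized so that $\Herm_n(\Oo_{F_0})$ has volume $1$, and for each integer $\ell\geq 0$ introduce the truncation
\[
\alpha_\ell(S,T):=\int_{\pi_0^{-\ell}\Herm_n(\Oo_{F_0})}\int_{M_{m,n}(\Oo_F)}\psi(\langle Y,S[X]-T\rangle)\,dX\,dY,
\]
and observe that for $\ell$ sufficiently large $\alpha_\ell(S,T)=\alpha(S,T)$ (the stabilization of this sequence for large $\ell$ is what actually makes the formal integral over $\Herm_n(F_0)$ well-defined). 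Since the $Y$-integral is now over a compact set, I swap the order of integration and apply Pontrjagin duality to the inner integral. The key input, which I verify directly by testing on standard generators using $\mathrm{Tr}_{F/F_0}$, is that under the bicharacter $(Y,Z)\mapsto\psi(\langle Y,Z\rangle)$ the annihilator of $\Herm_n(\Oo_{F_0})$ is precisely $\Herm^\vee_n(\Oo_{F_0})$.

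Granting this, the inner integral evaluates to $\q^{\ell n^2}\cdot\mathbf{1}_{\pi_0^\ell\Herm^\vee_n(\Oo_{F_0})}(S[X]-T)$, because $\Herm_n(\Oo_{F_0})$ is a free $\Oo_{F_0}$-module of rank $n^2$, so $\pi_0^{-\ell}\Herm_n(\Oo_{F_0})$ has volume $\q^{\ell n^2}$. Next, I observe that the condition $S[X]-T\in\pi_0^\ell\Herm^\vee_n(\Oo_{F_0})$ depends only on $X\bmod \pi_0^\ell$: a perturbation $X\mapsto X+\pi_0^\ell X_0$ with $X_0\in M_{m,n}(\Oo_F)$ changes $S[X]$ by an element of $\pi_0^\ell\Herm_n(\Oo_{F_0})\subseteq\pi_0^\ell\Herm^\vee_n(\Oo_{F_0})$. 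Combined with $|M_{m,n}(\Oo_F/\pi_0^\ell)|=\q^{2\ell mn}$ and $\mathrm{vol}(M_{m,n}(\Oo_F))=1$, the remaining $X$-integral equals $\q^{-2\ell mn}$ times the stated count, yielding the first equality after multiplication by $\q^{\ell n^2}$.

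For the second equality I would rerun the same calculation with $\pi_0^{-\ell}\Herm_n(\Oo_{F_0})$ replaced by $\pi_0^{-\ell}\Herm^\vee_n(\Oo_{F_0})$ in the truncation. The Pontrjagin dual is then $\pi_0^\ell\Herm_n(\Oo_{F_0})$, while the total volume is inflated by the finite index $[\Herm^\vee_n(\Oo_{F_0}):\Herm_n(\Oo_{F_0})]=\q^{n(n-1)/2}=|\partial_F|^{-n(n-1)/2}$; this index is computed by counting, for each of the $n(n-1)/2$ off-diagonal pairs $(i,j)$ with $i<j$, the quotient $\partial_F^{-1}\Oo_F/\Oo_F$ of order $\q$. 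The main obstacle is the duality/volume bookkeeping: correctly identifying the Pontrjagin dual lattice in the ramified setting and confirming the truncation stabilizes; once these are in place, everything else is a direct substitution.
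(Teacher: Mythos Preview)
Your proposal is correct and follows essentially the same approach as the paper's proof: truncate the $Y$-integral to $\pi_0^{-\ell}\Herm_n(\Oo_{F_0})$, swap the order of integration, apply the duality between $\Herm_n(\Oo_{F_0})$ and $\Herm^\vee_n(\Oo_{F_0})$ under $\psi(\langle\cdot,\cdot\rangle)$, and then reduce the $X$-integral to a finite count modulo $\pi_0^\ell$. Your treatment of the second equality, explicitly computing the index $[\Herm^\vee_n(\Oo_{F_0}):\Herm_n(\Oo_{F_0})]=\q^{n(n-1)/2}$ via the off-diagonal entries, is slightly more detailed than the paper, which simply says to swap the roles of the two lattices; otherwise the arguments are the same.
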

\begin{proof}
\begin{align*}
    \alpha(S,T)=& \lim_{\ell\rightarrow \infty}\int_{\frac{1}{\pi_0^\ell} \Herm_n(\Oo_{F_0})} dY \int_{M_{m,n}(\Oo_F)} \psi (\langle Y, S[X]-T\rangle) dX\\
    =& \lim_{\ell\rightarrow \infty}  \int_{M_{m,n}(\Oo_F)} dX \int_{\frac{1}{\pi_0^\ell} \Herm_n(\Oo_{F_0})} \psi (\langle Y, S[X]-T\rangle) dY\\
    =&  \lim_{\ell\rightarrow \infty}  \int_{M_{m,n}(\Oo_F)}  \mathrm{char}\{\pi_0^\ell \Herm^\vee_n(\Oo_{F_0})\}(S[X]-T)\cdot \mathrm{vol}(\frac{1}{\pi_0^\ell} \Herm_n(\Oo_{F_0}),dY)dX.
\end{align*}
Here we have used the orthogonality of characters on $\frac{1}{\pi_0^\ell} \Herm_n(\Oo_{F_0})$ and the duality between $\Herm_n(\Oo_{F_0})$ and $\Herm_n^\vee(\Oo_{F_0})$. Hence
\begin{align*}
    \alpha(S,T)=& \lim_{\ell\rightarrow \infty}  q^{n^2\ell} \int_{M_{m,n}(\Oo_F)}  \mathrm{char}\{\pi_0^\ell \Herm^\vee_n(\Oo_{F_0})\}(S[X]-T) dX\\
    =& \lim_{\ell\rightarrow \infty}  q^{n^2\ell} \sum_{x\in M_{m,n}(\Oo_F/(\pi_0^\ell))}\int_{x+\pi_0^\ell M_{m,n}(\Oo_F)}  \mathrm{char}\{\pi_0^\ell \Herm^\vee_n(\Oo_{F_0})\}(S[X]-T) dX.
\end{align*}
where the summation is over $x\in M_{m,n}(\Oo_F/(\pi_0^\ell))$. It is easy to see that if $S[X]-T\in \pi_0^\ell \Herm^\vee_n(\Oo_{F_0})$ then $S[\tilde X]-T\in \pi_0^\ell \Herm^\vee_n(\Oo_{F_0})$ for all $\tilde{X}$ such that $\tilde{X}-X\in \pi_0^\ell M_{m,n}(\Oo_F)$.  Hence we get the first equality in the lemma. Swap the roles of $\Herm_n(\Oo_{F_0})$ and $\Herm_n^\vee(\Oo_{F_0})$ in the above calculations, we get the second equality in the lemma.
\end{proof}

Let $\cH=\begin{pmatrix}0&\pi^{-1}\\-\pi^{-1}&0\end{pmatrix}$ and we use the same notation to define the rank $2$ hermitian lattice with Gram matrix $\cH$.
Define
\begin{equation}
    S_r=S\oplus \cH^r.
\end{equation}
By the methods of \cite[Corollary 5.4]{Hi} one can prove that
\begin{equation}\label{eq:localdensitypolynomial}
    \alpha(S_r,T)=\alpha(S,T,X)|_{X=\q^{-2r}}
\end{equation}
where $\alpha(S,T,X)\in\Q[X]$ is a polynomial. Let $\chi$ be the quadratic character of the extension $F^\times/F_0^\times$, namely,
\begin{equation}
    \chi(a)=\left\{\begin{array}{cc}
        1 & \text{ if } a\in \mathrm{Nm}_{F/F_0}(F^\times), \\
        -1 & \text{ if } a\in F^\times_0\setminus \mathrm{Nm}_{F/F_0}(F^\times).
    \end{array}\right.
\end{equation}
For $T\in \Herm_n(F_0)$, define 
\[\chi(T)=\chi((-1)^{\frac{n(n-1)}{2}} \mathrm{det}(T)).\]
The following theorem will be use in this paper as well as \cite{HSY}.
\begin{theorem}\label{thm:localdensity calculation}
Assume that $u_1,u_2,v\in  \Oo_{F_0}^\times$. Define $\epsilon_1 =\chi(-u_1 u_2)$ and $\epsilon_2 =\chi(-v)$. Assume $a\geq b$ are non-negative integers.
Let $S=\mathrm{Diag}\{v,1\}$.
\begin{enumerate}
    \item Assume that $T=\mathrm{Diag}\{u_1 (-\pi_0)^a,u_2 (-\pi_0)^b\}$. Then we have
\begin{align}\label{eq:local density first formula}
\alpha(S, T, X) 
 =& (1-X) (1+ \epsilon_2 + \q \epsilon_2) \sum_{e=0}^b (\q X)^e 
   -\epsilon_1 \q^{b+1} X^{a+1} (1-X) \sum_{e=0}^{b} (\q^{-1}X)^e
   \\ \nonumber
   -&\epsilon_1 (1+\q) ( X^{a+ b +2}+ \epsilon_1 \epsilon_2)
   + (1+\epsilon_2) \q^{b+1} X^{b+1} (1+\epsilon_1 X^{a-b}).
\end{align}
And
\begin{equation}\label{eq:local density second formula}
    \alpha(\cH,T,X)=(1-\q^{-2}X)(\sum_{e=0}^b\q^e X^e+\epsilon_1\sum_{e=a+1}^{a+b+1} \q^{a+b+1-e} X^e).
\end{equation}

\item  
Assume that $T= \left(\begin{array}{cc}
    0 & (-\pi)^{2a-1} \\
   (\pi)^{2a-1}  & 0
\end{array}\right)$ where $a\geq 0$.  Then
\begin{align}\label{eq:local density third formula}
     \alpha(S,T,X)=&(1-X)(1+\epsilon_2+\q\epsilon_2)\sum_{e=0}^{a} (\q X)^e -\q^{2a+1}(1-X)\sum_{e=a+1}^{2a}(\q^{-1}X)^e\\ \nonumber
    &-(\q+1)(X^{2a+1}+\epsilon_2)+(\q+1)(1+\epsilon_2)\q^{a}X^{a+1}.
\end{align}
And 
\begin{equation}\label{eq:local density fourth formula}
     \alpha(\cH,T,X)=(1-\q^{-2}X)(\sum_{e=0}^{a} \q^e X^e+\sum_{e=a+1}^{2a} \q^{2a-e} X^e).
\end{equation}
\end{enumerate}
\end{theorem}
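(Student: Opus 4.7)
The plan is to evaluate $\alpha(S_r,T)$ directly from Lemma~\ref{lem:definitionoflocaldensity}, extract the dependence on $q^{-2r}$, and identify the resulting expression with $\alpha(S,T,X)$.

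First, by Lemma~\ref{lem:definitionoflocaldensity}, up to a universal power of $q$, $\alpha(S_r,T)$ equals the number of $X\in M_{2+2r,2}(\Oo_F/\pi_0^\ell)$ with $S_r[X]-T\in\pi_0^\ell\Herm_2(\Oo_{F_0})$ for $\ell$ large.  Split $X=\binom{Y}{Z}$ with $Y\in M_{2,2}(\Oo_F/\pi_0^\ell)$ and $Z\in M_{2r,2}(\Oo_F/\pi_0^\ell)$.  Since $S_r=S\oplus\cH^r$, one has $S_r[X]=S[Y]+\cH^r[Z]$, so the count decouples as
\[
\sum_Y \#\{Z:\cH^r[Z]\equiv T-S[Y]\pmod{\pi_0^\ell}\}.
\]
For each fixed $Y$, the inner count is (again up to a normalizing factor of $q$) $\alpha(\cH^r,T-S[Y])$.

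For the split hyperbolic form $\cH^r$, the local density admits a known closed form depending only on the Jordan invariants of $T':=T-S[Y]$, namely the valuations of its diagonal entries, the content of its off-diagonal part, and $\chi(\det T')$; this is where the factor $q^{-2r}=X$ enters.  I would then stratify the outer sum over $Y$ by the Jordan type of $T-S[Y]$.  For each of the four cases of the theorem, a direct enumeration produces a finite sum of monomials $c_e X^e$, and simplification by geometric-series identities recovers \eqref{eq:local density first formula}--\eqref{eq:local density fourth formula} term by term.  The factors $(1-X)$ in \eqref{eq:local density first formula},\,\eqref{eq:local density third formula} and $(1-q^{-2}X)$ in \eqref{eq:local density second formula},\,\eqref{eq:local density fourth formula} arise from telescoping as $Y$ varies over classes for which $T-S[Y]$ has a non-invertible block, a universal feature of hermitian local densities in the spirit of Hironaka's recursion \cite{Hi}.

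The main obstacle is the case analysis.  In part (1) one has to separate the four sign combinations $(\epsilon_1,\epsilon_2)\in\{\pm 1\}^2$ and the regimes $a\leq b$ vs.\ $a>b$; in part (2) with $S=\cH$ the outer stratification is simpler because $\cH[Y]$ has vanishing diagonal, which explains the cleaner shape of the answer.  Part (3) with the anti-diagonal $T$ requires a different Jordan stratification since the diagonal entries of $T-S[Y]$ then inherit odd $\pi_0$-valuation from $T$.  As a cross-check, I would apply Hironaka's recursion expressing $\alpha(S\oplus\cH,T)$ as an explicit linear combination of $\alpha(S,T')$ for various $T'$ to rewrite \eqref{eq:local density first formula}--\eqref{eq:local density fourth formula} as inductive identities in $r$; the base case $r=0$ then reduces to a finite counting problem which is solvable by hand.
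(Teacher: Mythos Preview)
Your decoupling $S_r[X] = S[Y] + \cH^r[Z]$ is correct and captures a genuine structural feature, but the proposal has a circularity: you assume a ``known closed form'' for $\alpha(\cH^r, T')$ depending only on the Jordan invariants of $T'$, yet this is precisely what \eqref{eq:local density second formula} and \eqref{eq:local density fourth formula} assert (substitute $X = q^{-2(r-1)}$). So your argument for \eqref{eq:local density first formula} and \eqref{eq:local density third formula} presupposes the other two formulas, and for those your method collapses into the recursion you defer to a ``cross-check''. Moreover, the stratification of $Y \in M_{2,2}(\Oo_F)$ by the Jordan type of $T - S[Y]$ is the entire computational content of your scheme and is not carried out; the phrase ``direct enumeration produces a finite sum of monomials'' is where the proof would have to live. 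Your parenthetical about the anti-diagonal $T$ giving diagonal entries of $T - S[Y]$ ``odd $\pi_0$-valuation from $T$'' is also off: that $T$ has zero diagonal, so the diagonal of $T-S[Y]$ comes entirely from $S[Y]$.

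The paper uses two separate mechanisms. For \eqref{eq:local density first formula} and \eqref{eq:local density third formula} it applies Hironaka's formula (Theorem~\ref{thm:Hironakaformula}) with $\Gamma$ the Iwahori subgroup, writing $\alpha(S_r,T)$ as a sum over the explicit orbit representatives in \eqref{eq:Gamma_0orbitsofhermitianmatrix} of products $\cG_\Gamma(Y,T)\,\cG(Y,S_r)/\alpha(Y;\Gamma)$ of Gauss-type integrals. The multiplicativity $\cG(Y, S_1 \oplus S_2) = \cG(Y,S_1)\cG(Y,S_2)$ (Lemma~\ref{lem:GYmultiplicative}) makes the $r$-dependence factor out as a monomial in $q^{r}$ for each orbit, so $X = q^{-2r}$ appears cleanly; this is the rigorous Fourier-side version of your decoupling, with your integral over $Y$ replaced by a finite sum over $\Gamma$-orbits whose Gauss integrals are computed explicitly in Lemmas~\ref{lem:G_Gamma T diagonal}--\ref{lem:G(Y,S)}. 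For \eqref{eq:local density second formula} and \eqref{eq:local density fourth formula} the paper instead runs an induction on fundamental invariants (Proposition~\ref{prop:induction on fundamental invariants}) using the primitive density $\beta(\cH,T,X)$ supplied by \cite{LL2}; this reduces the claim to checking a finite list of linear identities among the conjectured polynomials, indexed by the $q+2$ cosets in $U_2\backslash(\pi_{2,1}\cup\pi_{2,2})$, rather than an integral over $M_{2,2}(\Oo_F)$.
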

We postpone the proof of the theorem to the next section. Now let us focus on its consequence.
\subsection{A Kudla-Rapoport type theorem}
We now go back to the situation of Theorem \ref{thm:maintheorem1}.
We define $\alpha'(S,T)$ as in \eqref{eq:alphaprime}. Hence 
\begin{equation}\label{alpha'andF'}
    \frac{\partial}{\partial r} \alpha(S_r,T)|_{r=0}=2 \log \q\cdot \alpha'(S,T).
\end{equation}
\begin{theorem}\label{thm:maintheorem2} Suppose $\bV$ has dimension $2$ and $(\bV,h(,))$ is anisotropic. 
Let $\bL$ be a hermitian lattice in $\bV$ whose hermitian form is represented by a Gram matrix $T$. Then 
\[\mathrm{Int}(\bL)
   =2 \frac{\alpha'(S,T)}{\alpha(S,S)},\]
where $S=\left(\begin{array}{cc}
    v & 0 \\
    0 & 1
\end{array}\right)$ with $v\in \Oo_{F_0}^\times$ and $\chi(-v)=1$.
\end{theorem}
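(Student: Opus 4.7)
\textbf{Proof plan for Theorem \ref{thm:maintheorem2}.} The strategy is to reduce the left-hand side to the explicit expression $\mu_\q(T)$ given by Theorem \ref{thm:maintheorem1}, evaluate the right-hand side using the explicit polynomial formula for $\alpha(S,T,X)$ from Theorem \ref{thm:localdensity calculation}, and then verify the resulting polynomial identity in $a,b,\q$.

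First I note that both sides depend only on the equivalence class of $T$: the left-hand side by \cite{Ho2}, the right-hand side by inspection of \eqref{eq:definitionoflocaldensity}. Hence we may assume $T=\mathrm{Diag}\{u_1(-\pi_0)^a,u_2(-\pi_0)^b\}$ with $u_1,u_2\in\Oo_{F_0}^\times$ and $a\ge b\ge 0$, in which case Theorem \ref{thm:maintheorem1} yields $\mathrm{Int}(\bL)=\mu_\q(T)$. Next I fix the signs appearing in Theorem \ref{thm:localdensity calculation}. Since $\bar\pi=-\pi$, we have $\mathrm{Nm}(\pi)=-\pi_0\in\mathrm{Nm}(F^\times)$, so $\chi((-\pi_0)^{a+b})=1$ and therefore
\[
\chi(-\det T)=\chi(-u_1u_2)=\epsilon_1.
\]
Anisotropy of $\bV$ (Hasse invariant $-1$, by \eqref{eq:Hasse invariant}) forces $\epsilon_1=-1$, while the hypothesis on $S$ gives $\epsilon_2=\chi(-v)=1$.

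Next I would compute $\alpha(S,S)$ by specializing \eqref{eq:local density first formula} at $T=S$, i.e.\ $a=b=0,\ u_1=v,\ u_2=1$, for which $\epsilon_1=\chi(-v)=1$ and $\epsilon_2=1$. Evaluating the four summands at $X=1$ gives $\alpha(S,S)=2(\q-1)$. I would then compute $\alpha'(S,T)$ by substituting $\epsilon_1=-1,\epsilon_2=1$ into \eqref{eq:local density first formula}; an easy check confirms $\alpha(S,T,1)=0$ (as it must, since the isotropic $S$ does not represent the anisotropic $T$), and differentiating term-by-term at $X=1$ using the product rule (the factors $(1-X)$ kill three of the four summands of each bracket) yields a closed form
\[
\alpha'(S,T)=\frac{2(\q^{b+1}-1)(1+\q)}{\q-1}-(1+\q)(a+b+2)+2\q^{b+1}(a-b).
\]

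Finally I would divide by $\alpha(S,S)/2=\q-1$ and rearrange $\mu_\q(T)$ using the standard identities
\[
\sum_{e=0}^{b}\q^e=\frac{\q^{b+1}-1}{\q-1},\qquad \sum_{s=0}^{b}s\,\q^s=\frac{b\,\q^{b+2}-(b+1)\q^{b+1}+\q}{(\q-1)^2},
\]
matching the two expressions as polynomials in $a,b,\q$.

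The main obstacle is purely combinatorial bookkeeping in the last step: one must track the telescoping coming from the double sum $\sum_{s=1}^b\sum_{t=1}^a\q^{\min\{s,t\}}$ appearing in the derivation of $\mu_\q(T)$ in the proof of Theorem \ref{thm:maintheorem1} and show it matches the geometric-series contributions from the four summands in \eqref{eq:local density first formula}. To guard against sign errors I would verify the identity first on the small cases $(a,b)=(0,0),(1,0),(1,1)$ (where both sides give $0,\ 1,\ 2\q+2$ respectively), and only then perform the full polynomial manipulation.
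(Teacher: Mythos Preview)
Your approach is essentially identical to the paper's own proof: reduce to diagonal $T$, invoke Theorem~\ref{thm:maintheorem1} for the left side, compute $\alpha(S,S)=2(\q-1)$ and $\alpha'(S,T)$ from \eqref{eq:local density first formula} with $(\epsilon_1,\epsilon_2)=(-1,1)$, and verify $\alpha'(S,T)=(\q-1)\mu_\q(T)$. The only omission is the case $T\notin\Herm_2(\Oo_{F_0})$, which the paper disposes of separately by noting that both sides vanish (the geometric side by \cite[Theorem 1.2]{Shi1}, the analytic side because the anisotropic $\bL$ then contains a vector of negative valuation, forcing $\alpha(S_r,T)=0$ for all $r$).
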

\begin{proof}
Assume as before that $\bx=(x_1,x_2)$ and $\bL=\mathrm{span}\{x_1,x_2\}$. First assume $T\in \Herm_2(\Oo_{F_0})$. Since both sides of the equation only depends on the equivalent class of $T$, we can assume $T$ is of the form in Theorem \ref{thm:maintheorem1}.
By the assumption that $(\bV,h(,))$ is anisotropic, we know that $\chi(-u_1 u_2)=-1$. Without loss of generality we assume $a\geq b$.
Evaluate the first equation of Theorem \ref{thm:localdensity calculation} at $\epsilon_1=\epsilon_2=1$ and $a=b=0$, we have
\[\alpha(S,S)=\alpha(S,S,1)=(-X^2+(2\q-2)X+1)|_{X=1}=2\q-2.\]
Evaluate the same equation at $\epsilon_1=-1$, $\epsilon_2=1$, we have
\begin{align}\label{eq:alpha(S,T)}
\alpha(S, T, X) 
 =& (1-X) (2 + \q ) \sum_{e=0}^b (\q X)^e 
   + \q^{b+1} X^{a+1} (1-X) \sum_{e=0}^{b} (\q^{-1}X)^e
   \\ \nonumber
   +& (1+\q) ( X^{a+ b +2}-1)
   + 2 \q^{b+1} X^{b+1} (1- X^{a-b}).
\end{align}
Hence
\begin{align*}
    &\alpha'(S,T)\\
    =&(2+\q)\sum_{e=0}^b \q^e+\q^{b+1} \sum_{e=0}^b \q^{-e}-(1+\q)(a+b+2)+2\q^{b+1}(a-b)\\
    =&2\q^{b+1}(a-b+1)+4\sum_{s=1}^b \q^s-2\q-(\q+1)(a+b).
\end{align*}
On the other hand, by theorem \ref{thm:maintheorem1}, we have 
\begin{align*}
    &\frac{1}{2}\alpha(S,S)\mathrm{Int}(\bL)
    =\frac{1}{2}\alpha(S,S)\mu_\q(T)\\
    =&(\q-1)[2 \sum_{s=0}^{b}\q^s(a+b+1-2s)-a-b-2]\\
    =&2 \sum_{s=1}^{b+1}\q^{s}(a+b+3-2s)-2\sum_{s=0}^{b}\q^s(a+b+1-2s)-(\q-1)(a+b+2)\\
    =&2\q^{a+1}(a-b+1)+4\sum_{s=1}^b \q^s-2\q-(\q+1)(a+b).
\end{align*}
This finishes the proof of the theorem when $T\in \Herm_2(\Oo_{F_0})$.

If $T\in \Herm_2(F_0)\setminus\Herm_2(\Oo_{F_0})$, then $\cZ(\bx)$ is empty by \cite[Theorem 1.2]{Shi1}. Meanwhile since $\bL$ is anisotropic, by Jordan decomposition and \cite[Proposition 8.1]{J} there exists an element $v\in \bL$ such that 
\[\mathrm{val}_{\pi_0}(h(v,v))\leq -1.\]
Hence $\alpha(S_r,T)=0$ for all $r$. This shows that $\alpha'(S,T)=0$. So again the equality in the theorem holds.   
\end{proof}

For later use, we also make the following calculations. Let $T$ be as in Theorem \ref{thm:maintheorem1}.
Let $S'=\left(\begin{array}{cc}
    v' & 0 \\
    0 & 1
\end{array}\right)$ with  $ \chi(-v')=-1$.
Then by Theorem \ref{thm:localdensity calculation}, we have
\begin{equation}\label{alphaS'S'}
    \alpha(S',T)=\alpha(S',S')=2(\q+1),
\end{equation}
In particular, it is independent of $a$ and $b$.

\section{Local density of hermitian forms}\label{sec:local density}
In this section we prove Theorem \ref{thm:localdensity calculation}. We denote by $\mathrm{tr}(\cdot)$ the trace function of $F/F_0$. We use the uniformizer $\varpi_0=-\pi_0$ of $F_0$ since it simplifies many expressions. We will use results from \cite{Hi} to prove the first and the second formula of Theorem \ref{thm:localdensity calculation}, and use results from \cite{LL2} to prove the second and fourth formula.
\subsection{Hironaka's algorithm}
For any congruence subgroup $\Gamma \subset \GL_n(\Oo_F)$, define
\begin{equation}
    \alpha(Y;\Gamma):=\lim_{d\rightarrow \infty} q^{-dn^2} N_{d}(Y;\Gamma) ,
\end{equation}
where 
\[N_d(Y;\Gamma):=| \big\{ \gamma\in \Gamma \ \mathrm{mod}(\pi_0^d) \mid \gamma^* Y\gamma \equiv Y \mathrm{mod} (\pi_0^d) \big\}|.\]

\begin{theorem}\label{thm:Hironakaformula} (\cite[Proposition 3.2]{Hi})
Let $\Gamma$ be a congruence subgroup of $\GL_n(\Oo_F)$. For $T\in X_n$, $ S \in X_m$, we have
\[\alpha(S,T)=\sum_{Y \in \Gamma \backslash X_n} \frac{\mathcal{G}_{\Gamma}(Y,T)\cG(Y,S)}{\alpha(Y;\Gamma)},\]
where 
\[ \cG(Y,S)=\int_{M_{m,n}(\Oo_F)} \psi(\langle Y,S[X]\rangle)dX,\]
\[\cG_\Gamma(Y,T)=\int_{\Gamma} \psi(\langle Y[\gamma],-T\rangle)d\gamma=\int_{\Gamma} \psi(\langle Y,-T[\gamma^*]\rangle)d\gamma.\]
Here $d\gamma$ is the Haar measure on $M_{n,n}(\Oo_F)$ such that $M_{n,n}(\Oo_F)$ has measure one and $dX$ is the Haar measure on $M_{m,n}(\Oo_F)$ such that $M_{m,n}(\Oo_F)$ has measure one.
\end{theorem}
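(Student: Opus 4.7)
The plan is to rewrite $\alpha(S,T)$ as an orbit sum for the action of $\Gamma$ on $X_n$ given by $Y \mapsto Y[\gamma]:=\gamma^* Y \gamma$. First, I would swap the order of integration in \eqref{eq:definitionoflocaldensity} and perform the $X$-integral first, obtaining
\begin{equation*}
\alpha(S,T) = \int_{\Herm_n(F_0)} \cG(Y,S)\, \psi(-\langle Y, T\rangle)\, dY,
\end{equation*}
where the swap is justified by first truncating $Y$ to $\pi_0^{-\ell}\Herm_n(\Oo_{F_0})$ and passing to the limit as in Lemma \ref{lem:definitionoflocaldensity}. Since singular hermitian matrices form a set of measure zero in $\Herm_n(F_0)$, the integral is equivalent to one over $X_n$.

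Next I would verify that $Y \mapsto \cG(Y,S)$ is $\Gamma$-invariant. Indeed, the substitution $X \mapsto X\gamma^*$ preserves $M_{m,n}(\Oo_F)$ and its Haar measure (since $\gamma\in \GL_n(\Oo_F)$), and combined with the identity $\langle Y[\gamma],S[X]\rangle =\langle Y,S[X\gamma^*]\rangle$ it yields $\cG(Y[\gamma],S)=\cG(Y,S)$. Partitioning $X_n$ into $\Gamma$-orbits and pulling $\cG(Y,S)$ outside each orbit integral reduces the theorem to evaluating
\begin{equation*}
I(Y_0) := \int_{\Gamma\cdot Y_0} \psi(-\langle Y, T\rangle)\, dY
\end{equation*}
for each $Y_0 \in \Gamma\backslash X_n$. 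I would parametrize $\Gamma\cdot Y_0$ via $\gamma\mapsto Y_0[\gamma]$ with fiber $\mathrm{Stab}_\Gamma(Y_0)$, and use the identity $\langle Y_0[\gamma],T\rangle=\langle Y_0, T[\gamma^*]\rangle$ to identify the pulled-back integrand with the one appearing in $\cG_\Gamma(Y_0,T)$.

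To finish, I would establish $I(Y_0) = \cG_\Gamma(Y_0,T)/\alpha(Y_0;\Gamma)$ by a measure comparison: the map $\gamma\mapsto Y_0[\gamma]$ pushes forward the Haar measure $d\gamma$ on $\Gamma$ to $\alpha(Y_0;\Gamma)$ times the Haar measure $dY$ restricted to the orbit, because $\alpha(Y_0;\Gamma)$ is by definition the local density of the stabilizer equation $\gamma^* Y_0 \gamma = Y_0$ in the free variable $\gamma\in M_{n,n}(\Oo_F)$. The discrete counts $N_d(Y_0;\Gamma)$ appearing in the definition of $\alpha(Y_0;\Gamma)$ precisely record, at level $\pi_0^d$, both the mass of the stabilizer and the Jacobian of $\gamma\mapsto \gamma^* Y_0 \gamma$ viewed as a map into $\Herm_n(\Oo_{F_0})$. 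Summing the resulting orbit contributions $\cG(Y_0,S)\cG_\Gamma(Y_0,T)/\alpha(Y_0;\Gamma)$ over $Y_0 \in \Gamma\backslash X_n$ then yields the stated formula. The main obstacle is this measure-theoretic bookkeeping: one must check that the normalizations $\mathrm{vol}(M_{n,n}(\Oo_F),d\gamma)=1$ and $\mathrm{vol}(\Herm_n(\Oo_{F_0}),dY)=1$ together with the Jacobian of the orbit map combine to produce exactly the factor $\alpha(Y_0;\Gamma)^{-1}$ with no spurious constant. Once this verification is in hand, the assembly is routine.
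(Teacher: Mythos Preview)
The paper does not supply its own proof of this theorem; it simply cites \cite[Proposition 3.2]{Hi} and notes in Remark \ref{rmk:error of Hironaka} a correction to Hironaka's definition of $\cG_\Gamma(Y,T)$. Your outline is the standard argument and is correct. The measure computation you flag as the main obstacle works exactly as you suspect: for $Y_0\in X_n$ the stabilizer is an open compact subgroup of the unitary group of $Y_0$, so the $\Gamma$-orbit is open in $\Herm_n(F_0)$; comparing volumes of the ball $Y_0+\pi_0^d\Herm_n(\Oo_{F_0})$ (of $dY$-measure $q^{-dn^2}$) against its $\phi$-preimage (of $d\gamma$-measure $N_d(Y_0;\Gamma)\,q^{-2dn^2}$) gives the Radon--Nikodym derivative $\phi_*d\gamma/dY = q^{-dn^2}N_d(Y_0;\Gamma)\to\alpha(Y_0;\Gamma)$, with no extra constant.
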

\begin{remark}\label{rmk:error of Hironaka}
In \cite[Proposition 3.2]{Hi}, $\cG_\Gamma(Y,T)$ is defined to be
\[\int_{\Gamma} \psi(\langle Y,-T[\gamma]\rangle)d\gamma\]
which is incorrect.
\end{remark}
Now we specialize to the case when $n=2$ and $\Gamma$ is the Iwahori subgroup
\[\Gamma=\{\gamma=(\gamma_{ij})\in \GL_n(\Oo_F)\mid \gamma_{21}\in \pi \Oo_F\}.\]

\subsection{Classification of hermitian lattice under the Iwahori subgroup}\label{subsec:Iwahoriclassify}
By \cite[Theorem 2.8]{Hi}, the set $\Gamma\backslash X_2$ can be represented by elements of the form 
\begin{equation}\label{eq:Gamma_0orbitsofhermitianmatrix}
    Y_{(id,e_1,e_2,\epsilon_1,\epsilon_2)}=\left(\begin{array}{cc}
   \epsilon_1 \varpi_0^{e_1}  & 0 \\
    0 & \epsilon_2 \varpi_0^{e_2}
\end{array}\right),
\text{ and } Y_{((12),e)}=\left(\begin{array}{cc}
   0  & \pi^e \\
    (-\pi)^e & 0
\end{array}\right),
\end{equation}
where $ \epsilon_i=1 $ or $\sigma$ and $e_1,e_2,e\in \Z$. Here $\sigma\in \Oo_{F_0}^\times$ and $\chi(\sigma)=-1$.
\begin{lemma}\label{lem:alpha Gamma}
Define
\[\alpha(e_1,e_2):=\left\{\begin{array}{cc}
   4q^{e_2+3e_1-1}  & \text{ if } e_2\geq e_1  \\
   4q^{e_1+3 e_2}   &  \text{ if } e_2< e_1.
\end{array}\right.\]
Then $\alpha(Y_{(id,e_1,e_2,\epsilon_1,\epsilon_2)};\Gamma)=\alpha(e_1,e_2)$. We also have $\alpha(Y_{((12),e)};\Gamma)=q^{2e-1} (q-1)$.
\end{lemma}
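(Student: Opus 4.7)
The plan is to evaluate $N_{d}(Y;\Gamma)$ directly from its definition. Writing $\gamma=\begin{pmatrix}a & b \\ \pi c & d\end{pmatrix}\in\Gamma$ with $a,d\in\Oo_{F}^{\times}$ and $b,c\in\Oo_{F}$, the identity $\gamma^{*}Y\gamma\equiv Y\pmod{\pi_{0}^{d}}$ unpacks into three independent congruences (the $(2,1)$-entry is the Galois conjugate of the $(1,2)$-entry, since both $\gamma^{*}Y\gamma$ and $Y$ are Hermitian). I will count the solutions $(a,b,c,d)$ modulo $\pi^{2d}$, divide by $q^{4d}$, and let $d\to\infty$. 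The only arithmetic input I need is that, since $F/F_{0}$ is ramified and $p$ is odd, the norm map $\mathrm{Nm}\colon\Oo_{F}^{\times}\to\Oo_{F_{0}}^{\times}$ has image of index $2$, so for any $u\in\mathrm{Nm}(\Oo_{F}^{\times})$ (in particular any $u\equiv 1\pmod{\varpi_{0}}$) and any $k\geq 1$, the congruence $a\bar{a}\equiv u\pmod{\varpi_{0}^{k}}$ has exactly $2q^{k}$ solutions $a\in\Oo_{F}^{\times}/\pi^{2k}$.

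For the diagonal representative $Y=\mathrm{diag}(\epsilon_{1}\varpi_{0}^{e_{1}},\epsilon_{2}\varpi_{0}^{e_{2}})$, using $\pi^{2}=-\varpi_{0}$, the three congruences are
\begin{align*}
\mathrm{(A)}\quad & a\bar{a}\,\epsilon_{1}\varpi_{0}^{e_{1}}-\varpi_{0}c\bar{c}\,\epsilon_{2}\varpi_{0}^{e_{2}}\equiv \epsilon_{1}\varpi_{0}^{e_{1}}\pmod{\varpi_{0}^{d}},\\
\mathrm{(B)}\quad & b\bar{b}\,\epsilon_{1}\varpi_{0}^{e_{1}}+d\bar{d}\,\epsilon_{2}\varpi_{0}^{e_{2}}\equiv \epsilon_{2}\varpi_{0}^{e_{2}}\pmod{\varpi_{0}^{d}},\\
\mathrm{(C)}\quad & b\bar{a}\,\epsilon_{1}\varpi_{0}^{e_{1}}+\pi d\bar{c}\,\epsilon_{2}\varpi_{0}^{e_{2}}\equiv 0\pmod{\varpi_{0}^{d}}.
\end{align*}
Assume WLOG $e_{1}\leq e_{2}$. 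The count proceeds as follows: $c$ ranges over $\Oo_{F}/\pi^{2d-1}$ (the Iwahori constraint on the $(2,1)$-entry absorbs one power of $\pi$), giving $q^{2d-1}$ choices; for each $c$, $\mathrm{(A)}$ reduces to $a\bar{a}\equiv 1+(\epsilon_{2}/\epsilon_{1})c\bar{c}\,\varpi_{0}^{e_{2}-e_{1}+1}\pmod{\varpi_{0}^{d-e_{1}}}$ whose right-hand side is $\equiv 1\pmod{\varpi_{0}}$ and hence a norm, giving $2q^{d+e_{1}}$ solutions $a\in\Oo_{F}^{\times}/\pi^{2d}$ (lifted from $2q^{d-e_{1}}$ solutions mod $\pi^{2(d-e_{1})}$); equation $\mathrm{(C)}$ then rigidly determines $b$ modulo $\varpi_{0}^{d-e_{1}}$, forcing $b\in\pi^{2(e_{2}-e_{1})+1}\Oo_{F}$ and leaving $q^{2e_{1}}$ choices for $b\in\Oo_{F}/\pi^{2d}$; finally $\mathrm{(B)}$ yields a norm equation for $d$ whose right-hand side is again $\equiv 1\pmod{\varpi_{0}}$, giving $2q^{d+e_{2}}$ solutions $d$. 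Multiplying gives $4q^{4d-1+3e_{1}+e_{2}}$; dividing by $q^{4d}$ recovers $\alpha(e_{1},e_{2})=4q^{e_{2}+3e_{1}-1}$. Independence from $\epsilon_{1},\epsilon_{2}$ follows since these units cancel between the two sides of each equation. The case $e_{1}>e_{2}$ is handled analogously, but the $\pi$-factor in $\mathrm{(C)}$ now lies on the side of the smaller $\varpi_{0}$-exponent; this shifts the counting asymmetrically by a factor of $q$ and yields $\alpha(e_{1},e_{2})=4q^{e_{1}+3e_{2}}$.

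For $Y_{((12),e)}$, the diagonal entries of $\gamma^{*}Y_{((12),e)}\gamma$ are $\pi^{e}(c\bar{a}+(-1)^{e}a\bar{c})$ and $\pi^{e}(d\bar{b}+(-1)^{e}b\bar{d})$, each of which lies in $F_{0}$ (being a trace or an anti-trace of an element of $F$); requiring each $\equiv 0\pmod{\pi_{0}^{d}}$ imposes an $F_{0}$-linear condition that cuts the free count of $(a,c)$ and of $(b,d)$ by a factor of $q$. The $(1,2)$-entry $\pi^{e}(d\bar{a}+(-1)^{e}b\bar{c})\equiv\pi^{e}\pmod{\pi_{0}^{d}}$ then pins down $d$ in terms of $a,b,c$ (since $a$ is a unit), consuming the remaining freedom; a direct count yields $\alpha(Y_{((12),e)};\Gamma)=q^{2e-1}(q-1)$. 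The main obstacle will be the careful bookkeeping of $\pi$-powers, especially tracking the asymmetry introduced by the Iwahori condition $c\in\pi\Oo_{F}$ (which is precisely why $\alpha(e_{1},e_{2})\ne\alpha(e_{2},e_{1})$ in the diagonal case), and verifying in the $e_{1}>e_{2}$ subcase that the $\varpi_{0}$-corrections appearing in $\mathrm{(A)}$ still land in the image of the norm so that the $2q^{k}$ kernel count applies uniformly.
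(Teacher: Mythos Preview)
Your approach is genuinely different from the paper's, which simply cites \cite[Theorem~2.10]{Hi} without any computation. You instead carry out the count of $N_d(Y;\Gamma)$ by hand, which is more elementary and self-contained. The strategy is sound and, for the diagonal case, your final tally $q^{2d-1}\cdot 2q^{d+e_1}\cdot q^{2e_1}\cdot 2q^{d+e_2}=4q^{4d-1+3e_1+e_2}$ does recover $\alpha(e_1,e_2)$ when $e_1\le e_2$.

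A few points to tighten. First, there is a harmless sign slip in (A): since $\overline{\pi c}\cdot\pi c=-\pi^2 c\bar c=\varpi_0 c\bar c$, the $(1,1)$-entry is $a\bar a\,\epsilon_1\varpi_0^{e_1}+\varpi_0 c\bar c\,\epsilon_2\varpi_0^{e_2}$, not with a minus sign; this does not affect the norm count. Second, your order ``solve (C) for $b$, then (B) for $d$'' is circular as stated, because (C) involves $d$ and (B) involves $b$. The clean way is: substitute $b$ from (C) into (B) and use (A) to simplify, which yields the uncoupled equation $d\bar d\equiv a\bar a\pmod{\varpi_0^{d-e_2}}$; one then checks that the $q^{2e_1}$-fold ambiguity in $b$ lands in high enough $\pi$-power to be invisible in (B) modulo $\varpi_0^{d-e_2}$. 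Only after this does ``$2q^{d+e_2}$ solutions for $d$'' follow. Third, your treatment of the case $e_1>e_2$ and especially of $Y_{((12),e)}$ is quite sketchy; for the latter, the diagonal conditions do not each ``cut by $q$'' but by a power of $q$ depending on $e$ and $d$, and the off-diagonal condition determines $d$ (say) modulo $\pi^{2d-e}$ rather than $\pi^{2d}$, so the bookkeeping must be redone carefully to arrive at $q^{2e-1}(q-1)$. Finally, since the $Y$'s arising in the application have arbitrary (including negative) $e_i$, you should note that the argument goes through uniformly once $d$ is large relative to $|e_i|$, so that all the divided congruences make sense and the lifting counts $q^{2e_i}$ are interpreted correctly.
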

\begin{proof}
This is a special case of \cite[Theorem 2.10]{Hi}.
\end{proof}

\subsection{Gauss integrals}\label{subsec:Gaussintegrals}
We denote by $\bar{\chi}$ the function on $\Oo_F/\pi \Oo_F\cong\Oo_{F_0}/ \varpi_0 \Oo_{F_0}\cong \F_q$ such that 
\begin{equation}
    \bar{\chi}(a)=\left\{\begin{array}{cc}
        1 & \text{ if } a\in (\F_q^\times)^2, \\
        -1 & \text{ if } a\in \F_q^\times\setminus (\F_q^\times)^2, \\
        0 & \text{ if } a= 0.
    \end{array}\right.
\end{equation}
Let $\psi$ be an additive character of $F_0$ with conductor $\Oo_{F_0}$.
Define $\barpsi$ on $\Oo_{F_0}/ \varpi_0 \Oo_{F_0}\cong \F_q$ by 
\begin{equation}
    \barpsi(\bar{a})=\psi(\frac{a}{\varpi_0}).
\end{equation}
where $a\in \Oo_{F_0}$ is any representative of $\bar{a}\in \Oo_{F_0}/ \varpi_0 \Oo_{F_0}$. We need some preliminary results on some Gauss type of integrals.
For $a\in \F_q^\times$ define the Gauss sum on $\F_q$ by 
\begin{equation}
 g_a(\barchi,\barpsi)=\sum_{b\in \F_q}\barchi(b) \barpsi(ab).
\end{equation}
We also denote $g_1(\barchi,\psi)$ simply by $g(\barchi,\psi)$. Then 
\begin{equation}\label{eq:g_aandg}
    g_a(\barchi,\barpsi)=\barchi(a^{-1}) g(\barchi,\barpsi)=\barchi(a) g(\barchi,\barpsi).
\end{equation}
It is easy to see that 
\begin{equation}
    g_a(\barchi,\barpsi)=\sum_{b\in \F_q} \barpsi(ab^2)
\end{equation}
using the fact that $\sum_{b\in R} \barpsi(ab)+\sum_{b\in N} \barpsi(ab)+1=0$ where $R=(\F_q^\times)^2$ and $N=\F_q^\times \setminus (\F_q^\times)^2$. The following is a classical result.
\begin{lemma}\label{lem:guasssumsquare}
\[g_a(\barchi,\barpsi)^2=q\cdot \barchi(-1).\]
\end{lemma}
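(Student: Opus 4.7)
The plan is to reduce to the case $a=1$ and then carry out the standard computation of $g(\barchi,\barpsi)^2$ via a change of variables and character orthogonality.

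First I would use equation \eqref{eq:g_aandg} to write $g_a(\barchi,\barpsi) = \barchi(a)\, g(\barchi,\barpsi)$. Since $a\in\F_q^\times$ and $\barchi$ takes values in $\{\pm 1\}$ on $\F_q^\times$, we have $\barchi(a)^2 = 1$, so
\[
g_a(\barchi,\barpsi)^2 = g(\barchi,\barpsi)^2.
\]
Thus it suffices to establish the identity $g(\barchi,\barpsi)^2 = q\cdot \barchi(-1)$.

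Next I would expand
\[
g(\barchi,\barpsi)^2 = \sum_{b,c\in\F_q} \barchi(bc)\,\barpsi(b+c),
\]
restrict to $b\in \F_q^\times$ (terms with $b=0$ vanish since $\barchi(0)=0$), and perform the substitution $c = bd$. Then $\barchi(bc) = \barchi(b^2 d) = \barchi(d)$ and $b+c = b(1+d)$, so the double sum becomes
\[
\sum_{d\in\F_q} \barchi(d) \sum_{b\in \F_q^\times} \barpsi(b(1+d)).
\]
Using character orthogonality on $\F_q$, the inner sum equals $q-1$ when $d=-1$ and $-1$ otherwise; combined with $\sum_{d\in\F_q}\barchi(d)=0$, this gives $q\cdot\barchi(-1)$.

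The proof is a classical one-paragraph computation, so there is no real obstacle; the only mild subtlety is to make sure the substitution $c=bd$ covers all $(b,c)$ with $b\neq 0$ bijectively, and to discard the contribution from $b=0$ at the outset using $\barchi(0)=0$.
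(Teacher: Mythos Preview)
Your proof is correct and is the standard argument for this classical identity. The paper itself does not give a proof, simply labeling the lemma as ``a classical result,'' so your computation fills in exactly what the paper omits.
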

\begin{remark}
It turns out that in the calculation of local density, only $g_a(\barchi,\barpsi)^2$ instead of $g_a(\barchi,\barpsi)$ shows up in the final expression. This is to be expected since local density is independent of the choice of character $\psi$.
\end{remark}

\begin{lemma}\label{lem:Gaussintegral}
Let $v\in F^\times$ and $w \in F_0^\times$. Assume $w=w_0 \varpi_0^s$ where $s=\mathrm{val}_{\varpi_0}(w)$.
\begin{enumerate}
\item Define $J(w):=\int_{\Oo_F} \psi(w x \bar{x}) dx$, then
\[J(w)=\left\{\begin{array}{cc}
     1 & \text{ if }s\text{ is non-negative}, \\
     q^{s} \chi(w_0) g(\barchi,\barpsi) & \text{ if }s\text{ is negative}.
\end{array}\right.\]
\item Define $J^*(w):=\int_{\Oo_F^\times} \psi(w x \bar{x}) dx$. Then we have 
\[J^*(w)=\left\{\begin{array}{cc}
   0  &  \text{ if } \mathrm{val}_{\varpi_0}(w)<-1,\\
   \frac{1}{q}\chi(w_0) g(\barchi,\barpsi) -\frac{1}{q} & \text{ if } w=w_0 \varpi_0^{-1}, w_0 \in \Oo_F^\times,\\
   1-\frac{1}{q}  &  \text{ if } \mathrm{val}_{\varpi_0}(w)\geq 0.
\end{array}\right.\]
In particular, it is zero if and only if $\mathrm{val}_{\varpi_0}(w)<-1$.
\item 
$\int_{\Oo_F}\int_{\Oo_F} \psi(\mathrm{tr}(\frac{1}{\pi} v\bar{x} y))dx dy=q^{min\{\mathrm{val}_\pi(v),0\}} $.
\item $\int_{\Oo_F}\int_{\Oo_F^\times} \psi(\mathrm{tr}(\frac{1}{\pi} v\bar{x} y))dx dy=q^{min\{\mathrm{val}_\pi(v),0\}}-q^{min\{\mathrm{val}_\pi(v)+1,0\}-1} $. In other words it is zero if $\mathrm{val}_{\pi}(v)<0$ and is $(1-\frac{1}{q})$ if $\mathrm{val}_{\pi}(v)\geq 0$ .
\item \begin{align*}
    &\int_{\Oo_F^\times}\int_{\Oo_F^\times} \psi(\mathrm{tr}(\frac{1}{\pi} v\bar{x} y))dx dy\\
    =&q^{min\{\mathrm{val}_\pi(v),0\}}-2q^{min\{\mathrm{val}_\pi(v)+1,0\}-1} +q^{min\{\mathrm{val}_\pi(v)+2,0\}-2}.
\end{align*}
In other words, it is $(1-q^{-1})^2$ if $\mathrm{val}_\pi(v)\geq 0$, $ -q^{-1}(1-q^{-1})$ when $\mathrm{val}_\pi(v)=-1$ and $0$ if $\mathrm{val}_\pi(v)<0$.
\end{enumerate}
\end{lemma}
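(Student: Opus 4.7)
The plan is to reduce all five integrals to elementary volume computations and one-variable Gauss integrals over $\Oo_{F_0}$. The single identity
\[\int_{\Oo_F^\times} f(x)\,dx \;=\; \int_{\Oo_F} f(x)\,dx \;-\; q^{-1}\int_{\Oo_F} f(\pi x)\,dx,\]
obtained from $\Oo_F = \Oo_F^\times \sqcup \pi\Oo_F$ together with the change of variables $x \mapsto \pi x$ (Jacobian $q^{-1}$) and the identity $\pi\bar\pi = -\pi_0 = \varpi_0$, is the mechanism that drives parts (2), (4), and (5).

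For (1), if $s := \mathrm{val}_{\varpi_0}(w) \geq 0$ then $w x\bar x \in \Oo_{F_0}$, on which $\psi$ is trivial, so $J(w) = 1$. For $s < 0$, I would write $x = x_1 + x_2\pi$ with $x_i \in \Oo_{F_0}$ and use $x\bar x = x_1^2 + \varpi_0 x_2^2$ to factor
\[J(w) \;=\; G(w)\,G(w\varpi_0), \qquad G(c) := \int_{\Oo_{F_0}}\psi(cy^2)\,dy.\]
Then $G(c_0/\varpi_0^k)$ for $k \geq 1$ is computed by substituting $y = \varpi_0^{\lfloor k/2\rfloor} w$, which rescales the integral to one over $\varpi_0^{-\lfloor k/2\rfloor}\Oo_{F_0}$. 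Only the subintegral over $\Oo_{F_0}$ itself contributes (the higher-valuation cosets vanish by orthogonality of $\psi$ applied to the cross term in the expansion of $(w_0+v)^2$), reducing to $G(c_0) = 1$ when $k$ is even and to the classical Gauss-sum identity $G(c_0/\varpi_0) = q^{-1}\chi(c_0)g(\barchi,\barpsi)$ when $k$ is odd. This gives $G(c_0/\varpi_0^k) = q^{-k/2}$ or $q^{-\lceil k/2\rceil}\chi(c_0)g(\barchi,\barpsi)$ accordingly. Since $G(w)$ and $G(w\varpi_0)$ have complementary parity, their product collapses uniformly to $q^s\chi(w_0)g(\barchi,\barpsi)$. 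Part (2) then follows at once from the displayed identity, which specializes to $J^*(w) = J(w) - q^{-1}J(w\varpi_0)$; inserting the three regimes from (1) produces the three stated cases.

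For parts (3)--(5), the integrand $\psi(\mathrm{tr}(v\bar x y/\pi))$ is a pure additive character in each variable separately, and the key fact is that since the inverse different of $F/F_0$ equals $\pi^{-1}\Oo_F$, one has $\int_{\Oo_F}\psi(\mathrm{tr}(ab))\,db = 1$ if $a \in \pi^{-1}\Oo_F$ and $0$ otherwise. Applied to (3) with $a = v\bar x/\pi$, the inner integral collapses to the indicator of $\{x : x \in v^{-1}\Oo_F\}$, giving $\mathrm{vol}(\Oo_F \cap v^{-1}\Oo_F) = q^{\min\{\mathrm{val}_\pi(v),0\}}$. For (4) and (5), apply the displayed identity to one or both variables respectively, noting that the substitution $x \mapsto \pi x$ effectively replaces $v$ by $v\pi$ in the integrand and thus shifts $\mathrm{val}_\pi(v)$ by $+1$; the resulting differences of volume terms match the announced formulas after a short case analysis in $\mathrm{val}_\pi(v)$.

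The only genuinely delicate step is the inductive evaluation of $G(c_0/\varpi_0^k)$ in part (1), where the parity-dependent Gauss-sum and volume factors must be tracked separately before the product $G(w)G(w\varpi_0)$ eliminates the parity ambiguity. All remaining manipulations are routine bookkeeping with $\pi$-adic valuations and measures of $\Oo_F$-lattices.
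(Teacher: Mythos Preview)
Your proposal is correct. For parts (2)--(5) it aligns with the paper, which proves (4) from (3) by exactly your $\Oo_F = \Oo_F^\times \sqcup \pi\Oo_F$ identity and defers the rest to Hironaka; your inverse-different argument for (3) is the natural direct proof. For part (1) you take a different route: the paper decomposes $\Oo_F$ into cosets modulo $\pi^k$ (with $k=-s$), uses the cross term in $(z+\pi^k x)\overline{(z+\pi^k x)}$ to see that only the cosets in $\pi^{k-1}\Oo_F$ survive, and then sums over $\Oo_F/\pi\Oo_F \cong \F_\q$ to land on the Gauss sum in a single step. You instead factor $J(w)=G(w)G(w\varpi_0)$ via the splitting $\Oo_F=\Oo_{F_0}\oplus\pi\Oo_{F_0}$ and evaluate two one-variable Gauss integrals over $\Oo_{F_0}$ separately. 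The paper's route is shorter since it avoids the even/odd parity bookkeeping and the final multiplication; yours makes the reduction to classical quadratic Gauss sums over $F_0$ more transparent.

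One small caution on your evaluation of $G(c_0/\varpi_0^k)$ for odd $k$: after the substitution the integrand is $\psi(c_0 w^2/\varpi_0)$, so in the expansion $(w_0+v)^2$ with $v\in\Oo_{F_0}$ the term $c_0 v^2/\varpi_0$ does \emph{not} lie in $\Oo_{F_0}$, and the cross-term orthogonality does not apply as written. The fix is to take $v\in\varpi_0\Oo_{F_0}$ instead (i.e.\ split into cosets of $\varpi_0\Oo_{F_0}$), which makes the quadratic-in-$v$ term integral and forces $w_0\in\Oo_{F_0}$ exactly as you claim; the conclusion is unchanged.
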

\begin{proof}
(1), (2), (3) and (5) are covered in \cite[Lemma 5.1]{Hi}. We write down the proof of (1) for illustration.
If $\mathrm{val}_{\varpi_0}(w) \geq 0$, then $J(w)=1$. Assume now $\mathrm{val}_{\varpi_0}(w)<0$ and let $k=-\mathrm{val}_{\varpi_0}(w)$. Then  
\begin{align*}
     J(w)=& \sum_{z \in \Oo_F/(\pi^k)} \int_{z+\pi^k \Oo_F} \psi(w x\bar{x})dx\\
     =& \q^{-k} \sum_{z \in \Oo_F/(\pi^k)} \int_{\Oo_F} \psi(w (z+\pi^k x)\overline{(z+\pi^k x)})dx \\
     =& \q^{-k} \sum_{z \in \Oo_F/(\pi^k)} \psi(w z\bar{z}) \int_{\Oo_F} \psi[\mathrm{tr}(\pi^k w\bar{z} x)]dx\\
     =& \q^{-k} \sum_{z \in \Oo_F/(\pi^k)} \psi(w z\bar{z})  \mathrm{char}(\Oo_F) (w \bar{z} \pi^{k+1}).
\end{align*}
The nonzero terms in the above summation are those with $z\in (\pi^{k-1})$. So
\begin{align*}
    J(w)=& \q^{-k}\sum_{z\in \Oo_F/(\pi)}\psi(\frac{w_0}{\varpi_0} z \bar{z}) \\
    =& \q^{-k}\sum_{z\in \Oo_{F_0}/(\varpi_0)}\psi(\frac{w_0}{\varpi_0} z^2) \\
    =& \q^{-k} \chi(w_0) g(\barchi,\barpsi).
\end{align*}
Here in the last step we use \eqref{eq:g_aandg}.

(4) follows from (3) as
\begin{align*}
  &\int_{\Oo_F}\int_{\Oo_F^\times} \psi(\mathrm{tr}(\frac{1}{\pi} v\bar{x} y))dx dy\\
  =&\int_{\Oo_F}\int_{\Oo_F} \psi(\mathrm{tr}(\frac{1}{\pi} v\bar{x} y))dx dy-\frac{1}{\q}\int_{\Oo_F}\int_{\pi \Oo_F} \psi(\mathrm{tr}(\frac{1}{\pi} v\bar{x} y))dx dy\\
  =& \int_{\Oo_F}\int_{\Oo_F} \psi(\mathrm{tr}(\frac{1}{\pi} v\bar{x} y))dx dy-\frac{1}{\q}\int_{\Oo_F}\int_{ \Oo_F} \psi(\mathrm{tr}(\frac{1}{\pi} (-v \pi )\bar{x} y))dx dy.
\end{align*}
This finishes the proof of the lemma.
\end{proof}

Due to Remark \ref{rmk:error of Hironaka}, we need to recalculate $\cG_\Gamma(Y,T)$ in the formula of Theorem \ref{thm:Hironakaformula}. Recall that we have defined $Y_{(id,e_1,e_2,\epsilon_1,\epsilon_2)}$ and $Y_{((12),e)}$ in \eqref{eq:Gamma_0orbitsofhermitianmatrix}.
\begin{lemma}\label{lem:G_Gamma T diagonal}
Assume that $T=\mathrm{Diag}\{u_1\varpi_0^a, u_2\varpi_0^b\}$,
where $u_1,u_2\in \Oo_{F_0}^\times$. 
Then we have
\[\cG_\Gamma \left(Y_{(id,e_1,e_2,\epsilon_1,\epsilon_2)},T\right)=\frac{1}{q}  J^*(-\epsilon_1 u_1 \varpi_0^{a+e_1}) J(-\epsilon_1 u_2\varpi_0^{b+e_1}) J(-\epsilon_2 u_1 \varpi_0^{a+e_2+1}) J^*(-\epsilon_2  u_2\varpi_0^{b+e_2}),\]
and
\[\cG_\Gamma \left(Y_{((12),e)},T\right)=\frac{1}{q}(q^{min\{2a+e+2,0\}}-q^{min\{2a+e+3,0\}-1}) (q^{min\{2b+e+1,0\}}-q^{min\{2b+e+2,0\}-1}).\]
\end{lemma}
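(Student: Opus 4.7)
The plan is to apply Theorem \ref{thm:Hironakaformula} by parametrizing $\gamma \in \Gamma$ through its matrix entries: $A, D \in \Oo_F^\times$ on the diagonal, $B \in \Oo_F$ at position $(1,2)$, and $C' \in \pi \Oo_F$ at position $(2,1)$. Using $\bar \pi = -\pi$ and $\pi \bar \pi = \varpi_0$, a direct computation shows that the diagonal entries of $\gamma T \gamma^*$ are $u_1 \varpi_0^a A\bar A + u_2 \varpi_0^b B\bar B$ and $u_1 \varpi_0^a C'\bar{C'} + u_2 \varpi_0^b D\bar D$, while the $(2,1)$-entry is $u_1 \varpi_0^a C'\bar A + u_2 \varpi_0^b D\bar B$.

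For $Y = Y_{(id, e_1, e_2, \epsilon_1, \epsilon_2)}$, only the diagonal entries of $\gamma T \gamma^*$ contribute to $\mathrm{Tr}(Y \cdot \gamma T \gamma^*)$, so the exponent of $\psi$ decouples into four independent one-variable hermitian forms in $A, B, C', D$. The integrals in $A$, $B$, and $D$ are immediately $J^*(-\epsilon_1 u_1 \varpi_0^{a+e_1})$, $J(-\epsilon_1 u_2 \varpi_0^{b+e_1})$, and $J^*(-\epsilon_2 u_2 \varpi_0^{b+e_2})$ respectively. For the $C'$-integral, the substitution $C' = \pi C$ gives $dC' = q^{-1} dC$ and $C'\bar{C'} = \varpi_0 C\bar C$, converting it into $q^{-1} J(-\epsilon_2 u_1 \varpi_0^{a+e_2+1})$. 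Multiplying yields the first formula.

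For $Y = Y_{((12), e)}$, only the off-diagonal entries of $\gamma T \gamma^*$ contribute; the hermitian symmetry $Y_{21} = \overline{Y_{12}}$ gives $\mathrm{Tr}(Y \cdot \gamma T \gamma^*) = \mathrm{tr}\bigl(\pi^e(u_1 \varpi_0^a C'\bar A + u_2 \varpi_0^b D\bar B)\bigr)$, where $\mathrm{tr} = \mathrm{tr}_{F/F_0}$. This decouples into a product of two two-variable integrals, one in $(A, C')$ and one in $(B, D)$. The $(B, D)$-integral is exactly of the form in Lemma \ref{lem:Gaussintegral}(4) with $\tfrac{1}{\pi}v = -\pi^e u_2 \varpi_0^b$, so $\mathrm{val}_\pi(v) = 2b + e + 1$, producing the second factor. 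The $(A, C')$-integral reduces to the same template after substituting $C' = \pi C$ (which injects another $q^{-1}$) with $\tfrac{1}{\pi}v = -\pi^{e+1} u_1 \varpi_0^a$, so $\mathrm{val}_\pi(v) = 2a + e + 2$, giving the first factor and the overall $1/q$ prefactor.

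The main bookkeeping task is (i) to track the Haar-measure factor $q^{-1}$ correctly when passing from $dC'$ on $\pi\Oo_F$ to $dC$ on $\Oo_F$, which is the source of the $1/q$ prefactor in both formulas, and (ii) to align the conjugation convention of Lemma \ref{lem:Gaussintegral}(4) (which has integrand $\bar x y$, with $x \in \Oo_F$ and $y \in \Oo_F^\times$) with the orientation arising from $\gamma T \gamma^*$. The latter is handled using $\mathrm{tr}(z) = \mathrm{tr}(\bar z)$ to swap $v \leftrightarrow \bar v$ as needed; since only $\mathrm{val}_\pi(v)$ appears in the final expression, this swap has no effect on the answer. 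Neither issue is conceptually hard, and once the substitutions and orientations are aligned, both identities follow by direct calculation.
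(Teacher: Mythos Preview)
Your proposal is correct and follows essentially the same approach as the paper's proof: the paper writes $\gamma=\begin{pmatrix} x & y \\ z & w\end{pmatrix}$ (your $A,B,C',D$), computes $T[\gamma^*]=\gamma T\gamma^*$ explicitly, separates the resulting integral into a product of one-variable Gauss integrals (diagonal $Y$) or two two-variable integrals (anti-diagonal $Y$), and applies the substitution $z\mapsto \pi z$ on the $\pi\Oo_F$-entry to produce the $q^{-1}$ prefactor, invoking Lemma~\ref{lem:Gaussintegral}(4) for the anti-diagonal case. One cosmetic remark: you do not actually use Theorem~\ref{thm:Hironakaformula} itself, only the definition of $\cG_\Gamma$ stated there.
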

\begin{proof}
Let $\gamma=\left(\begin{array}{cc}
    x & y \\
    z & w
\end{array}\right)\in \Gamma$. Then 
\[T[\gamma^*]=\left(\begin{array}{cc}
    u_1\varpi_0^a x\bar{x} +u_2\varpi_0^b y \bar{y} & u_1\varpi_0^a x \bar{z} +u_2\varpi_0^b y \bar{w} \\
    u_1\varpi_0^a z \bar{x} +u_2\varpi_0^b \bar{y}w &  u_1\varpi_0^a z\bar{z} +u_2\varpi_0^b w \bar{w}
\end{array}\right).\]
So we have
\begin{align*}
    &\cG_\Gamma (Y_{(id,e_1,e_2,\epsilon_1,\epsilon_2)},T) \\
=&\int_{\Oo_F^\times} \psi(-\epsilon_1 u_1 \varpi_0^{a+e_1} x \bar{x}) dx \int_{ \Oo_F} \psi(-\epsilon_1  u_2 \varpi_0^{b+e_1} y \bar{y}) dy \int_{\pi \Oo_F} \psi(-\epsilon_2 u_1 \varpi_0^{a+e_2} z\bar{z}) dz \int_{\Oo_F^\times} \psi(-\epsilon_2  u_2\varpi_0^{b+e_2} w \bar{w}) dw  \\
=&\frac{1}{\q}\int_{\Oo_F^\times} \psi(-\epsilon_1 u_1 \varpi_0^{a+e_1} x \bar{x}) dx \int_{ \Oo_F} \psi(-\epsilon_1  u_2 \varpi_0^{b+e_1} y \bar{y}) dy \int_{ \Oo_F} \psi(-\epsilon_2 u_1 \varpi_0^{a+e_2+1} z\bar{z}) dz \int_{\Oo_F^\times} \psi(-\epsilon_2  u_2\varpi_0^{b+e_2} w \bar{w}) dw  \\
=& \frac{1}{\q} J^*(-\epsilon_1 u_1 \varpi_0^{a+e_1}) J(-\epsilon_1 u_2\varpi_0^{b+e_1}) J(-\epsilon_2 u_1 \varpi_0^{a+e_2+1}) J^*(-\epsilon_2  u_2\varpi_0^{b+e_2}).
\end{align*}

And we have
\begin{align*}
    &\cG_\Gamma (Y_{((12),e)},T) \\
=& \int_{\pi \Oo_F} \int_{\Oo_F^\times}\psi[\mathrm{tr}(-u_1\varpi_0^a \pi^e z \bar{x})]dx dz \int_{ \Oo_F} \int_{\Oo_F^\times}\psi[\mathrm{tr}(-u_2\varpi_0^b \pi^e w \bar{y})]dw dy\\
=& \frac{1}{\q} \int_{ \Oo_F} \int_{\Oo_F^\times}\psi[\mathrm{tr}(-u_1\varpi_0^{a} \pi^{e+1} z \bar{x})]dx dz \int_{ \Oo_F} \int_{\Oo_F^\times}\psi[\mathrm{tr}(-u_2\varpi_0^b \pi^e w \bar{y})]dw dy\\
=& \frac{1}{\q}(\q^{min\{2a+e+2,0\}}-\q^{min\{2a+e+3,0\}-1}) (\q^{min\{2b+e+1,0\}}-\q^{min\{2b+e+2,0\}-1}),
\end{align*}
where the last equality follows from (4) of Lemma \ref{lem:Gaussintegral}.
\end{proof}

The proof of the following lemma is similar to that of the previous one which follows directly from Lemma \ref{lem:Gaussintegral}, so we omit it.
\begin{lemma}
Assume that $T=Y_{((12),a)}$.
Then we have
\[\cG_\Gamma \left(Y_{(id,e_1,e_2,\epsilon_1,\epsilon_2)},T\right)= \frac{1}{q}(q^{min\{a+2 e_1+1,0\}}-q^{min\{a+2e_1+2,0\}-1})(q^{min\{a+2 e_2+2,0\}}-q^{min\{a+2e_2+3,0\}-1}),\]
and
\[\cG_\Gamma \left(Y_{((12),e)},T\right)=\frac{1}{q} (q^{min\{a+e+1,0\}}-2q^{min\{a+e+2,0\}-1}+q^{min\{a+e+3,0\}-2}) q^{min\{a+e+2,0\}}.\]
\end{lemma}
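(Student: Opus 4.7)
The plan is to mimic the proof structure of the preceding lemma (the diagonal $T$ case): first compute $T[\gamma^*]=\gamma T\gamma^*$ explicitly for a general matrix $\gamma=\bigl(\begin{smallmatrix}x&y\\z&w\end{smallmatrix}\bigr)\in\Gamma$, then take the matrix trace against each of the two $Y$'s, observe that the resulting phase factors across the four entries of $\gamma$, and evaluate each factor using parts (3)--(5) of Lemma~\ref{lem:Gaussintegral}. Concretely, using $\bar\pi=-\pi$, a direct computation gives the anti-diagonal $(1,2)$-entry $m_{12}=y\bar\pi^a\bar z+x\pi^a\bar w$, while the diagonal entries are $m_{11}=\mathrm{tr}_{F/F_0}(x\pi^a\bar y)$ and $m_{22}=\mathrm{tr}_{F/F_0}(z\pi^a\bar w)$.

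For the first identity I would take $Y=Y_{(id,e_1,e_2,\epsilon_1,\epsilon_2)}$, so $\mathrm{Tr}(Y\,T[\gamma^*])=\epsilon_1\varpi_0^{e_1}\mathrm{tr}_{F/F_0}(x\pi^a\bar y)+\epsilon_2\varpi_0^{e_2}\mathrm{tr}_{F/F_0}(z\pi^a\bar w)$. Since the two summands depend on disjoint variables, the integral $\cG_\Gamma$ factors into an $(x,y)$-piece over $\Oo_F^\times\times\Oo_F$ and a $(z,w)$-piece over $\pi\Oo_F\times\Oo_F^\times$. After a relabeling $x\leftrightarrow\bar x,\,y\leftrightarrow\bar y$ to place the phases in the canonical form $\mathrm{tr}(\tfrac{1}{\pi}v\bar xy)$ of Lemma~\ref{lem:Gaussintegral}(4), the first piece is evaluated with $v=-\epsilon_1\varpi_0^{e_1}\pi^{a+1}$, giving $\mathrm{val}_\pi v=a+2e_1+1$; for the second piece I substitute $z=\pi z'$ (which produces the extra factor $\tfrac1q$), raising the $\pi$-adic valuation by $1$ to $a+2e_2+2$, and apply (4) again. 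Multiplying the two yields precisely the claimed product formula.

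For the second identity I take $Y=Y_{((12),e)}$, so $\mathrm{Tr}(Y\,T[\gamma^*])=\pi^e m_{21}+\bar\pi^e m_{12}=\mathrm{tr}_{F/F_0}(\bar\pi^e m_{12})=\mathrm{tr}_{F/F_0}(y\bar\pi^{a+e}\bar z+x\pi^a\bar\pi^e\bar w)$. This again splits cleanly into a $(y,z)$-integral over $\Oo_F\times\pi\Oo_F$ and an $(x,w)$-integral over $\Oo_F^\times\times\Oo_F^\times$. The first, after substituting $z=\pi z'$ (contributing $\tfrac{1}{q}$) and rewriting as $\mathrm{tr}(\tfrac{v}{\pi}\bar{z'}y)$ with $v=\bar\pi^{a+e+2}$ of valuation $a+e+2$, is $\tfrac{1}{q}\,q^{\min\{a+e+2,0\}}$ by Lemma~\ref{lem:Gaussintegral}(3). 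The second, rewritten in the canonical form with valuation $a+e+1$, is handled by Lemma~\ref{lem:Gaussintegral}(5) and gives the trinomial $q^{\min\{a+e+1,0\}}-2q^{\min\{a+e+2,0\}-1}+q^{\min\{a+e+3,0\}-2}$. Their product is the stated formula.

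The main (only) obstacle is bookkeeping: correctly tracking the shifts in $\mathrm{val}_\pi$ caused by the substitution $z=\pi z'$ (which simultaneously introduces the measure factor $1/q$ and raises the valuation by $1$), and being careful with the involutions $\pi\mapsto\bar\pi=-\pi$ when moving factors across conjugations. Once the phases are put into the exact form assumed in Lemma~\ref{lem:Gaussintegral}, all four evaluations are immediate applications of its parts (3)--(5).
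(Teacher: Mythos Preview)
Your proposal is correct and is precisely the approach the paper intends: the paper omits the proof with the remark that it ``is similar to that of the previous one which follows directly from Lemma~\ref{lem:Gaussintegral},'' and you have carried out exactly this computation---factoring the integral over $\Gamma$ into pairs of variables after computing $T[\gamma^*]$, substituting $z=\pi z'$ to pass from $\pi\Oo_F$ to $\Oo_F$, and applying parts (3)--(5) of Lemma~\ref{lem:Gaussintegral} with the correct $\pi$-adic valuations. The bookkeeping you flag (signs from $\bar\pi=-\pi$, the simultaneous $1/q$ and valuation shift from $z=\pi z'$) is handled correctly.
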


\begin{lemma}\label{lem:GYmultiplicative}
For any $Y\in X_2$,
\[\cG(Y,S_1\oplus S_2)=\cG(Y,S_1)\cG(Y,S_2).\]
\end{lemma}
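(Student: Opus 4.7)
The plan is straightforward since the claim is essentially a Fubini-type factorization. First I would unpack the definition: writing $m_i$ for the size of $S_i$ ($i=1,2$) and $m = m_1 + m_2$, an element $X \in M_{m,2}(\Oo_F)$ decomposes uniquely as $X = \begin{pmatrix} X_1 \\ X_2 \end{pmatrix}$ with $X_i \in M_{m_i,2}(\Oo_F)$, and the product Haar measure on $M_{m,2}(\Oo_F)$ is the product of the Haar measures on $M_{m_i,2}(\Oo_F)$.

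The key algebraic identity is
\[(S_1 \oplus S_2)[X] = X^*(S_1 \oplus S_2) X = X_1^* S_1 X_1 + X_2^* S_2 X_2 = S_1[X_1] + S_2[X_2],\]
so by bilinearity of the pairing $\langle \cdot , \cdot \rangle = \mathrm{Tr}(\cdot \, \cdot)$,
\[\langle Y, (S_1 \oplus S_2)[X] \rangle = \langle Y, S_1[X_1]\rangle + \langle Y, S_2[X_2]\rangle.\]
Applying $\psi$ turns the sum into a product, and Fubini then gives
\[\cG(Y, S_1 \oplus S_2) = \int_{M_{m_2,2}(\Oo_F)} \int_{M_{m_1,2}(\Oo_F)} \psi(\langle Y, S_1[X_1]\rangle)\, \psi(\langle Y, S_2[X_2]\rangle)\, dX_1\, dX_2,\]
which factors as $\cG(Y, S_1)\cdot \cG(Y, S_2)$.

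There is no real obstacle here: the only things that could conceivably go wrong are the block-diagonal identity for $(S_1 \oplus S_2)[X]$ and the compatibility of Haar measures, both of which are immediate. The lemma is purely formal, and I expect the write-up to fit in a few lines.
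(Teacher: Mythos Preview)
Your proposal is correct and follows essentially the same approach as the paper: both arguments rest on the block-diagonal identity $(S_1\oplus S_2)[X]=S_1[X_1]+S_2[X_2]$, after which $\psi$ converts the sum to a product and Fubini factors the integral. The paper merely writes out the matrix identity slightly more explicitly by naming the four column vectors, but there is no substantive difference.
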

\begin{proof}
Assume that $S_i$ has rank $m_i$ ($i=1,2$). Also assume that $x_1$ and $x_2$ are of size $m_1\times 1$, $x_3$ and $x_4$ are of size $m_2\times 1$. Let 
\[X_1=(x_1 \ x_2), \ X_2=(x_3 \ x_4),\ X=\left(\begin{array}{c}
     X_1  \\
     X_2 
\end{array}\right).\]
Then
\begin{align*}
    &\left(\begin{array}{cc}
        x_1^* & x_3^* \\
        x_2^* & x_4^*
    \end{array}\right)
    \left(\begin{array}{cc}
        S_1 & 0 \\
        0 & S_2
    \end{array}\right)
    \left(\begin{array}{cc}
        x_1 & x_2 \\
        x_3 & x_4
    \end{array}\right)\\
    =& \left(\begin{array}{cc}
        x_1^* & x_3^* \\
        x_2^* & x_4^*
    \end{array}\right)
    \left(\begin{array}{cc}
        S_1 & 0 \\
        0 & 0
    \end{array}\right)
    \left(\begin{array}{cc}
        x_1 & x_2 \\
        x_3 & x_4
    \end{array}\right)+\left(\begin{array}{cc}
        x_1^* & x_3^* \\
        x_2^* & x_4^*
    \end{array}\right)
    \left(\begin{array}{cc}
        0 & 0 \\
        0 & S_2
    \end{array}\right)
    \left(\begin{array}{cc}
        x_1 & x_2 \\
        x_3 & x_4
    \end{array}\right)\\
    =&\left(\begin{array}{cc}
        x_1^*S_1 x_1 & x_1^* S_1 x_2 \\
        x_2^* S_1 x_1 & x_2^*S_1 x_2
    \end{array}\right)+\left(\begin{array}{cc}
        x_3^*S_2 x_3 & x_3^* S_2 x_4 \\
        x_4^* S_2 x_3 & x_4^*S_2 x_4
    \end{array}\right)\\
    =& S_1[X_1]+S_2[X_2]
\end{align*}
Hence 
\[\langle Y,(S_1\oplus S_2)[X]\rangle=\langle Y,S_1[X_1]\rangle+ \langle Y,S_2[X_2]\rangle.\]
The lemma follows. 
\end{proof}

\begin{lemma}\label{lem:G(Y,S)}
Assume that $S_r=\mathrm{Diag}\{ v_1 \varpi_0^{f_1}, v_2 \varpi_0^{f_2}\}\oplus \cH^r$,
where $v_1, v_2\in \Oo_{F_0}^\times$.
Then we have
\begin{align*}
    & \cG \left(Y_{(id,e_1,e_2,\epsilon_1,\epsilon_2)},S_r\right)\\
=& J(v_1 \epsilon_1 \varpi_0^{e_1+f_1})J(v_1\epsilon_2 \varpi_0^{e_2+f_1}) J(v_2 \epsilon_1 \varpi_0^{e_1+f_2})J(v_2\epsilon_2 \varpi_0^{e_2+f_2}) (q^r)^{2min\{e_1,0\}+2min\{e_2,0\}},
\end{align*}
and
\[\cG \left(Y_{((12),e)},S_r\right)=q^{min\{e+2f_1+1,0\}} q^{min\{e+2f_2+1,0\}}(q^r)^{2 min\{e,0\}}.\]
\end{lemma}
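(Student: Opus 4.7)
The plan is to exploit the multiplicativity $\cG(Y, S_1 \oplus S_2) = \cG(Y, S_1)\,\cG(Y, S_2)$ proved in Lemma~\ref{lem:GYmultiplicative}. Writing $S_r$ as an orthogonal sum of the two rank-one forms $v_1\varpi_0^{f_1}$, $v_2\varpi_0^{f_2}$ over $F_0$ together with $r$ copies of the hyperbolic plane $\cH$, we get
$$\cG(Y, S_r) = \cG(Y, v_1\varpi_0^{f_1})\,\cG(Y, v_2\varpi_0^{f_2})\,\cG(Y, \cH)^r,$$
so it suffices to compute $\cG(Y, v\varpi_0^f)$ for $v \in \Oo_{F_0}^\times$ and $\cG(Y, \cH)$ for the two types of representatives $Y$. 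In each case the integrand becomes a character of an explicit polynomial in the matrix entries of $X$, and the resulting integral is handled by direct application of the table in Lemma~\ref{lem:Gaussintegral}.

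For the diagonal representative $Y = Y_{(\id, e_1, e_2, \epsilon_1, \epsilon_2)}$, writing $X = (x_1, x_2) \in M_{1,2}(\Oo_F)$ one has $\langle Y, (v\varpi_0^f)[X]\rangle = v\epsilon_1\varpi_0^{e_1+f}\,x_1\bar{x}_1 + v\epsilon_2\varpi_0^{e_2+f}\,x_2\bar{x}_2$, so the integral separates into the product $J(v\epsilon_1\varpi_0^{e_1+f})\,J(v\epsilon_2\varpi_0^{e_2+f})$ by definition of $J$ in Lemma~\ref{lem:Gaussintegral}(1). For the hyperbolic factor, using the identity $z - \bar{z} = \pi\,\mathrm{tr}_{F/F_0}(\pi^{-1}z)$ and writing $X = (x_{ij})_{2\times 2}$, a direct computation yields
$$\langle Y, \cH[X]\rangle = \mathrm{tr}\bigl(\pi^{-1}\epsilon_1\varpi_0^{e_1}\bar{x}_1 x_3\bigr) + \mathrm{tr}\bigl(\pi^{-1}\epsilon_2\varpi_0^{e_2}\bar{x}_2 x_4\bigr).$$
The integral then factors into two independent off-diagonal double integrals, each equal to $q^{2\min\{e_i,0\}}$ by Lemma~\ref{lem:Gaussintegral}(3); raising to the $r$-th power gives the claimed expression.

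For the antidiagonal representative $Y = Y_{((12), e)}$, conjugate symmetry of the two summands gives $\langle Y, (v\varpi_0^f)[X]\rangle = \mathrm{tr}_{F/F_0}(v\varpi_0^f\pi^e\bar{x}_2 x_1)$, and Lemma~\ref{lem:Gaussintegral}(3) immediately yields $\cG(Y, v\varpi_0^f) = q^{\min\{e+2f+1, 0\}}$, accounting for the two rank-one factors. For $\cG(Y, \cH)$, a direct computation gives
$$\langle Y, \cH[X]\rangle = \pi^{e-1}\bigl[\bar{x}_2 x_3 - \bar{x}_4 x_1 + (-1)^e(\bar{x}_1 x_4 - \bar{x}_3 x_2)\bigr].$$
The subtlety is that the scalar one pulls inside the trace depends on the parity of $e$: for $e$ even, the bracket regroups as $\pi\bigl[\mathrm{tr}(\pi^{-1}\bar{x}_2 x_3) + \mathrm{tr}(\pi^{-1}\bar{x}_1 x_4)\bigr]$ and we extract $\pi^e \in F_0$; for $e$ odd, the bracket regroups as $\mathrm{tr}(\bar{x}_2 x_3) - \mathrm{tr}(\bar{x}_1 x_4)$ and we extract $\pi^{e-1} \in F_0$. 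In either parity the integral decouples into two double integrals, each equal to $q^{\min\{e,0\}}$ by Lemma~\ref{lem:Gaussintegral}(3), giving $\cG(Y, \cH) = q^{2\min\{e,0\}}$ uniformly.

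The routine part is dispatched by Lemma~\ref{lem:GYmultiplicative} and the Gauss-integral evaluations in Lemma~\ref{lem:Gaussintegral}; the only mildly delicate step is the parity bookkeeping for the last case, but the cancellation yielding the same formula in both parities is precisely what makes the final expression clean and uniform in $e$.
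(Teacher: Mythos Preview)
Your proof is correct and follows the same approach as the paper: use the multiplicativity from Lemma~\ref{lem:GYmultiplicative} to reduce to the rank-one pieces and to $\cH$, then evaluate each factor via Lemma~\ref{lem:Gaussintegral}. The only difference is that for $\cG(Y_{((12),e)},\cH)$ the paper simply asserts the factorization as $\bigl(\int_{\Oo_F^2}\psi[\mathrm{tr}(\pi^{e-1}\bar{x}y)]\,dx\,dy\bigr)^2$, whereas you carry out the parity bookkeeping explicitly to justify it; your extra care here is harmless and arguably clarifies a step the paper leaves implicit.
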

\begin{proof}
\begin{align*}
    \cG (Y_{(id,e_1,e_2,\epsilon_1,\epsilon_2)},\langle v_1 \varpi_0^{f_1}\rangle)=&\int_{\Oo_F} \psi(\epsilon_1 v_1 \varpi_0^{e_1+f_1} x\bar{x} )dx\int_{\Oo_F} \psi(\epsilon_2 v_1 \varpi_0^{e_2+f_1} x\bar{x} )dx \\
 =& J(v_1 \epsilon_1 \varpi_0^{e_1+f_1})J(v_1\epsilon_2 \varpi_0^{e_2+f_1}).
\end{align*}
Similarly 
\begin{align*}
    \cG (Y_{(id,e_1,e_2,\epsilon_1,\epsilon_2)},\langle v_2 \varpi_0^{f_2}\rangle)
 = J(v_2 \epsilon_1 \varpi_0^{e_1+f_2})J(v_2\epsilon_2 \varpi_0^{e_2+f_2}).
\end{align*}
We also have
\begin{align*}
    \cG (Y_{(id,e_1,e_2,\epsilon_1,\epsilon_2)}, \cH)=& \int_{\Oo_F^2} \psi[\mathrm{tr}(\frac{1}{\pi}\epsilon_1 \varpi_0^{e_1} \bar{x}y)]dxdy  \int_{\Oo_F^2} \psi[\mathrm{tr}(\frac{1}{\pi}\epsilon_2 \varpi_0^{e_2} \bar{x}y)]dxdy \\
=& \q^{2min\{e_1,0\}+2min\{e_2,0\}}, \nonumber
\end{align*}
where the last equality follows from Lemma \ref{lem:Gaussintegral}. The first equation in the lemma now follows from Lemma \ref{lem:GYmultiplicative}. For the second equation in the lemma, we have
\begin{align*}
    \cG (Y_{((12),e)},\langle v_1 \varpi_0^{f_1}\rangle)=& \int_{\Oo_F^2} \psi [\mathrm{tr} (\frac{1}{\pi}v_1 \pi^{e+2f_1+1} \bar{x} y)] dx dy\\
=&\q^{min\{e+2f_1+1,0\}}, 
\end{align*}
where the last equality follows from Lemma \ref{lem:Gaussintegral}. Similarly
\[\cG (Y_{((12),e)},\langle v_2 \varpi_0^{f_2}\rangle)
=\q^{min\{e+2f_2+1,0\}},\]
and
\begin{align*}
    \cG (Y_{((12),e)},\cH)= \big\{\int_{\Oo_F^2} \psi [\mathrm{tr}(\pi^{e-1}\bar{x}y)dxdy]\big\}^2
= \q^{2 min\{e,0\}}. \nonumber
\end{align*}
Then the second equation in the lemma follows from Lemma \ref{lem:GYmultiplicative}.
\end{proof}

Similarly we can prove the following.
\begin{lemma}
Assume that $S_r=Y_{((12),f)}\oplus \cH^r$.
Then we have
\begin{align*}
    & \cG \left(Y_{(id,e_1,e_2,\epsilon_1,\epsilon_2)},S_r\right)
= q^{min\{2e_1+f+1,0\}+min\{2e_2+f+1,0\}} (q^r)^{2min\{e_1,0\}+2min\{e_2,0\}},
\end{align*}
and
\[\cG \left(Y_{((12),e)},S_r\right)=q^{2min\{e+f+1,0\}}(q^r)^{2 min\{e,0\}}.\]
\end{lemma}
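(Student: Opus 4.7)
The proof plan mirrors that of the previous lemma, with $Y_{((12),f)}$ replacing the diagonal Gram matrix. The key tool is the multiplicativity identity of Lemma \ref{lem:GYmultiplicative}, which reduces the computation to
\[
\cG(Y, S_r) = \cG(Y, Y_{((12),f)}) \cdot \cG(Y, \cH)^r,
\]
so the only genuinely new input is $\cG(Y, Y_{((12),f)})$; the factors $\cG(Y_{(id,\ldots)}, \cH)$ and $\cG(Y_{((12),e)}, \cH)$ were already computed in the proof of the preceding lemma.

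For $Y = Y_{(id,e_1,e_2,\epsilon_1,\epsilon_2)}$, I would write $X = (x_{ij}) \in M_{2,2}(\Oo_F)$, expand $Y_{((12),f)}[X] = X^\ast Y_{((12),f)} X$ directly, and take $\langle Y, Y_{((12),f)}[X]\rangle$. The diagonal entries of $Y_{((12),f)}[X]$ have the shape $\bar{x}_i \pi^f x_j + \bar{x}_j (-\pi)^f x_i$, which is the trace $\mathrm{tr}(\bar{x}_i \pi^f x_j)$ since $\overline{\pi^f} = (-\pi)^f$. Pulling out the $F_0$-scalar $\epsilon_k \varpi_0^{e_k}$, the integral splits as a product of two copies of
\[
\int_{\Oo_F^2} \psi\!\bigl(\mathrm{tr}(\tfrac{1}{\pi}\, \epsilon_k \varpi_0^{e_k} \pi^{f+1}\, \bar{x}\, y)\bigr)\, dx\, dy,
\]
and Lemma \ref{lem:Gaussintegral}(3) evaluates each factor as $q^{\min\{2e_k + f + 1, 0\}}$, giving the claimed formula after multiplying by $\cG(Y,\cH)^r$.

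For $Y = Y_{((12),e)}$, the computation is analogous but one takes the trace of the product against an off-diagonal $Y$. The off-diagonal entries of $Y_{((12),f)}[X]$ are $\bar{x}_1 \pi^f x_4 + \bar{x}_3 (-\pi)^f x_2$ and its partner; combining with the $\pi^e$, $(-\pi)^e$ entries of $Y$ and collecting Galois-conjugate pairs yields $\langle Y_{((12),e)}, Y_{((12),f)}[X]\rangle = \mathrm{tr}(\pi^{e+f}\bar{x}_2 x_3) + \mathrm{tr}((-1)^e \pi^{e+f} \bar{x}_1 x_4)$. The sign $(-1)^e$ can be absorbed into the variable, and each of the two resulting integrals equals $q^{\min\{e+f+1,0\}}$ by Lemma \ref{lem:Gaussintegral}(3). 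Multiplying by $\cG(Y_{((12),e)}, \cH)^r = q^{2r\min\{e,0\}}$ gives the second formula.

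The only real obstacle is bookkeeping: one must track the Galois action $\bar\pi = -\pi$ carefully to verify that the off-diagonal expressions genuinely collapse into traces of the form $\mathrm{tr}(\tfrac{1}{\pi} v \bar{x} y)$ with $\mathrm{val}_\pi(v)$ equal to $2e_k + f + 1$ or $e + f + 1$, respectively, so that Lemma \ref{lem:Gaussintegral}(3) applies verbatim. Once this is checked, everything reduces to substitution into the multiplicativity identity.
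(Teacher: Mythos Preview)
Your proposal is correct and follows exactly the approach the paper intends: the paper omits the proof with ``Similarly we can prove the following,'' meaning the same strategy as Lemma~\ref{lem:G(Y,S)}---multiplicativity from Lemma~\ref{lem:GYmultiplicative} followed by Lemma~\ref{lem:Gaussintegral}(3)---which is precisely what you outline. Your identification of the trace structure and the valuations $2e_k+f+1$ and $e+f+1$ is accurate, and the bookkeeping caveat about $\bar\pi=-\pi$ is the only subtlety, which you handle correctly.
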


\subsection{Proof of \eqref{eq:local density first formula}}
In the following we simply write $\mathfrak{g}=g(\barchi,\barpsi)$. 
By Theorem \ref{thm:Hironakaformula}, Lemma \ref{lem:alpha Gamma}, \ref{lem:G_Gamma T diagonal} and \ref{lem:G(Y,S)}, we have
\begin{align*}
    &\alpha(S_r,T)\\
    =&\sum_{\epsilon_1, \epsilon_2} \sum_{(e_1,e_2)\in \Z^2} \frac{1}{\q}  J^*(-\epsilon_1 u_1 \varpi_0^{a+e_1}) J(-\epsilon_1 u_2\varpi_0^{b+e_1}) J(-\epsilon_2 u_1 \varpi_0^{a+e_2+1}) J^*(-\epsilon_2  u_2\varpi_0^{b+e_2})\\
    &\cdot J(v\epsilon_1 \varpi_0^{e_1})J(v\epsilon_2 \varpi_0^{e_2})J(\epsilon_1 \varpi_0^{e_1})J(\epsilon_2 \varpi_0^{e_2})  (\q^r)^{2min\{e_1,0\}+2min\{e_2,0\}} \cdot \alpha(e_1,e_2)\\
    +&\sum_{e\in Z} \frac{1}{\q}(\q^{min\{2a+e+2,0\}}-\q^{min\{2a+e+3,0\}-1}) (\q^{min\{2b+e+1,0\}}-\q^{min\{2b+e+2,0\}-1})\\
    &\cdot \q^{2min\{e+1,0\}} (\q^r)^{2 min\{e,0\}} \frac{1}{\q^{2e-1}(\q-1)},
\end{align*}
where the summation of $\epsilon_1,\epsilon_2$ is over $\Oo_{F_0}^\times/\mathrm{Nm}(\Oo_F^\times)$. The second summation is easy to compute:
\begin{align}\label{eq: sum of alpha(e) terms}
     &\frac{1}{\q}(1-\frac{1}{\q})^2[\sum_{e=-2b-1}^{-1}\q^{2(e+1)} \q^{2er}\frac{1}{\q^{2e-1}(\q-1)}+\sum_{e=0}^\infty  \frac{1}{\q^{2e-1}(\q-1)}]\\ \nonumber
     =&(\q-1) \sum_{e=-2b-1}^{-1} \q^{2er}+\frac{1}{\q+1}.
\end{align}
For a subset $\Lambda$ of $\Z^2$ define 
\begin{align}\label{eq:I(Lambda)}
    I(\Lambda):=&\sum_{\epsilon_1, \epsilon_2} \sum_{(e_1,e_2)\in \Lambda} \frac{1}{\q} J^*(-\epsilon_1 u_1 \varpi_0^{a+e_1}) J(-\epsilon_1 u_2\varpi_0^{b+e_1}) J(\epsilon_2 u_1 \varpi_0^{a+e_2+1}) J^*(-\epsilon_2  u_2\varpi_0^{b+e_2})\\ \nonumber
    \cdot &  J(v\epsilon_1 \varpi_0^{e_1})J(\epsilon_1 \varpi_0^{e_1})J(v\epsilon_2 \varpi_0^{e_2})J(\epsilon_2 \varpi_0^{e_2})  (\q^r)^{2min\{e_1,0\}+2min\{e_2,0\}} \cdot\frac{1}{\alpha(e_1,e_2)}.
\end{align}
We know from Lemma \ref{lem:Gaussintegral} that $J^*(-\epsilon_1 u_1 \varpi_0^{a+e_1})\neq 0$ if and only if $ e_1\geq -a-1$, and $J^*(-\epsilon_2 u_2 \varpi_0^{b+e_2})\neq 0$ if and only if $ e_2\geq -b-1$. In the following calculations, we will repeatedly using Lemma \ref{lem:guasssumsquare} and \ref{lem:Gaussintegral} sometimes without explicitly referring to them.
We will also use the following obvious facts
\begin{equation}\label{eq: summation of epsilon}
    \sum_{\epsilon \in \Oo_{F_0}^\times/\mathrm{Nm}(\Oo_F^\times)}\chi(\epsilon)=0, \ \sum_{\epsilon \in \Oo_{F_0}^\times/\mathrm{Nm}(\Oo_F^\times)}\chi(\epsilon^2)=2.
\end{equation}
\begin{lemma}\label{lem: I Z -b-1}
\begin{align*}
     &I(\Z\times \{-b-1\})\\
     =& -  (\q^r)^{-2(b+1)} [\chi(-u_1 u_2)  (\q^r)^{-2(a+1)}+(\q-1)\q^b\sum_{e_1=-b}^{-1} \q^{e_1}  (\q^r)^{2e_1}+\chi(-v) \q^b].
\end{align*}
\end{lemma}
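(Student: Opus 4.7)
The plan is to fix $e_2 = -b-1$ and split the summand in \eqref{eq:I(Lambda)} into an $\epsilon_2$-factor times an $(\epsilon_1, e_1)$-factor. Since $a\geq b$, $J(\epsilon_2 u_1\varpi_0^{a-b}) = 1$; and by Lemmas~\ref{lem:Gaussintegral}(1) and~\ref{lem:guasssumsquare}, $J(v\epsilon_2\varpi_0^{-b-1})J(\epsilon_2\varpi_0^{-b-1}) = \q^{-2b-1}\chi(-v)$, which is $\epsilon_2$-independent since $\chi(\epsilon_2)^2 = 1$. The only remaining $\epsilon_2$-dependent piece is $J^*(-\epsilon_2 u_2 \varpi_0^{-1}) = \tfrac{\mathfrak{g}}{\q}\chi(-\epsilon_2 u_2) - \tfrac{1}{\q}$; summing over $\epsilon_2$ and using the orthogonality \eqref{eq: summation of epsilon} gives $-2/\q$. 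Together with the $\tfrac{1}{\q}$ prefactor and the $(\q^r)^{-2(b+1)}$ coming out of the exponent $2\min\{e_2, 0\}$, this produces the overall factor $-(\q^r)^{-2(b+1)}$ together with an internal constant $\q^{-2b-2}\chi(-v)$ that must then be absorbed into the remaining sum.

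Next I analyze the $(\epsilon_1, e_1)$-sum, supported on $e_1 \geq -a-1$ (otherwise $J^*=0$). Once more by Lemma~\ref{lem:guasssumsquare}, $J(v\epsilon_1\varpi_0^{e_1})J(\epsilon_1\varpi_0^{e_1}) = \q^{2e_1+1}\chi(-v)$ for $e_1 < 0$ and $=1$ for $e_1 \geq 0$, $\epsilon_1$-independent in both cases. I partition the $e_1$-sum into three ranges. In range (a) $e_1 \geq 0$, all $J$'s and $J^*$ trivialize to unit-independent constants, and with $\alpha(e_1, -b-1) = 4\q^{e_1 - 3b - 3}$ from Lemma~\ref{lem:alpha Gamma} the geometric series $\sum_{e_1\geq 0}\q^{-e_1}$ sums to produce the $-\chi(-v)\q^b$ term. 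In range (b) $-b \leq e_1 \leq -1$, still $J^* = 1 - 1/\q$ and $J(-\epsilon_1 u_2\varpi_0^{b+e_1}) = 1$, but the factor $\q^{2e_1+1}\chi(-v)$ injects a $\q^{e_1}(\q^r)^{2e_1}$ that matches the middle piece $-(\q-1)\q^b \sum_{e_1=-b}^{-1} \q^{e_1}(\q^r)^{2e_1}$.

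The delicate range is (c) $-a-1 \leq e_1 \leq -b-1$, where $J(-\epsilon_1 u_2\varpi_0^{b+e_1}) = \q^{b+e_1}\chi(-\epsilon_1 u_2)\mathfrak{g}$ is genuinely $\epsilon_1$-dependent. For $-a \leq e_1 \leq -b-1$ one has $J^* = 1 - 1/\q$ constant in $\epsilon_1$, so the integrand is proportional to $\chi(-\epsilon_1 u_2)$ and the $\epsilon_1$-sum vanishes by \eqref{eq: summation of epsilon}. The single surviving point is $e_1 = -a-1$, where $J^* = \tfrac{\mathfrak{g}}{\q}\chi(-\epsilon_1 u_1) - \tfrac{1}{\q}$; the product $\chi(-\epsilon_1 u_1)\chi(-\epsilon_1 u_2) = \chi(u_1 u_2)$ becomes $\epsilon_1$-independent and, after invoking $\mathfrak{g}^2 = \q\chi(-1)$, yields the $-\chi(-u_1 u_2)(\q^r)^{-2(a+1)}$ contribution. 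The main obstacle is the careful bookkeeping of the piecewise definition of $\alpha(e_1, -b-1)$ in Lemma~\ref{lem:alpha Gamma} at the boundary $e_1 = -b-1$ (the equality case $e_1 = e_2$ belongs to the branch $e_2 \geq e_1$), together with the degenerate coalescence $a = b$, in which (c) collapses to the single point $e_1 = -b-1 = -a-1$ on the boundary of (b); there $J^*$ is already of the form $\tfrac{\mathfrak{g}}{\q}\chi(-\epsilon_1 u_1) - \tfrac{1}{\q}$, so the same $\chi(u_1 u_2)\mathfrak{g}^2 = \q\chi(-u_1u_2)$ mechanism continues to produce the correct term. Summing the three regional contributions and factoring out $-(\q^r)^{-2(b+1)}$ then yields the three bracketed pieces in the claimed formula.
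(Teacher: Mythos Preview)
Your approach is essentially identical to the paper's: factor out the $\epsilon_2$-dependent part at $e_2=-b-1$ (where only the constant term of $J^*(-\epsilon_2 u_2\varpi_0^{-1})$ survives the $\epsilon_2$-sum), then split the remaining $(\epsilon_1,e_1)$-sum into the same ranges and use the vanishing $\sum_{\epsilon_1}\chi(\epsilon_1)=0$ to kill all of $-a\le e_1\le -b-1$ except the endpoint $e_1=-a-1$, where $\mathfrak g^2=\q\chi(-1)$ yields the $\chi(-u_1u_2)$ term. One small bookkeeping slip: when you collect the $\tfrac{1}{\q}$ prefactor, the $-\tfrac{2}{\q}$ from the $\epsilon_2$-sum, and the product $J(v\epsilon_2\varpi_0^{-b-1})J(\epsilon_2\varpi_0^{-b-1})=\q^{-2b-1}\chi(-v)$, the ``internal constant'' is $2\q^{-2b-3}\chi(-v)$ rather than $\q^{-2b-2}\chi(-v)$; with this correction the three ranges match the three bracketed terms exactly as you outline.
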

\begin{proof}
By the definition of $I(\Z\times \{-b-1\})$ and Lemma \ref{lem:Gaussintegral}, we have
\begin{align*}
    &I(\Z\times \{-b-1\})\\
    =& \sum_{\epsilon_1, \epsilon_2} \sum_{e_1\in \Z}
    \frac{1}{\q} [\frac{1}{\q}\chi(-\epsilon_2 u_2)\mathfrak{g}-\frac{1}{\q}]\chi(-v)\q^{-2b-1} (\q^r)^{-2(b+1)}\\
    &\cdot J^*(-\epsilon_1 u_1 \varpi_0^{a+e_1}) J(-\epsilon_1 u_2\varpi_0^{b+e_1}) J(v\epsilon_1 \varpi_0^{e_1})J(\epsilon_1 \varpi_0^{e_1}) (\q^r)^{2min\{e_1,0\}} \cdot\frac{1}{\alpha(e_1,-b-1)}.
\end{align*}
Since $\sum_{\epsilon_2}\chi(\epsilon_2)=0$, we know the above is equal to
\begin{align}\label{eq:I Z -b-1}
   & - 2 \chi(-v)\q^{-2b-3} (\q^r)^{-2(b+1)} (\sum_{e_1=-a-1}^{-a-1}+\sum_{e_1=-a}^{-b-2}+\sum_{e_1=-b-1}^{-b-1}+\sum_{e_1=-b}^{-1}+\sum_{e_1=0}^{\infty})\\ \nonumber
    \cdot&  \sum_{\epsilon_1} J^*(-\epsilon_1 u_1 \varpi_0^{a+e_1}) J(-\epsilon_1 u_2\varpi_0^{b+e_1}) J(v\epsilon_1 \varpi_0^{e_1})J(\epsilon_1 \varpi_0^{e_1}) (\q^r)^{2min\{e_1,0\}} \cdot\frac{1}{\alpha(e_1,-b-1)}.
\end{align}
Since $\sum_{\epsilon_1}\chi(\epsilon_1)=0$, we know that
\begin{align*}
    &\sum_{e_1=-a-1}^{-a-1} \sum_{\epsilon_1} J^*(-\epsilon_1 u_1 \varpi_0^{a+e_1}) J(-\epsilon_1 u_2\varpi_0^{b+e_1}) J(v\epsilon_1 \varpi_0^{e_1})J(\epsilon_1 \varpi_0^{e_1}) (\q^r)^{2min\{e_1,0\}} \cdot\frac{1}{\alpha(e_1,-b-1)}\\ 
    =&\sum_{\epsilon_1} [\frac{1}{\q}\chi(-\epsilon_1 u_1)\mathfrak{g}-\frac{1}{\q}] \q^{b-a-1}\chi(-\epsilon_1 u_2)\mathfrak{g} \chi(-v) \cdot \q^{-2a-1}\cdot (\q^r)^{-2(a+1)}
    \cdot \frac{1}{4\q^{-(b+1)-3(a+1)-1}}\\ 
    =&\frac{1}{2}\q^{2b+3} \chi(u_1 u_2 v)  (\q^r)^{-2(a+1)}.
\end{align*}
And if $a\geq b+2$ we know that (otherwise the following calculation is not needed)
\begin{align*}
    &\sum_{e_1=-a}^{-b-2} \sum_{\epsilon_1} J^*(-\epsilon_1 u_1 \varpi_0^{a+e_1}) J(-\epsilon_1 u_2\varpi_0^{b+e_1}) J(v\epsilon_1 \varpi_0^{e_1})J(\epsilon_1 \varpi_0^{e_1}) (\q^r)^{2min\{e_1,0\}} \cdot\frac{1}{\alpha(e_1,-b-1)}\\
    =& \sum_{e_1=-a}^{-b-2} \sum_{\epsilon_1} (1-\frac{1}{\q}) \q^{b+e_1}\chi(-\epsilon_1 u_2) \mathfrak{g} \chi(-v) \q^{2e_1+1} (\q^r)^{2e_1} \cdot\frac{1}{\alpha(e_1,-b-1)}\\
    =&0.
\end{align*}
Similarly if $a>b $ we know that (otherwise the following calculation is not needed)
\begin{align*}
    \sum_{e_1=-b-1}^{-b-1} \sum_{\epsilon_1} J^*(-\epsilon_1 u_1 \varpi_0^{a+e_1}) J(-\epsilon_1 u_2\varpi_0^{b+e_1}) J(v\epsilon_1 \varpi_0^{e_1})J(\epsilon_1 \varpi_0^{e_1}) (\q^r)^{2min\{e_1,0\}} \cdot\frac{1}{\alpha(e_1,-b-1)}=0.
\end{align*}
By Lemma \ref{lem:guasssumsquare}, \ref{lem:Gaussintegral} and \eqref{eq: summation of epsilon},
We also know that 
\begin{align*}
    &\sum_{e_1=-b}^{-1} \sum_{\epsilon_1} J^*(-\epsilon_1 u_1 \varpi_0^{a+e_1}) J(-\epsilon_1 u_2\varpi_0^{b+e_1}) J(v\epsilon_1 \varpi_0^{e_1})J(\epsilon_1 \varpi_0^{e_1}) (\q^r)^{2min\{e_1,0\}} \cdot\frac{1}{\alpha(e_1,-b-1)}\\
    =& \sum_{e_1=-b}^{-1} \frac{1}{2}  (1-\frac{1}{\q}) \q^{e_1+3b+4} \chi(-v) (\q^r)^{2e_1},
\end{align*}
and
\begin{align*}
    &\sum_{e_1=0}^{\infty} \sum_{\epsilon_1} J^*(-\epsilon_1 u_1 \varpi_0^{a+e_1}) J(-\epsilon_1 u_2\varpi_0^{b+e_1}) J(v\epsilon_1 \varpi_0^{e_1})J(\epsilon_1 \varpi_0^{e_1}) (\q^r)^{2min\{e_1,0\}} \cdot\frac{1}{\alpha(e_1,-b-1)}\\
    =&\frac{1}{2} \q^{3b+3}.
\end{align*}
Plug the above five terms back into \eqref{eq:I Z -b-1}, the lemma is proved.
\end{proof}

\begin{lemma}\label{lem:I Z e_2 <0}
For $-b\leq e_2\leq -1$, we have 
\begin{align*}
    &I(\Z\times \{e_2\})\\
    =& (\q^r)^{2e_2}[(\q-1)\chi(-u_1 u_2 ) \q^{b+1+e_2} \q^{-2(a+1)r}+ (\q-1)^2\sum_{e_1=-b}^{e_2}  \q^{e_2-e_1} (\q^r)^{2e_1}\\
    +& (\q-1)^2\sum_{e_1=e_2+1}^{-1}  \q^{e_1-e_2-1} (\q^r)^{2e_1}+(\q-1)\chi(-v) \q^{-e_2-1}].
\end{align*}
\end{lemma}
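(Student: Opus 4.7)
The plan is to closely mirror the proof of Lemma~\ref{lem: I Z -b-1}, with the key new input being that for $-b\leq e_2\leq -1$ the $\epsilon_2$-dependent Gauss-type factors evaluate uniformly. Specifically, since $b+e_2\geq 0$ and $a+e_2+1\geq 1$, Lemma~\ref{lem:Gaussintegral}(2) gives $J^*(-\epsilon_2 u_2\varpi_0^{b+e_2})=1-1/\q$ and $J(\epsilon_2 u_1\varpi_0^{a+e_2+1})=1$, while $J(v\epsilon_2\varpi_0^{e_2})J(\epsilon_2\varpi_0^{e_2})=\q^{2e_2}\chi(v)\chi(\epsilon_2)^2\mathfrak{g}^2=\q^{2e_2+1}\chi(-v)$ by Lemma~\ref{lem:guasssumsquare}. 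Summing over $\epsilon_2\in\Oo_{F_0}^\times/\mathrm{Nm}(\Oo_F^\times)$ then extracts the uniform prefactor $2(\q-1)\q^{2e_2}\chi(-v)(\q^r)^{2e_2}$, independent of $\epsilon_1$.

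Next I would evaluate the remaining $(e_1,\epsilon_1)$-sum by partitioning the $e_1$-range into the six pieces $\{-a-1\}$, $[-a,-b-2]$ (nonempty only if $a\geq b+2$), $\{-b-1\}$ (nonempty only if $a\geq b+1$), $[-b,e_2]$, $[e_2+1,-1]$, $[0,\infty)$; terms with $e_1\leq -a-2$ vanish identically since $J^*(-\epsilon_1 u_1\varpi_0^{a+e_1})=0$. In each surviving piece I would insert the explicit formulas of Lemma~\ref{lem:Gaussintegral} and then sum over $\epsilon_1$ using the identities in \eqref{eq: summation of epsilon}. Just as in the proof of Lemma~\ref{lem: I Z -b-1}, the ranges $[-a,-b-2]$ and $\{-b-1\}$ will collapse because the surviving character product over $\epsilon_1$ reduces to a single $\chi(\epsilon_1)$, whose sum is zero. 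The four remaining contributions then match term-for-term with the four summands in the stated formula: $\{-a-1\}$ gives $(\q-1)\chi(-u_1u_2)\q^{b+1+e_2}(\q^r)^{-2(a+1)}$ after the Gauss-sum collapse $\mathfrak{g}^4=\q^2$; $[-b,e_2]$ gives $(\q-1)^2\sum_{e_1=-b}^{e_2}\q^{e_2-e_1}(\q^r)^{2e_1}$ using the branch $\alpha(e_1,e_2)=4\q^{e_2+3e_1-1}$; $[e_2+1,-1]$ gives $(\q-1)^2\sum_{e_1=e_2+1}^{-1}\q^{e_1-e_2-1}(\q^r)^{2e_1}$ using the other branch $\alpha(e_1,e_2)=4\q^{e_1+3e_2}$; and $[0,\infty)$ produces a convergent geometric series summing to $(\q-1)\chi(-v)\q^{-e_2-1}$.

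The hard part is not any single step but rather the bookkeeping: tracking signs, the quadratic characters, and the powers of the Gauss sum $\mathfrak{g}$, which appear in even powers and must be converted uniformly via $\mathfrak{g}^2=\q\chi(-1)$ before cancellations become visible. A second subtle point is the forced split at $e_1=e_2$, which is exactly what produces the two distinct geometric progressions (with ratios $\q^{-1}$ and $\q$ in $e_1$) in the final answer; it arises from the case distinction in the definition of $\alpha(e_1,e_2)$ and cannot be absorbed into a single sum. Beyond these combinatorial difficulties, the argument is a template copy of Lemma~\ref{lem: I Z -b-1} and requires no new ideas.
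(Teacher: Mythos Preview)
Your proposal is correct and follows essentially the same approach as the paper's own proof: extract the uniform $\epsilon_2$-prefactor $2\tfrac{1}{\q}(1-\tfrac{1}{\q})\chi(-v)\q^{2e_2+1}(\q^r)^{2e_2}$, then partition the $e_1$-range into the same six pieces, use \eqref{eq: summation of epsilon} to kill the ranges $[-a,-b-2]$ and $\{-b-1\}$, and match each of the four surviving pieces to the four summands in the statement. The identification of which branch of $\alpha(e_1,e_2)$ applies on $[-b,e_2]$ versus $[e_2+1,-1]$ is exactly what the paper does as well.
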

\begin{proof}
Apply Lemma \ref{lem:guasssumsquare}, \ref{lem:Gaussintegral} and \eqref{eq: summation of epsilon} again, we have
\begin{align*}
     &I(\Z\times \{e_2\})\\
    = & 2\frac{1}{\q} (1-\frac{1}{\q}) \chi(-v) \q^{2e_2+1} (\q^r)^{2e_2} (\sum_{e_1=-a-1}^{-a-1}+\sum_{e_1=-a}^{-b-2}+\sum_{e_1=-b-1}^{-b-1}+\sum_{e_1=-b}^{e_2}+\sum_{e_1=e_2+1}^{-1}+\sum_{e_1=0}^{\infty})\\
    &\sum_{\epsilon_1} J^*(-\epsilon_1 u_1 \varpi_0^{a+e_1}) J(-\epsilon_1 u_2\varpi_0^{b+e_1}) J(v\epsilon_1 \varpi_0^{e_1})J(\epsilon_1 \varpi_0^{e_1})(\q^r)^{2min\{e_1,0\}}\cdot\frac{1}{\alpha(e_1,e_2)}.
\end{align*}
Denote the summand of the last equation by $s(e_1,e_2)$. As in the proof of Lemma 
\ref{lem: I Z -b-1}, we know that
\begin{align*}
    \sum_{e_1=-a-1}^{-a-1} \sum_{\epsilon_1} s(e_1,e_2)=&\frac{1}{2} \chi(u_1 u_2 v) \q^{b+2-e_2} \q^{-2(a+1) r},\\
    \sum_{e_1=-a}^{-b-2} \sum_{\epsilon_1} s(e_1,e_2)=&0,\\
    \sum_{e_1=-b-1}^{-b-1} \sum_{\epsilon_1} s(e_1,e_2)=&0,\\
    \sum_{e_1=-b}^{e_2} \sum_{\epsilon_1} s(e_1,e_2)=&\sum_{e_1=-b}^{e_2}\frac{1}{2}(\q-1) \chi(-v) \q^{-e_1-e_2+1} (\q^r)^{2e_1},\\
    \sum_{e_1=e_2+1}^{-1} \sum_{\epsilon_1} s(e_1,e_2)=&\sum_{e_1=e_2+1}^{-1} \frac{1}{2}(\q-1) \chi(-v) \q^{e_1-3e_2} (\q^r)^{2e_1},\\
    \sum_{e_1=0}^{\infty} \sum_{\epsilon_1}s(e_1,e_2)=&\frac{1}{2}\q^{-3e_2}.
\end{align*}
Combining all the terms above finishes the proof of the lemma.
\end{proof}

\begin{lemma}\label{lem:I Z e_2 geq 0}
\begin{align*}
    I(\Z\times \Z_{\geq 0})=\chi(u_1 u_2 v)\q^{b+1} \q^{-2(a+1) r}+\sum_{e_1=-b}^{-1}(\q-1) \chi(-v) \q^{-e_1} (\q^r)^{2e_1}+\frac{\q}{\q+1}.
\end{align*}
\end{lemma}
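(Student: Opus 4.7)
The plan is to mimic the strategy of Lemmas \ref{lem: I Z -b-1} and \ref{lem:I Z e_2 <0}. When $e_2 \geq 0$, every valuation appearing in the $\epsilon_2$-dependent Gauss integrals is non-negative, so Lemma \ref{lem:Gaussintegral} evaluates them to $1-\q^{-1}$ (for $J^*(-\epsilon_2 u_2 \varpi_0^{b+e_2})$) or $1$ (for the three remaining factors). In particular the $\epsilon_2$-summand no longer carries a Gauss sum $\mathfrak{g}$, and summing over $\epsilon_2$ merely contributes a factor of $2$. Unlike in the previous two lemmas, then, no range of $e_1$ is killed by the $\epsilon_2$-sum, and I must examine all of them.

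Next I split the $e_1$-sum into the familiar ranges $e_1=-a-1$, $-a\leq e_1\leq -b-2$, $e_1=-b-1$, $-b\leq e_1\leq -1$ and $e_1\geq 0$. As in Lemma \ref{lem: I Z -b-1}, the two middle ranges are annihilated by the $\epsilon_1$-sum because an odd number of Gauss sums $\mathfrak{g}$ appear, producing a $\chi(\epsilon_1)$ factor that vanishes by \eqref{eq: summation of epsilon}. The three surviving ranges $e_1=-a-1$, $-b\leq e_1\leq -1$ and $e_1\geq 0$ should produce the three terms of the claim in order.

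The computations in the first two surviving ranges are essentially identical to the corresponding ones in Lemmas \ref{lem: I Z -b-1} and \ref{lem:I Z e_2 <0}: one uses $\mathfrak{g}^2=\q\chi(-1)$ to eliminate the remaining even powers of $\mathfrak{g}$, and then sums the geometric series $\sum_{e_2\geq 0}\q^{-e_2}=\q/(\q-1)$ that arises from $1/\alpha(e_1,e_2)$. This produces $\chi(u_1 u_2 v)\q^{b+1}(\q^r)^{-2(a+1)}$ from $e_1=-a-1$ and $(\q-1)\chi(-v)\sum_{e_1=-b}^{-1}\q^{-e_1}(\q^r)^{2e_1}$ from $-b\leq e_1\leq -1$.

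The main obstacle is the double sum over $e_1,e_2\geq 0$, where $\alpha(e_1,e_2)$ takes two different forms on either side of the diagonal $e_1=e_2$. I split the sum accordingly as
\[\sum_{e_1,e_2\geq 0}\frac{1}{\alpha(e_1,e_2)}=\sum_{0\leq e_1\leq e_2}\frac{\q^{1-e_2-3e_1}}{4}+\sum_{0\leq e_2<e_1}\frac{\q^{-e_1-3e_2}}{4};\]
both double geometric series converge, and after multiplication by the prefactor $4(\q-1)^2/\q^3$ coming from the $1/\q$ in \eqref{eq:I(Lambda)} together with the two $\epsilon$-sums, the identity $\q^4-1=(\q-1)(\q+1)(\q^2+1)$ simplifies the result to the constant $\q/(\q+1)$, as claimed.
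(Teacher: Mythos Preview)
Your proposal is correct and follows essentially the same approach as the paper: both first observe that for $e_2\geq 0$ the $\epsilon_2$-dependent Gauss integrals all reduce to constants, and then split the $e_1$-sum into the same ranges, with the middle two ranges vanishing by the $\epsilon_1$-sum exactly as before. The only cosmetic difference is in the region $e_1,e_2\geq 0$: the paper keeps $e_2$ fixed, splits $e_1$ at $e_1=e_2$, evaluates, and then sums over $e_2$, whereas you compute the double geometric series directly by splitting at the diagonal; these are the same computation in a different order.
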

\begin{proof}
Assume now $e_2\geq 0$.
Apply Lemma \ref{lem:guasssumsquare}, \ref{lem:Gaussintegral} and \eqref{eq: summation of epsilon} again, we have
\begin{align*}
    &I(\Z\times \{e_2\})\\
    =& 2\frac{1}{\q}(1-\frac{1}{\q})  (\sum_{e_1=-a-1}^{-a-1}+\sum_{e_1=-a}^{-b-2}+\sum_{e_1=-b-1}^{-b-1}+\sum_{e_1=-b}^{-1}+\sum_{e_1=0}^{e_2}+\sum_{e_1=e_2+1}^{\infty})\sum_{\epsilon_1}   \\
    &\cdot J^*(-\epsilon_1 u_1 \varpi_0^{a+e_1}) J(-\epsilon_1 u_2\varpi_0^{b+e_1}) 
    J(v\epsilon_1 \varpi_0^{e_1})J(\epsilon_1 \varpi_0^{e_1}) (\q^r)^{2min\{e_1,0\}} \cdot\frac{1}{\alpha(e_1,e_2)}.
\end{align*}
Again denote the summand of the last equation by $s(e_1,e_2)$
The first four terms of the summation have been computed in the previous two lemmas.  We also have
\begin{align*}
    \sum_{e_1=0}^{e_2}  \sum_{\epsilon_1} s(e_1,e_2)=&\frac{1}{2} (\q-1)\q^{-e_2}\frac{1-\q^{-3(e_2+1)}}{1-\q^{-3}},\\
    \sum_{e_1=e_2+1}^{\infty}  \sum_{\epsilon_1} s(e_1,e_2)=&\frac{1}{2}\q^{-4e_2-1}.
\end{align*}
We hence have
\begin{align*}
    I(\Z\times \{e_2\})=&\frac{1}{\q}(1-\frac{1}{\q}) [\chi(u_1 u_2 v) \q^{b+2-e_2} \q^{-2(a+1) r}\\
    +&\sum_{e_1=-b}^{-1}(\q-1) \chi(-v) \q^{-e_1-e_2+1} (\q^r)^{2e_1}
    + (\q-1)\q^{-e_2}\frac{1-\q^{-3(e_2+1)}}{1-\q^{-3}}+\q^{-4e_2-1}].
\end{align*}
Summing over $e_2\in \Z_{\geq 0}$ and a further simplification proves the lemma.
\end{proof}

Combine \eqref{eq: sum of alpha(e) terms}, and Lemma \ref{lem: I Z -b-1}, \ref{lem:I Z e_2 <0} and \ref{lem:I Z e_2 geq 0}, we have
\begin{align}\label{eq:alpha S,T,X initial expression}
    &\alpha(S,T,X)\\ \nonumber
    =&(\q-1) \sum_{e=1}^{2b+1} X^e+1-(\q-1)\q^b\sum_{e=1}^{b} \q^{-e}  X^{e+b+1}\\ \nonumber
    +&\sum_{e_2=1}^{b}(\q-1)^2\sum_{e_1=e_2}^{b}  \q^{e_1-e_2} X^{e_1+e_2}+\sum_{e_2=2}^{b}(\q-1)^2\sum_{e_1=1}^{e_2-1}  \q^{-e_1+e_2-1} X^{e_1+e_2}\\ \nonumber
    -&  \chi(-u_1 u_2)  X^{a+b+2}
    +\sum_{e=1}^{b} (\q-1) \chi(-u_1 u_2 ) \q^{b+1-e} X^{a+1+e}\\ \nonumber
    +&\sum_{e=1}^{b}(\q-\frac{1}{\q}) \chi(-v) \q^{e} X^{e}-\chi(-v) \q^b X^{b+1}
    + \chi(u_1 u_2 v)\q^{b+1} X^{a+1}.
\end{align}
Define
\[A=\sum_{e_2=1}^{b}(\q-1)^2\sum_{e_1=e_2}^{b}  \q^{e_1-e_2} X^{e_1+e_2}, \ B=\sum_{e_2=2}^{b}(\q-1)^2\sum_{e_1=1}^{e_2-1}  \q^{-e_1+e_2-1} X^{e_1+e_2}.\]
\begin{lemma}\label{lem:A+B}
For $2\leq k \leq b+1$, the $X^k$-coefficient of $A+B$ is $(\q-1)(\q^{k-1}-1)$.
For $b+2\leq k \leq 2b$, the $X^k$-coefficient of $A+B$ is $(\q-1)(\q^{2b-k+1}-1)$.
\end{lemma}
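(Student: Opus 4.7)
The plan is a direct extraction of the coefficient of $X^k$ in $A+B$ by enumerating pairs $(e_1,e_2)$ with $e_1+e_2=k$. In $A$ the constraints are $1\leq e_2\leq b$ and $e_2\leq e_1\leq b$, contributing $(\q-1)^2 \q^{k-2e_2}$; in $B$ they are $2\leq e_2\leq b$ and $1\leq e_1\leq e_2-1$, contributing $(\q-1)^2 \q^{2e_2-k-1}$. Setting $e_1=k-e_2$, the admissible range for $e_2$ in $A$ becomes $\max(1,k-b)\leq e_2\leq \lfloor k/2\rfloor$, and in $B$ it is $\lceil (k+1)/2\rceil \leq e_2\leq \min(b,k-1)$.

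First I would treat the case $2\leq k\leq b+1$, where $\max(1,k-b)=1$ and $\min(b,k-1)=k-1$. The two ranges are complementary halves of $\{1,\dots,k-1\}$. Substituting $j=\lfloor k/2\rfloor - e_2$ in the $A$-sum and $j=e_2-\lceil (k+1)/2\rceil$ in the $B$-sum, each becomes a geometric progression in $\q$ with common ratio $\q^2$, and the two progressions interleave to produce a single geometric series $\sum_{i=0}^{k-2}\q^i$. Multiplying by $(\q-1)^2$ and simplifying gives $(\q-1)(\q^{k-1}-1)$. (A short parity check $k=2m$ versus $k=2m+1$ handles the endpoints cleanly.)

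Next I would handle $b+2\leq k\leq 2b$, where now $\max(1,k-b)=k-b$ and $\min(b,k-1)=b$. The same change of variables $j=\lfloor k/2\rfloor-e_2$ in $A$ and $j=e_2-\lceil (k+1)/2\rceil$ in $B$ yields geometric sums that interleave into $\sum_{i=0}^{2(b-\lceil k/2\rceil)}\q^i$ (even $k$) or $\sum_{i=0}^{2(b-\lceil k/2\rceil)-1}\q^i$ (odd $k$); in either case the closed form is $\dfrac{\q^{2b-k+1}-1}{\q-1}$. Multiplying by $(\q-1)^2$ gives $(\q-1)(\q^{2b-k+1}-1)$ as claimed.

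The computation is essentially bookkeeping; the only mild subtlety is making sure that the $A$- and $B$-ranges fit together without gap or overlap at the midpoint $e_2=k/2$, which is why I separate the parities. No genuine obstacle is expected, and the entire argument should fit in a few displayed lines once the case split is written out.
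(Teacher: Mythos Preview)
Your proposal is correct and follows essentially the same approach as the paper: both enumerate the pairs $(e_1,e_2)$ with $e_1+e_2=k$ lying in the index sets for $A$ and $B$, then sum the resulting powers of $\q$. The paper writes out the index sets $\Lambda_A(k)$ and $\Lambda_B(k)$ explicitly and then says ``the lemma follows easily,'' whereas you spell out the interleaving of the two geometric progressions into $\sum_{i=0}^{k-2}\q^i$ (respectively $\sum_{i=0}^{2b-k}\q^i$); this is exactly the computation the paper leaves implicit.
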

\begin{proof}
Define
\[\Lambda_A=\{(e_1,e_2)\in \Z^2 \mid e_2\leq e_1 \leq b, 1\leq e_2 \leq b\},\ 
\Lambda_B=\{(e_1,e_2)\in \Z^2 \mid 1\leq e_1 \leq e_2-1, 2\leq e_2 \leq b\},\]
and 
\[\Lambda_A(k)=\{(e_1,e_2)\in \Z^2 \mid e_1+e_2=k\}\cap \Lambda_A,\ \Lambda_B(k)=\{(e_1,e_2)\in \Z^2 \mid e_1+e_2=k\}\cap \Lambda_B. \]
We then have
\begin{align*}
    \Lambda_A(k)=\{(k-1,1),(k-2,2),\ldots, (\lfloor\frac{k+1}{2}\rfloor,\lfloor\frac{k}{2}\rfloor)\},&  \text{ for } 2\leq k \leq b+1,\\
    \Lambda_A(k)=\{(b,k-b),(b-1,k-b+1),\ldots, (\lfloor\frac{k+1}{2}\rfloor,\lfloor\frac{k}{2}\rfloor)\}, & \text{ for } 
    b+2\leq k \leq 2b,\\
    \Lambda_B(k)=\{(1,k-1),(2,k-2),\ldots,(\lfloor\frac{k-1}{2}\rfloor,\lfloor\frac{k}{2}\rfloor+1)\}, & \text{ for } 3\leq k \leq b+1,\\
    \Lambda_B(k)=\{(k-b,b),(k-b+1,b-1),\ldots,(\lfloor\frac{k-1}{2}\rfloor,\lfloor\frac{k}{2}\rfloor+1)\}, & \text{ for } b+2 \leq k\leq 2b-1.
\end{align*} 
Since 
\[A=\sum_{\Lambda_A} (\q-1)^2 \q^{e_1-e_2} X^{e_1+e_2}, \quad B=\sum_{\Lambda_B} (\q-1)^2 \q^{e_1-e_2} X^{e_1+e_2}, \]
the lemma follows easily.
\end{proof}

Apply the above Lemma and simplify \eqref{eq:alpha S,T,X initial expression}, we get
\begin{align}\label{eq:TDSUformula}
    &\alpha(S,T,X)\\ \nonumber
    =&1+(\q-1)\sum_{e=1}^{b+1} \q^{e-1}X^e
    -\chi(-u_1 u_2)  X^{a+b+2}
    +\sum_{e=1}^{b} (\q-1)\chi(-u_1 u_2 ) \q^{b+1-e} X^{a+1+e}\\ \nonumber
    +&\sum_{e=1}^{b}(\q-\frac{1}{\q}) \chi(-v) \q^{e} X^{e}-\chi(-v) \q^b X^{b+1}+ \chi(u_1 u_2 v)\q^{b+1} X^{a+1}. \nonumber
\end{align}
Using the identity
\begin{equation}\label{eq:difference identity}
   (1-X)\sum_{e=0}^b Y^e X^e=(\q-1)\sum_{e=1}^b Y^{e-1} X^e+1-Y^b X^{b+1} 
\end{equation}
with $Y=\q$ and $\frac{1}{\q}$, it is easy to see that the formula \eqref{eq:TDSUformula} is the same as the first formula in Theorem \ref{thm:localdensity calculation}. This finishes the proof. \qedsymbol

\subsection{Proof of \eqref{eq:local density third formula}}
According to Theorem \ref{thm:Hironakaformula},  Lemma \ref{lem:alpha Gamma}, \ref{lem:G_Gamma T diagonal} and \ref{lem:G(Y,S)}, we have
\begin{align*}
    &\alpha(S_r,T)\\
    =&\sum_{\epsilon_1, \epsilon_2}\sum_{e_1\in \Z} \sum_{e_2\in \Z} \frac{1}{\q}(\q^{min\{a+2 e_1+1,0\}}-\q^{min\{a+2e_1+2,0\}-1})(\q^{min\{a+2 e_2+2,0\}}-\q^{min\{a+2e_2+3,0\}-1})\\
    \cdot & J(v \epsilon_1 \varpi_0^{e_1})J(v\epsilon_2 \varpi_0^{e_2}) J( \epsilon_1 \varpi_0^{e_1})J(\epsilon_2 \varpi_0^{e_2}) (\q^r)^{2min\{e_1,0\}+2min\{e_2,0\}}\cdot \alpha(e_1,e_2)\\
    +& \sum_{e\in \Z} \frac{1}{\q} (\q^{min\{a+e+1,0\}}-2\q^{min\{a+e+2,0\}-1}+\q^{min\{a+e+3,0\}-2}) \\
    \cdot & \q^{2min\{e+1,0\}}(\q^r)^{2 min\{e,0\}}\cdot \frac{1}{\q^{2e-1}(\q-1)}.
\end{align*}
The second summation is easy to compute, it is 
\begin{align*}
    & (\sum_{e=-a-2}^{-a-2}+\sum_{e=-a-1}^{-1}+\sum_{e=0}^\infty) \frac{1}{\q} (\q^{min\{a+e+1,0\}}-2\q^{min\{a+e+2,0\}-1}+\q^{min\{a+e+3,0\}-2}) \\
    \cdot & \q^{2min\{e+1,0\}}(\q^r)^{2 min\{e,0\}}\cdot \frac{1}{\q^{2e-1}(\q-1)}\\
    =&-(\q^r)^{-2a-4}+(\q-1)\sum_{e=-a-1}^{-1} (\q^r)^{2e}+\frac{1}{\q+1}.
\end{align*}

Again for a subset $\Lambda \subseteq \Z^2$ define 
\begin{align*}
    I(\Lambda)
    :=&\sum_{\epsilon_1, \epsilon_2}\sum_{(e_1,e_2)\in \Lambda} \frac{1}{\q}(\q^{min\{a+2 e_1+1,0\}}-\q^{min\{a+2e_1+2,0\}-1})(\q^{min\{a+2 e_2+2,0\}}-\q^{min\{a+2e_2+3,0\}-1})\\
    \cdot & J(v \epsilon_1 \varpi_0^{e_1})J(v\epsilon_2 \varpi_0^{e_2}) J( \epsilon_1 \varpi_0^{e_1})J(\epsilon_2 \varpi_0^{e_2}) (\q^r)^{2min\{e_1,0\}+2min\{e_2,0\}}\cdot \alpha(e_1,e_2).
\end{align*}
The summand is nonzero if and only if $a+2e_2+2\geq 0$ and $a+2e_1+1\geq 0$, or equivalently $e_2\geq -\left \lfloor{\frac{a}{2}}\right \rfloor -1=-\frac{a+1}{2}$ and $e_1\geq -\left \lfloor{\frac{a+1}{2}}\right \rfloor=-\frac{a+1}{2}$ since $a$ is odd.

Assume that $a> 0$, otherwise the following is not needed and can be skipped. 
\begin{align}\label{eq:TASU I a/2-1}
    & I( \{ -\frac{a+1}{2} \}\times \Z)\\ \nonumber
    =&\sum_{e_2\in \Z} \sum_{\epsilon_1, \epsilon_2} \frac{1}{\q}(1-\frac{1}{\q})
    \q^{-\left \lfloor{\frac{a}{2}}\right \rfloor-1}\cdot
    \chi(v\epsilon_1) \mathfrak{g}\cdot
    \q^{-\left \lfloor{\frac{a}{2}}\right \rfloor-1}\cdot
     \chi(\epsilon_1) \mathfrak{g} \cdot(\q^r)^{-2\left \lfloor{\frac{a}{2}}\right \rfloor-2}\\ \nonumber
    \cdot &(\q^{min\{a+2 e_2+2,0\}}-\q^{min\{a+2e_2+3,0\}-1})
     J(v \epsilon_2 \pi_0^{e_2}) J( \epsilon_2 \pi_0^{e_2}) (\q^r)^{2min\{e_2,0\}}\cdot \alpha(-\frac{a+1}{2},e_2)\\ \nonumber
     =& 2 (1-\frac{1}{\q})
    \q^{-a-1} \chi(-v)
     (\q^r)^{-a-1} (\sum_{e_2=-\frac{a+1}{2}}^{-1}+\sum_{e_2=0}^\infty)\\ \nonumber
     & \cdot\sum_{\epsilon_2}(\q^{min\{a+2 e_2+2,0\}}-\q^{min\{a+2e_2+3,0\}-1})
     J(v \epsilon_2 \varpi_0^{e_2}) J( \epsilon_2 \varpi_0^{e_2}) (\q^r)^{2min\{e_2,0\}}\cdot \alpha(-\frac{a+1}{2},e_2)\\ \nonumber
    =&\sum_{e_2=-\frac{a+1}{2}}^{-1} (\q-1)^2 \q^{e_2+\left \lfloor{\frac{a}{2}}\right \rfloor+1} 
     (\q^r)^{2e_2-a-1}+(\q-1)\q^{\left \lfloor{\frac{a}{2}}\right \rfloor+1} \chi(-v) (\q^r)^{-a-1}.
\end{align}

Assume that $a> 0$, otherwise the following calculation is not needed and can be skipped. For $-\left \lfloor{\frac{a}{2}}\right \rfloor \leq e_1\leq -1$, we have
\begin{align}\label{eq:TASU I e_1 Z}
    & I(\{e_1\}\times \Z)\\ \nonumber
    =& 2 \frac{1}{\q}(1-\frac{1}{\q})\q^{2e_1+1} \chi(-v) (\q^r)^{2e_1}  (\sum_{e_2=-\left \lfloor{\frac{a}{2}}\right \rfloor-1}^{e_1-1}+\sum_{e_2=e_1}^{-1}+\sum_{e_2=0}^\infty) \sum_{ \epsilon_2}\\ \nonumber
    \cdot &(\q^{min\{a+2 e_2+2,0\}}-\q^{min\{a+2e_2+3,0\}-1})
    \cdot J(v\epsilon_2 \varpi_0^{e_2}) J(\epsilon_2 \varpi_0^{e_2}) (\q^r)^{2min\{e_2,0\}}\cdot \alpha(e_1,e_2)\\ \nonumber
    =&\sum_{e_2=-\left \lfloor{\frac{a}{2}}\right \rfloor-1}^{e_1-1} (\q-1)^2\q^{e_1-e_2-1} (\q^r)^{2e_1+2e_2}+ \sum_{e_2=e_1}^{-1} (\q-1)^2\q^{e_2-e_1} (\q^r)^{2e_1+2e_2}+(\q-1)\q^{-e_1}  \chi(-v) (\q^r)^{2e_1}.
\end{align}

Now consider $e_1\geq 0$. Then 
\begin{align*}
    &I(\{e_1\}\times \Z)\\ 
    =& 2\frac{1}{\q}(1-\frac{1}{\q}) (\sum_{e_2=-\left \lfloor{\frac{a}{2}}\right \rfloor-1}^{-1}+\sum_{e_2=0}^{e_1-1}+\sum_{e_2=e_1}^\infty)\sum_{ \epsilon_2} \\ 
    \cdot & (\q^{min\{a+2 e_2+2,0\}}-\q^{min\{a+2e_2+3,0\}-1}) J(v\epsilon_2 \varpi_0^{e_2}) J(\epsilon_2 \varpi_0^{e_2}) (\q^r)^{2min\{e_2,0\}}\cdot \alpha(e_1,e_2)\\ 
    =& \frac{1}{\q} (1-\frac{1}{\q})[\sum_{e_2=-\left \lfloor{\frac{a}{2}}\right \rfloor-1}^{-1}  (\q-1) \q^{-e_1-e_2} \chi(-v) (\q^r)^{2e_2}+(1-\frac{1}{\q}) \q^{-e_1}\frac{1-\q^{-3e_1}}{1-\q^{-3}}+ \q^{-4e_1+1}].
\end{align*}
Hence we get 
\begin{equation}\label{eq:TASU Z geq0 Z}
     I(\Z_{\geq 0}\times \Z)=\sum_{e_2=-\left \lfloor{\frac{a}{2}}\right \rfloor-1}^{-1} (\q-1) \q^{-e_2-1} \chi(-v) (\q^r)^{2e_2}+\frac{\q}{\q+1}.
\end{equation}

As in Lemma \ref{lem:A+B}, for $a>0$, we can show that
\begin{align}\label{eq:TASU simplify step}
    &\sum_{e_1=1}^{\left \lfloor{\frac{a}{2}}\right \rfloor}\sum_{e_2=e_1+1}^{\left \lfloor{\frac{a}{2}}\right \rfloor+1}(\q-1) \q^{e_2-e_1-1} X^{e_1+e_2}+\sum_{e_1=1}^{\left \lfloor{\frac{a}{2}}\right \rfloor}\sum_{e_2=1}^{e_1}(\q-1) \q^{e_1-e_2}X^{e_1+e_2}\\ \nonumber
    =&\sum_{e=2}^{\left \lfloor{\frac{a}{2}}\right \rfloor+1} (\q^{e-1}-1) X^e+\sum_{e=\left \lfloor{\frac{a}{2}}\right \rfloor+2}^{a} (\q^{a-e+1}-1) X^e.
\end{align}

Combining \eqref{eq:TASU I a/2-1}, \eqref{eq:TASU I e_1 Z}, \eqref{eq:TASU Z geq0 Z} and \eqref{eq:TASU simplify step}, we have that
for all $a\geq -1$
\[
     \alpha(S,T,X)
    =-X^{a+2}+\sum_{e=\frac{a+1}{2}+1}^{a+1} (\q-1) \q^{a+2-e} X^{e}+(\q-1)\sum_{e=1}^{\frac{a+1}{2}} \q^{e-1} X^e+1+\sum_{e=1}^{ \frac{a+1}{2}}(\q^2-1)\chi(-v)\q^{e-1}X^{e}.
\]
Using the identity \eqref{eq:difference identity} with $Y=\q$ and $\frac{1}{\q}$, we see that the above is the same with the third formula in Theorem \ref{thm:localdensity calculation}. \qedsymbol

\subsection{Proof of \eqref{eq:local density second formula} and \eqref{eq:local density fourth formula}}
For $T\in \Herm_n(F_0), S\in \Herm_m(F_0)$,
we define primitive local densities as in \cite{kitaoka1983note}:
\begin{align}
    \beta(S, T)&=\q^{\ell n (n-2m)}\cdot\\ \nonumber
    & |\{ X \in M_{m,n}(\Oo_{F}/\pi_0^\ell):\mathrm{rank}_{\Oo_F/\pi\Oo_F}(X)=n,  S[X]-T \in  \pi_0^\ell \cdot \Herm^\vee_n(\Oo_{F_0})\}|
\end{align}
for sufficiently large $\ell$. We define the primitive local density polynomial
$\beta(S,T,X)\in \Q[X]$ by 
\begin{equation}
    \beta(S,T,q^{-2r})=\beta(S_r,T).
\end{equation}
The following is the analogue of \cite[Theorem 1]{kitaoka1983note} in the Hermitian form case.
\begin{proposition}\label{prop:induction on fundamental invariants}
Define $U_n=\mathrm{GL}_n(\Oo_F)$ and $ \pi_{n,i}$ to be the double coset $U_n (\pi I_i\oplus I_{n-i}) U_n$ in $\GL_n(F)$. Then we have
\begin{align*}
			\alpha(S, T, X)
			= \sum_{i=1}^{n} (-1)^{i-1} \q^{i(i-1)/2+i(n-m)}X^{i}
		 \cdot \sum_{g\in U_n\backslash \pi_{n,i}} \alpha(S,T[g^{-1}],X) + \beta(S,T,X).
		\end{align*}
\end{proposition}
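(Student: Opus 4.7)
\emph{Proof plan.} This is the hermitian analogue of Kitaoka's inclusion-exclusion formula \cite{kitaoka1983note}, whose structure has been adapted to hermitian local densities in \cite{Hi}. By Lemma \ref{lem:definitionoflocaldensity}, both $\alpha(S_r, T)$ and $\beta(S_r, T)$ can be expressed as normalized finite counts of $X \in M_{m+2r, n}(\Oo_F/\pi_0^\ell)$ satisfying $S_r[X] \equiv T \bmod \pi_0^\ell \Herm^\vee_n(\Oo_{F_0})$ for $\ell$ sufficiently large, with the primitive version additionally requiring the reduction $\bar X : \F_\q^n \to \F_\q^{m+2r}$ to have full column rank. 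It suffices to prove the identity at the level of these finite counts, stratifying according to the kernel $\Lambda(X) := \ker(\bar X) \subseteq \F_\q^n$ and applying Möbius inversion on the subspace lattice.

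Primitivity is equivalent to $\Lambda(X) = 0$. Using the Möbius function of the subspace lattice of $\F_\q^n$, namely $\mu(\{0\}, W) = (-1)^{\dim W} \q^{\binom{\dim W}{2}}$, one obtains
\[
\beta(S_r, T) = \sum_{i=0}^n (-1)^i \q^{i(i-1)/2} \sum_{\substack{W \subseteq \F_\q^n \\ \dim W = i}} f(W),
\]
where $f(W)$ is the corresponding (normalized) count with constraint $W \subseteq \Lambda(X)$. Note the combinatorial coefficient $\q^{i(i-1)/2}$ from Möbius already matches that in the statement. For each $W$ of dimension $i$, lift to a rank-$i$ direct summand $\tilde W \subseteq \Oo_F^n$; in a basis of $\Oo_F^n$ adapted to $\tilde W$, set $g_W = \mathrm{diag}(\pi, \ldots, \pi, 1, \ldots, 1) \in \pi_{n,i}$ with $i$ copies of $\pi$. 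The assignment $W \mapsto U_n g_W$ defines a bijection between $i$-dimensional subspaces of $\F_\q^n$ and $U_n \backslash \pi_{n,i}$. Moreover, $W \subseteq \Lambda(X)$ is equivalent to the existence of a unique $X' \in M_{m+2r,n}(\Oo_F)$ with $X = X' g_W$, and under this substitution the equation $S_r[X] = T$ becomes $S_r[X'] = T[g_W^{-1}]$.

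The remaining and most delicate step is to compare $f(W)$ with $\alpha(S_r, T[g_W^{-1}])$. The substitution produces an explicit volume factor with two contributions: (a) the enlargement of the effective modular range for $X'$ relative to $X$ on the $i$ columns that were constrained to be divisible by $\pi$; and (b) the change of ambient lattice for the hermitian constraint, since $S_r[X']$ is naturally constrained modulo $\pi_0^\ell \cdot g_W^{-*} \Herm^\vee_n g_W^{-1}$ rather than $\pi_0^\ell \Herm^\vee_n$, with the correction computable as the index $[g_W^{-*}\Herm^\vee_n g_W^{-1} : \Herm^\vee_n]$ using the definition of $\Herm^\vee_n$ via the different $\partial_F = (\pi)$. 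Combining these two volume factors with the Möbius coefficient $\q^{i(i-1)/2}$ and substituting $X = \q^{-2r}$ yields precisely $\q^{i(i-1)/2 + i(n-m)} X^i$; rearranging the Möbius identity to isolate the $i \geq 1$ terms then gives the stated formula. The main technical task is this careful bookkeeping of the two volume contributions and verifying that their product combines with the Möbius coefficient to produce exactly the coefficients asserted in the proposition.
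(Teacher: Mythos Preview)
Your proposal is correct and follows precisely the Kitaoka-style inclusion--exclusion argument that the paper invokes by reference to \cite{kitaoka1983note} (the paper itself gives no proof of this proposition, only the citation). The two volume contributions you isolate combine exactly as claimed: the change of variables $X=X'g_W$ contributes $|\det g_W|_F^{m+2r}=\q^{-i(m+2r)}$ to $dX$, while the accompanying change $Y'=g_W Y g_W^*$ on the $\Herm_n$ side contributes $\q^{in}$ to $dY$, so $f(W)=\q^{i(n-m-2r)}\alpha(S_r,T[g_W^{-1}])=\q^{i(n-m)}X^i\,\alpha(S,T[g_W^{-1}],X)$; plugging this into the M\"obius identity $\beta=\sum_i(-1)^i\q^{\binom{i}{2}}\sum_{\dim W=i}f(W)$ and rearranging yields the statement.
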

We now assume again that $m=n=2$.
\begin{lemma} Assume that $T$ is integral or $T=\cH$.
\begin{align*}
	\beta(\cH,T,X)=\begin{cases}
				(1-q^{-2}X),& \text{ if $T$ is equivalent to } \cH,\\
				(1-X)(1-\q^{-2}X), & \text{ otherwise}.
	\end{cases}
\end{align*}
\end{lemma}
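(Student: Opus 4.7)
The plan is to compute the primitive representation density $\beta(\cH^{r+1}, T)$ for each $r \geq 0$ and recognize the answer as the polynomial $\beta(\cH, T, X)$ evaluated at $X = \q^{-2r}$, using the defining relation $\beta(\cH,T,X)|_{X = \q^{-2r}} = \beta(\cH \oplus \cH^r, T) = \beta(\cH^{r+1}, T)$. By the primitive analogue of Lemma 5.1, for $\ell$ sufficiently large, $\beta(\cH^{r+1}, T)$ equals, up to normalization, the count of matrices $X \in M_{2r+2, 2}(\Oo_F / \pi_0^\ell)$ whose reduction mod $\pi$ has rank two and which satisfy $\cH^{r+1}[X] \equiv T \pmod{\pi_0^\ell \Herm_2^\vee(\Oo_{F_0})}$. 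Such an $X$ corresponds to a primitive hermitian embedding $\iota \colon (\Oo_F^2, T) \hookrightarrow \cH^{r+1}$. Since $\cH^{r+1}$ is $\pi^{-1}$-modular, any primitive embedding splits off as an orthogonal direct summand; hence a nonzero $\beta$ forces $T \in \Herm_2^\vee(\Oo_{F_0})$, and when it is nonzero the orthogonal complement of $\iota(\Oo_F^2)$ is a unimodular hermitian lattice of rank $2r$ determined up to isomorphism by $T$ via Witt cancellation over $\Oo_F$.

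The main computational step is a case analysis according as $T \sim \cH$ or not. When $T \sim \cH$, the complement is forced to be $\cH^r$, and the count of primitive embeddings reduces to the orbit-stabilizer ratio $|U(\cH^{r+1})(\Oo_F / \pi_0^\ell)| / (|U(\cH)(\Oo_F/\pi_0^\ell)| \cdot |U(\cH^r)(\Oo_F/\pi_0^\ell)|)$, normalized by the appropriate power of $\q$. Plugging in standard formulas for unitary group orders at a ramified prime and simplifying gives $1 - \q^{-2-2r}$, matching $(1 - \q^{-2}X)|_{X = \q^{-2r}}$. When $T \not\sim \cH$, the complement is in a different isomorphism class of rank-$2r$ unimodular hermitian lattice; the analogous double-coset count produces an extra factor $1 - \q^{-2r}$ beyond the previous case, which is exactly $(1-X)|_{X = \q^{-2r}}$. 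Geometrically, this factor records the proportion of primitive reductions $X \bmod \pi \in M_{2r+2,2}(\F_\q)$ of rank two whose column span avoids a codimension-one locus (the locus forcing $L_T$ to be isomorphic to the hyperbolic plane rather than its twist).

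The main obstacle I anticipate is careful bookkeeping of the dual-lattice condition in the ramified setting: the off-diagonal entry $T_{12}$ lies in $\pi^{-1}\Oo_F$ while the diagonal entries lie in $\Oo_{F_0}$, so the congruence $T_{12} \equiv (\cH^{r+1}[X])_{12} \pmod{\pi^{2\ell - 1}}$ is weaker by one $\pi$-power than the diagonal congruences mod $\pi_0^\ell$. This asymmetry must be tracked at each step of the unitary group volume computation. If the direct orbit-counting approach becomes unwieldy, an alternative is to invoke the ``induction on fundamental invariants'' Proposition to write $\beta(\cH, T, X) = \alpha(\cH, T, X) - X \sum_{g \in U_2 \backslash \pi_{2,1}} \alpha(\cH, T[g^{-1}], X) + \q X^2 \alpha(\cH, -\pi_0^{-1}T, X)$ and use the already-established explicit formulas for $\alpha(\cH, T, X)$ (namely \eqref{eq:local density second formula} and \eqref{eq:local density fourth formula}) together with a finite case analysis over the $\q + 1$ double cosets in $U_2 \backslash \pi_{2,1}$ to extract $\beta$.
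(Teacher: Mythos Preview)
Your main approach (directly counting primitive embeddings of $(\Oo_F^2,T)$ into $\cH^{r+1}$ via unitary group orders) is a valid route and is in fact the substance of \cite[Lemma~2.16]{LL2}, which is what the paper simply cites without further argument. So you are effectively reconstructing the cited result rather than taking a different path. One technical caution: the assertion that ``any primitive embedding splits off as an orthogonal direct summand'' is not automatic and needs the $\pi^{-1}$-modularity of $\cH^{r+1}$ together with the constraint $T\in\Herm_2^\vee(\Oo_{F_0})$; you should spell this out carefully, since the ramified dual-lattice asymmetry you flag is exactly where this step could go wrong.

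Your proposed \emph{alternative}, however, is circular. You suggest recovering $\beta(\cH,T,X)$ from the induction formula (Proposition~\ref{prop:induction on fundamental invariants}) together with the explicit formulas \eqref{eq:local density second formula} and \eqref{eq:local density fourth formula} for $\alpha(\cH,T,X)$. But in the paper's logical order those two formulas are \emph{proved} precisely by the induction formula with the present lemma supplying $\beta(\cH,T,X)$ as the base input. So invoking them here would assume what you are trying to prove. If you want a fallback that avoids the unitary group volume bookkeeping, you would need an independent computation of $\alpha(\cH,T,X)$ (e.g.\ via Hironaka's method as done for $S$ diagonal), not the formulas in Theorem~\ref{thm:localdensity calculation}.
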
\label{lem:primitive local density}
\begin{proof}
This is a special case of \cite[Lemma 2.16]{LL2}. We simply evaluate the summand of their formula at $s+1$ and $L=L'$, and make the substitution $X=\q^{-2s}$.
\end{proof}

We define the fundamental invariant of $T$ following \cite[Definition 2.12]{LL2}. If $T$ is equivalent to $\mathrm{Diag}\{u_1 (-\pi_0)^a,u_2 (-\pi_0)^b\}$ with $u_1,u_2\in \Oo_{F_0}^\times$ and $a\geq b$, we define its fundamental invariant to be $i(T)=(2a+1,2b+1)$. If $T$ is equivalent to $\begin{pmatrix}
0 & \pi^{2a-1} \\
(-\pi)^{2a-1} & 0
\end{pmatrix}$, we define its fundamental invariant to be $i(T)=(2a,2a)$. The set of fundamental invariants is partially ordered by the partial order inherited from $\Z^2$.

Our proof is based on induction on $i(T)$. 
Notice that $i(T[g^{-1}])<i(T)$ for any $g\in U_2\backslash \pi_{2,i}$ ($i=1,2$). If $i(T)=(s,t)$ with $s\geq t$ and $t<0$, then $\alpha(S,T,X)=0$. By Proposition \ref{prop:induction on fundamental invariants}, it suffices to prove that
\begin{equation}\label{eq:induction on fi formula}
``\alpha(\cH,T,X)"-\sum_{i=1}^{2} (-1)^{i-1} \q^{i(i-1)/2}X^{i}
		 \cdot \sum_{U_n\backslash \pi_{n,i}} ``\alpha(\cH,T[g^{-1}],X)"= \beta(\cH,T,X)
\end{equation}
for any $T$ such that $i(T)\geq (0,0)$,
where $``\alpha(\cH,T,X)"$ is the claimed formula of $\alpha(\cH,T,X)$ in Theorem \ref{thm:localdensity calculation}.

\noindent 
Case 1. We first prove the case $\chi(T)=-1$. In this case we can assume  $T=\mathrm{Diag}\{u_1 (-\pi_0)^a,u_2 (-\pi_0)^b\}$ with $u_1,u_2\in \Oo_{F_0}^\times$ and $a\geq b$. Define
\[
    \mu(a,b,X)=\frac{``\alpha(\cH,T,X)"}{1-q^{-2}X}=\begin{cases}
    \sum_{e=0}^b\q^e X^e-\sum_{e=a+1}^{a+b+1} \q^{a+b+1-e} X^e,& \text{ if } b\geq 0 \\
    0, & \text{ if } b< 0, 
    \end{cases}
\]
where $T\in \Herm_2(F_0)$ such that $\chi(T)=-1$ and $i(T)=(2a+1,2b+1)$.
The set $U_2\backslash \pi_{2,1}$ is represented by $\q+1$ elements
\begin{equation}
    \gamma_\infty=\begin{pmatrix}
    \pi & 0\\
    0 & 1
    \end{pmatrix},
    \gamma_x=\begin{pmatrix}
    1 & x\\
    0 & \pi
    \end{pmatrix}, \quad x\in \Oo_F/\pi \Oo_F.
\end{equation}
The set $U_2\backslash \pi_{2,2}$ is the singleton represented by $\mathrm{Diag}\{\pi,\pi\}$. According to the preceding argument, it suffices to prove that in each the following sub cases, \eqref{eq:induction on fi formula} holds.
\begin{enumerate}[label=(\alph*)]
    \item $i(T)=(2a+1,2b+1)$ with $a>b>0$.
    \item $i(T)=(2a+1,2b+1)$ with $a>b=0$.
    \item $i(T)=(2a+1,2a+1)$ with $a>0$.
    \item $i(T)=(2a+1,2a+1)$ with $a=0$.
\end{enumerate}
If $i(T)=(2a+1,2b+1)$ with $a>b$, then 
\[i(T[\gamma_\infty^{-1}])=(2a-1,2b+1), \quad i(T[\gamma_x^{-1}])=(2a+1,2b-1).\]
In this case we need to show that 
\[
     \mu(a,b,X)-\q X\mu(a,b-1,X)-X\mu(a-1,b,X)+\q X^2\mu(a-1,b-1,X)=1-X.
\]
If $i(T)=(2a+1,2a+1)$, then 
\[i(T[\gamma_\infty^{-1}])=(2a+1,2a-1), \quad i(T[\gamma_x^{-1}])=(2a+1,2a-1).\]
In this case we need to show that
\[
     \mu(a,a,X)-(\q+1) X\mu(a,a-1,X)+\q X^2\mu(a-1,a-1,X)=1-X.
\]
In sub case (a), we have
\begin{align*}
    &\mu(a,b,X)-\q X\mu(a,b-1,X)-X\mu(a-1,b,X)+\q X^2\mu(a-1,b-1,X)\\
    =&\sum_{e=0}^b \q^e X^e-\sum_{e=a+1}^{a+b+1} \q^{a+b+1-e} X^e 
    -\sum_{e=1}^b \q^e X^e+\sum_{e=a+2}^{a+b+1} \q^{a+b+2-e} X^e \\
    -&\sum_{e=0}^b \q^e X^{e+1}+\sum_{e=a+1}^{a+b+1} \q^{a+b+1-e} X^e
    +\sum_{e=1}^b \q^e X^{e+1}-\sum_{e=a+2}^{a+b+1} \q^{a+b+2-e} X^e\\
    =&1-X.
\end{align*}
In sub case (b), we have
\begin{align*}
    &\mu(a,b,X)-\q X\mu(a,b-1,X)-X\mu(a-1,b,X)+\q X^2\mu(a-1,b-1,X)\\
    =&\mu(a,0,X)-X\mu(a-1,0)\\
    =&1-X.
\end{align*}
Sub case (c) and (d) are similar and left to the reader.

\noindent 
Case 2. We first prove the case $\chi(T)=1$. In this case $T$ can be either diagonalized or anti-digonalized. According to the fundamental invariant, we define
\[
    \nu(s,t,X)=\begin{cases} \sum_{e=0}^b \q^e X^e+\sum_{e=a+1}^{a+b+1} \q^{a+b+1-e} X^e &\text{ if } s=2a+1,t=2b+1,b\geq 0, \\
     \sum_{e=0}^a \q^e X^e+\sum_{e=a+1}^{2a} \q^{2a-e} X^e &\text{ if } s=t=2a, a\geq 0,\\
    0 & \text{ if } t< 0.
    \end{cases}
\]
Again it suffices to prove that in each the following sub cases, \eqref{eq:induction on fi formula} holds.
\begin{enumerate}[label=(\alph*)]
    \item $i(T)=(2a+1,2b+1)$ with $a>b>0$.
    \item $i(T)=(2a+1,2b+1)$ with $a>b=0$.
    \item $i(T)=(2a+1,2a+1)$ with $a>0$.
    \item $i(T)=(1,1)$.
    \item $i(T)=(2a,2a)$ with $a>0$.
    \item $i(T)=(0,0)$.
\end{enumerate}
If $i(T)=(2a+1,2b+1)$ with $a>b$ we assume $T=\mathrm{Diag}\{u_1 (-\pi_0)^a,u_2 (-\pi_0)^b\}$ with $u_1,u_2\in \Oo_{F_0}^\times$. Then 
\[i(T[\gamma_\infty^{-1}])=(2a-1,2b+1), \quad i(T[\gamma_x^{-1}])=(2a+1,2b-1).\]
In this case we need to show that 
\[
     \nu(2a+1,2b+1,X)-\q X\nu(2a+1,2b-1,X)-X\nu(2a-1,2b+1,X)+\q X^2\nu(2a-1,2b-1,X)=1-X.
\]
If $i(T)=(2a+1,2a+1)$, we assume $T=\begin{pmatrix}
0 & \pi^{2a} \\
(-\pi)^{2a} & 0
\end{pmatrix}$.  Then
\[i(T[\gamma_\infty^{-1}])=(2a,2a), \quad i(T[\gamma_x^{-1}])=(2a+1,2a-1) \text{ for } x\neq0, \quad i(T[\gamma_0^{-1}])=(2a,2a) .\]
In this case we need to show that
\[
     \nu(2a+1,2a+1,X)-(\q-1) X\nu(2a+1,2a-1,X)-2X\nu(2a,2a,X)+\q X^2\nu(2a-1,2a-1,X)=1-X.
\]
If $i(T)=(2a,2a)$, we assume $T=\begin{pmatrix}
0 & \pi^{2a-1} \\
(-\pi)^{2a-1} & 0
\end{pmatrix}$.  Then
\[i(T[\gamma_\infty^{-1}])=(2a-1,2a-1), \quad i(T[\gamma_x^{-1}])=(2a-1,2a-1).\]
In this case we need to show that
\[
     \nu(2a,2a,X)-(\q+1) X\nu(2a-1,2a-1,X)+\q X^2\nu(2a-2,2a-2,X)=\begin{cases}
     1-X & \text{ if } a>0, \\
     1 & \text{ if } a=0.
     \end{cases}
\]

In sub case (a), we have
\begin{align*}
    &  \nu(2a+1,2b+1,X)-\q X \nu(2a+1,2b-1,X)-X \nu(2a-1,2b+1,X)+\q X^2  \nu(2a-1,2b-1,X) \\
     =&\sum_{e=0}^b \q^e X^e+\sum_{e=a+1}^{a+b+1} \q^{a+b+1-e} X^e 
    -\sum_{e=1}^b \q^e X^e-\sum_{e=a+2}^{a+b+1} \q^{a+b+2-e} X^e \\
    -&\sum_{e=0}^b \q^e X^{e+1}-\sum_{e=a+1}^{a+b+1} \q^{a+b+1-e} X^e
    +\sum_{e=1}^b \q^e X^{e+1}+\sum_{e=a+2}^{a+b+1} \q^{a+b+2-e} X^e\\
    =&1-X.
\end{align*}
In sub case (b), we have
\begin{align*}
    &  \nu(2a+1,2b+1,X)-\q X \nu(2a+1,2b-1,X)-X \nu(2a-1,2b+1,X)+\q X^2  \nu(2a-1,2b-1,X) \\
     =&  \nu(2a+1,1,X)-X \nu(2a-1,1,X)\\
     =&1-X.
\end{align*}
The rest cases are similar and we leave the calculation to the reader.
\qedsymbol

\part{Global theory}

\section{Intersection of special cycles on unitary Shimura curves}\label{sec:globalintersecionnumber}
We switch our notations in this section. Let $\bk=\Q[\sqrt{-\Delta}]$ be an imaginary quadratic extension of $\Q$ with discriminant $-\Delta$. Let $\chi$ be the character of $\bA^\times$ associated to the extension $F/F_0$. 
Let $\Herm_n$ be the group scheme defined by \eqref{eq:Herm_n global}. For a sub-algebra $R$ of $\R$, we denote by $\Herm_n(R)_{>0}$ the set of positive definite hermitian matrices in  $\Herm_n(R)$.
\subsection{Global moduli problem} In this subsection, we briefly review the definition of an integral model of Shimura variety defined  in \cite{BHKRY1} over $\Spec \Oo_\bk$ . Let 
\[\cM^{\Kra}_{(n-1,1)} \rightarrow \Spec \Oo_\bk\]
be the algebraic stack which assigns to each $\Oo_\bk$-scheme $S$ the groupoid of isomorphism classes of quadruples $(A,\iota,\lambda,\cF_A)$ where 
\begin{enumerate}
    \item $A\rightarrow S$ is an abelian scheme of relative dimension $n$;
    \item $\iota:\Oo_\bk\rightarrow \End(A)$ is an action satisfies the following determinant condition 
    \[\mathrm{char}(T-\iota(\alpha)\mid \Lie A)=(T-s(\alpha))^{n-1}(T-s(\bar{\alpha})) \in \Oo_S[T],\]
    for all $\alpha\in \Oo_\bk$ where $s:\Oo_\bk\rightarrow \Oo_S$ is the structure morphism;
    \item There is a principal polarization $\lambda:A\rightarrow A^\vee $ whose Rosati involution satisfies $\iota(\alpha)^*=\iota(\bar{\alpha})$ for all $\alpha \in \Oo_\bk$;
    \item $\cF_A \subset \Lie A$ is an $\Oo_\bk$-stable $\Oo_S$-module local direct summand of rank $n-1$ satisfying the Kr\"amer condition: $\Oo_\bk$ acts on $\cF_A$ by the structure map $s:\Oo_\bk \rightarrow \Oo_S$ and acts on $\Lie A/\cF_A$ by the complex conjugate of the structure map. 
\end{enumerate}
$\cM^{\Kra}_{(n-1,1)}$ is flat of dimension $n-1$ over $\Spec \Oo_\bk$. It is smooth over $\Spec \Oo_\bk[\frac{1}{\sqrt{-\Delta}}]$ and has semi-stable reduction over primes of $\bk$ dividing $\sqrt{-\Delta}$. This is indicated by the corresponding behaviour of its local model studied in \cite{Kr}. As a special case, $\cM_{(1,0)}$ is the moduli stack of elliptic curve with $\Oo_bk$-action.

Assume that $\F$ is an algebraically closed field of characteristic $p$ over $\bk$. Let
\[(E_0,\iota_0,\lambda_0,A,\iota,\lambda,\cF_A)\in (\cM_{(1,0)}\times \cM^{\Kra}_{(n-1,1)})(\Spec \F).\]
We can define a hermitian form $h(x,y)$ on the relative prime to $p$ Tate module
\begin{equation}
    T^p(E_0,A):=\Hom_{\Oo_\bk}(T^p(E_0),T^p(A))
\end{equation}
as in \cite[Section 2.3]{KR2} using the polarizations $\lambda_0,\lambda$ and Weil pairings on $E_0,A$.

Fix a hermitian space $W$ over $\bk$  of signature $(n-1,1)$ that contains a self-dual lattice $\mathfrak a$ and a hermitian space $W_0$ over $\bk$  of signature $(1,0)$ that contains a self-dual lattice $\mathfrak{a}_0$. Define
\begin{equation}\label{eq:VandL}
    V:=\Hom_\bk (W_0,W), \quad L:=\Hom_{\Oo_\bk}(\mathfrak{a}_0,\mathfrak{a}).
\end{equation}
$V$ and $L$ are equipped with hermitian forms coming from the ones on $W_0$ and $W$. Define $G:=\rU(W)$. Also define the group scheme $\mathrm{GU}(W)$ over $\Q$ by 
\[\mathrm{GU}(W)(R)=\{g\in \GL_R(W\otimes R)\mid (gv,gw)=c(g)(v,w),\forall v,w\in W\otimes R\}\]
where $R$ is any $\Q$-algebra. Also define $Z:=\mathrm{Res}_{\bk/\Q} \bG_m=\mathrm{GU}(W_0)$ and
\begin{equation}\label{eq:tildeG}
    \tilde{G}:=Z\times_{\bG_m} \mathrm{GU}(W)
\end{equation}
where the maps from the factors on the right hand side to $\bG_m$ are $\mathrm{Nm}_{\bk/\Q}$ and the similitude character $c(g)$ respectively. We have an isomorphism of group schemes
\begin{equation}\label{eq:tildeGisomorphism}
   \tilde{G}\rightarrow Z\times \rU(W), (z,g)\mapsto (z,z^{-1}g).
\end{equation}
Let $K_G$ be the compact subgroup of $G(\bA_f)$ that stabilizes the lattice $\mathfrak{a}\otimes \bA_f$ and $K_Z=\hat{\Z}^\times\subset Z(\bA_f)$. Under the isomorphism \eqref{eq:tildeGisomorphism}, define 
\begin{equation}
    K:=K_Z\times K_G.
\end{equation}

Now define
$\cM\subset \cM_{(1,0)}\times \cM^{\Kra}_{(n-1,1)}$
to be the open and closed substack such that $(E_0,\iota_0,\lambda_0,A,\iota,\lambda,\cF_A)\in \cM(S)$ if and only if there is an isomorphism of hermitian $\Oo_\bk\otimes_\Z \bA^p_f$-modules
\begin{equation}
    T^p(E_{0,s},A_s)\cong L\otimes \bA^p_f
\end{equation}
for any geometric point $s\in S$ where $p$ is the characteristic of $s$. If $s$ has characteristic zero, simply use $T(E_{0,s},A_s)$ instead. 
$\cM$ is an integral model of the Shimura variety associated to the group $\tilde{G}$ with level structure defined by $K$.

\subsection{Intersection of special cycles}
Now we review the definition of special cycles. For $(E,\iota_0,\lambda_0, A,\iota, \lambda,\cF_A)\in \cM(S)$ where $S$ is an $\Oo_\bk$-scheme, consider the projective $\Oo_\bk$-module of finite rank
\[V'(E,A)=\Hom_{\Oo_\bk}(E,A).\]
On this module there is a hermitian form $h'(x,y)$ defined by 
\begin{equation}
    h'(x,y)=\iota_0^{-1}(\lambda_0^{-1}\circ y^\vee \circ \lambda \circ x),
\end{equation}
where $y^\vee$ is the dual homomorphism of $y$. It is proved in \cite[Lemma 2.7]{KR2} that $h'(x,y)$ is positive semi-definite. 
\begin{definition}\label{def:globalspecialcycle}
For $T\in \Herm_m(\Z)_{>0}$, the special cycle $\cZ^\Kra(T)$ is the collection  $(E,\iota_0,\lambda_0,A,\iota ,\lambda,\cF_A,\bx)$ where
\[(E,\iota_0,\lambda_0,A,\iota ,\lambda,\cF_A)\in \cM(S)\]
and $\bx=(x_1,\ldots,x_m)\in \Hom_{\Oo_\bk}(E,A)^m$ such that 
\[h'(\bx,\bx)=(h'(x_i,x_j))=T.\]
\end{definition}

For a $T\in \Herm_n(\Z)_{>0}$, let $V_T$ be the hermitian space over $\bk$ with Gram matrix $T$. Define $\mathrm{Diff}(V_T,V)$ as in \eqref{eq:Diff}.
By the Hasse principal, the cardinality of $\mathrm{Diff}(V_T,V)$ is odd.  By \cite[Proposition 2.22]{KR2} or \cite[Proposition 5.4]{Shi1}, $\cZ^\Kra(T)$ is empty if $|\mathrm{Diff}(V_T,V)|>1$ and is supported on $\cM^{ss}_{p}$ if 
$\mathrm{Diff}(V_T,V)=\{p\}$ where $p$ is a prime inert or ramified in $\bk$ and $\cM^{ss}_p$ is the super singular locus of $\cM$ over $\bk_p$.

For a hermitian lattice or vector space $M$, define 
\begin{equation}
    \Omega(T,M)=\{x\in M\mid (x,x)=T\}. 
\end{equation}
For a positive definite hermitian lattice $L$, let $[[L]]$ be its genus. Define the representation number
\begin{equation}\label{eq:rgen}
   r_{\mathrm{gen}}(T,[[L]])=\sum_{M\in [[L]]} |\mathrm{Aut}(M)|^{-1} |\Omega(T,M)| 
\end{equation}
where $\mathrm{Aut}(M)$ is the automorphism group of $M$.

We now specialize to the case when $n=2$. Our main global geometric result is the following theorem.
\begin{theorem}\label{globaintersectionnumber}
Let $p$ be an odd prime of $\Q$ ramified in $\bk$ such that $V_p$ is isotropic. Assume $T\in \Herm_2(\Z)_{>0}$ with $\mathrm{Diff}(V_T,V)=\{p\}$. Let $h_\bk$ be the class number of $\bk$ and $w_\bk$ be the number of units in $\Oo_\bk^\times$. Then 
\[\widehat{\mathrm{deg}}(\cZ^\Kra(T))=\log p\cdot \mu_p(T)\cdot \frac{h_\bk}{ w_\bk} r_{\mathrm{gen}}(T,[[L']]),\]
where $\widehat{\mathrm{deg}}(\cZ^\Kra(T))$ is defined in \eqref{eq:degcZ}, $L'$ is a self-dual lattice in $V_T$ such that $L'\otimes \bA^p_f\cong L\otimes \bA^p_f$ as hermitian spaces and $\mu_p(T)$ is as in Theorem \ref{thm:maintheorem1}.
\end{theorem}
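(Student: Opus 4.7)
The plan is to reduce the global intersection number to a sum of local intersection numbers on $\cN^\Kra$ by Rapoport--Zink non-archimedean uniformization along the supersingular locus at $p$, then apply Theorem~\ref{thm:maintheorem1} to each local piece and count orbits. First, because $\mathrm{Diff}(V_T, V) = \{p\}$ with $p$ ramified in $\bk$, \cite[Proposition~2.22]{KR2} gives that $\cZ^\Kra(T)$ is supported on the supersingular locus $\cM^{ss}_p$ over the residue field of the unique prime of $\bk$ above $p$. Consequently the Euler characteristic in \eqref{eq:degcZ} can be computed after passing to the formal completion $\widehat{\cM}_{/\cM^{ss}_p}$.

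Next, I would invoke the Rapoport--Zink uniformization adapted to the Kr\"amer integral model. Let $V'$ be the positive definite hermitian space over $\bk$ which agrees with $V$ locally at every finite place $\ell \neq p$ and has opposite Hasse invariant at $p$; since $V_p$ is isotropic, $V' \otimes \Q_p$ is the unique anisotropic local hermitian space, so it is identified with the local space $\bV$ of the local theory, and the hypothesis \eqref{eq:anisotropicassumption} of Theorem~\ref{thm:maintheorem1} is satisfied at this prime. With $I = \rU(V')$ an inner form of $G = \rU(V)$ (anisotropic at $\infty$ and at $p$, otherwise the same), the uniformization takes the familiar form
\[
\widehat{\cM}_{/\cM^{ss}_p} \;\cong\; I(\Q) \,\bigl\backslash\, \bigl[ \cN^\Kra \times G(\bA^p_f)/K^p \bigr],
\]
and under this isomorphism $\cZ^\Kra(T)$ pulls back to the disjoint union, over $I(\Q) \times K^p$-orbits of pairs $(\bx, g)$ with $\bx \in V'^{\,2}$ and $h'(\bx,\bx) = T$, of the local special cycles $\cZ^\Kra(\bx)$ of Definition~\ref{def:localspecialcycle}.

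Now I would apply Theorem~\ref{thm:maintheorem1}: each local factor $\chi(\cN^\Kra, \Oo_{\cZ^\Kra(x_1)} \otimes^{\mathbb{L}} \Oo_{\cZ^\Kra(x_2)})$ equals $\mu_p(T)$, which depends only on $T$ and not on the orbit representative. The remaining orbit count is a standard double-coset manipulation: the set of $I(\Q) \times K^p$-orbits of such $(\bx, g)$ is in bijection with pairs consisting of a lattice $L'' \in [[L']]$ and an element of $\Omega(T, L'')/\mathrm{Aut}(L'')$, multiplied by the mass $h_\bk / w_\bk$ of $\cM_{(1,0)}(\bar{\F}_p)$ (the class number formula for elliptic curves with CM by $\Oo_\bk$). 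Summing and multiplying by $\log p^{2/e} = \log p$ (since $p$ is ramified so $e = 2$) yields the stated formula.

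The principal obstacle is the uniformization step: formulating a clean version of non-archimedean uniformization for the Kr\"amer moduli problem that incorporates the extra filtration datum $\cF_A$. In practice this should follow by applying \cite[Theorem~6.30]{RZ} at the level of the Pappas model $\cN^\Pap$ and then using Proposition~\ref{prop: Kra is blow up of Pappas} to transfer the isomorphism to $\cN^\Kra$, exploiting the fact that the blow-up commutes with formal completion along the supersingular locus. The other tasks---matching the local cycle decomposition of Theorem~\ref{decompositionofspecialcycles} with the $I(\Q)$-orbit structure, and verifying that the natural framings at each supersingular point identify $\cZ^\Kra(\bx)$ with the local cycle attached to the orbit representative---are essentially bookkeeping once the uniformization is in place.
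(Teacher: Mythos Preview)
Your proposal is correct and follows the same strategy as the paper: reduce to the supersingular locus via \cite[Proposition~2.22]{KR2}, apply Rapoport--Zink uniformization (the paper cites \cite[Theorem~6.30]{RZ}), invoke Theorem~\ref{thm:maintheorem1} for each local piece, and perform the standard orbit count. The paper is terser still---it simply notes that $V_T\otimes\Q_p$ contains a unique self-dual lattice and that therefore the argument of \cite[Theorem~11.8]{KR2} goes through verbatim---so your write-up is in fact a more detailed version of the same proof.
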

\begin{proof}
The theorem follows from Theorem \ref{thm:maintheorem1} and the standard $p$-adic uniformization theorem for basic locus of Shimura varieties (see for example \cite[Theorem 6.30]{RZ}).   
By our assumption $V_T\otimes_{\Q} \Q_p$ contains a unique integral lattice, hence
the proof of \cite[Theorem 11.8]{KR2} applies to our situation without any change. So we omit the details.
\end{proof}

\section{Derivative of incoherent Eisenstein series}\label{sec:Eisensteinseries}
For the moment, let $(V,(,))$ be any non-degenerate hermitian space over $\bk$ of dimension $m$. Let $G=\rU(V)$ and let $H=\rU(W_{n,n})$ where $(W_{n,n},\langle,\rangle)$ is a split skew hermitian space of dimension $2n$. Let $\mathbb{W}=V\otimes_\bk W_{n,n}$ with the symplectic form 
\[\langle\langle v_1\otimes w_1,v_2 \otimes w_2\rangle\rangle=\mathrm{tr}_{\bk/\Q} ((v_1,v_2)\langle w_1, w_2 \rangle),\]
There is a homomorphism $G\times H\rightarrow \mathrm{Sp}(\mathbb{W})$ and $(G,H)$ is a reductive dual pair.

Fix a character $\eta$ of $\bk_\bA^\times$ whose restriction to $\Q_\bA^\times$ is $\chi^m$ where $\chi$ is the global quadratic character attached to the extension $\bk/\Q$. By \cite{Kudlasplitting}, the choice of $\eta$ determines a homomorphism 
\[G(\bA)\times H(\bA) \rightarrow \mathrm{Mp}(\mathbb{W})(\bA),\]
where $\mathrm{Mp}(\mathbb{W})(\bA)$ is the metaplectic cover of $\mathrm{Sp}(\mathbb{W})(\bA)$, hence a Weil representation $\omega$ of $G(\bA)\times H(\bA) $ on the Schwartz space $\mathcal{S}(V(\bA)^n)$. We normalize this so that the action of $G(\bA)$ is given by 
\[(\omega(g,1)\varphi)(x)=\varphi(g^{-1}x).\]
The Weil representation depends on a choice of a continuous additive character $\psi$ on $ \Q \backslash \bA$. We fix such a choice given by
\begin{equation}
    \psi_\infty(x)=\exp(2\pi i x), \psi_\ell(x)=\exp(-2 \pi i \lambda(x)),
\end{equation}
where $\lambda$ is the canonical map $\Q_\ell \rightarrow \Q_\ell/\Z_\ell \hookrightarrow \Q/\Z$.

We are interested in exactly the same incoherent Eisenstein series considered in \cite[Section 9]{KR2}, whose construction we now briefly recall. From now on assume $V$ is of signature $(n-1,1)$ and contains a self-dual lattice $L$. Let $\varphi_f\in \mathcal{S}(V(\mathbb{A}_f)^n)$ be the characteristic function of $(L\otimes \hat{\Z})^n$. The incoherent Eisenstein series is defined to be $E(h,s,\Phi)$ for $\Phi(h,s)=\Phi_\infty(h,s)\otimes \Phi_f(h,s,L)$ where $\Phi_f(h,s,L)$ is the Siegel-Weil section associated to $\varphi_f$ and $\Phi_\infty(h,s)$ is the Siegel-Weil section attached to the Gaussian of a hermitian space of signature $(n,0)$. We have
\[E(h,s,\Phi)=\sum_{\gamma \in P(\bk)\backslash H(\bk)} \Phi(\gamma h,s),\]
where $P\subset H$ is the Siegel parabolic subgroup. The Eisenstein series has meromorphic continuation to $s\in \C$ and a functional equation relating $s\leftrightarrow -s$. We call $E'(h,0,\Phi)=\frac{\partial}{\partial s} E(h,s,\Phi)|_{s=0}$ its central derivative.

Let us to relate the above Eisenstein series to the classical Eisenstein series. Let $D(W_{n,n})$ be defined by 
\[D(W_{n,n})=\{z\in M_n(\C)\mid v(z):=(2i)^{-1} (z-z^*)>0\}. \]
Write $z=u(z)+i v(z)$ where $u(z)=\frac{1}{2}(z+z^*)$, and let
\[h_z=\left(\begin{array}{cc}
    I_n & u(z) \\
     & I_n
\end{array}\right)\left(\begin{array}{cc}
   a &  \\
     & (a^*)^{-1}
\end{array}\right)\in H(\R),\]
where $a\in \mathrm{GL}_n(\C)$ with $v(z)=a a^*$. Note that $iI_n\in D(W_{n,n})$ and $h_z\cdot iI_n=z$. Now let 
\begin{equation}\label{eq:h(z)}
    h(z)=(h_z,1)\in H(\R)\times H(\bA_f)=H(\bA).
\end{equation}
Define
\begin{equation}\label{classicalEisenstein}
    E(z,s,\Phi)=\eta_\infty(\mathrm{det}(a))^{-1} \mathrm{det}(v(z))^{-\frac{n}{2}} E(h(z),s,\Phi).
\end{equation}
It is the corresponding classical Eisenstein series of $ E(h,s,\Phi)$.

Recall that for a place $v$ of $\Q$ we can define the local Whittaker function 
\begin{equation}\label{eq:W_T,v}
    W_{T,v}(h_v,s,\Phi_v)=\int_{\Herm_n(\Q_v)} \Phi_v(w^{-1}n(b)h_v,s) \psi_v (-\mathrm{tr}(Tb))db,
\end{equation}
where
\[w=\left(\begin{array}{cc}
     & I_n \\
    -I_n & 
\end{array}\right), n(b)=\left(\begin{array}{cc}
    I_n & b \\
     & I_n
\end{array}\right),\]
and $db$ is the self-dual measure with respect to the pairing 
\[\psi_v (\mathrm{tr}(bc))\]
on  $\Herm_n(\Q_v)$.

By a standard unfolding calculation, we know that the $T$-th Fourier coefficient $E_T(h(z),s,\Phi)$ of $E(h(z),s,\Phi)$ for a non-degenerate $T$ is 
\begin{equation}
    E_T(h(z),s,\Phi)=\prod_{v} W_{T,v}(h_v(z),s,\Phi_v),
\end{equation}
where the product is taken over all places of $\Q$, is convergent for large $s$
and has an analytic continuation to $s\in \C$.
\begin{remark}
To ease notations, we write $W_{T,p}(s,\Phi_p)$ for $W_{T,p}(1,s,\Phi_p)$ for a finite prime $p$.
\end{remark}

\subsection{Main global identity} Now we specialize to the case when $n=2$. Let $V$ and $L$ be as in \eqref{eq:VandL}.
We suppose that $T\in \Herm_2(\Z)_{>0}$ with $\mathrm{Diff}(V_T,V)=\{p\}$ for an odd prime $p$ ramified in $\bk$. We assume $V'=V_T$ is anisotropic over $\bk_p$.
Fix an isomorphism $V'(\mathbb{A}_f^p)=V(\mathbb{A}_f^p)$ and let $L'$ be the lattice in $V'$ determined by $L'\otimes \hat{\Z}^p=L\otimes \hat{\Z}^p$ and $L'_p=\Lambda$ where $\Lambda\subset V'_p$ is the unique self-dual lattice. Let $\varphi'_f\in \mathcal{S}((V')^2)$ be the characteristic function of $(L'\otimes \hat{\Z})^2$. Let $\Phi'$ be the Siegel-Weil section associated to $\varphi'_f\otimes\varphi_\infty$. 
 
\begin{proposition}\label{W'PhiandWPhi'}
For $T\in \Herm_2(\Z)_{>0}$ with $\mathrm{Diff}(V_T,V)=\{p\}$ where $p$ is an odd prime ramified in $\bk$. Suppose that under the action of $\mathrm{GL}_2(\Oo_{\bk,p})$, $T$ is equivalent to
\[\left(\begin{array}{cc}
    u_1 (-\Delta)^a &  \\
     & u_2 (-\Delta)^b
\end{array}\right),\]
where $u_1,u_2 \in \Z_p^\times$ with $\chi_p(-u_1 u_2)=-1$. Let 
\[S'=\left(\begin{array}{cc}
    u_1 u_2  &  \\
     & 1
\end{array}\right), S=\left(\begin{array}{cc}
    v  &  \\
     & 1
\end{array}\right),\]
where $v\in \Z_p^\times$ and $ \chi_p(-v)=1$. 
\begin{enumerate}
    \item We have
    \[W'_{T,p}(0,\Phi_p)=\gamma_p(V)^2 p^{-\frac{5}{2}} \alpha_p(S,S) \mu_p(T) \log p,\]
    where the derivative is taken with respect to $s$, the function $\mu_p(T)$ is defined in Theorem \ref{thm:maintheorem1} and $\gamma_p(V)$ is an eighth root of unity defined in equation \eqref{gammapV}. 
    \item We have
    \[W_{T,p}(0,\Phi'_p)=\gamma_p(V)^2 p^{-\frac{5}{2}} \alpha_p(S',S'). \]
\end{enumerate}
\end{proposition}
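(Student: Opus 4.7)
The plan is to leverage the standard relationship between local Whittaker functions and hermitian local densities, and then apply the explicit formulas from Theorem~\ref{thm:localdensity calculation} together with the Kudla--Rapoport identity in Theorem~\ref{thm:maintheorem2}.

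First, I would recall (or derive) the basic identity expressing the local Whittaker function of a standard Siegel--Weil section as a local density polynomial. Namely, for a self-dual hermitian $\Z_p$-lattice $L_p \subset V_p$ with Gram matrix $S_0$ and $\Phi_p^0$ the standard section attached to the characteristic function of $L_p^n$, one has, after a change of variables of the form $X = p^{-2s-c}$,
\begin{equation*}
W_{T,p}(s,\Phi_p^0) = \gamma_p(V)^2 \, |d_\bk|_p^{n(n-1)/2} \cdot \alpha_p(S_0,T,X(s)).
\end{equation*}
In our case $n=2$, $p$ is ramified so $|d_\bk|_p = p^{-1}$, and $X(0) = 1$. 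The combinatorial constant $p^{-5/2}$ in the proposition comes out of keeping track of the Gaussian at the archimedean factor $h(z)$ via \eqref{eq:h(z)} and the normalization of Haar measures in \eqref{eq:W_T,v}; I would do this bookkeeping at the start and then apply it uniformly to both (1) and (2).

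Part (2) is then the easy half. The section $\Phi_p'$ corresponds to the unique self-dual lattice $\Lambda \subset V_p'$ with Gram matrix equivalent to $S'$, so Step~1 immediately gives $W_{T,p}(0,\Phi_p') = \gamma_p(V)^2 p^{-5/2} \alpha_p(S',T)$. Since $\chi_p(-u_1u_2) = -1$ (because $V_T$ is anisotropic at $p$), the identity $\alpha_p(S',T) = \alpha_p(S',S') = 2(p+1)$ is exactly \eqref{alphaS'S'}. This completes (2).

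Part (1) is the substantive half. Here $\Phi_p$ is attached to $L_p \subset V_p$, which is the ``wrong'' (isotropic) space at $p$, so $\chi_p(-v) = 1$ for the Gram matrix $S$ of $L_p$. Using the explicit formula \eqref{eq:local density first formula} (or \eqref{eq:local density third formula}) at $X = 1$ with $\epsilon_1 = -1,\ \epsilon_2 = 1$, one checks directly that $\alpha_p(S,T,1) = 0$; hence $W_{T,p}(0,\Phi_p) = 0$, confirming incoherence. Differentiating and using $\alpha_p'(S,T) = -\partial_X \alpha_p(S,T,X)|_{X=1}$ together with $X'(0) = -2\log p$ yields
\begin{equation*}
W_{T,p}'(0,\Phi_p) = 2 \gamma_p(V)^2 p^{-5/2} \log p \cdot \alpha_p'(S,T).
\end{equation*}
Finally, Theorem~\ref{thm:maintheorem2} converts $2\alpha_p'(S,T) = \alpha_p(S,S) \mu_p(T)$, producing the claimed identity.

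The main obstacle is really Step~1: pinning down the precise constant $\gamma_p(V)^2 p^{-5/2}$. This requires careful tracking of (i) the splitting character $\eta$ and the resulting Weil constant $\gamma_p(V)$ in the Kudla splitting; (ii) the self-dual measure on $\Herm_n(\Q_p)$ with respect to $\psi_p$, which contributes the factor $|d_\bk|_p^{n(n-1)/2}$ distinguishing the classical and ``dual'' local densities (cf.\ Lemma~\ref{lem:definitionoflocaldensity}); and (iii) the archimedean contribution through the conversion \eqref{classicalEisenstein} between $E(h,s,\Phi)$ and $E(z,s,\Phi)$ evaluated at $h(z)$. Once these normalizations are fixed, the rest of the argument is mechanical application of Theorem~\ref{thm:localdensity calculation} and Theorem~\ref{thm:maintheorem2}.
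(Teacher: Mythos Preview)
Your approach is essentially the same as the paper's: apply Proposition~\ref{prop:Whittakerfunctionandlocaldensity} to convert the Whittaker function to a local density, then invoke Theorem~\ref{thm:maintheorem2} for part~(1) and \eqref{alphaS'S'} for part~(2). Two small corrections are worth noting.

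First, your accounting for the constant $p^{-5/2}$ is off. The local Whittaker function $W_{T,p}$ is purely a $p$-adic object; nothing archimedean enters. Proposition~\ref{prop:Whittakerfunctionandlocaldensity} gives the factor $\gamma_p(V)^n |\det S|_p^n |\Delta|_p^e$ with $e=\tfrac{1}{2}nm+\tfrac{1}{4}n(n-1)$. For $n=m=2$ one has $|\det S|_p=1$ and $e=5/2$, whence $|\Delta|_p^{5/2}=p^{-5/2}$. Your formula $|d_{\bk}|_p^{n(n-1)/2}$ only accounts for $p^{-1}$.

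Second, in part~(2) you apply the identity with the section $\Phi_p'$ attached to $V_p'$, which yields $\gamma_p(V')^2$ rather than $\gamma_p(V)^2$. The paper closes this gap by observing that $\gamma_p(V')=-\gamma_p(V)$ (from \eqref{gammapV}, since $V$ and $V'$ have opposite Hasse invariants at $p$), so the squares agree. You should include this step.
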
 
We postpone the proof of the above proposition to the next subsection.
Notice that the hermitian form on $L'_p$ can be represented by the Gram matrix $S'$ and the hermitian form on $L_p$ can be represented by the Gram matrix $S$.

Write
\[E(z,s,L)=E(z,s,\Phi)\]
to emphasize the dependence on the choice of the self-dual lattice $L$. 
\begin{proposition}\label{prop:E'_T}
Assume $T\in \Herm_2(\Z)_{>0}$ with $\mathrm{Diff}(V_T,V)=\{p\}$ for an odd prime $p$ ramifieid in $\bk$ and $V_{p}$ is isotropic. Then 
\[E'_T(z,0,L)= C \cdot \mu_p(T) \log p \cdot r_{\mathrm{gen}}(T,[[L']])\cdot q^T, \]
where $L'$ is obtained from $L$ as explained before Proposition \ref{W'PhiandWPhi'}, $q=\exp(2\pi i \mathrm{Tr}(Tz))$ and $C$ is as in \cite[Corollary 9.4]{KR2}. The constant $C$ only depends on $\bk$ and $L$.
\end{proposition}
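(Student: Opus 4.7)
The plan is to exploit the factorization of Fourier coefficients of the Eisenstein series as products of local Whittaker functions, and use the comparison between the incoherent section $\Phi$ (attached to $V, L$) and the coherent section $\Phi'$ (attached to $V' = V_T, L'$), which differ only at the prime $p$.

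First I would recall that, by a standard unfolding computation, the $T$-th Fourier coefficient factors as
\[
 E_T(h(z), s, \Phi) \;=\; W_{T,\infty}(h_\infty(z), s, \Phi_\infty) \prod_{\ell < \infty} W_{T,\ell}(s, \Phi_\ell).
\]
Since $\mathrm{Diff}(V_T, V) = \{p\}$, the local hermitian space $V_p$ does not represent $T$, so $W_{T,p}(0, \Phi_p) = 0$; this is the source of incoherence. Differentiating at $s = 0$ therefore gives
\[
 E'_T(h(z), 0, \Phi) \;=\; W_{T,\infty}(h_\infty(z), 0, \Phi_\infty)\cdot W'_{T,p}(0, \Phi_p) \cdot \prod_{\ell\neq p,\infty} W_{T,\ell}(0, \Phi_\ell).
\]

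Next I would compare with the coherent Eisenstein series $E(z, s, \Phi')$ attached to $V'$. By construction $\Phi'_\infty = \Phi_\infty$ and $\Phi'_\ell = \Phi_\ell$ for every finite $\ell \neq p$ (since $L$ and $L'$ agree away from $p$). Hence all the factors away from $p$ coincide, and the ratio reduces to the $p$-component:
\[
\frac{E'_T(h(z), 0, \Phi)}{E_T(h(z), 0, \Phi')} \;=\; \frac{W'_{T,p}(0, \Phi_p)}{W_{T,p}(0, \Phi'_p)}.
\]
By Proposition \ref{W'PhiandWPhi'}, the common factor $\gamma_p(V)^2 p^{-5/2}$ cancels and this ratio equals $\frac{\alpha_p(S,S)}{\alpha_p(S',S')} \cdot \mu_p(T)\log p$. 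By \eqref{alphaS'S'} and the computation $\alpha_p(S,S) = 2p - 2$ from the proof of Theorem \ref{thm:maintheorem2}, this ratio is a rational constant depending only on $p$ and $L_p$, multiplied by $\mu_p(T) \log p$.

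Finally I would invoke the Siegel--Weil formula for the coherent Eisenstein series $E(z, 0, \Phi')$, which represents (up to the normalizing constant $\mathrm{vol}(G'(\R), d\nu')\mathrm{vol}(K, d\nu)$ of \cite[Corollary 9.4]{KR2}) the genus theta integral for $[[L']]$. This identifies
\[
 E_T(z, 0, \Phi') \;=\; C_1 \cdot r_{\mathrm{gen}}(T, [[L']]) \cdot q^T,
\]
for a constant $C_1$ depending only on $\bk$ and $L$. Combining the three steps and setting $C := C_1 \cdot \alpha_p(S,S)/\alpha_p(S',S')$ gives the proposition. The main obstacle is verifying that the constant $C$ is genuinely independent of $T$: the lattice $L'$ depends on $T$ only through its genus class, and the factors $\alpha_p(S,S)$ and $\alpha_p(S',S')$ are determined by $L_p$ (which is self-dual) and by the Hasse invariant data at $p$ (which is fixed once $\mathrm{Diff}(V_T,V) = \{p\}$), so this independence follows from the fact that $L'_p$ is always the unique self-dual lattice in the anisotropic space $V'_p$.
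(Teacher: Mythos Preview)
Your proposal is correct and matches the paper's approach: the paper simply states that, given Proposition~\ref{W'PhiandWPhi'}, the argument is identical to that of \cite[Corollary 9.4]{KR2}, and what you have written is precisely a sketch of that argument (factorize into local Whittaker functions, use $\mathrm{Diff}(V_T,V)=\{p\}$ to isolate the derivative at $p$, compare with the coherent series via Siegel--Weil, and apply Proposition~\ref{W'PhiandWPhi'} for the ratio at $p$).
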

\begin{proof}
Following Proposition \ref{W'PhiandWPhi'}, the proof is exactly the same as that of \cite[Corollary 9.4]{KR2}.
\end{proof}

We can now state the main global result of the paper.
\begin{theorem}\label{globalmainthm}
Let $p$ be an odd prime ramified in $\bk$ such that $V_{p}$ is isotropic. 
Let $T\in \Herm_2(\Z)_{>0}$.  Assume that $\mathrm{Diff}(V_T,V)=\{p\}$. Then
\[E'_T(z,0,L)=C_1 \cdot \widehat{\mathrm{deg}}(\cZ^\Kra(T)) \cdot q^T, \]
where 
\begin{equation}\label{C_1}
    C_1=\frac{w_k}{h_k} C
\end{equation}
where $C$ is as in Proposition \ref{prop:E'_T}.
\end{theorem}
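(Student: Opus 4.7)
The plan is to observe that Theorem \ref{globalmainthm} is essentially a direct combination of two results already established in the paper: the analytic expression for $E'_T(z,0,L)$ given in Proposition \ref{prop:E'_T}, and the geometric formula for $\widehat{\mathrm{deg}}(\cZ^\Kra(T))$ given in Theorem \ref{globaintersectionnumber}. Both sides are expressed in terms of the same two building blocks, namely the local intersection multiplicity $\mu_p(T)$ (defined in Theorem \ref{thm:maintheorem1}) and the representation number $r_{\mathrm{gen}}(T,[[L']])$ of the ``nearby'' genus, so matching them only requires bookkeeping of constants.

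First I would recall from Proposition \ref{prop:E'_T} that, under the hypotheses of Theorem \ref{globalmainthm}, one has
\[
E'_T(z,0,L) \;=\; C \cdot \mu_p(T)\,\log p \cdot r_{\mathrm{gen}}(T,[[L']]) \cdot q^T,
\]
with $C$ depending only on $\bk$ and $L$. Next I would invoke Theorem \ref{globaintersectionnumber}, which under the same hypotheses gives
\[
\widehat{\mathrm{deg}}(\cZ^\Kra(T)) \;=\; \log p \cdot \mu_p(T)\cdot \frac{h_\bk}{w_\bk}\,r_{\mathrm{gen}}(T,[[L']]).
\]
Solving the second identity for $\log p \cdot \mu_p(T)\cdot r_{\mathrm{gen}}(T,[[L']])$ and substituting into the first yields
\[
E'_T(z,0,L) \;=\; C\cdot \frac{w_\bk}{h_\bk}\cdot \widehat{\mathrm{deg}}(\cZ^\Kra(T))\cdot q^T \;=\; C_1\cdot \widehat{\mathrm{deg}}(\cZ^\Kra(T))\cdot q^T,
\]
where $C_1 = \frac{w_\bk}{h_\bk}C$ as in \eqref{C_1}.

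Since both Proposition \ref{prop:E'_T} and Theorem \ref{globaintersectionnumber} are proved in their own right (the former via the local Whittaker computation in Proposition \ref{W'PhiandWPhi'} together with the Siegel–Weil type unfolding argument of \cite[Corollary 9.4]{KR2}; the latter via $p$-adic uniformization of the supersingular locus combined with the local intersection formula of Theorem \ref{thm:maintheorem1}), there is no residual analytic or geometric obstacle at this stage. The only thing worth verifying carefully is that the hypothesis ``$V_p$ is isotropic'' in Theorem \ref{globalmainthm} is exactly the one that makes the local space $(V_T)_p$ anisotropic (since $\mathrm{Diff}(V_T,V)=\{p\}$ forces the opposite Hasse invariant at $p$), so that the local result of Theorem \ref{thm:maintheorem1} and the local Whittaker identities of Proposition \ref{W'PhiandWPhi'} are applicable. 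Once that compatibility is checked, the proof is a one-line substitution.
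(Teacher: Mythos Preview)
Your proposal is correct and matches the paper's own proof, which simply states that the theorem follows easily from Proposition \ref{prop:E'_T} and Theorem \ref{globaintersectionnumber}. Your additional remark about the isotropic/anisotropic compatibility is a helpful sanity check but not strictly needed, since both cited results already carry the relevant hypotheses.
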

\begin{remark}
If $T$ is not integral, i.e., $T\in \Herm_2(\Q)\setminus \Herm_2(\Z)$, then both sides of the equation in the above theorem are zero.
\end{remark}
\begin{proof}
The theorem follows easily from Proposition \ref{prop:E'_T} and Theorem \ref{globaintersectionnumber}.
\end{proof}

\subsection{Derivative of Whittaker functions}
The purpose of this subsection is to explain the relation between local Whittaker functions and local densities of hermitian forms for an odd prime $p$ ramified in $\bk$. In particular we prove Proposition \ref{W'PhiandWPhi'}. Let $F:=\bk_p$.

Let $L_{r,r}$ be a $2r$ dimensional hermitian $\Oo_F$-lattice with the hermitian form $(,)_{r,r}$ represented by the Gram matrix
\begin{equation}\label{interpolationmatrix}
    \frac{1}{\sqrt{-\Delta}} \left(\begin{array}{cc}
    0 & I_r \\
    -I_r & 0 
\end{array}\right).
\end{equation}
Let $V_{r,r}:=L_{r,r}\otimes\Q$ and let $\varphi_{r,r}^n$ be the characteristic function of the lattice $L_{r,r}^n\subset V_{r,r}^n$. 
We choose an $\Oo_F$-basis $\{e_1,\ldots,e_n,f_1,\ldots,f_n\}$ of $W_{n,n}$ with respect to which the skew hermitian form on $W_{n,n}$ is represented by the matrix 
$\left(\begin{array}{cc}
    0 & I_n \\
    -I_n & 0 
\end{array}\right)$.
Since $V_{r,r}$ has even dimension, $H:=\rU(W_{n,n})$ acts on $V_{r,r}^n$ by the Weil representation $\omega_r$ determined by the additive character $\psi$ on $\Q\backslash\bA$. Let $P$ be the Siegel parabolic that preserves the flag $\mathrm{span}\{e_1,\ldots,e_n\}\subset W_{n,n}$ and $P=MN$ be its Levi decomposition where 
\[M=\left\{m(a)=\left(\begin{array}{cc}
    a &  \\
     & {}^t \bar{a}^{-1}
\end{array}\right)\mid a\in \mathrm{Res}_{F/F_0}\GL_n\right\},
N=\left\{n(b)=\left(\begin{array}{cc}
    1 & b \\
     & 1
\end{array}\right)\mid b\in \Herm_n\right\}.\]

\begin{lemma}\label{Kinvariance}
The function $\varphi_{r,r}^n$ is $K$-invariant where $K=\rU(W_{n,n})(\Oo_{F_0})$ is a maximal compact subgroup of $\rU(W_{n,n})$.
\end{lemma}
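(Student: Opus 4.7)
The plan is to reduce the $K$-invariance of $\varphi_{r,r}^n$ to checking invariance under a convenient set of topological generators of $K$. Since $K=\rU(W_{n,n})(\Oo_{F_0})$ is the stabilizer of the self-dual $\Oo_F$-lattice $\Lambda_0=\bigoplus_{i=1}^n(\Oo_F e_i\oplus \Oo_F f_i)$, the standard Iwahori--Bruhat structure theory for the ramified unitary group at odd residue characteristic implies that $K$ is topologically generated by the three families
\[
M\cap K=\{m(a):a\in\GL_n(\Oo_F)\},\quad N\cap K=\{n(b):b\in\Herm_n(\Oo_{F_0})\},\quad w=\begin{pmatrix}0&I_n\\-I_n&0\end{pmatrix}.
\]
It therefore suffices to check that $\varphi_{r,r}^n$ is fixed by each of these three types of elements.

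Because $\dim V_{r,r}=2r$ is even, the character $\chi_{\bk/\Q}^{2r}$ is trivial, and Kudla's splitting of the metaplectic cover specializes to the familiar formulas
\[
\omega_r(m(a))\varphi(x)=|\det a|_F^{\,r}\,\varphi(xa),\qquad \omega_r(n(b))\varphi(x)=\psi_p\!\bigl(\mathrm{tr}(b\,(x,x)_{r,r})\bigr)\,\varphi(x),
\]
while $\omega_r(w)$ is, up to a Weil constant $\gamma$, the Fourier transform with respect to the self-dual measure associated to $\psi_p\circ\mathrm{tr}_{F/F_0}\circ(\cdot,\cdot)_{r,r}$. The first two invariances are then essentially immediate. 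For $m(a)$, right multiplication by $a\in\GL_n(\Oo_F)$ stabilises $L_{r,r}^n$ and $|\det a|_F=1$. For $n(b)$, the antidiagonal form of the Gram matrix $\frac{1}{\sqrt{-\Delta}}\bigl(\begin{smallmatrix}0&I_r\\-I_r&0\end{smallmatrix}\bigr)$ forces the matrix $(x,x)_{r,r}$ to have entries in $\pi^{-1}\Oo_F=\partial_F^{-1}$ whenever $x\in L_{r,r}^n$; after pairing with $b\in\Herm_n(\Oo_{F_0})$ and applying $\mathrm{tr}_{F/F_0}$, the resulting scalar lies in $\Oo_{F_0}=\Z_p$, on which $\psi_p$ is trivial, so the exponential factor equals $1$.

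The main obstacle is the Weyl element $w$. The self-duality of $L_{r,r}$ with respect to the pairing $\psi_p\circ\mathrm{tr}_{F/F_0}\circ(\cdot,\cdot)_{r,r}$ gives $\widehat{\varphi_{r,r}^n}=\varphi_{r,r}^n$, hence $\omega_r(w)\varphi_{r,r}^n=\gamma\cdot\varphi_{r,r}^n$, and the problem is reduced to showing $\gamma=1$ for the normalization used in \cite{Kudlasplitting}. This is a purely local Weil-index computation at the ramified prime $p$; for an even-dimensional self-dual hermitian space the relevant Gauss-sum simplifies to $+1$, which is the only place where the parity $\dim V_{r,r}=2r$ and the ramification of $F/F_0$ genuinely enter. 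With $\gamma=1$ confirmed, invariance under $w$ follows and the lemma is proved.
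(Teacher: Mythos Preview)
Your proof is correct and takes essentially the same approach as the paper: reduce to invariance under $P(\Oo_{F_0})$ and Weyl elements, then use the self-duality of $L_{r,r}$ under the trace pairing together with $\gamma_p(V_{r,r})=1$. The only organizational difference is that the paper uses the Bruhat decomposition $K=\bigsqcup_{j=0}^n P(\Oo_{F_0})\,w_j\,P(\Oo_{F_0})$ and checks each partial Weyl element $w_j$ via the corresponding partial Fourier transform, whereas you check only $w=w_n$ after asserting that $P\cap K$ and $w$ generate $K$.
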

\begin{proof}
By the Bruhat decomposition of $K$, we know that 
\[K=\sqcup_{j=0}^n P(\Oo_{F_0})w_j P(\Oo_{F_0}),\]
where
\[w_j=\left(\begin{array}{cccc}
  I_{n-j} &  &   &  \\
   &  &   & I_j \\
   &  & I_{n-j}  &  \\
  &-I_{j} &    &  
\end{array}\right).\]
Write $x=(x',x'')$ where $x'\in V_{r,r}^{n-j}$ and $x''\in V_{r,r}^j$.
Recall from \cite[Section 5]{Kudlasplitting}:
\begin{equation}\label{eq:w_jaction}
    \omega_r(w_j^{-1}) \varphi_{r,r}^n (x)=\gamma_p(V_{r,r})^{-j} \varphi_{r,r}^{n-j}(x')\int_{V_{r,r}^j} \psi_p(-\mathrm{tr}_{F/\Q_p} (\mathrm{Tr}(x'',y)_{r,r}))\varphi_{r,r}^j(y)dy,
\end{equation}
where the measure $dy$ is defined in a way such that $\mathrm{vol}(L_{r,r}^j,dy)=1$ and $\gamma_p(V_{r,r})$ is defined in the same way as \eqref{gammapV} below. One can show that $\gamma_p(V_{r,r})=1$ in our case.

The key observation is that $L_{r,r}$ is self dual with respect to the symmetric form $\mathrm{tr}_{F/\Q_p}[(,)_{r,r}]$. Hence
\begin{align*}
    \int_{V_{r,r}^j} \psi_p(-\mathrm{tr}_{F/\Q_p}(\mathrm{Tr}(x'',y)_{r,r}))\varphi_{r,r}^j(y)dy= 1_{L_{r,r}^j}(x'').
\end{align*}
In conclusion we have 
\begin{equation}
    \omega_r(w_j) \varphi_{r,r}^n (x)=\varphi_{r,r}^{n-j}(x') \varphi_{r,r}^{j}(x'')= \varphi_{r,r}^n(x).
\end{equation}
The invariance under the action of $P(\Oo_{F_0})$ is easy to check. We have completed the proof.
\end{proof}
Hence we conclude that
\begin{corollary}\label{interpolationtrick}
Let $|\cdot|_p$ be the valuation on $\bk_p$ such that $|\sqrt{-\Delta}|_p=\frac{1}{p}$ for a uniformizer $\pi$ of $\bk_p$. Then
\[
    \omega_r(g) \varphi_{r,r}^n (0)=|\mathrm{det}(a(g))|_p^r,
\]
where $g=a(g)n(g)k(g)$ with $k(g)\in K$, $a(g)\in M$ and $n(g)\in N$.
\end{corollary}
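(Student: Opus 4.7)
The plan is a direct computation using the Iwasawa decomposition $g = m(a(g))\,n(g)\,k(g)$ together with the explicit formulas for the three basic factors of the Weil representation. By Lemma \ref{Kinvariance} the right-most factor disappears: $\omega_r(k(g))\varphi_{r,r}^n = \varphi_{r,r}^n$. So it suffices to compute
\[
\bigl(\omega_r(m(a(g)))\,\omega_r(n(g))\,\varphi_{r,r}^n\bigr)(0).
\]

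First I dispatch the unipotent factor. The standard formula for the Weil representation on $N$ reads
\[
\omega_r(n(b))\,\varphi(x) = \psi_p\bigl(\mathrm{tr}_{F/\Q_p}\,\mathrm{Tr}(b\cdot (x,x)_{r,r})\bigr)\varphi(x),
\]
so evaluation at $x=0$ gives no phase and produces $\varphi_{r,r}^n(0) = 1$, since $0 \in L_{r,r}^n$.

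Next I handle the Levi factor $m(a)$ with $a \in \mathrm{Res}_{F/F_0}\GL_n$. The standard formula is
\[
\omega_r(m(a))\,\varphi(x) = \eta(\det a)\,|\det a|_F^{m/2}\,\varphi(x\cdot a),
\]
where $m = \dim_\bk V_{r,r} = 2r$, so the exponent is exactly $r$. Evaluating at $x=0$ and using $\varphi_{r,r}^n(0)=1$ yields $\eta(\det a(g))\,|\det a(g)|_F^r$. Since $m = 2r$ is even, the constraint $\eta|_{\Q_\bA^\times} = \chi^m$ is trivial, and we take $\eta$ itself trivial (which is the implicit convention throughout this section). Finally I note that the absolute value $|\cdot|_p$ of the statement coincides with the $F$-adic absolute value $|\cdot|_F$ on $F = \bk_p$: because $p$ is ramified in $\bk$, the element $\sqrt{-\Delta}$ is a uniformizer of $F$, and the residue field of $F$ has size $p$, so $|\sqrt{-\Delta}|_F = 1/p = |\sqrt{-\Delta}|_p$. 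This identifies $|\det a(g)|_F^r$ with $|\det a(g)|_p^r$ and proves the corollary.

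This argument has no real obstacle; the only delicate point is tracking normalizations (the absolute value, the sign convention for $\omega_r$, and the choice of $\eta$), all of which are fixed by the conventions already in force in the section. The essence is that Lemma \ref{Kinvariance} kills the compact factor, the unipotent factor trivially preserves the value at $x=0$, and the toral factor contributes the expected $|\det|$-twist with exponent $\dim V_{r,r}/2 = r$.
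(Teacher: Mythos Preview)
Your proof is correct and follows exactly the approach the paper intends: the paper states this as an immediate corollary of Lemma~\ref{Kinvariance} without giving a proof, and your argument spells out the standard Weil representation formulas on $M$ and $N$ together with the $K$-invariance from that lemma. Your handling of the character $\eta$ is also consistent with the paper's remark that for the even-dimensional auxiliary space $V_{r,r}$ the Weil representation $\omega_r$ is determined by $\psi$ alone.
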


\begin{proposition}\label{prop:Whittakerfunctionandlocaldensity}
Let $S\in \Herm_m(\Q_p)$  and define 
\[S_r=S\oplus \frac{1}{\sqrt{-\Delta}} \left(\begin{array}{cc}
    0 & I_r \\
    -I_r & 0 
\end{array}\right).\]
Let $\varphi_p$ the characteristic function of $L^n$ where $L$ is a hermitian $\Oo_F$-lattice with Gram matrix $S$.
Let $\Phi_p$ be the Siegel-Weil section associated to $\varphi_p$ and $V:=L\otimes \Q$. Then 
\[W_{T,p}(r,\Phi_p)=\gamma_p(V)^n |\mathrm{det}(S)|_p^{n} |\Delta|_p^e \alpha_p(S_r,T),\]
where $e=\frac{1}{2} nm+\frac{1}{4}n(n-1)$ and 
\begin{equation}\label{gammapV}
    \gamma_p(V)=(-\Delta,\mathrm{det}(V_p))_p \gamma_p(\Delta,\psi_{p,\frac{1}{2}})^m \gamma_p(-1,\psi_{p,\frac{1}{2}})^{-m}.
\end{equation}
Here $(,)_p$ is the local Hilbert symbol and $\gamma_p(a,\psi_{p,\frac{1}{2}}) $ is the Weil index with respect to the additive character $\psi_{p,\frac{1}{2}}$ given by $\psi_{p,\frac{1}{2}}(x)=\psi_p(\frac{1}{2}x)$, see for example \cite{Kudlasplitting} and \cite[Chapter I.4]{kudla1996notes}. 
\end{proposition}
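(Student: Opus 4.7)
The plan is to reduce the statement to the case $r=0$ by the interpolation trick with hyperbolic planes, and then to directly unfold the Whittaker integral using the explicit Weil representation formulas. Let $V' = V \oplus V_{r,r}$ with Gram matrix $S_r$, and let $\Phi_p^{(r)}$ be the Siegel-Weil section attached to the Schwartz function $\varphi_p \otimes \varphi_{r,r}^n \in \mathcal{S}((V')^n)$. The Weil representation on $\mathcal{S}((V')^n)$ factors as the tensor product of the Weil representations on $\mathcal{S}(V^n)$ and $\mathcal{S}(V_{r,r}^n)$ (the additive characters agree because the signatures of $V$ and $V_{r,r}$ match up appropriately). Using Corollary~\ref{interpolationtrick}, for any $g \in H(\Q_p)$ we have
\[
\Phi_p^{(r)}(g,0) = \omega(g)(\varphi_p \otimes \varphi_{r,r}^n)(0) = |\det a(g)|_p^r \, \omega(g)\varphi_p(0) = \Phi_p(g,r).
\]
Consequently $W_{T,p}(r,\Phi_p) = W_{T,p}(0,\Phi_p^{(r)})$, and we have reduced to proving
\[
W_{T,p}(0,\Phi_p^{(r)}) = \gamma_p(V)^n |\det S|_p^n |\Delta|_p^e\, \alpha_p(S_r,T).
\]

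First I would unfold the Whittaker integral. By definition
\[
W_{T,p}(0,\Phi_p^{(r)}) = \int_{\Herm_n(\Q_p)} \bigl[\omega(w^{-1}n(b))(\varphi_p \otimes \varphi_{r,r}^n)\bigr](0)\, \psi_p(-\mathrm{tr}(Tb))\, db.
\]
Applying the standard formulas $\omega(n(b))\varphi(x) = \psi_p(\mathrm{tr}(b \cdot (x,x)))\varphi(x)$ and $\omega(w)\varphi(0) = \gamma_p(V')^n \int_{(V')^n}\varphi(y)\,dy$ (with the appropriate Weil index $\gamma_p(V') = \gamma_p(V)$, since the hyperbolic summand contributes trivially to the Weil index as noted in Lemma~\ref{Kinvariance}), the Whittaker integral becomes
\[
\gamma_p(V)^n \int_{\Herm_n(\Q_p)} \int_{(V')^n} \varphi_p(x')\varphi_{r,r}^n(x'')\, \psi_p(\mathrm{tr}(b(S_r[x]-T)))\, dx\, db,
\]
where $x = (x',x'') \in V^n \oplus V_{r,r}^n$ and $dx$ is the self-dual measure on $(V')^n$ with respect to the pairing induced by $S_r$ and $\psi_p \circ \mathrm{tr}_{F/\Q_p}$.

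Next I would carry out the integration over $b$ by orthogonality of characters on $\Herm_n(\Q_p)$, truncating at level $\pi_0^\ell$ for large $\ell$ exactly as in the proof of Lemma~\ref{lem:definitionoflocaldensity}. This converts the integral into the counting quantity
\[
\lim_{\ell\to\infty} q^{\ell n(n-2m')} \bigl|\{X \in (L \oplus L_{r,r})^n/\pi_0^\ell : S_r[X] - T \in \pi_0^\ell \Herm_n^\vee(\Oo_{\Q_p})\}\bigr|,
\]
where $m' = m + 2r$, which matches the definition of $\alpha_p(S_r,T)$ up to constants. The factor $|\det S|_p^n$ arises from comparing the self-dual measure on $V^n$ (with respect to $S$ and $\psi_p \circ \mathrm{tr}$) to the measure giving $L^n$ volume one; the factor $|\Delta|_p^e$ arises from comparing the self-dual measure on $\Herm_n(\Q_p)$ with respect to $\psi_p \circ \mathrm{tr}$ to the measure giving $\Herm_n(\Z_p)$ volume one (the off-diagonal entries in $F$ contribute $|\partial_F|_p = |\Delta|_p$ each, while the $n$ diagonal entries in $\Q_p$ pair trivially).

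The main obstacle is the bookkeeping in the last step: correctly matching the measure normalizations between the Whittaker integral (with self-dual Haar measures) and the local density integral (with unit-volume measures on the lattices), and correctly identifying the Weil index $\gamma_p(V')$ with $\gamma_p(V)$ via the formula \eqref{gammapV} while showing the $V_{r,r}$-contribution is trivial. Once all these normalization factors are accounted for, the identity drops out.
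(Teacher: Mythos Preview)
Your proposal is correct and follows essentially the same approach as the paper: use Corollary~\ref{interpolationtrick} to rewrite $W_{T,p}(r,\Phi_p)$ as a Whittaker integral for the enlarged space $V\oplus V_{r,r}$ with Schwartz function $\varphi_p\otimes\varphi_{r,r}^n$, then unfold using the Weil representation formulas and compare with $\alpha_p(S_r,T)$ via orthogonality of characters. The paper's proof is very terse, writing down the unfolded integral and then referring the remaining measure-normalization bookkeeping to \cite[Proposition~10.1]{KR2}; you have simply spelled out more of what that reference contains, including the identifications $\gamma_p(V\oplus V_{r,r})=\gamma_p(V)$ and the origin of the factors $|\det S|_p^n$ and $|\Delta|_p^e$.
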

\begin{remark}
Over $\bk_p$ for an odd prime $p$ inert in $\bk$, we know that 
\[\frac{1}{\sqrt{-\Delta}} \left(\begin{array}{cc}
    0 & I_r \\
    -I_r & 0 
\end{array}\right)\sim
 \left(\begin{array}{cc}
    0 & I_r \\
    I_r & 0 
\end{array}\right).\]
The latter Gram matrix is used in \cite[Proposition 10.1]{KR2}.  However their calculation does not apply to the case when $p$ is ramified in $\bk$.
\end{remark}
\begin{proof}
Define $V_p^{[r]}=V\oplus V_{r,r}$ and
\[\varphi_p^{[r]}=\varphi_p\otimes\varphi_{r,r}^n\in \mathcal{S}((V_p^{[r]})^n).\]
Corollary \ref{interpolationtrick} and \eqref{eq:W_T,v} imply
\begin{align*}
    W_{T,p}(r,\Phi_p)=\int_{N(F_0)}\psi_p(-\mathrm{Tr}(Tb))\gamma_p(V)^n\int_{(V_p^{[r]})^n} \psi_p(\mathrm{Tr}(b(x,x)))\varphi_p^{[r]}(x)dxdb.
\end{align*}
The proof afterwards is exactly the same as that of \cite[Proposition 10.1]{KR2}. 
\end{proof}

\noindent
\textit{Proof of Proposition \ref{W'PhiandWPhi'}}:
By Proposition \ref{prop:Whittakerfunctionandlocaldensity} and \eqref{alpha'andF'}, we have 
\[W'_{T,p}(0,\Phi_p)=2\gamma_p(V_p)^2 p^{-\frac{5}{2}}\alpha'(S,T)  \log p .\]
Now (1) follows from the fact that $\alpha'(S,T)=\frac{1}{2}\alpha(S,S)\mu_p(T)$, see the proof of Theorem \ref{thm:maintheorem2}.
Again by Proposition \ref{prop:Whittakerfunctionandlocaldensity},
\[W_{T,p}(0,\Phi'_p)=\gamma_p(V'_p)^2 p^{-\frac{5}{2}} \alpha_p(S',T).\]
Notice that $\gamma_p(V'_p)=-\gamma_p(V_p)$. Then (2) follows from equation \eqref{alphaS'S'}.
\qedsymbol

\appendix
\section{Kr\"amer model and blow-up}\label{sec:Kramer model is blow up}
\subsection{Definition of local models}\label{subsec:local models}
The goal of this appendix is to prove Proposition \ref{prop: Kra is blow up of Pappas}. First we describe the local model $N^\Kra$ (resp. $N^\Pap$) of $\cN^\Kra$ (resp. $\cN^\Pap$) in the sense of \cite[Definition 3.27]{RZ}.

Let $\Lambda$ be a fixed rank $n$ free $\Oo_{\breve{F}}$-module with canonical basis $\{e_1,\ldots, e_n\}$ and $V=\Lambda\otimes_{\Z} \Q$. Denote by $\Pi$ the $\Oo_{\breve{F}_0}$-linear map induced by the action of $\pi$ on $\Lambda$. Define an alternating $ \breve{F}_0$-bilinear form $\langle,\rangle$ on $V$ such that 
\begin{equation}
    \langle e_i,e_j\rangle=0,\ \langle e_i,\Pi e_j\rangle =\delta_{ij}.
\end{equation}
Then it is true that
\[\langle ax,y\rangle =\langle x,\bar{a} y\rangle, a\in \Oo_F.\]
$N^\Kra$ is the  functor which associates to each $S\in \Nilp \Oo_{\breve{F}}$ the set of pairs $(\cF,\cF_0)$ where $\cF$ and $\cF_0$ are $\Oo_S$-submodules of $\Lambda\otimes_{\Oo_{\breve{F}_0}} \Oo_S$ such that 
\begin{enumerate}
    \item $\cF$ as an $\Oo_S$-module is locally on $S$ a direct summand of rank $n$,\
    \item $\cF_0$ as an $\Oo_S$-module is locally on $S$ a direct summand of rank $1$,\
    \item $\cF_0\subset \cF$, both are $\Oo_{\breve{F}}$-stable,
    \item $\cF$ is  totally isotropic for the form $\langle ,\rangle\otimes_{\Oo_{\breve{F}_0}} \Oo_S$,\
    \item $(\Pi+\pi)\cF\subseteq \cF_0$, where we think of $\pi$ as in $\Oo_S$ via the structure morphism $\Oo_{\breve{F}}\rightarrow \Oo_S$\ 
    \item $(\Pi-\pi)\cF_0=(0)$.
\end{enumerate}
Meanwhile $N^\Pap$ is the  functor which associates to each $S\in \Nilp \Oo_{\breve{F}}$ the set of $\Oo_S$-submodules $\cF$ of $\Lambda\otimes_{\Oo_{\breve{F}_0}} \Oo_S$ such that
\begin{enumerate}
    \item $\cF$ as an $\Oo_S$-module is locally on $S$ a direct summand of rank $n$,\
    \item $\cF$ is $\Oo_{\breve{F}}$-stable,
    \item $\cF$ is  totally isotropic for the form $\langle ,\rangle\otimes_{\Oo_{\breve{F}_0}} \Oo_S$,\
    \item The characteristic polynomial $\mathrm{char}(\Pi|\cF)$ is $(T+\pi)^{n-1}(T-\pi)$,\ 
    \item We need to impose the following wedge conditions when $n\neq 2$:
    \[\wedge^{n}(\Pi-\pi|\cF)=0,\ \wedge^{2}(\Pi+\pi|\cF)=0.\]
\end{enumerate}
The singular locus of $N^\Pap$ consists of a unique point $z_s$ in its special fiber corresponding to the submodule 
\[\cF_s=\Pi \bar\Lambda,\]
where $\bar\Lambda=(\Lambda\otimes_{\Oo_{\breve{F}_0}} k)$.
By the main theorem of \cite{Kr}, the forgetful functor $(\cF,\cF_0)\mapsto \cF$ defines a morphism $\Phi^\loc:N^\Kra\rightarrow N^\Pap$ which induces an isomorphism outside $z_s$. The exceptional locus $(\Phi^\loc)^{-1}(z_s)$ is a divisor in $N^\Kra$ isomorphic to the projective space $\bP^{n-1}/k$.

\subsection{Equations of local model}
By the argument before Proposition 4.14 of \cite{P}, we know that $N^\Pap$ is the affine scheme $\Spec\Oo_{\breve F}[b_{ij}]_{1\leq i,j \leq n}/I^\Pap$ where $I^\Pap$ is the ideal generated by
\begin{equation}\label{eq:I Pap}
    B-B^t,\ \wedge^2(B+\pi I_n),\ B^2-\pi_0 I_n, \ \mathrm{Tr}(B)+(n-2)\pi
\end{equation}
where $B=(b_{ij})_{1\leq i,j \leq n}$. Let $R^\Pap$ be the quotient ring. In this coordinate, the submodule $\cF$ is generated by the column vectors (see the paragraph before Equation (4.8) of \cite{P})
\[(B,I_n)^t.\]

Let $N^\Blow$ be the blow-up of $N^\Pap$ along $z_s$. Let $T=(t_{ij})$ be the $n\times n$ matrix with entries the variables $t_{ij}$.
Let $\Proj(R^\Pap,[t,t_{ij}])$ be the projective space over $R^\Pap$ whose homogeneous coordinate ring is the set of polynomials $R^\Pap[t,t_{ij}]_{1\leq i,j \leq n}$ homogeneous in the variables $t,t_{ij}, 1\leq i,j \leq n$. By \cite[Proposition 4.14]{P}, $N^\Blow$ is the subscheme of $\Proj(R^\Pap,[t,t_{ij}])$ defined by the homogeneous ideal generated by 
\begin{align}\label{eq: I blow}
     b_{ij} t-\pi t_{ij},\ & b_{ij} t_{pq}- b_{pq} t_{ij}, \\ \nonumber
      T-T^t,\ \wedge^2(T+t I_n),\ & T^2-t^2 I_n, \ \mathrm{Tr}(T)+(n-2)t.
\end{align}
It is observed in the proof of \cite[Proposition 4.14]{P} that $N^\Blow$ can be covered by $n+1$ affine charts on which $t$ and $t_{ii}$, $1\leq i \leq n$ are invertible respectively. We push this observation one step further. Define the following change of variables
\[s_{ij}=\delta_{ij} t+t_{ij}.\]
\begin{lemma}
Let $V_i$ be the affine subscheme $N^\Blow$ on which $s_{ii}$ is invertible. Then $\{V_i\mid 1\leq i \leq n\}$ is a cover of $N^\Blow$.
\end{lemma}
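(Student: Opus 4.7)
The plan is to show that the complement of $\bigcup_{i=1}^n V_i$ in $N^\Blow$ is empty, by exhibiting a contradiction with the Proj condition that the homogeneous coordinates $t, t_{ij}$ cannot simultaneously vanish at any point.

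First, I would exploit the symmetry and rank-at-most-one structure of the matrix $S = T + tI_n$ with entries $s_{ij} = t_{ij} + \delta_{ij} t$. By the relation $T = T^t$ from \eqref{eq: I blow}, $S$ is symmetric. The relation $\wedge^2(T + tI_n) = 0$ says that every $2\times 2$ minor of $S$ vanishes, so
\[
s_{ii}\, s_{jj} \;=\; s_{ij}\, s_{ji} \;=\; s_{ij}^2 \qquad \text{for all } i,j.
\]
Consequently, at any point $P$ of $N^\Blow$ (i.e., a morphism $\Spec K \to N^\Blow$ with $K$ a field) at which $s_{ii}(P) = 0$ for every $i$, we obtain $s_{ij}(P)^2 = 0$ in $K$, hence $s_{ij}(P) = 0$ for all $i,j$. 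Thus $T(P) = -t(P)\, I_n$.

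Second, I would plug this into the trace relation $\mathrm{Tr}(T) + (n-2)t = 0$ from \eqref{eq: I blow}, which yields
\[
-n\, t(P) + (n-2)\, t(P) \;=\; -2\, t(P) \;=\; 0.
\]
Since $p$ is odd and $K$ has residue characteristic $p$ (because $N^\Blow$ lies over $\SpfOF$), the element $2$ is invertible in $K$, forcing $t(P) = 0$. Combined with $t_{ij}(P) = s_{ij}(P) - \delta_{ij}t(P) = 0$, we conclude that every homogeneous coordinate $t, t_{ij}$ vanishes at $P$, which is impossible for a point of the Proj scheme $N^\Blow \subset \Proj(R^\Pap,[t,t_{ij}])$. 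Hence no such $P$ exists and $\{V_i\}_{i=1}^n$ covers $N^\Blow$, as required.

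The only delicate step is the passage from the scheme-theoretic identity $s_{ij}^2 = s_{ii} s_{jj}$ to the vanishing of $s_{ij}$ on the complement of $\bigcup V_i$; I avoid this subtlety by arguing pointwise (working in residue fields), which is sufficient since the $V_i$ form a cover if and only if they cover all points. The oddness of $p$ is used only once, but essentially — in characteristic $2$ the trace relation would not force $t = 0$, and indeed one would expect the covering statement to fail in that case.
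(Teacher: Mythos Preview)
Your proof is correct and shares the paper's key step: at a point outside all $V_i$ the vanishing of each $s_{ii}=t+t_{ii}$, combined with the trace relation $\mathrm{Tr}(T)+(n-2)t=0$, yields $-2t=0$, hence $t=0$ as $p$ is odd. The paper stops here, since $t=t_{ii}=0$ already contradicts Pappas's observation that the $n+1$ charts where $t$ or some $t_{ii}$ is invertible cover $N^\Blow$; you instead invoke $\wedge^2 S=0$ to kill the off-diagonal $s_{ij}$ as well and reach a direct contradiction with the Proj condition. Your route is slightly longer but self-contained, while the paper's is shorter by leaning on the cited covering statement.
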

\begin{proof}
Suppose $\{V_i\mid 1\leq i \leq n\}$ does not cover $N^\Blow$. Then there is a geometric point $z$ of $N^\Blow$ such that 
\[t+t_{ii}=0, 1\leq i \leq n\]
in the residue field $k_z$. Then the relation $\mathrm{Tr}(T)+(n-2)t=0$ tells us that $t=0$ in $k_z$ since we assume the residue characteristic is not $2$. This in turn implies that $t_{ii}=0$ in $k_z$. This is a contradiction with the observation made by loc.cit..
\end{proof}
Let $S=(s_{ij})$, then $N^\Blow$ is the subscheme of $\Proj(R^\Pap,[t,s_{ij}])$ defined by the homogeneous ideal generated by 
\begin{align}\label{eq: I blow'}
     (b_{ij}+\pi \delta_{ij}) t-\pi s_{ij},\ & b_{ij} (s_{pq}-\delta_{pq}t)- b_{pq} (s_{ij}-\delta_{ij}t), \\ \nonumber
      S-S^t,\ \wedge^2 S,\ & (S-tI_n)^2-t^2 I_n, \ \mathrm{Tr}(S)-2t.
\end{align}

We now describe the local equations of $N^\Kra$ near $(\Phi^\loc)^{-1}(z_s)$, see step 4 of the proof of \cite[Theorem 4.5]{Kr}. $N^\Kra$ is covered by $n$ affine charts $U_i=\Spec R_i$ ($1\leq i \leq n$) where
\begin{equation}\label{eq:app Kramer model equation}
    R_i=\Oo_{\breve F}[x_i^1,\ldots x_i^n, y_i]/(y_i(\sum_{j=1}^n(x_i^j)^2)-2\pi, x^i_i-1).
\end{equation}
The following transition relations between coordinate functions hold in $U_i\cap U_j$ where $x_i^j$ and $x_j^i$ are units.
\begin{equation}\label{eq:app coordinate change of Kramer model}
    x_i^\ell=x_i^j x_j^\ell,\ y_i=(x_j^i)^2 y_j.
\end{equation}
Define the column vector $\vec{x}_i=(x_i^1,\ldots,x_i^n)^t$. Then the $n\times n$ matrix
\begin{equation}\label{eq:matrix A}
    A=y_i \vec{x}_i \vec{x}_i^t-\pi I_n
\end{equation}
is independent of the chart $U_i$.
With respect to the basis $\{e_1,\ldots, e_n,\Pi e_1,\ldots, \Pi e_n\}$ of $\Lambda$, in the chart $U_i$ the submodule $\cF_0$ and $\cF$ are generated respectively by (see Equation (4.5) of \cite{Kr})
\[(A\vec{x}_i,\vec{x}_i)^t\]
and the column vectors of  (see the second last equation before Equation (4.3) of loc.cit.)
\[(A,I_n)^t.\]
The morphism $\Phi^\loc$ is determined by the $\Oo_{\breve F}$-algebra homomorphisms
\[(\Phi_i^\loc)^\sharp:R^\Pap\rightarrow R_i, \ B \mapsto A.\]
Using equation \eqref{eq:app Kramer model equation} and \eqref{eq:matrix A}, one can easily see that the relations generated by \eqref{eq:I Pap} are preserved under $(\Phi_i^\loc)^\sharp$.
The exceptional locus $(\Phi^\loc)^{-1}(z_s)$ is a divisor defined by the single equation $y_i=0$ in each $U_i$.

\subsection{The proof}
By the universal property of blow-up (see for example \cite[Chapter II, Proposition 7.14]{hartshorne2013algebraic}) there is a unique morphism $N^\Kra\rightarrow N^\Blow$ making the following diagram commute.
\begin{equation}\label{eq:NKra Nblow Npap diagram}
   \begin{tikzcd}
N^\Kra \arrow[r] \arrow[dr, "\Phi^\loc"]
& N^\Blow \arrow[d]\\
& N^\Pap
\end{tikzcd} 
\end{equation}
We would like to explicitly construct this morphism. Define $\Psi_i^\sharp:\Oo_{V_i}\rightarrow R_i$ by 
\begin{equation}\label{eq:Psi sharp}
  \Psi_i^\sharp(t/s_{ii})=\frac{1}{2} \vec{x}_i^t\cdot \vec{x}_i, \ \Psi_i^\sharp(S/s_{ii})=\vec{x}_i\vec{x}_i^t, \ \Psi_i^\sharp(B)=A.  
\end{equation}

\begin{lemma}
The maps $\Psi_i^\sharp:\Oo_{V_i}\rightarrow R_i$ are $\Oo_{\breve F}$-algebra homomorphisms. Moreover they glue along $U_i\cap U_j$ to give a morphism $\Psi:N^\Kra\rightarrow N^\Blow$ which makes the diagram \eqref{eq:NKra Nblow Npap diagram} commute.
\end{lemma}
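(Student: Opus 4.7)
The plan is to split the verification into three pieces: (i) that each $\Psi_i^\sharp$ respects the defining relations of the affine chart $V_i$ of $N^{\Blow}$; (ii) that the maps $\Psi_i$ agree on overlaps and hence glue to a morphism $\Psi\colon N^{\Kra}\to N^{\Blow}$; and (iii) that $\Psi$ sits in the diagram \eqref{eq:NKra Nblow Npap diagram}. Piece (iii) is essentially immediate from the definition $\Psi_i^\sharp(B)=A$, since the map $V_i\to N^{\Pap}$ is given by the inclusion $R^{\Pap}\hookrightarrow\Oo_{V_i}$ (sending $B$ to $B$), so the composition with $\Psi_i^\sharp$ recovers $(\Phi_i^\loc)^\sharp\colon B\mapsto A$.

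For (i), the dehomogenized relations defining $V_i$ are obtained by setting $\tilde t:=t/s_{ii}$ and $\tilde s_{jk}:=s_{jk}/s_{ii}$ in \eqref{eq: I blow'}, yielding (a) $\tilde S=\tilde S^t$, (b) $\wedge^2\tilde S=0$, (c) $\tilde S^2-2\tilde t\tilde S=0$, (d) $\mathrm{Tr}(\tilde S)-2\tilde t=0$, (e) $(b_{jk}+\pi\delta_{jk})\tilde t-\pi \tilde s_{jk}=0$, and (f) $b_{jk}(\tilde s_{pq}-\delta_{pq}\tilde t)-b_{pq}(\tilde s_{jk}-\delta_{jk}\tilde t)=0$. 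Under $\Psi_i^\sharp$ we send $\tilde S\mapsto \vec x_i\vec x_i^t$ and $\tilde t\mapsto \tfrac12\vec x_i^t\vec x_i$. Relations (a), (b), (d) are visibly satisfied since $\vec x_i\vec x_i^t$ is symmetric of rank $\leq1$ and $\mathrm{Tr}(\vec x_i\vec x_i^t)=\vec x_i^t\vec x_i$. Relation (c) reduces to $(\vec x_i\vec x_i^t)^2=(\vec x_i^t\vec x_i)\,\vec x_i\vec x_i^t$, which is the standard rank-one identity. The key computational input is the defining equation $y_i\vec x_i^t\vec x_i=2\pi$ of $R_i$ from \eqref{eq:app Kramer model equation}, which gives $y_i\cdot\tfrac12\vec x_i^t\vec x_i=\pi$; substituting $b_{jk}\mapsto y_ix_i^jx_i^k-\pi\delta_{jk}$ into (e) yields $x_i^jx_i^k(y_i\cdot\tfrac12\vec x_i^t\vec x_i-\pi)=0$, which vanishes, and a short algebraic expansion (all the $y_i\tau$ terms combine into $\pi$) makes (f) collapse to zero as well.

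For (ii), the natural interpretation is that on a point of $U_i$ the map $\Psi_i$ records the projective coordinates $[\,\tfrac12\vec x_i^t\vec x_i : (x_i^jx_i^k)_{j,k}\,]$ in $\Proj(R^{\Pap},[t,s_{jk}])$. The transition formulas \eqref{eq:app coordinate change of Kramer model}, in particular $x_i^\ell=x_i^j x_j^\ell$, give $x_i^\ell x_i^m=(x_i^j)^2\,x_j^\ell x_j^m$ and $\vec x_i^t\vec x_i=(x_i^j)^2\vec x_j^t\vec x_j$, so on $U_i\cap U_j$ the two tuples of projective coordinates differ by the unit scalar $(x_i^j)^2$. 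Consequently $\Psi_i$ and $\Psi_j$ both land in $V_i\cap V_j$ and define the same morphism there. The global morphism $\Psi$ is then obtained by patching, and since composition with $V_i\to N^{\Pap}$ gives $\Phi_i^\loc$ on each chart, the diagram \eqref{eq:NKra Nblow Npap diagram} commutes.

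I expect the main obstacle to be purely bookkeeping: tracking the dehomogenization of the projective relations in \eqref{eq: I blow'} and verifying relation (f), where several terms proportional to $y_i\tau=\pi$ and $\pi\delta$ must cancel in pairs. No conceptual difficulty arises, and the combinatorial check is short once the rank-one identity and the relation $y_i\vec x_i^t\vec x_i=2\pi$ are in hand.
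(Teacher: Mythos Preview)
Your proposal is correct and follows essentially the same approach as the paper: you verify the images satisfy the dehomogenized blow-up relations using the rank-one form of $\vec x_i\vec x_i^t$ together with $y_i\,\vec x_i^t\vec x_i=2\pi$, check the gluing via the transition rule $x_i^\ell=x_i^j x_j^\ell$ (the paper phrases this as the identity $\Psi_j^\sharp(t/s_{jj})\Psi_i^\sharp(s_{jj}/s_{ii})=\Psi_i^\sharp(t/s_{ii})$, which is your scalar-$(x_i^j)^2$ observation), and read off commutativity from $\Psi_i^\sharp(B)=A$. The only place where the paper is slightly slicker is relation (f): rather than expanding, it observes $A=y_i\bigl(\Psi_i^\sharp(S/s_{ii})-\Psi_i^\sharp(t/s_{ii})I_n\bigr)$, so the two terms in (f) are visibly proportional with the same factor $y_i$ and cancel by symmetry.
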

\begin{proof}
In order to show that this is an $\Oo_{\breve F}$-algebra homomorphism,
we need to check that the relations generated by \eqref{eq: I blow'} are preserved. Notice that \eqref{eq:app Kramer model equation} and \eqref{eq:matrix A} implies that
\[A=y_i(\vec{x}_i\vec{x}_i^t-\frac{1}{2}\vec{x}_i^t\cdot \vec{x}_i I_n)=y_i(\Psi_i^\sharp(S/s_{ii})-\Psi_i^\sharp(t/s_{ii})I_n) .\]
This together with $\Psi_i^\sharp(B)=A$ directly implies that 
\[\Psi_i^\sharp((b_{ij} (s_{pq}-\delta_{pq}t)- b_{pq} (s_{ij}-\delta_{ij}t))/s_{ii})=0.\]
By \eqref{eq:app Kramer model equation} and \eqref{eq:matrix A}, we also have
\begin{align*}
    &\Psi_i^\sharp(\frac{t}{s_{ii}}(B+\pi I_n)-\frac{1}{s_{ii}}\pi S)\\
    =&\frac{1}{2} \vec{x}_i^t\cdot \vec{x}_i (A+\pi I_n)-\pi \vec{x}_i\vec{x}_i^t\\
    =& \frac{1}{2} \vec{x}_i^t\cdot \vec{x}_i (y_i \vec{x}_i\vec{x}_i^t)- \frac{1}{2} (\vec{x}_i^t\cdot \vec{x}_i y_i) \vec{x}_i\vec{x}_i^t\\
    =&0.
\end{align*}
The relations in the second line of \eqref{eq: I blow'} can be checked directly using the definition of $\Psi_i^\sharp$. The equation $\Psi_i^\sharp(B)=A$ indicates that the relations generated by \eqref{eq:I Pap} are preserved since the same relations are satisfied by $A$.

We also need to check that the definition of $\Psi_i^\sharp$ and $\Psi_j^\sharp$ are compatible in $U_i\cap U_j$, so they glue together to give a morphism $\Psi:N^\Kra\rightarrow N^\Blow$. First we need to check that
\begin{equation}\label{eq: Psi sharp change coordinate}
   \Psi_j^\sharp(t/s_{jj})\Psi_i^\sharp(s_{jj}/s_{ii})=\Psi_i^\sharp(t/s_{ii}). 
\end{equation}
The second equation of \eqref{eq:Psi sharp} tells us $\Psi^\sharp_i(s_{jj}/s_{ii})=(x_i^j)^2$. Then \eqref{eq: Psi sharp change coordinate} follows from \eqref{eq:app coordinate change of Kramer model}. Similarly one can verify that 
\[\Psi_j^\sharp(S/s_{jj})\Psi_i^\sharp(s_{jj}/s_{ii})=\Psi_i^\sharp(S/s_{ii}).\]
The relation $\Psi_i^\sharp(B)=A$ guarantees the commutativity of the diagram \eqref{eq:NKra Nblow Npap diagram} if the horizontal morphism is $\Psi$. 
\end{proof}

\begin{lemma}\label{lem:local model isomorphism}
The morphism $\Psi:N^\Kra\rightarrow N^\Blow$ is an isomorphism. 
\end{lemma}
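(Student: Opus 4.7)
The plan is to exhibit an explicit inverse $\Psi' : N^\Blow \to N^\Kra$ chart by chart using the open cover $\{V_i\}$ of $N^\Blow$ by the loci where $s_{ii}$ is invertible. Guided by the formulas in \eqref{eq:Psi sharp}, I define $\Psi'_i : V_i \to U_i$ by the $\Oo_{\breve F}$-algebra map
\[
R_i \longrightarrow \Gamma(V_i,\Oo_{N^\Blow}), \qquad x_i^j \mapsto s_{ij}/s_{ii}, \qquad y_i \mapsto b_{ii}+\pi.
\]

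\textbf{Verifying well-definedness.} The only non-trivial defining relation of $R_i$ to check is $y_i \sum_j (x_i^j)^2 = 2\pi$. The vanishing of $\wedge^2 S$ together with the symmetry $S = S^t$ forces all $2\times 2$ minors of $S$ to vanish, so $s_{ii}s_{jj} = s_{ij}^2$. Combined with $\mathrm{Tr}(S) = 2t$ this gives $\sum_j (s_{ij}/s_{ii})^2 = 2t/s_{ii}$. Multiplying by $b_{ii}+\pi$ and using the blow-up relation $(b_{ii}+\pi)t = \pi s_{ii}$ from \eqref{eq: I blow'} produces $(b_{ii}+\pi)\sum_j (s_{ij}/s_{ii})^2 = 2\pi$, as required. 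Commutativity with $\Phi^\loc$ and $\pi_{N^\Blow}$ follows from $\Psi'_i{}^\sharp(B) = A$, an elementary consequence of the $\wedge^2 S = 0$ relations via $(s_{ij}s_{ik})/s_{ii}^2 = s_{jk}/s_{ii}$ and the identifications $b_{ii}+\pi = y_i$, $x_i^j = s_{ij}/s_{ii}$.

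\textbf{Gluing and inversion.} For $i \neq j$ the overlap $V_i \cap V_j$ is the locus where $s_{ij}$ is invertible (again by $s_{ij}^2 = s_{ii}s_{jj}$), which maps into $U_i \cap U_j$ by construction. The transition formulas \eqref{eq:app coordinate change of Kramer model} become the identities $s_{i\ell}/s_{ii} = (s_{ij}/s_{ii})(s_{j\ell}/s_{jj})$ and $b_{ii}+\pi = (s_{ji}/s_{jj})^2(b_{jj}+\pi)$, the first an immediate $\wedge^2 S = 0$ relation and the second a consequence of $(b_{ii}+\pi)t = \pi s_{ii}$, $(b_{jj}+\pi)t = \pi s_{jj}$ and $s_{ij}^2 = s_{ii}s_{jj}$. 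So the $\Psi'_i$ glue to a morphism $\Psi' : N^\Blow \to N^\Kra$ over $N^\Pap$. A direct computation shows $(\Psi'_i)^\sharp \circ \Psi_i^\sharp = \mathrm{id}$ on $R_i$, using $x_i^i = 1$ and $y_i(x_i^i)^2 = y_i$.

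\textbf{Main obstacle.} The remaining hurdle is the reverse composition $\Psi \circ \Psi'$. On the coordinate ring of $V_i$ one needs the identity $(b_{ii}+\pi)s_{jk} = s_{ii}(b_{jk}+\pi\delta_{jk})$, and my calculations show that this identity holds only after multiplication by $t$ (using the second family of relations in \eqref{eq: I blow'}). To bypass this torsion issue I will argue as follows: $\Psi$ and $\Psi'$ are mutually inverse on the complement of the exceptional divisor (where both are isomorphisms onto $N^\Pap \setminus \{z_s\}$); since $N^\Kra$ is integral and flat over $\Spec \Oo_{\breve F}$ by \cite{Kr}, this complement is schematically dense, so $\Psi \circ \Psi' = \mathrm{id}$ on all of $N^\Blow$ provided $N^\Blow$ is likewise reduced/flat in a neighbourhood of the exceptional divisor. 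The latter is built into the blow-up construction of \cite[Proposition~4.14]{P}, which describes $N^\Blow$ as a closed subscheme of a projective space over the regular Noetherian base $N^\Pap$; combined with the flatness of $\Phi^\loc : N^\Kra \to N^\Pap$ away from $z_s$, this completes the identification $\Psi \circ \Psi' = \mathrm{id}$ and proves the lemma.
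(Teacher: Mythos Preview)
Your construction of the inverse $\Psi'$ is exactly the paper's inverse $\Omega_i^\sharp$, and your verifications of well-definedness, gluing, and the composition $\Psi'\circ\Psi=\id_{N^\Kra}$ match the paper's. The only divergence is in the ``main obstacle'': the paper simply checks $\Omega_i^\sharp\circ\Psi_i^\sharp=\id$ directly on the generators $t/s_{ii}$ and $S/s_{ii}$, without appealing to any density argument.

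Your perceived obstacle is not real. The identity $(b_{ii}+\pi)s_{jk}=s_{ii}(b_{jk}+\pi\delta_{jk})$ holds in $\Oo_{V_i}$ \emph{on the nose}, not merely after multiplication by $t$. You only used the first family of relations in \eqref{eq: I blow'}; you need the second family $b_{pq}(s_{jk}-\delta_{jk}t)=b_{jk}(s_{pq}-\delta_{pq}t)$ as well. Setting $p=q=i$ gives
\[
b_{jk}s_{ii}=b_{ii}s_{jk}-b_{ii}\delta_{jk}t+b_{jk}t.
\]
Now substitute $b_{jk}t=\pi s_{jk}-\pi\delta_{jk}t$ from the first family to get $b_{jk}s_{ii}=(b_{ii}+\pi)s_{jk}-(b_{ii}+\pi)\delta_{jk}t$, and finally use $(b_{ii}+\pi)t=\pi s_{ii}$ to obtain $(b_{jk}+\pi\delta_{jk})s_{ii}=(b_{ii}+\pi)s_{jk}$. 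This shows that every $b_{jk}$ lies in the subalgebra generated by $b_{ii}$ and the $s_{jk}/s_{ii}$; since $\Omega_i^\sharp\circ\Psi_i^\sharp(b_{ii})=\Omega_i^\sharp(y_i-\pi)=b_{ii}$ is trivial, the verification on all of $\Oo_{V_i}$ follows. Your density workaround is therefore unnecessary, and in any case its justification (``built into the blow-up construction'') is not quite an argument.
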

\begin{proof}
It suffices to construct an inverse of each $\Psi_i^\sharp$. Define $\Omega_i^\sharp:R_i \rightarrow \Oo_{V_i}$ by
\begin{equation}
    \Omega_i^\sharp(y_i)=b_{ii}+\pi,\ \Omega_i^\sharp(x_i^j)=s_{ji}/s_{ii}.
\end{equation}
The $(i,i)$-th entry of the relation $(S-tI_n)^2-t^2 I_n=0$ together with the relation $S=S^t$ tells us that
\[(s_{ii}-t)^2+\sum_{j=1}^n s_{ji}^2-t^2=0.\]
In other words
\begin{equation}\label{eq: s_ji square sum}
  2s_{ii}t=\sum_{j=1}^n s_{ji}^2.  
\end{equation}
Hence 
\[\Omega_i^\sharp(y_i (\vec{x}_i^t \cdot \vec{x}_i))
    =(b_{ii}+\pi) \sum_{j=1}^n (s_{ji}/s_{ii})^2
    = \frac{2t(b_{ii}+\pi)}{s_{ii}}
    = 2\pi.\]
In the last equality we used the first relation in \eqref{eq: I blow'}. This shows that $\Omega_i^\sharp$ is an $\Oo_{\breve F}$-algebra homomorphism. 

We need to show that $\Omega_i^\sharp$ and $\Psi_i^\sharp$ are inverse of each other. Let $\vec{s}_i$ be the column vector $(s_{1i}/s_{ii},\ldots,s_{ni}/s_{ii})^t$. We verify the desired relations one by one.
\[\Psi_i^\sharp\circ \Omega_i^\sharp(y_i)=\Psi_i^\sharp(b_{ii}+\pi)=a_{ii}+\pi=y_{i}(x_i^i)^2=y_i.\]
\[\Psi_i^\sharp\circ \Omega_i^\sharp(x_i^j)=\Psi_i^\sharp(s_{ji}/s_{ii})=x_{i}^j x_{i}^i=x_{i}^j.\]
\[\Omega_i^\sharp\circ \Psi_i^\sharp(t/s_{ii})=\frac{1}{2}\Omega_i^\sharp(\vec{x}_i^t\cdot \vec{x}_i)=\frac{1}{2}\vec{s}_i^t\cdot \vec{s}_i=t/s_{ii}.\]
Here in the last equality we use \eqref{eq: s_ji square sum} again.
\[\Omega_i^\sharp\circ \Psi_i^\sharp(S/s_{ii})=\Omega_i^\sharp(\vec{x}_i \vec{x}_i^t)=\vec{s}_i \vec{s}_i^t=S/s_{ii}.\]
Here in the last equality we use the fact that $\wedge^2 S=0$, hence $S/s_{ii}$ must be of the form $\lambda \vec{s}_i \vec{s}_i^t$. Equating the $(i,i)$-th entry tells us that $\lambda=1$.  This finishes the proof of the lemma.
\end{proof}

\noindent\textit{Proof of Proposition \ref{prop: Kra is blow up of Pappas}}:
Let $M$ denote the functor that assigns to each strict $\Oo_{F_0}$-module the $\Oo_{F_0}$-strict eigenspace of the Lie algebra of its universal extension. Define $\tilde{\cN}^\Kra$ (resp. $\tilde{\cN}^\Pap$) to be the functor whose $S$-point ($S\in \Nilp_{\Oo_{\breve F}}$) is the groupoid of the isomorphism classes of the following data
\begin{itemize}
    \item $(X,\iota,\lambda,\rho,\cF_X)\in \cN^\Kra(S)$ (resp. $(X,\iota,\lambda,\rho)\in \cN^\Pap(S)$).
    \item An isomorphism $\gamma$ of $\Oo_{\breve F}\otimes_{\Oo_{\breve{F}_0}} \Oo_S$-modules:
    \[\gamma:M(X)\xrightarrow{\sim} \Lambda\otimes_{\Oo_{\breve{F}_0}} \Oo_S\]
    that respects the alternating forms on both sides.
\end{itemize}
Then we have the following local model diagrams (see \cite[Definition 3.28]{RZ}):
\[
\begin{tikzcd}[column sep=small]
& \tilde{\cN}^\Kra \arrow{dl}[swap]{\mu} \arrow{dr}{\mu'} & \\
\cN^\Kra  & & N^\Kra
\end{tikzcd}
\text{ and }
\begin{tikzcd}[column sep=small]
& \tilde{\cN}^\Pap \arrow{dl}[swap]{\nu} \arrow{dr}{\nu'} & \\
\cN^\Pap  & & N^\Pap
\end{tikzcd}.
\]
Here $\mu$ and $\nu$ are natural forgetful functors and $\mu'$ and $\nu'$
are defined by 
\[\mu'(X,\iota,\lambda,\cF_0)=(\cF,\cF_0),\]
where $\cF=\gamma(\mathrm{Ker}[M(X)\rightarrow\Lie X])$ (see \cite[Section 3.29]{RZ}) and $\cF_0=\gamma(\cF_X)^\bot$, and 
\[\nu'(X,\iota,\lambda,\rho)=\cF.\]
Here the alternating form $\langle,\rangle$ on $\Lambda$ induces a non-degenerate pairing between $\Lambda\otimes_{\Oo_{\breve{F}_0}} \Oo_S/\cF$ and $\cF$, and we use $\gamma(\cF_X)^\bot$ to denote the perpendicular of the image of $\cF_X$ under the map $\Lie X\rightarrow \Lambda\otimes_{\Oo_{\breve{F}_0}} \Oo_S/\cF$ induced by $\gamma$. Since $\Oo_F$ acts on $\cF_X$ by the structural morphism and on $ \cF/\cF_X$ by the conjugate of the structural morphism, the pair $(\cF,\cF_0)$ satisfies Condition (5) and (6) in the definition of $N^\Kra$.  
The forgetful functor induces a morphism from the first local model diagram to the second.

Let $\mathcal U$ be an \'etale neighbourhood  of a point $x$ in the special fiber of $\cN^\Pap$ and $\mathcal{U}^\Kra$ be its base change to $\cN^\Kra$.
A section $s:\mathcal{U}^\Kra\rightarrow \tilde{\cN}^\Kra$ rigid of first order in the sense of \cite[Definition 3.31]{RZ} (whose existence is proved in loc.cit.) induces by the forgetful functor a section $s':\mathcal{U}\rightarrow \tilde{\cN}^\Pap$. Hence we have the following commutative diagram 
\[
\begin{tikzcd}
\mathcal{U}^\Kra \arrow{r}{\mu \circ s} \arrow{d} & N^\Kra \arrow{d}\\
\mathcal{U} \arrow{r}{\mu' \circ s'} & N^\Pap
\end{tikzcd}
\]
where the vertical maps are forgetful functors and the horizontal maps are formally \'etale Zariski locally by \cite[Proposition 3.33]{RZ}.
Let $\cN^\Blow$ be the blow-up of $\cN^\Pap$ along its singular locus. Then by the universal property of blowing up, there is a morphism $\cN^\Kra\rightarrow \cN^\Blow$.
The above commutative diagram induces the following 
commutative diagram (notice that blow up commutes with flat base change)
\[
\begin{tikzcd}
\mathcal{U}^\Kra \arrow{r}{\mu \circ s} \arrow{d} & N^\Kra \arrow{d}\\
\mathcal{U}^\Blow \arrow{r} & N^\Blow
\end{tikzcd}
\]
where $\mathcal{U}^\Blow$ is the base change of $\mathcal{U}$ to $\cN^\Blow$ and the lower horizontal map is again formally \'etale Zariski locally. By Lemma \ref{lem:local model isomorphism}, we know that $\mathcal{U}^\Kra\rightarrow\mathcal{U}^\Blow$ is formally \'etale. Hence $\cN^\Kra\rightarrow\cN^\Blow$ is formally \'etale, hence \'etale as it is locally of finite presentation by construction. 

It suffices to show that the finite morphism $\cN^\Kra\rightarrow\cN^\Blow$ has degree $1$. 
When $n\geq 3$ or $n=2$ and $\bV$ is isotropic, this follows from the fact that both $\cN^\Kra$ and $\cN^\Blow$ agree with $\cN^\Pap$ outside the singular locus. When $n=2$ and $\bV$ is isotropic, we can embed $\cN^\Pap_2$ into $\cN^\Pap_3$ by taking $(X,\iota,\lambda,\rho)$ over $\Spec S$ to $(X\times \cG,\iota\times \iota_\cG,\lambda\times \lambda_\cG,\rho \times \rho_\cG)$  where $(\cG,\iota_\cG,\lambda_\cG,\rho_\cG)$ is the canonical lifting and the subscript in $\cN^\Pap$ indicates dimension. The embedding then induces a commutative diagram 
\[
\begin{tikzcd}
\cN^\Kra_2 \arrow{r} \arrow{d} & \cN^\Kra_3 \arrow{d}\\
\cN^\Blow_2 \arrow{r} & \cN^\Blow_3
\end{tikzcd}.
\]
In particular this shows that $\cN^\Kra_2$ and $\cN^\Blow_2$ have the same $k$-points. This finishes the proof of the claim and the proposition.
\qedsymbol

\noindent
{\bf Conflict of Interest Statement:}
On behalf of all authors, the corresponding author states that there is no conflict of interest.

\bibliographystyle{alpha}
\bibliography{reference}

\end{document}